\documentclass[11pt]{article}
\usepackage{amsfonts}
\usepackage{latexsym}
\usepackage{amsmath}
\usepackage{amssymb}
\usepackage{amsthm}
\usepackage{amsmath}

\usepackage{mathtools}
\usepackage{mathrsfs}

\usepackage{verbatim}
\usepackage{hyperref}
\usepackage{bibspacing}
 \usepackage{tikz}
\usepackage{pgfplots}
\usepgfplotslibrary{fillbetween}

\newtheorem{thm}{Theorem}[section]
\newtheorem*{thm*}{Theorem}
\newtheorem{cor}[thm]{Corollary}

\newtheorem{lem}[thm]{Lemma}
\newtheorem{lemma}[thm]{Lemma}
\newtheorem{prop}[thm]{Proposition}
\newtheorem*{prop*}{Proposition}
\newtheorem{proposition}[thm]{Proposition}

\newtheorem*{conj*}{Conjecture}
\newtheorem{defn}[thm]{Definition}
\newtheorem*{dfn*}{Definition}
\theoremstyle{definition}
\newtheorem{rem}[thm]{\textbf{Remark}}
\newtheorem*{rmk*}{Remark}
\newtheorem*{fact*}{Fact}

\theoremstyle{proof}

\newcommand{\dsa}[2]{\left . \frac{d}{ds} #1 \right |_{#2^+}}
\newcommand{\dta}[2]{\left . \frac{d}{dt} #1 \right |_{#2^+}}
\newcommand{\dt}[1]{\left . \frac{d}{dt} #1 \right |_{0^+}}
\newcommand{\dtl}[1]{\left . \underline{\frac{d}{dt}} #1 \right |_{0^+}}
\newcommand{\dtll}[1]{\underline{\frac{d}{dt}} #1 |_{0^+}}
\newcommand{\dht}[1]{\left . \frac{d#1}{dt}\right |_{0^+}}

\DeclareMathOperator{\interior}{\textnormal{int}}
\DeclareMathOperator{\closure}{\textnormal{cl}}
\DeclareMathOperator{\conv}{\textnormal{conv}}
\DeclareMathOperator{\sspan}{\textnormal{span}}
\DeclareMathOperator{\supp}{\textnormal{supp}}
\DeclareMathOperator{\Rad}{\mathcal{R}}
\newcommand{\Lip}{\mathcal{L}}

\newcommand{\GL}{\textrm{GL}}

\newcommand{\norm}[1]{\left\Vert#1\right\Vert}

\newcommand{\abs}[1]{\left\vert#1\right\vert}
\newcommand{\set}[1]{\left\{#1\right\}}
\newcommand{\brac}[1]{\left(#1\right)}
\newcommand{\scalar}[1]{\left \langle #1 \right \rangle}

\newcommand{\di}[1]{\left<#1\right>}
\newcommand{\Real}{\mathbb{R}}
\newcommand{\R}{\mathbb{R}}

\newcommand{\Y}{Y}

\renewcommand{\S}{\mathbb{S}}
\newcommand{\F}{\mathcal{F}}
\renewcommand{\H}{\mathcal{H}}
\newcommand{\I}{\mathcal{I}}
\newcommand{\J}{L}
\newcommand{\y}{\mathbf{y}}

\newcommand{\eps}{\epsilon}
\renewcommand{\L}{\mathscr{L}}  \newcommand{\U}{U}\renewcommand{\J}{\mathcal{J}}

\renewcommand{\SS}{\mathcal{S}}
\newcommand{\Id}{\textrm{Id}}
\newcommand{\usc}{\textrm{USC}}
\newcommand{\h}{\ell}

\numberwithin{equation}{section}

\numberwithin{equation}{section}

\setlength{\topmargin}{-0.5cm}
\setlength{\textwidth}{6.1in}  \setlength{\textheight}{8.4in} \setlength{\oddsidemargin}{0.3in}  \setlength{\evensidemargin}{0.3in}

\begin{document}

\renewcommand*{\thefootnote}{\fnsymbol{footnote}}

\author{Emanuel Milman\textsuperscript{$*$,$\dagger$}
\and
Shahar Shabelman\textsuperscript{$*$,$\diamondsuit$}
\and
Amir Yehudayoff\textsuperscript{$\|$,$*$,$\ddagger$}\\\\
\emph{dedicated to the memory of Paolo Gronchi}
}

\footnotetext{$^*$ Department of Mathematics, Technion-Israel Institute of Technology, Haifa 32000, Israel.}
\footnotetext{$^{\|}$ Department of Computer Science, University of Copenhagen, Denmark.} 
\footnotetext{$^\dagger$ Email: emilman@tx.technion.ac.il.}
\footnotetext{$\!\!^\diamondsuit$ Email: shabelman@campus.technion.ac.il.}
\footnotetext{$^\ddagger$ Email: amir.yehudayoff@gmail.com. }

\begingroup    \renewcommand{\thefootnote}{}    \footnotetext{2020 Mathematics Subject Classification: 52A40, 52A30, 52A38, 42B15.}
    \footnotetext{Keywords: Intersection body, spherical Radon transform, Busemann intersection inequality, continuous Steiner symmetrization, Lipschitz star body, ellipsoids.}
    \footnotetext{The research leading to these results is part of a project that has received funding from the European Research Council (ERC) under the European Union's Horizon 2020 research and innovation programme (grant agreement No 101001677). A.Y. is supported by a DNRF Chair grant.}
\endgroup

\title{Fixed and Periodic Points of the Intersection Body Operator}

\date{\nonumber} 
\maketitle

\begin{abstract}
The intersection body $IK$ of a star body $K$ in $\R^n$ was introduced by E.~Lutwak following the work of H.~Busemann, and plays a central role in the dual Brunn-Minkowski theory. We show that when $n \geq 3$, $I^2 K = c K$ iff $K$ is a centered ellipsoid, and hence $I K = c K$ iff $K$ is a centered Euclidean ball, answering long-standing questions by Lutwak, Gardner, and Fish--Nazarov--Ryabogin--Zvavitch. An equivalent formulation of the latter in terms of non-linear harmonic analysis states that a non-negative $\rho \in L^\infty(\S^{n-1})$ satisfies $\Rad(\rho^{n-1}) = c \rho$ for some $c > 0$ iff $\rho$ is 
constant, where $\Rad$ denotes the spherical Radon transform.  Our proof is entirely geometrical: we recast the iterated intersection body equation as an Euler-Lagrange equation for a certain volume functional under radial perturbations, derive new formulas for the volume of $I K$, and introduce a continuous version of Steiner symmetrization for Lipschitz star bodies, which (surprisingly) yields a useful radial perturbation exactly when $n\geq 3$. 
\end{abstract}

\section{Introduction}

A Borel set $K$ in $\R^n$ is called star-shaped if
\[
K = \{ r \theta : r \in [0,\rho_K(\theta)] ~,~ \theta \in \S^{n-1} \} ,
\]
for some (Borel) function $\rho_K : \S^{n-1} \rightarrow \R_+$ called its radial function, where $\S^{n-1}$ denotes the Euclidean unit sphere in $\R^n$. The set $K$ is called a star body if $\rho_K$ is positive and continuous (and hence $K$ is necessarily compact); the family of star bodies in $\R^n$ is denoted by $\SS_n$. A compact convex set with non-empty interior is called a convex body.

The intersection body $I K$ of a star body $K$ in $\R^n$ was introduced and studied by Lutwak in \cite{Lutwak-intersection-bodies}, who defined $I K$ as the star body given by
\begin{equation} \label{eq:intro-IB}
\rho_{IK}(\theta) = |K \cap \theta^{\perp}|_{n-1} .
\end{equation}
Here and throughout, we use $\abs{L}_k$ to denote the $k$-dimensional Hausdorff measure of $L$, and often omit the subscript $k$ when it is equal to the dimension of $L$'s affine hull. Remarkably, it was shown by Busemann \cite{Busemann-IntersectionBodyIsConvex} (see also \cite[Theorem 8.1.10]{GardnerGeometricTomography2ndEd}) that when $K$ is an origin-symmetric convex body then $I K$ is itself convex. 
Busemann also showed \cite{Busemann-IntersectionBodyInq} (see also \cite[Corollary 9.4.5]{GardnerGeometricTomography2ndEd} or \cite[Section 10.10]{Schneider-Book-2ndEd}) that if $K$ is convex then
\begin{equation} \label{eq:intro-II}
\abs{IK} \leq \abs{I B_K} , \end{equation}
where $B_K$ is a centered Euclidean ball having the same volume as $K$, with equality when $n \geq 3$ if and only if $K$ is a centered ellipsoid. 

Lutwak's definition of the intersection body (\ref{eq:intro-IB}) and Busemann's intersection inequality (\ref{eq:intro-II}) may be extended to arbitrary bounded Borel sets $K \subset \R^n$ (even though the star-shaped $IK$ may not be a star body in general), and the characterization of equality in (\ref{eq:intro-II}) when $n \geq 3$ remains valid for general star bodies $K$ (see Petty's work \cite{Petty-CentroidSurfaces}), and up to null-sets, for bounded Borel sets~$K$ as well (see Corollary 7.5 in Gardner's work~\cite{Gardner-DualAffineQuermassintegrals}).
 Note that the case $n=2$ is excluded since $I K = 2 U K$ for any origin-symmetric star body $K$ in $\R^2$, where $U$ denotes a 90-degree rotation, and so $\abs{I K} = 4 \abs{K} = \abs{I B_K}$. Intersection bodies play an essential role in the dual Brunn-Minkowski theory and in Geometric Tomography, in particular in relation to the solution of the Busemann-Petty problem --- we refer to \cite{Gardner-BP-3dim, GKS, Koldobsky-Book, Lutwak-intersection-bodies,Zhang-Correction-4dim}, \cite[Chapter 8]{GardnerGeometricTomography2ndEd} and the references therein for additional information.

\medskip
 
Let  $I : \SS_n \rightarrow \SS_n$ denote the intersection body operator. Our main results in this work are the following characterizations.

\begin{thm} \label{thm:intro-main}
Let $K$ be a star body in $\R^n$, $n \geq 3$. Then $I^2 K = c K$ for some $c > 0$ iff $K$ is a centered ellipsoid. 
\end{thm}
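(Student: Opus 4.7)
The linear covariance of the intersection body operator, $\rho_{I(TK)}(\theta) = |\det T|\,\|T^{*}\theta\|^{-1}\,\rho_{IK}(T^{*}\theta/\|T^{*}\theta\|)$, combined with the homogeneity $I(\lambda K) = \lambda^{n-1} IK$ and $IB = \omega_{n-1} B$ for the unit Euclidean ball $B$, gives after two iterations $I^{2}K = \omega_{n-1}^{n}\,|\det T|^{n-2}\,K$ for every centered ellipsoid $K = TB$ with $T\in\GL(n)$. Hence the ``if'' direction holds with $c = \omega_{n-1}^{n}|\det T|^{n-2} > 0$.

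\textbf{Reformulation as an Euler--Lagrange equation.} Using $\rho_{IK} = \tfrac{1}{n-1}\Rad(\rho_{K}^{n-1})$ and $\vol(IK) = \tfrac{1}{n}\int_{\S^{n-1}}\rho_{IK}^{n}\,d\sigma$, I differentiate along a radial perturbation $\rho_{K}\to\rho_{K}+\epsilon h$; applying the self-adjointness of the spherical Radon transform $\Rad$ twice yields the first variation
\[
\tfrac{d}{d\epsilon}\big|_{\epsilon=0}\vol(IK_{\epsilon}) \;=\; (n-1)\int_{\S^{n-1}} \rho_{I^{2}K}(\theta)\,\rho_{K}(\theta)^{n-2}\,h(\theta)\,d\sigma(\theta),
\]
while the volume constraint $\vol(K_{\epsilon}) = \vol(K)$ differentiates to $\int_{\S^{n-1}}\rho_{K}^{n-1}h\,d\sigma = 0$. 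By Lagrange multipliers, $K$ is a critical point of the functional $K\mapsto\vol(IK)$ subject to fixed $\vol(K)$ precisely when $I^{2}K = cK$ for some $c > 0$. So the problem reduces to classifying such critical points.

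\textbf{Probing criticality by continuous Steiner symmetrization.} To extract rigidity from criticality, I introduce a continuous Steiner symmetrization adapted to Lipschitz star bodies: for each $e\in\S^{n-1}$, this produces a volume-preserving family $\{K_{t}^{e}\}_{t\geq 0}$ of Lipschitz star bodies with $K_{0}^{e} = K$. Busemann's inequality applied infinitesimally along this deformation shows that $t\mapsto\vol(IK_{t}^{e})$ is non-decreasing; hence, for $K$ critical, the one-sided derivative at $t = 0^{+}$ must vanish for every $e$. The heart of the proof --- and the main obstacle --- is to decode this family of vanishing identities (one per $e$) into the conclusion that $K$ is a centered ellipsoid. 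The plan is to derive new integral formulas for $\vol(IK)$ that express the derivative above as an explicit functional of $\rho_{K}$ and its first-order $e$-directional data on $\S^{n-1}$, and then to couple vanishing of this functional for every $e$ with the equality characterization in Busemann's inequality so as to force $\rho_{K}^{-2}$ to be the restriction to $\S^{n-1}$ of a positive-definite quadratic form on $\R^{n}$ --- the defining property of a centered ellipsoid. The hypothesis $n\geq 3$ is essential throughout this step: in the plane $IK$ is merely a $90^{\circ}$ rotation of $2K$, so both the infinitesimal Busemann inequality and the radial perturbation produced by continuous Steiner symmetrization degenerate, and the equality characterization itself is an $n\geq 3$ phenomenon.
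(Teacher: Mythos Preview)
Your outline correctly identifies the variational strategy that the paper uses: the easy direction via linear covariance, the Euler--Lagrange interpretation of $I^2K=cK$, and probing criticality with continuous Steiner symmetrization. But the proposal stops exactly where the real work begins, and contains one substantive inaccuracy along the way.

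First, the claim that ``Busemann's inequality applied infinitesimally'' shows $t\mapsto|IK_t^e|$ is non-decreasing is not justified. Busemann's inequality compares $|IK|$ to $|IB_K|$, i.e.\ the endpoint after \emph{full} spherical symmetrization; it says nothing about monotonicity along a one-parameter Steiner flow. In the paper this monotonicity is obtained only after deriving a new formula $|IK|=\I_u(K)$ that expresses the volume as an integral over $n$-tuples $\y\in(P_{u^\perp}K)^n$ of the section volume $|R_\y\cap\theta_\y^\perp|$ of a product of fibers $R_\y$; monotonicity then follows fiberwise from Brunn's concavity principle applied to each rectangle $R_\y$.

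Second, and more seriously, your final paragraph is a plan, not an argument. You acknowledge that decoding the vanishing of the derivative is ``the heart of the proof and the main obstacle,'' then offer only the intention to ``derive new integral formulas'' and ``couple vanishing with the equality characterization in Busemann's inequality.'' The paper's actual mechanism is quite specific and nowhere suggested in your text: the equality analysis of Brunn's principle on each rectangle $R_\y$ leaves three alternatives, one of which forces the geometric inclusion $P_{\sspan\theta_\y}B_\infty^n(\h_\y)\subset\interior(2P_{\sspan\theta_\y}B_1^n(\h_\y))$; when $n\geq 3$ there is an open cone of $\theta$'s for which this fails (because $B_\infty^n\not\subset 2B_1^n$), which forces instead that the centers of the fiber intervals satisfy a linear relation. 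A separate linearity lemma then shows that the mid-point function is linear on a small ball in $u^\perp$, and a local Bertrand--Brunn/Soltan criterion concludes that $K$ is a centered ellipsoid. None of these ingredients --- the $\I_u$ formula, the rectangle trichotomy, the $B_\infty^n$ versus $2B_1^n$ observation (which is precisely where $n\geq 3$ enters), the linearity lemma, or Soltan's theorem --- appears in your proposal. You also do not address why $K$ is Lipschitz a priori (the paper uses a bootstrap via the Radon transform and Sobolev algebras to get $C^\infty$ regularity). As written, the proposal is a correct high-level road map that matches the paper's route, but it is not a proof: the entire technical core is missing.
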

This provides a positive answer to questions of Lutwak \cite[Open Problem 12.8]{Lutwak-Selected} and Gardner \cite[Open Problem 8.6, Case $i=n-1$]{GardnerGeometricTomography2ndEd}, who asked whether centered ellipsoids are indeed the only star bodies for which $I^2 K = c K$. As a consequence, we easily deduce
the following corollary. 
\begin{cor} \label{cor:intro-main}
Let $K$ be a star body in $\R^n$, $n \geq 3$. Then $I K = c K$ for some $c > 0$ iff $K$ is a centered Euclidean ball. 
\end{cor}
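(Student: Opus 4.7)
The plan is to deduce the corollary from Theorem \ref{thm:intro-main} combined with the homogeneity of the intersection body operator and an explicit check on centered ellipsoids.

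First I would observe that $I$ is $(n-1)$-homogeneous under positive scalar dilations:
\[
\rho_{I(\lambda K)}(\theta) \,=\, \abs{\lambda K \cap \theta^\perp}_{n-1} \,=\, \lambda^{n-1} \rho_{IK}(\theta) \qquad \text{for every } \lambda > 0,
\]
so $I(\lambda K) = \lambda^{n-1} IK$. Consequently, the hypothesis $IK = cK$ with $c > 0$ iterates to
\[
I^2 K \,=\, I(cK) \,=\, c^{n-1}\, IK \,=\, c^n K,
\]
which is a positive dilate of $K$, and Theorem \ref{thm:intro-main} then forces $K$ to be a centered ellipsoid.

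It remains to check that, among centered ellipsoids, $IK = cK$ characterizes Euclidean balls. Using the fact that $I$ commutes with the action of the orthogonal group (since an orthogonal $O$ maps $\theta^\perp$ isometrically onto $(O\theta)^\perp$), I may assume after a rotation that $K$ has its principal axes along the standard basis $e_1,\ldots,e_n$, with semi-axes $a_1, \ldots, a_n > 0$. Then $\rho_K(e_k) = a_k$, while $K \cap e_k^\perp$ is an $(n-1)$-dimensional ellipsoid with semi-axes $\{a_j\}_{j \neq k}$, so $\rho_{IK}(e_k)$ equals a fixed dimensional constant times $\prod_{j \neq k} a_j$. Imposing $\rho_{IK}(e_k) = c\,\rho_K(e_k)$ for every $k$ and comparing the resulting identities for two indices $j \neq k$ immediately forces $a_j^2 = a_k^2$, so all $a_i$ are equal and $K$ is a centered Euclidean ball.

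I expect no genuine obstacle here: the deep work is entirely in Theorem \ref{thm:intro-main}, and this corollary follows from two elementary properties of $I$ (scalar homogeneity and rotation equivariance) together with a one-line ratio computation on diagonal ellipsoids.
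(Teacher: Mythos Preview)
Your proposal is correct and follows essentially the same route as the paper: iterate $IK=cK$ using the $(n-1)$-homogeneity of $I$ to obtain $I^2K=c^nK$, invoke Theorem \ref{thm:intro-main} to conclude $K$ is a centered ellipsoid, and then verify that among centered ellipsoids only balls satisfy $IK=cK$. The only cosmetic difference is in this last step: the paper uses the transformation rule $I(TK)=|\det T|\,(T^{-1})^*(IK)$ to deduce $T^*T B_n$ is a dilate of $B_n$, whereas you evaluate both radial functions at the principal axes and compare ratios---both arguments are elementary and yield the same conclusion.
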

This provides a complete answer to questions of Gardner \cite[Open Problem 8.7, Case $i=n-1$]{GardnerGeometricTomography2ndEd} and Fish--Nazarov--Ryabogin--Zvavitch \cite[Question]{FNRZ-IntersectionBodyOperator}, who asked what are the fixed points of the intersection body operator $I : \SS_n \rightarrow \SS_n$ when $n \geq 3$. The authors of \cite{FNRZ-IntersectionBodyOperator} also asked what the periodic points of $I$ are, and Theorem \ref{thm:intro-main} provides a partial answer in this direction. Note that in \cite[Open Problems 8.6-8.7]{GardnerGeometricTomography2ndEd}, an even more general family of operators depending on a parameter $i \in \{1,\ldots,n-1\}$ is considered (see \cite[Corollary 9.8]{Grinberg-Zhang} for a solution to the case $i=1$). 
We emphasize that both Theorem \ref{thm:intro-main} and Corollary \ref{cor:intro-main} are new already for the class of convex bodies $K$ (in which case the more technical parts of our proof may be simplified, but the heart of our argument remains novel). 

\begin{rem}
Naturally, both statements above are false for $n=2$. Indeed, $I^2 K = 4 K$ for any origin-symmetric star body $K$ in $\R^2$, and $I K = 2 K$ holds for any $K$ invariant under $U$. Consequently, any attempt at a proof must crucially use the assumption that $n \geq 3$. Interestingly, our proof will use the fact that $B_\infty^n$ is \emph{not} a subset of $2 B_1^n$ when $n \geq 3$, where $B_p^n$ denotes the unit ball of $\L_p^n$. \end{rem}

\begin{rem} \label{rem:intro-gen}
As we will see in the proof, Theorem \ref{thm:intro-main} and Corollary \ref{cor:intro-main} actually hold under the more general assumption that $K$ is a star-shaped bounded Borel set in $\R^n$ ($n \geq 3$) satisfying $I^2 K = c K$ or $I K = c K$ up to null-sets, in which case it is possible to modify $K$ on a null-set so that either $\rho_K \equiv 0$ (when $|K| = 0$) or else $K$ is a centered ellipsoid or Euclidean ball, respectively.
\end{rem}

The above results may be equivalently formulated as results in \emph{non-linear} harmonic analysis. Let $\Rad : C(\S^{n-1}) \rightarrow C(\S^{n-1})$ denote the spherical Radon (or Funk) transform, defined by $\Rad(f)(u) := \int_{\S^{n-1} \cap u^{\perp}} f(\theta) d\sigma_{\S^{n-1} \cap u^{\perp}}(\theta)$,
where $\sigma_{\S}$ denotes the Haar probability measure on the sphere $\S$. It is easy to see that $\Rad$ is a bounded operator in $L^p(\S^{n-1})$ ($p \in [1,\infty]$), and so its action continuously extends to $L^2(\S^{n-1})$. 
Passing to polar coordinates, we see that $\rho_{IK}(u) = \omega_{n-1} \Rad(\rho_K^{n-1})$ for an appropriate $\omega_{n-1} > 0$, and so in view of Remark \ref{rem:intro-gen}, Corollary \ref{cor:intro-main} translates to the following:
\begin{cor} \label{cor:intro-Radon}
Let $\rho$ denote a non-negative function in $L^\infty(\S^{n-1})$, $n \geq 3$. 
Then, as functions in $L^\infty(\S^{n-1})$,
\[
\Rad(\rho^{n-1}) = c \; \rho \; \text{ for some $c > 0$} \;\; \text{ iff } \;\; \text{$\rho$ is constant.}
\]
\end{cor}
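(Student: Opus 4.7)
The plan is to deduce Corollary 1.3 directly from the generalized geometric statement recorded in Remark 1.4, by translating the spherical Radon equation into the iterated intersection body equation via polar coordinates. Given a non-negative $\rho \in L^\infty(\S^{n-1})$, define the star-shaped bounded Borel set
\[
K := \set{ r \theta : r \in [0, \rho(\theta)] ~,~ \theta \in \S^{n-1} } \subset \R^n,
\]
so that $\rho_K = \rho$. The boundedness of $\rho$ ensures $K$ is bounded, and Borel measurability of $\rho$ ensures $K$ is Borel; thus $K$ fits precisely in the class of sets treated in Remark 1.4.

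Next, I would invoke the polar-coordinate identity $\rho_{IK}(u) = \omega_{n-1} \Rad(\rho_K^{n-1})(u)$ mentioned in the excerpt, which rewrites the hypothesis $\Rad(\rho^{n-1}) = c\,\rho$ in $L^\infty(\S^{n-1})$ as
\[
\rho_{IK} = c' \, \rho_K \quad \text{almost everywhere on } \S^{n-1},
\]
where $c' := c/\omega_{n-1} > 0$. Equivalently, $IK = c'\, K$ up to a null set. Applying the general form of Corollary 1.2 recorded in Remark 1.4, one of two alternatives holds: either $|K| = 0$, in which case $\rho = 0$ a.e., or $K$ agrees, up to a null set, with a centered Euclidean ball, in which case $\rho_K$, and hence $\rho$, is a.e.\ equal to a positive constant. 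In both cases $\rho$ is constant in $L^\infty(\S^{n-1})$.

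The reverse implication is immediate: if $\rho \equiv a \geq 0$ a.e., then by linearity of $\Rad$ and the fact that $\Rad(1) = 1$ we get $\Rad(\rho^{n-1}) = a^{n-1} = a^{n-2}\, \rho$, satisfying the equation with $c = a^{n-2}$ (or trivially when $a = 0$, taking any $c$).

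The only real verifications are bookkeeping: (i) $L^\infty$ is exactly the regularity class to which the extended statement of Remark 1.4 applies, since any non-negative bounded Borel $\rho$ arises as $\rho_K$ for some star-shaped bounded Borel set $K$; and (ii) an a.e.\ identity between $\rho_{IK}$ and $c'\rho_K$ is precisely the "equality up to null sets" version of $IK = c'K$ that Remark 1.4 is designed to handle. There is no substantive obstacle here — all of the difficulty is packaged into Theorem 1.1 and its Corollary 1.2, and Corollary 1.3 is merely their harmonic-analytic restatement.
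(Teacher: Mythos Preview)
Your proposal is correct and follows exactly the approach the paper indicates: the paper does not give a separate proof of this corollary but simply remarks that, via the polar-coordinate identity $\rho_{IK} = \omega_{n-1}\Rad(\rho_K^{n-1})$ and Remark~1.4, Corollary~1.2 translates directly into Corollary~1.3. One trivial slip: since $\rho_{IK} = \omega_{n-1}\Rad(\rho^{n-1}) = \omega_{n-1} c\,\rho$, the correct constant is $c' = \omega_{n-1} c$, not $c' = c/\omega_{n-1}$; this has no effect on the argument.
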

Alternatively, let $\xi^{\wedge}$ denote the Fourier transform of a distribution $\xi$ in $\R^n$, and recall that if $\xi$ is an even homogeneous distribution of degree $-n+\alpha$ then $\xi^{\wedge}$ is even and homogeneous of degree $-\alpha$ (see \cite[Lemma 2.21 and Theorem 3.8]{Koldobsky-Book} for more information). Applying Corollary \ref{cor:intro-Radon} to $\rho = 1 / \xi_0$, we have:
\begin{cor} \label{cor:intro-Fourier}
Let $\xi$ denote a $1$-homogeneous extension to $\R^n$ of an even measurable function $\xi_0 \geq \delta > 0$ on $\S^{n-1}$, $n \geq 3$. Then, as distributions,
\[
(\xi^{-n+1})^{\wedge} \cdot \xi \equiv c \text{ on $\R^n \setminus \{0\}$ for some $c > 0$} \;\; \text{ iff } \;\; \text{$\xi$ is a multiple of the Euclidean norm.}
\]
\end{cor}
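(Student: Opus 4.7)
The plan is to deduce the Fourier identity from the spherical Radon identity of Corollary~\ref{cor:intro-Radon} via Koldobsky's Fourier-analytic description of central hyperplane sections. The dictionary is $\rho := 1/\xi_0$, which by hypothesis lies in $L^\infty(\S^{n-1})$ (bounded by $1/\delta$) and is non-negative; it is, up to a null set, the radial function of a star-shaped bounded Borel set $K \subset \R^n$ whose gauge is precisely $\xi$. Since $n-1 < n$, the $(-n+1)$-homogeneous function $\xi^{-n+1}$ is locally integrable and decays at infinity, so it defines an even tempered distribution, and by \cite[Lemma~2.21]{Koldobsky-Book} its Fourier transform $(\xi^{-n+1})^\wedge$ is even and $(-1)$-homogeneous, hence determined by its restriction to $\S^{n-1}$.

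The key step is to invoke the Fourier slice formula from \cite{Koldobsky-Book}: for a.e.\ $u \in \S^{n-1}$,
\[
(\xi^{-n+1})^\wedge(u) \;=\; \pi(n-1) \, |K \cap u^\perp|_{n-1} \;=\; \pi \, |\S^{n-2}|_{n-2} \, \Rad(\rho^{n-1})(u),
\]
the second identity being the passage to polar coordinates in $u^\perp$, exactly as used in the excerpt to derive Corollary~\ref{cor:intro-Radon} from Corollary~\ref{cor:intro-main}. Restricting the hypothesis $(\xi^{-n+1})^\wedge \cdot \xi \equiv c$ to $\S^{n-1}$ and using $\xi|_{\S^{n-1}} = 1/\rho$, the equation rearranges to
\[
\Rad(\rho^{n-1}) \;=\; \frac{c}{\pi \, |\S^{n-2}|_{n-2}} \, \rho \quad \text{a.e.\ on } \S^{n-1},
\]
which is precisely the hypothesis of Corollary~\ref{cor:intro-Radon} with a positive constant on the right. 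That corollary forces $\rho$ to be a.e.\ constant, hence $\xi_0$ is a.e.\ constant, and $\xi$ is a positive multiple of $|\cdot|$. Conversely, if $\xi = \alpha |\cdot|$, applying the same slice formula to $K = \alpha^{-1} B_2^n$ gives $(\xi^{-n+1})^\wedge = \gamma_n \alpha^{-n+1} |\cdot|^{-1}$ for an explicit positive constant $\gamma_n$ (the classical Fourier transform of the Riesz kernel $|\cdot|^{-n+1}$), so that $(\xi^{-n+1})^\wedge \cdot \xi \equiv \gamma_n \alpha^{-n+2}$, a positive constant.

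The main technical point I expect will need care is applying the slice formula --- typically stated for smooth or continuous gauges --- to the merely bounded measurable $\xi$ at hand. The assumption $\xi_0 \geq \delta > 0$ is essential here: it ensures $\rho \in L^\infty$ and hence that $\Rad(\rho^{n-1})$ is a bounded function on $\S^{n-1}$, which in turn identifies $(\xi^{-n+1})^\wedge$ as an $L^\infty$ function (not merely a distribution) on $\S^{n-1}$ that can be multiplied pointwise by $\xi$. If one prefers, one may approximate $\rho$ in $L^\infty$ by smooth positive radial functions, apply the slice identity in the smooth case, and pass to the limit, using that both sides are continuous in $\rho$ with respect to the $L^\infty$ norm. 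Aside from this, the entire content of the corollary is the $\xi$-to-$\rho$ translation above, modulo the already-established Corollary~\ref{cor:intro-Radon}.
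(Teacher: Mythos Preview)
Your proposal is correct and follows essentially the same approach as the paper: the paper's entire argument for this corollary is the single sentence ``Applying Corollary~\ref{cor:intro-Radon} to $\rho = 1/\xi_0$'' together with a reference to \cite[Lemma~2.21 and Theorem~3.8]{Koldobsky-Book} for the Fourier--Radon dictionary, which is precisely the slice formula you invoke. Your treatment is in fact more detailed than the paper's, as you spell out the converse direction and flag the (legitimate) technical point about extending the slice formula from continuous to bounded measurable gauges, which the paper leaves implicit in its citation.
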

These non-linear results do not seem to be amenable to non-perturbative harmonic analytic methods. However, when $\norm{\rho-1}_{L^\infty} \leq \eps_n$ for some small enough $\eps_n > 0$ depending solely on $n$, Corollary \ref{cor:intro-Radon} was established using perturbative Fourier methods by Fish--Nazarov--Ryabogin--Zvavitch \cite{FNRZ-IntersectionBodyOperator}. Moreover, these authors showed that when $n \geq 3$ and $K$ is a star body sufficiently close to a centered Euclidean ball $B_n$ in the Banach-Mazur distance, then $I^m K \rightarrow B_n$ as $m \rightarrow \infty$ in the Banach-Mazur distance, thereby deducing for such $K$ that if $I^m K = c K$ for some $m \geq 1$ then necessarily $K$ is a centered ellipsoid. 
\smallskip

We proceed to provide a sketch of the argument leading up to Theorem \ref{thm:intro-main}, and describe several new ingredients which we believe are of independent interest. 

\subsection{Variational approach via continuous Steiner symmetrization}

The significance of the equation 
\begin{equation} \label{eq:intro-I2K}
I^2 K = c K
\end{equation}
stems from the fact that it is the Euler-Lagrange equation for the functional
\begin{equation} \label{eq:intro-F}
\F_c(K) := |I K| - (n-1) c |K| ,
\end{equation}
characterizing its stationary points under radial perturbations. 
A precise statement is somewhat technical (see Proposition \ref{prop:stationary}), since one has to carefully specify an appropriate class of \emph{admissible} radial perturbations $\{K_t\}$ of $K_0 = K$. When $K$ is a star body with Lipschitz continuous radial function $\rho_K$ (``Lipschitz star body"), it turns out that \emph{continuous} Steiner symmetrization $\{S_u^t K\}_{t \in [0,1]}$ in a.e.~direction $u \in \S^{n-1}$ provides such an admissible perturbation. This idea is quite general, and may be used to generate and solve various additional geometric equations --- we present some examples in the concluding Section \ref{sec:conclude}. 

\smallskip

Continuous Steiner symmetrization for graphical domains has its origins in the work of P\'olya--Szeg\"o \cite[Note B]{PolyaSzego-Book}.
For the class of convex bodies, continuous Steiner symmetrization is a particular case of a shadow system \cite{RogersShephard-ShadowSystems,Shephard-ShadowSystems}, a well-established and extremely useful tool, which has been successfully used to resolve numerous geometric extremization problems (see e.g. \cite{CampiGronchi-LpBusemannPettyCentroid,   CampiGronchi-VolumeProductInqs, CFPP-EasyBusemannAndCampiGronchi,  MeyerReisner-SantaloViaShadowSystems,EMilmanYehudayoff-AffineQuermassintegrals, RogersShephard-ShadowSystems,  Saroglou-GeneralizedBCD, Shephard-ShadowSystems} to name just a few). Continuous Steiner symmetrization has been used by Rogers \cite{Rogers-BLL}, Brascamp--Lieb--Luttinger \cite{BrascampLiebLuttinger} and Christ \cite{Christ-KPlaneTransform} to treat general compact sets and measurable functions, by first approximating them with simpler objects and then applying a fiberwise gradual symmetrization; it thus underlies many symmetrization results (see e.g.~\cite{PaourisPivovarov-Survey} and the references therein). This type of fiberwise continuous symmetrization was subsequently extended to directly apply to general measurable sets in $\R^n$ by Brock \cite{Brock-ContSteiner,Brock-ContSteiner2}, who defined it up to null-sets, studied its properties and derived various applications. See \cite{BGGK-PolyaSzego} for an account, unified treatment and extension of numerous types of symmetrizations which have appeared in the literature. 

\smallskip

However, there is little literature on the \emph{geometric} properties of Steiner symmetrization of star bodies, for which one expects to have some control over their corresponding boundaries along the symmetrization process. While the classical Steiner symmetrization $S_u K$ of Lipschitz star bodies $K$ has been recently studied in \cite{LinXi-LipschitzStarBodySymmetrization}, we are not aware of any prior works involving a continuous version $\{S_u^t K\}_{t \in [0,1]}$ in the class of star bodies, which is what we require for our variational approach. In particular, we introduce the first explicit definition of $\{S_u^t K\}_{t \in [0,1]}$ for a.e.~$u \in \S^{n-1}$ in the class of Lipschitz star bodies $K$, and establish the {\it a priori} non-obvious fact that $\{S_u^t K\}$ remain (uniformly Lipschitz) star bodies for all $t \in [0,1]$, giving rise to a genuine \emph{radial} and a.e.~\emph{differentiable} perturbation of $K$. Even the seemingly trivial statement that $|S_u^t K|\equiv |K|$ remains constant requires a careful verification. See Sections \ref{sec:Steiner} through \ref{sec:admissible} for precise definitions and details. 

\smallskip

Using some fairly standard results in Harmonic Analysis and Sobolev spaces (see Appendix \ref{sec:appendix}), one can show that any solution to (\ref{eq:intro-I2K}) when $n \geq 3$ must have $C^\infty$ smooth (and in particular Lipschitz) radial function $\rho_K$. Thus, after quite a bit of technical preparation, our starting point for characterizing those (Lipschitz) star bodies $K$ satisfying (\ref{eq:intro-I2K}) is  that
\begin{equation} \label{eq:intro-dIdt}
\left . \frac{d}{dt} \right |_{t=0^+} |I (S_u^t K)|  = 0 \;\;\;  \text{ for a.e. } u \in \S^{n-1} . 
\end{equation}

\subsection{New formulas for $|I K|$}

Our next ingredient, which appears to be novel even for convex bodies $K$, is a new formula for the volume of the intersection body $| I K|$. We first extend the domain of $I$ to include non-negative, bounded and compactly supported Borel measurable functions $g$ on $\R^n$, by defining $I(g)$ to be the star-shaped set 
\[
\rho_{I(g)}(u) = \int_{u^{\perp}} g(y) \, dy ,
\]
noting that $I(K) = I(1_K)$ for any compact $K$. 
Now assuming that the following limit exists, define
\begin{equation} \label{eq:intro-I0}
\I_0(g) := \lim_{p \rightarrow -1^+} \; \frac{p+1}{n} \int_{\R^n} \cdots \int_{\R^n} \Delta(x_1,\ldots,x_n)^{p} g(x_1) \cdots g(x_n) \, dx_1 \ldots dx_n ,
\end{equation}
where $\Delta(x_1,\ldots,x_m)$ denotes the $m$-dimensional volume of the parallelotope linearly spanned by $x_1,\ldots,x_m \in \R^n$. 
Finally, if $K$ is a compact set, denote $\I_0(K) = \I_0(1_K)$ assuming that the limit exists. In this work we show that $| I K | = \I_0(K)$ for any star body $K$. More generally, we introduce the following condition.

\begin{defn}[Radially negligible boundary] \label{def:intro-radially-negligible}
A compact set $K \subset \R^n$ is said to have radially negligible boundary if
\[
\int_{\S^{n-1}} |\partial K \cap \R_+ u|_1 \;  d\sigma_{\S^{n-1}}(u) =  0 .
\]
\end{defn}
\begin{rem} \label{rem:intro-radially-negligible}
A general star-shaped compact set may not have radially negligible boundary, but a star body $K$ does, since $\text{int}(K) \cap \R_+ u =  [0,\rho_K(u)) u$ and hence $\partial K \cap \R_+ u = \{\rho_K(u) u\}$.  
\end{rem}

\begin{thm} \label{thm:intro-I0K}
Let $f \in C_c(\R^n,\R_+)$, and let $K$ be a compact set in $\R^n$ with radially negligible boundary. Then, the limit in (\ref{eq:intro-I0}) exists for $g \in \{ f , 1_K \}$ and
\[
|I(f)| = \I_0(f) \text{ and } |I K| = \I_0(K) .
\]
\end{thm}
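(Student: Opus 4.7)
The plan is to integrate out $x_n$ first, using the one-dimensional distributional identity
$$\lim_{p \to -1^+} (p+1) \int_\R |t|^p \phi(t) \, dt = 2 \phi(0)$$
(valid whenever $\phi$ is bounded, compactly supported, and has $0$ as a Lebesgue point), and then applying the linear Blaschke--Petkantschin formula to the remaining $(n-1)$-fold integral to recognize it as $|I(g)|$.

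To apply the 1D identity, fix $(x_1, \ldots, x_{n-1})$ in linear general position, set $H = \mathrm{span}(x_1, \ldots, x_{n-1})$ with unit normal $\nu = \nu(x)$, and use the factorization $\Delta(x_1, \ldots, x_n) = V_{n-1}(x) \cdot |\langle x_n, \nu\rangle|$. Writing $x_n = y + t\nu$ with $y \in H$, the inner integral over $x_n$ becomes $V_{n-1}(x)^p \int_H \int_\R |t|^p g(y + t\nu) \, dt \, dy$; multiplying by $(p+1)$ and passing the limit through yields $2 V_{n-1}(x)^{-1} \int_H g(y) \, dy = 2 V_{n-1}(x)^{-1} \rho_{I(g)}(\nu(x))$. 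For $g = f \in C_c(\R^n, \R_+)$, pointwise convergence in $(x_1, \ldots, x_{n-1})$ is immediate because $\psi_x(t) := \int_H g(y + t\nu)\, dy$ is continuous in $t$; for $g = 1_K$, one needs $0$ to be a Lebesgue point of $\psi_x$ for a.e.~$x$, which should follow from the radial negligibility of $\partial K$ via a Fubini-style argument (or can be bypassed by approximating $1_K$ by $C_c$ functions while tracking convergence of both $\I_0(\cdot)$ and $|I(\cdot)|$). To pass the limit through the outer $(n-1)$-fold integral, I would apply dominated convergence: the 1D identity yields $|(p+1) \int_\R |t|^p \psi_x(t) \, dt| \leq C_g$ uniformly for $p \in (-1, -1/2)$, and on $\{V_{n-1}(x) \leq 1\}$ we have $V_{n-1}(x)^p \leq V_{n-1}(x)^{-1}$ while on $\{V_{n-1}(x) > 1\}$ the bound $V_{n-1}(x)^p \leq 1$ suffices; integrability of $V_{n-1}^{-1} \prod_{i<n} g(x_i)$ over $\{V_{n-1}(x) \leq 1\}$ is itself a one-line consequence of Blaschke--Petkantschin.

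After these reductions,
$$\I_0(g) = \frac{2}{n} \int_{(\R^n)^{n-1}} V_{n-1}(x)^{-1} \rho_{I(g)}(\nu(x)) \prod_{i=1}^{n-1} g(x_i) \, dx_1 \cdots dx_{n-1}.$$
Applying the linear Blaschke--Petkantschin formula for $n-1$ points in $\R^n$, whose Jacobian is $V_{n-1}(y)^{n-(n-1)} = V_{n-1}(y)$, exactly cancels the $V_{n-1}(x)^{-1}$ factor. On the right-hand side, $y_1, \ldots, y_{n-1} \in H$ forces $\nu(y) = \nu(H)$; the product over $y_i$ integrates to $\rho_{I(g)}(\nu(H))^{n-1}$, and together with the leading $\rho_{I(g)}(\nu(H))$ this assembles into $\rho_{I(g)}(\nu(H))^n$. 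Via the identification $G(n, n-1) \cong \S^{n-1}/\{\pm 1\}$, the outer integral $\int_{G(n, n-1)} \rho_{I(g)}^n \, dH$ equals $|I(g)| = \frac{1}{n}\int_{\S^{n-1}} \rho_{I(g)}^n \, d\mathcal{H}^{n-1}$ up to the universal BP normalization constant; this constant is pinned to $1$ by testing the identity on the Euclidean unit ball, where both sides admit a direct computation.

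The main obstacle I expect is the treatment of $g = 1_K$: verifying the Lebesgue-point property of $\psi_x$ at $t = 0$ for a.e.~$x$ hinges on the radial negligibility of $\partial K$ in a nontrivial way, since a naive Fubini argument only yields Lebesgue-point properties with respect to the ambient measure on $\R^n$, not along fixed hyperplane directions. The dominated-convergence step is technical but standard, reducing to integrability of $V_{n-1}^{-1} \prod g$ on $\{V_{n-1}(x) \leq 1\} \subset (\R^n)^{n-1}$ via Blaschke--Petkantschin.
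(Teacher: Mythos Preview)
Your proposal is correct and follows essentially the same route as the paper: factor $\Delta(x_1,\ldots,x_n)^p = \Delta(x_1,\ldots,x_{n-1})^p\,|\langle x_n,\nu\rangle|^p$, pass to the limit $p\to-1^+$ in the $x_n$-integral, justify the exchange of limit and outer integration by dominated convergence (with exactly the bound you describe, integrability of $V_{n-1}^{-1}$ being a one-line consequence of Blaschke--Petkantschin), and apply Blaschke--Petkantschin to match $|I(g)|$. The paper runs the computation in the reverse direction, starting from $|I(f)|$ and arriving at $\I_0(f)$, but this is immaterial.

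The one technical variation worth noting concerns your flagged obstacle for $g=1_K$. Rather than reducing to a 1D Lebesgue-point statement for $\psi_x(t)=|K\cap(t\nu+H)|_{n-1}$, the paper applies the Portmanteau theorem directly in $\R^n$: the measures $\tfrac{p+1}{2}|\langle\cdot,\theta\rangle|^p\,1_{B_n(R)}\H^n$ converge weakly to $1_{B_n(R)}\H^{n-1}|_{\theta^\perp}$, and convergence of the integrals of $1_K$ holds whenever $K$ is a \emph{continuity set} for the limit measure, i.e.\ $|\partial K\cap\theta^\perp|_{n-1}=0$. A short polar-coordinates computation (Lemma~\ref{lem:radially-negligible}) shows that radial negligibility of $\partial K$ is \emph{equivalent} to this holding for a.e.\ $\theta\in\S^{n-1}$, which dispatches your obstacle without any Lebesgue-point subtleties.
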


Here $C_c(\R^n,\R_+)$ denotes the family of non-negative continuous functions on $\R^n$ with compact support. 
Using the Steiner concavity of the integrand in (\ref{eq:intro-I0}) when $p < 0$ (see Subsection \ref{subsec:Steiner} for details), we immediately deduce the following corollary for $f \in C_c(\R^n,\R_+)$; the case of general compact sets $K$ is obtained by approximation.  We denote by $S_u f$ the Steiner symmetrization of $f$ via a layer-cake representation. 
\begin{cor} \label{cor:intro-SuK}
Let $f \in C_c(\R^n,\R_+)$, and let $K$ be a compact set in $\R^n$. Then, for all $u \in \S^{n-1}$,
\begin{equation} \label{eq:intro-IfISuf}
|I(f)| \leq |I(S_u f)| \text{ and } |I K| \leq |I (S_u K)| . 
\end{equation}
\end{cor}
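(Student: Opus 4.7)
The strategy is to reduce the stated inequalities to a Steiner symmetrization inequality for the functional $\I_0$ via Theorem~\ref{thm:intro-I0K}, and then invoke the Steiner concavity of $\Delta(x_1,\ldots,x_n)^p$ for $p<0$ (to be established in Subsection~\ref{subsec:Steiner}).

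First, for $f \in C_c(\R^n,\R_+)$, observe that $S_u f$ again lies in $C_c(\R^n,\R_+)$, since Steiner symmetrization defined via the layer-cake representation preserves non-negativity, continuity, and compact support. Theorem~\ref{thm:intro-I0K} therefore applies to both $f$ and $S_u f$, yielding $|I(f)| = \I_0(f)$ and $|I(S_u f)| = \I_0(S_u f)$. It thus suffices to prove $\I_0(f) \leq \I_0(S_u f)$. For each $p \in (-1,0)$, the function $(x_1,\ldots,x_n) \mapsto \Delta(x_1,\ldots,x_n)^p$ on $(\R^n)^n$ is Steiner concave with respect to direction $u$ applied to each variable, and the standard Brascamp--Lieb--Luttinger-type symmetrization inequality for such Steiner-concave integrands integrated against products of non-negative functions gives
\[
\int_{(\R^n)^n} \Delta(x_1,\ldots,x_n)^p \prod_{i=1}^n f(x_i)\, dx_1 \cdots dx_n \;\leq\; \int_{(\R^n)^n} \Delta(x_1,\ldots,x_n)^p \prod_{i=1}^n (S_u f)(x_i)\, dx_1 \cdots dx_n .
\]
Multiplying by $(p+1)/n > 0$ and letting $p \to -1^+$ (both limits exist by Theorem~\ref{thm:intro-I0K}) yields $\I_0(f) \leq \I_0(S_u f)$, and hence $|I(f)| \leq |I(S_u f)|$.

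For a general compact set $K \subset \R^n$, I would approximate $1_K$ from above by the continuous functions $f_m(x) := \max\{0,\, 1 - m \cdot d(x,K)\} \in C_c(\R^n,\R_+)$, which decrease pointwise to $1_K$. Applying the function case gives $|I(f_m)| \leq |I(S_u f_m)|$ for every $m$. By monotone convergence on each hyperplane, $\rho_{I(f_m)}(v) = \int_{v^\perp} f_m \downarrow |K \cap v^\perp| = \rho_{IK}(v)$ for every $v \in \S^{n-1}$, and a second application of monotone convergence on $\S^{n-1}$ gives $|I(f_m)| \downarrow |IK|$. Since Steiner symmetrization commutes with monotone decreasing limits (via layer-cake together with the monotonicity of $S_u$ under set inclusion), $S_u f_m \downarrow 1_{S_u K}$, and the same argument yields $|I(S_u f_m)| \downarrow |I(S_u K)|$. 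Passing to the limit completes the proof.

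The main obstacle is verifying the Steiner concavity of $\Delta^p$ for $p < 0$ together with the corresponding symmetrization inequality; this replaces the more familiar log-concavity of $\Delta$ used in the $p \geq 0$ regime, and is precisely what Subsection~\ref{subsec:Steiner} is devoted to. The remainder of the argument is essentially bookkeeping: Theorem~\ref{thm:intro-I0K} combined with elementary monotone convergence.
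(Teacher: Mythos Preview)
Your proposal is correct and follows essentially the same route as the paper: both arguments invoke Theorem~\ref{thm:intro-I0K} to identify $|I(\cdot)|$ with $\I_0(\cdot)$ on $C_c(\R^n,\R_+)$, use the Steiner concavity of $\Delta^p$ for $p<0$ to obtain the inequality at each $p$ before passing to the limit, and then handle the compact-set case by approximating $1_K$ from above by continuous functions and using that $S_u f_m \searrow S_u(1_K)=1_{S_u K}$ together with dominated (equivalently, monotone) convergence. The only cosmetic difference is that the paper packages the approximation step inside a statement for general $f\in\usc_c(\R^n,\R_+)$ (invoking Baire's theorem for the approximating sequence), whereas you write down the explicit approximants $f_m(x)=\max\{0,1-m\,d(x,K)\}$; the content is the same.
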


Applying an appropriate sequence of symmetrizations, it is known \cite[Lemma 9.4.3]{BuragoZalgallerBook} that $K_m = S_{u_m}\ldots S_{u_1} K$ converges to $B_K$ in the Hausdorff metric, and thanks to the continuity of $|I K|$ under Hausdorff convergence,
the classical Busemann intersection inequality (\ref{eq:intro-II}) for general compact sets immediately follows. Surprisingly, the inequality $|I(f)| \leq |I(S_u f)|$ for a \emph{single} application of Steiner symmetrization ($f \in C_c(\R^n,\R_+)$) has only recently been established by Adamczak--Paouris--Pivovarov--Simanjuntak \cite{APPS-LpCentroidBodies}. More generally, the results of \cite{APPS-LpCentroidBodies} apply to dual $L^p$-centroid bodies for $p \geq 0$ and when $n/p$ is an integer to $p \in [-1,0)$ as well (extending the intersection body case of $p=-1$). The proof in \cite{APPS-LpCentroidBodies} is fairly intricate, and requires several limiting arguments, thereby precluding (as far as we can see) any attempt to study the cases of equality in (\ref{eq:intro-IfISuf}), which are crucial for our variational approach. Our definition of $\I_0(K)$ in (\ref{eq:intro-I0}) also involves a limit, leading to a similar difficulty, but fortunately, for a nice class of compact sets $K$, we are able to calculate this limit as follows. 

\begin{defn}[$u$-finite compact set] \label{def:intro-u-finite}
Let $u \in \S^{n-1}$. 
A compact set $K$ in $\R^n$ is called $u$-finite if for a.e.~$y \in u^{\perp}$, $K \cap (y + \R u)$ consists of a finite disjoint union of closed intervals (each of positive length). 
\end{defn}

\begin{thm} \label{thm:intro-Iu}
Let $K$ be a $u$-finite compact set in $\R^n$. Then, the limit in (\ref{eq:intro-I0}) for $g = 1_K$ exists and $\I_0(K) = \I_u(K)$, where
\[
\I_u(K) := \frac{2}{n} \int_{(P_{u^\perp} K)^n} \Delta(\tilde y_1,\ldots,\tilde y_{n-1})^{-1} |R_\y \cap \theta_\y^{\perp}|_{n-1} \, dy_1 \ldots dy_n . 
\]
Here $P_{u^{\perp}} K$ denotes the orthogonal projection of $K$ onto $u^{\perp}$, $\y = (y_1,\ldots,y_n) \in (u^\perp)^n$, 
$R_\y = \{ (s^1,\ldots,s^n)  \in \R^n :  y_i + s^i u \in K \, , \, i=1,\ldots,n\}$, $\theta_\y$ denotes the element of $\S^{n-1}$ satisfying $\sum_{i=1}^n \theta_\y^i y_i = 0$ (this linear dependency is unique up to sign for a.e.~$\y$), and $(\tilde y_1,\ldots,\tilde y_{n-1})$ denote the $n-1$ rows of the $(n-1) \times n$ matrix whose $n$ columns in $u^{\perp}$ are $(y_1,\ldots,y_n)$. 
\end{thm}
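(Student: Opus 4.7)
The plan is to compute $\I_0(K)$ via a change of variables adapted to $u$, reducing the singular limit $p\to -1^+$ to an elementary one-dimensional limit. The crucial algebraic input will be a factorization of $\Delta$. Writing $x_i = y_i + s^i u$ with $y_i \in u^\perp$ and $s^i \in \R$, so $dx_i = dy_i\,ds^i$, and working in any orthonormal basis of $\R^n$ whose last vector is $u$, the matrix $[x_1\mid\cdots\mid x_n]$ has bottom row $(s^1,\ldots,s^n)$ and top $n-1$ rows $(\tilde y_1,\ldots,\tilde y_{n-1})$. Expanding along the bottom row yields $\det[x_1\mid\cdots\mid x_n] = \sum_i (-1)^{n+i} s^i \det Y_i$, where $Y_i$ denotes the $(n-1)\times(n-1)$ minor obtained by deleting the $i$-th column of $Y=[y_1\mid\cdots\mid y_n]$; by Cramer's rule the cofactor vector $((-1)^{n+i}\det Y_i)_i$ spans the kernel of $(a_1,\ldots,a_n)\mapsto \sum_i a_i y_i$ and is thus a scalar multiple of $\theta_\y$, whose norm equals $\sqrt{\sum_i (\det Y_i)^2} = \sqrt{\det(Y Y^\top)} = \Delta(\tilde y_1,\ldots,\tilde y_{n-1})$ by Cauchy--Binet. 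This gives the key factorization
\[
\Delta(x_1,\ldots,x_n) \;=\; \Delta(\tilde y_1,\ldots,\tilde y_{n-1})\,|\langle s,\theta_\y\rangle|,\quad s = (s^1,\ldots,s^n).
\]

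Substituting into (\ref{eq:intro-I0}) and applying Fubini (valid since the integrand is nonnegative and $\Delta^p$ is locally integrable for $p>-1$), the integral reorganizes as
\[
\frac{p+1}{n}\int_{(P_{u^\perp} K)^n}\Delta(\tilde y)^p\int_{R_\y}|\langle s,\theta_\y\rangle|^p\,ds\,dy,
\]
and disintegrating the inner integral along level sets of $\langle\cdot,\theta_\y\rangle$ rewrites it as $\int_\R |t|^p A(t;y)\,dt$ with $A(t;y) := |R_\y \cap \{\langle s,\theta_\y\rangle = t\}|_{n-1}$. The $u$-finiteness hypothesis enters crucially here: it forces $R_\y$ to be a finite disjoint union of closed boxes in $\R^n$ for a.e.\ $y \in (P_{u^\perp} K)^n$, so $A(\cdot;y)$ is a bounded, compactly supported, piecewise-polynomial function of $t$ which is continuous at $t=0$ for a.e.\ $y$ (those whose hyperplane $\theta_\y^\perp$ avoids every corner of $R_\y$, a condition failing only on a null set). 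The elementary limit $\lim_{p\to -1^+}(p+1)\int_\R |t|^p f(t)\,dt = 2 f(0)$ for bounded, compactly supported $f$ continuous at $0$ then produces the pointwise convergence
\[
\Delta(\tilde y)^p(p+1)\int_{R_\y}|\langle s,\theta_\y\rangle|^p\,ds \;\longrightarrow\; 2\,\Delta(\tilde y)^{-1}\,|R_\y\cap\theta_\y^\perp|_{n-1}\quad(\text{a.e.\ } y),
\]
which is precisely the integrand of $\I_u(K)$.

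The main obstacle will be to justify the exchange of this pointwise limit with the outer $y$-integration, because $\Delta(\tilde y)^p$ diverges as $p\to -1^+$ on the codimension-one set $\{\Delta(\tilde y) = 0\}$. My plan is to apply dominated convergence after establishing the following uniform-in-$p$ bounds: $(p+1)\int_\R |t|^p A(t;y)\,dt \le 2\|A(\cdot;y)\|_\infty \cdot \mathrm{diam}(R_\y)^{p+1}\le C_K$, and $\Delta(\tilde y)^p \le 1 + \Delta(\tilde y)^{-1}$ for $p\in(-1,0]$. Since direct global integrability of $\Delta(\tilde y)^{-1}$ on $(P_{u^\perp} K)^n$ may fail, a safer route will be a two-step truncation: restrict to $U_\varepsilon = \{\Delta(\tilde y)\ge\varepsilon\}$ where dominated convergence is immediate (with bound $\varepsilon^{-1} C_K$), obtain the identity there, and then let $\varepsilon\to 0^+$, controlling the tail contribution on $(P_{u^\perp} K)^n\setminus U_\varepsilon$ uniformly in $p$ by comparing with the pre-limit integral and invoking Fatou to match the resulting lower and upper limits.
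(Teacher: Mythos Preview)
Your approach is essentially identical to the paper's: the same change of variables $x_i = y_i + s^i u$, the same factorization $\Delta(x_1,\ldots,x_n) = \Delta(\tilde y_1,\ldots,\tilde y_{n-1})\,|\langle s,\theta_\y\rangle|$, and the same reduction of the $p\to -1^+$ limit to the one-dimensional approximate identity $\frac{p+1}{2}|t|^p\,dt \rightharpoonup \delta_0$. Your derivation of the factorization via Cramer and Cauchy--Binet is a pleasant alternative to the paper's bare determinant identity, and your observation that $A(\cdot;y)$ is piecewise-polynomial (hence continuous everywhere, not merely a.e.) is slightly sharper than what the paper invokes via the Portmanteau continuity-set criterion.

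The one place you diverge is the justification of dominated convergence, and here there is a gap. You write that ``direct global integrability of $\Delta(\tilde y)^{-1}$ on $(P_{u^\perp}K)^n$ may fail'' and propose a truncation-plus-Fatou workaround. In fact the integrability does \emph{not} fail: the paper establishes it directly by the Blaschke--Petkantschin formula. After bounding $P_{u^\perp}K$ by a cube and swapping rows and columns of the matrix $Y$, one has
\[
\int_{(P_{u^\perp}K)^n}\Delta(\tilde y_1,\ldots,\tilde y_{n-1})^{-1}\,d\y \;\le\; \int_{(RQ_n)^{n-1}}\Delta(\tilde y_1,\ldots,\tilde y_{n-1})^{-1}\,d\tilde y_1\cdots d\tilde y_{n-1} \;=\; \int_{G_{n,n-1}}|RQ_n\cap E|^{n-1}\,dE \;<\;\infty,
\]
which, together with your own bounds $\Delta(\tilde y)^p \le 1+\Delta(\tilde y)^{-1}$ and the uniform bound $C_K$ on the inner integral, gives an honest dominating function and closes the argument in one stroke. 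Your truncation scheme, by contrast, is not clearly complete: Fatou gives the lower bound $\liminf \ge \I_u(K)$, but your sketch for the upper bound (``controlling the tail \ldots\ by comparing with the pre-limit integral'') is circular --- uniform smallness of the tail $\int_{\{\Delta(\tilde y)<\varepsilon\}}$ as $\varepsilon\to 0$, uniformly in $p$, is essentially equivalent to the very integrability you were trying to avoid. Supply the Blaschke--Petkantschin step and the proof is done.
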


In other words, $\I_u(K)$ is actually independent of $u$ and coincides with $\I_0(K)$ for any $u \in \S^{n-1}$ for which $K$ is $u$-finite. 
By results of Lin and Xi \cite{LinXi-LipschitzStarBodySymmetrization}, for a.e.~$u\in \S^{n-1}$, a given Lipschitz star body $K$ is not only $u$-finite, but in fact satisfies a stronger property we call $u$-multi-graphicality (see Definition \ref{def:mgs} and Theorem \ref{thm:Lip-multi-graphical}), which allows us to introduce a well-defined notion of continuous Steiner symmetrization $\{S_u^t K\}_{t \in [0,1]}$ in the direction of $u$. Thanks to Theorem~\ref{thm:intro-Iu}, we are able to analyze the behaviour of $\I_0(S_u^t K)$ using the formula for $\I_u(S_u^t K)$.

\subsection{Equality analysis}

Having Theorem \ref{thm:intro-Iu} at hand, we obtain the following characterization.   

\begin{thm} \label{thm:intro-dIdt}
Let $K$ be a Lipschitz star body in $\R^n$. Then, there exists $\U \subseteq \S^{n-1}$ of full-measure such that for all $u \in \U$, $[0,1] \ni t \mapsto |I (S_u^t K)| = \I_0(S_u^t K) = \I_u(S_u^t K)$ is non-decreasing, and the following derivative exists and satisfies
\begin{equation} \label{eq:intro-positive-derivative}
\left . \frac{d}{dt} \right |_{t=0^+} |I (S_u^t K)|  \geq 0 .
\end{equation}
 If equality occurs in (\ref{eq:intro-positive-derivative}) for a given $u \in \U$, then for a.e.~$\y = (y_1,\ldots,y_n) \in (P_{u^{\perp}} K)^n$, $R_\y$ consists of a finite disjoint union of rectangles $\{R_\y^k\}$ in $\R^n$, such that for each rectangle $R_\y^k$, either
\begin{enumerate} 
\item \label{it:intro-cond1} $\theta_\y^{\perp}$ essentially does not intersect $R_\y^k$, i.e., $|R_\y^k \cap \theta_\y^{\perp}|_{n-2} = 0$, or  
\item \label{it:intro-cond2} $\theta_\y^{\perp}$ passes through the center $c(R_\y^k)$ of $R_\y^k$, i.e., $\scalar{\theta_\y , c(R_\y^k)} = 0$, or 
\item \label{it:intro-cond3} $\theta_\y^{\perp}$ intersects exactly $n-1$ pairs of opposing facets of the centered rectangle $R_\y^k - c(R_\y^k)$ (and no other facets).
\end{enumerate}
\end{thm}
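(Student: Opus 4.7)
The plan is to apply the formula from Theorem~\ref{thm:intro-Iu} and reduce the problem to a pointwise analysis of the integrand. Take $\U \subseteq \S^{n-1}$ to be the full-measure set of directions $u$ for which $K$ is $u$-multi-graphical (which exists by the cited Lin--Xi results). For such $u$, continuous Steiner symmetrization preserves the fiberwise combinatorial structure and the projection $P_{u^\perp}(S_u^t K) = P_{u^\perp} K$. In particular, each $S_u^t K$ is itself $u$-finite, and Theorem~\ref{thm:intro-Iu} gives
\[
|I(S_u^t K)| = \I_u(S_u^t K) = \frac{2}{n} \int_{(P_{u^\perp} K)^n} \Delta(\tilde y_1, \ldots, \tilde y_{n-1})^{-1}\, |R_\y(t) \cap \theta_\y^\perp|_{n-1}\, d\y
\]
with a $t$-independent domain of integration.

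I would then run a pointwise analysis in $\y$. For a.e.~$\y$, the set $R_\y(t) \subset \R^n$ decomposes as a finite disjoint union of axis-aligned rectangles $R_\y^k(t) = c^k(t) + \prod_{i=1}^n [-\ell_i^k, \ell_i^k]$, with the side lengths $\ell_i^k$ fixed in $t$ and the centers moving linearly as $c^k(t) = (1-t)\, c^k(0)$ under continuous Steiner symmetrization. Letting $g_k(\alpha) := |R_0^k \cap \{\scalar{\theta_\y, \cdot} = \alpha\}|_{n-1}$ for the ($t$-independent) centered rectangle $R_0^k := R_\y^k(t) - c^k(t)$, one has
\[
|R_\y^k(t) \cap \theta_\y^\perp|_{n-1} = g_k\bigl(-(1-t)\scalar{\theta_\y, c^k(0)}\bigr).
\]
Brunn's theorem shows each $g_k$ is concave, even, and maximized at $0$; hence $t \mapsto g_k(-(1-t)\scalar{\theta_\y, c^k(0)})$ is concave and non-decreasing on $[0,1]$. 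Summing over $k$ and integrating yields the monotonicity of $|I(S_u^t K)|$, and monotone convergence of the difference quotients (valid by the pointwise concavity in $t$, together with integrability at $t=1$) lets one exchange derivative and integral:
\[
\left . \frac{d}{dt}\right|_{t=0^+} |I(S_u^t K)| = \frac{2}{n} \int \Delta^{-1} \sum_k \scalar{\theta_\y, c^k(0)} \cdot g_k'\!\bigl(-\scalar{\theta_\y, c^k(0)}\bigr)\, d\y.
\]
Each summand is non-negative by the concavity and evenness of $g_k$, proving~(\ref{eq:intro-positive-derivative}).

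Suppose now that equality holds. Then the pointwise non-negative sum vanishes for a.e.~$\y$, so every individual term $\scalar{\theta_\y, c^k(0)}\cdot g_k'(-\scalar{\theta_\y, c^k(0)})$ must vanish. Writing $\alpha_k := -\scalar{\theta_\y, c^k(0)}$, three possibilities arise at generic $\y$: either $\alpha_k$ lies strictly outside the support of $g_k$, so $\theta_\y^\perp$ misses $R_\y^k$ and $|R_\y^k \cap \theta_\y^\perp|_{n-2} = 0$, giving case~(\ref{it:intro-cond1}); or $\alpha_k = 0$, so $\theta_\y^\perp$ passes through $c(R_\y^k)$, giving case~(\ref{it:intro-cond2}); or $\alpha_k$ lies in the interior of the support of $g_k$ at a point where $g_k$ is locally constant.

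The main obstacle will be the geometric identification of this last case. The plan is to exploit the piecewise-polynomial structure of the slice-volume function of a centered rectangle and to show that local constancy of $g_k$ at an interior point with $g_k > 0$ occurs iff the hyperplane $\{\scalar{\theta_\y, \cdot} = \alpha_k\}$ is parallel to some coordinate axis of $R_0^k$ (equivalently, $\theta_\y$ has a vanishing component in that direction) while still meeting the rectangle in a full $(n-1)$-dimensional slice whose combinatorial type is stable under small perturbations of $\alpha_k$. Equivalently, the hyperplane essentially intersects exactly $n-1$ of the $n$ pairs of opposing facets of $R_0^k$---those orthogonal to the nonzero components of $\theta_\y$---and no other facets, which recovers case~(\ref{it:intro-cond3}). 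The ``no other facets'' clause excludes tangential/boundary configurations, which form a null set in $\y$ that can be discarded at the outset.
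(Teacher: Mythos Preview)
Your overall strategy---rewriting $|I(S_u^t K)|$ via the $\I_u$ formula and analyzing the rectangle integrand pointwise using Brunn's concavity principle---is exactly the paper's approach. However, two of your steps contain genuine gaps.

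First, your description of $R_\y(t)$ as a union of rectangles with fixed side lengths and centers $c^k(t) = (1-t)c^k(0)$ is valid only until the first \emph{collision time}, when two intervals in some coordinate fiber merge. After that, rectangles coalesce and the index set over $k$ changes. Your concavity argument for monotonicity therefore only covers $[0,\tau(\y))$, not all of $[0,1]$. The paper handles this by arguing piecewise between collision times (each piece still reducing to Lemma~\ref{lem:rectangle-equality}). This does not affect the equality analysis at $t=0^+$, but it does affect the claim that $t \mapsto |I(S_u^t K)|$ is non-decreasing on all of $[0,1]$.

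Second, and more seriously, your identification of case~(\ref{it:intro-cond3}) is wrong. You assert that local constancy of $g_k$ at an interior point is equivalent to the hyperplane being parallel to a coordinate axis, i.e., to some $\theta_\y^i = 0$. This fails in both directions. If $\theta_\y^i = 0$, the slice function is (up to a factor $2\ell_i^k$) the slice function of the $(n-1)$-dimensional rectangle in the remaining coordinates, which is generically strictly concave, not constant. Conversely, take $R_0 = [-1,1]^n$ and $\theta = (1,\epsilon,\ldots,\epsilon)$ with $\epsilon>0$ small: all components of $\theta$ are nonzero, yet for small $|\alpha|$ the slice $\{x_1 + \epsilon(x_2+\cdots+x_n)=\alpha\}$ misses the pair of facets $\{x_1=\pm 1\}$, so the slices are translates of one another and $g$ is locally constant there. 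The correct mechanism, given in the paper's Lemma~\ref{lem:rectangle-equality}, is the equality case of Brunn--Minkowski: local constancy forces the slices to be translates along some direction $T$ with $\scalar{T,\theta}\neq 0$; whenever the central slice meets the $e_i$-pair of facets one must have $T^i=0$, so meeting all $n$ pairs would force $T=0$; since a bounded $(n-1)$-dimensional section must meet at least $n-1$ pairs, exactly $n-1$ pairs are met. Your piecewise-polynomial idea can be made to work, but not through the parallelism claim you propose.
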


Perhaps surprisingly, the proof of Theorem \ref{thm:intro-dIdt} is based on Brunn's concavity principle and the characterization of equality in the Brunn-Minkowski inequality for convex bodies, even though $K$ is only assumed to be a Lipschitz star body. 
It is not hard to show that for such $K$'s, there exists a $\delta > 0$ so for all $u \in \S^{n-1}$ and $y \in B_{u^\perp}(\delta)$, $J_u^y := K \cap (y + \R u)$ is an interval (containing $y$ in its interior). Here $B_{E}(p,\delta)$ denotes the Euclidean ball  in $E$ of radius $\delta$ centered at $p$, $B_E(\delta) := B_E(0,\delta)$, and we abbreviate $B_n = B_{\R^n}$. Consequently, for all 
 $\y = (y_1,\ldots,y_n) \in B_{u^\perp}(\delta)^n$, $R_\y$ is a single rectangle $\Pi_{i=1}^n [c^i_{\y} - \h^i_{\y} , c^i_{\y} + \h^i_{\y}]$ containing the origin in its interior, and hence its intersection with $\theta_\y^{\perp}$ violates condition (\ref{it:intro-cond1}). 
As for condition (\ref{it:intro-cond3}), it has the following interesting geometric consequence (see Lemma \ref{lem:all-facets}):
\begin{equation} \label{eq:intro-geometric}
P_{\sspan \theta_\y} B^n_\infty(\h_\y) \subset \interior (2 P_{\sspan \theta_\y} B^n_1(\h_\y)) ,
\end{equation}
where $B^n_\infty(\h) = \Pi_{i=1}^n [-\h^i,\h^i]$ and $B^n_1(\h) = \conv \{\pm \h^i e_i \}_{i=1,\ldots,n}$ are stretched unit balls of $\L_\infty^n$ and $\L_1^n$, respectively. However, when $n \geq 3$, we can always find an open set $\Theta \subset \R^n$ of $\theta$'s (independent of any other parameter) so that (\ref{eq:intro-geometric}) cannot hold, because the inclusions $B_1^n \subset B_\infty^n \subset n B_1^n$ are best possible. Consequently, we deduce that if $u \in \U$ and $\left . \frac{d}{dt} \right|_{t=0^+} |I (S_u^t K)| = 0$, then for a.e.~$\y=(y_1,\ldots,y_n) \in B_{u^{\perp}}(\delta)^n$ which satisfy a linear dependency contained in $\Theta$, condition (\ref{it:intro-cond2}) must hold. We then observe the following lemma. 

\begin{lemma} \label{lem:intro-linear}
Let $f : B \rightarrow \R$ be a function on a centered open Euclidean ball $B \subset \R^{n-1}$, $n \geq 3$, and let $\Theta \subset \R^n$ be a non-empty open set. Assume that for all $\theta \in \Theta$,
for every affinely independent $y_1,\ldots,y_n \in B$ such that $\sum_{i=1}^n \theta_i y_i = 0$, it holds that $\sum_{i=1}^n \theta_i f(y_i) = 0$. 
Then, $f$ must be a linear function on $B \setminus \{0\}$.   
\end{lemma}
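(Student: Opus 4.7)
The plan is to recast the hypothesis as a geometric constraint on the graph of $f$ in $\R^n$ and then upgrade local linearity to global linearity. Let $F : B \to \R^n$ denote the graph map $F(y) = (y, f(y))$. The equation $\sum \theta_i y_i = 0$ together with $\sum \theta_i f(y_i) = 0$ means precisely that the $n$ lifted points $F(y_1), \dots, F(y_n)$ are linearly dependent in $\R^n$ with coefficient vector $\theta$, i.e.\ they span a linear hyperplane in $\R^n$ through the origin. Since $n$ affinely independent points in $\R^{n-1}$ automatically linearly span $\R^{n-1}$, any linearly independent $n-1$ of them force this hyperplane to be non-vertical, so it takes the form $\{(y, \scalar{a, y})\}$ for a unique $a = a(y_1, \dots, y_{n-1}) \in \R^{n-1}$. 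The goal is to show that a single $a$ works for all $y \in B \setminus \{0\}$.

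First I would fix a reference configuration. Using openness of $\Theta$, choose $\theta^* \in \Theta$ with all $n$ components nonzero, together with an affinely independent $y_1^*, \dots, y_n^* \in B$ satisfying $\sum \theta_i^* y_i^* = 0$; the solution set has dimension $(n-1)(n-2) \geq 2$ for $n \geq 3$, so a small such simplex always fits inside $B$. Define $a \in \R^{n-1}$ by $\scalar{a, y_i^*} = f(y_i^*)$ for $i = 1, \dots, n-1$; the hypothesis applied to this configuration forces $\scalar{a, y_n^*} = f(y_n^*)$ as well. Next, fix any index $j$ and vary only the $j$-th slot: for $y$ in a small neighborhood of $y_j^*$, the tuple $(y_1^*, \dots, y_{j-1}^*, y, y_{j+1}^*, \dots, y_n^*)$ remains affinely independent, and its dependency $\theta^0(y)$ (unique up to scale) depends continuously on $y$ with $\theta^0(y_j^*) = \theta^*$. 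Openness of $\Theta$ gives $\theta^0(y) \in \Theta$ on an open $V_j \ni y_j^*$, and the hypothesis then yields $f(y) = \scalar{a, y}$ on $V_j$. Thus the linearity set $L := \{y \in B \setminus \{0\} : f(y) = \scalar{a, y}\}$ contains an open neighborhood of each reference point.

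To extend $L$ to all of $B \setminus \{0\}$, I would re-run the same scheme anchored at configurations whose first $n-1$ points already lie in $L$. Concretely, given $y_0 \in B \setminus \{0\}$, if we can find a linearly independent basis $(y_1, \dots, y_{n-1}) \subset L$ and $\mu$ in the open set $M := \{(-\theta_1/\theta_n, \dots, -\theta_{n-1}/\theta_n) : \theta \in \Theta, \, \theta_n \neq 0\} \subset \R^{n-1}$ with $\sum \mu_i \neq 1$ and $y_0 = \sum \mu_i y_i$, then the hypothesis immediately yields $f(y_0) = \scalar{a, y_0}$. To produce such a basis, pick $\mu^0 \in M$ with $\sigma := \sum \mu_i^0 \notin \{0, 1\}$ (possible since $M$ is open and not contained in either hyperplane) and take $y_i = y_0/\sigma + v_i$ with the $v_i$'s drawn from the $(n-1)(n-2)$-dimensional subspace $\{(v_1, \dots, v_{n-1}) : \sum \mu_i^0 v_i = 0\}$; provided $|\sigma|$ is large enough relative to $\|y_0\|$ and small, generic $v_i$'s are chosen, the $y_i$'s form a basis lying in $L$.

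The hardest step, as I expect, is precisely this last extension. If $\Theta$ is "narrow" so that $M$ is small and $|\sigma|$ bounded on $M$, a single application of the construction extends $L$ only by a bounded amount; one must iterate, feeding the just-enlarged $L$ back into the scheme, and close the argument by a connectedness chain. This is exactly where the hypothesis $n \geq 3$ is essential: it supplies the $(n-1)(n-2)$-dimensional freedom needed to place the $v_i$'s inside $L$, and it ensures that $B \setminus \{0\}$ is connected (being an open ball in $\R^{n-1}$ with $n-1 \geq 2$ minus a single point), so the iteration cannot stall at a proper open subset.
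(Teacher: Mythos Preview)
Your strategy---recast the hypothesis as a coplanarity constraint on the lifted points $(y_i,f(y_i))$, obtain local linearity by perturbing one vertex of a configuration, and glue via connectedness of $B\setminus\{0\}$---is exactly the paper's. The difference is organizational, and that is where a gap appears. You fix the linear functional $a$ from a single reference simplex and then try to enlarge $L=\{f=\scalar{a,\cdot}\}$ by writing a target $y_0$ as $\sum\mu_i y_i$ with all $y_i$ already in $L$ and $\mu$ in the fixed open set $M$. A single step only carries a ball $B(p,r)\subset L$ to a region near $\sigma p$, with $\sigma$ confined to the bounded set $\{\sum\mu_i:\mu\in M\}$; you never show that iteration reaches every direction and radius in $B\setminus\{0\}$, and a bare appeal to connectedness does not close this (you would need $L$ relatively closed, which is unavailable without continuity of $f$).

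The paper sidesteps the iteration by \emph{not} fixing $a$ up front. For an arbitrary $y_0\in B\setminus\{0\}$ it first produces a configuration $(y_1,\dots,y_{n-1},y_0)$ inside $B$ with $\sum\theta_i y_i=0$ for one fixed $\theta\in\Theta$: take any linearly independent $(n-1)$-tuple, compute the $n$-th vertex from the dependency, relabel so that vertex has the largest norm, then scale and rotate the whole simplex so that this vertex becomes $y_0$ while the others stay in $B$. Your own one-slot perturbation then yields $f=\ell_{y_0}$ on a neighborhood $N(y_0)$ for some linear $\ell_{y_0}$ depending on $y_0$. Connectedness is now invoked for a trivial task---two linear functions agreeing on an open overlap coincide---so a finite chain of $N(y)$'s along any path gives $\ell_{x_0}=\ell_{x_1}$. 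Replacing your iteration by this one-shot construction at every point is the missing ingredient.
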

Applying this to $f(y)$, the $u$-height of the center of the interval $J_u^y$, we deduce that all mid-points of $J_u^y$ for $y \in  B_{u^\perp}(\delta)$ lie on a common hyperplane through the origin. It remains to adapt to our setting the following criterion of Soltan \cite[Corollary 1]{Soltan-EllipsoidViaMidpoints}, which is a local form of the classical Bertrand--Brunn characterization of ellipsoids.

\begin{thm}[Soltan] \label{thm:intro-Soltan}
Let $K$ be a convex body in $\R^n$. Assume that there is a $p \in \interior K$ and a $\delta > 0$ such that for every direction $u \in \S^{n-1}$, the mid-points of all segments of $K$ parallel to $u$ and passing through $B_n(p,\delta)$ all lie on a common hyperplane. Then, $K$ must be an ellipsoid. 
\end{thm}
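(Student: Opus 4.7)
The plan is to adapt the classical Bertrand--Brunn characterization of ellipsoids to this local setting. After translating so that $p$ is the origin, for each $u \in \S^{n-1}$ write the chord $K \cap (y + \R u) = [y + a(y,u) u, y + b(y,u) u]$ for $y \in P_{u^{\perp}}(K)$, so its midpoint is $y + \phi_u(y) u$ with $\phi_u(y) := \tfrac{1}{2}(a(y,u) + b(y,u))$. The hypothesis precisely says that $\phi_u$ is affine on the $(n-1)$-dimensional disk $B_{u^{\perp}}(\delta) \subset u^{\perp}$; by unique affine continuation, extend $\phi_u(y) = \scalar{\alpha(u), y} + \beta(u)$ to all of $u^{\perp}$, let $H_u := \{y + \phi_u(y) u : y \in u^{\perp}\}$ be the associated hyperplane in $\R^n$, and let $R_u$ denote the affine involution of $\R^n$ fixing $H_u$ pointwise and reversing the $u$-direction. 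By construction $R_u$ maps every $u$-chord of $K$ whose line meets $B_n(\delta)$ to itself with endpoints swapped, and so $R_u$ preserves the open ``cylindrical core'' $C_u := K \cap (B_{u^{\perp}}(\delta) + \R u) \supset B_n(\delta)$.

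The first step is to locate a common center for the family $\{H_u\}_{u \in \S^{n-1}}$. For two directions $u_1, u_2$, on a sufficiently small neighborhood $V \subset C_{u_1} \cap C_{u_2}$ of $0$ the composition $R_{u_1} R_{u_2}$ is a well-defined affine isomorphism preserving $K \cap V$; tracking its fixed-point set across varying pairs and intersecting over enough pairs pins down a single point $c$ lying on every $H_u$. Translating $c$ to the origin yields $\beta(u) \equiv 0$, every $R_u$ becomes a \emph{linear} involution, and suitable compositions (over $n$ directions spanning $\R^n$) produce the antipodal map $x \mapsto -x$ as an affine symmetry of a neighborhood of $0$ in $K$, so the germ of $K$ at $0$ is centrally symmetric.

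The main obstacle is then to upgrade the \emph{local} invariance $R_u(K \cap C_u) = K \cap C_u$ to the \emph{global} identity $R_u(K) = K$. The strategy is to propagate the invariance outward using chords in \emph{other} directions: for any $q \in K$ there exists some $u' \in \S^{n-1}$ for which the $u'$-chord through $q$ meets $B_n(\delta)$, so the reflection of $q$ in the midpoint of that chord again lies in $K$. Chaining such auxiliary reflections through a finite sequence of directions, and using the convexity of $K$ together with the affinity of every $R_u$, transfers the local $R_u$-symmetry to all of $K$.

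Finally, once $R_u(K) = K$ holds for every $u \in \S^{n-1}$, the unit normals $n(u)$ to the (now linear) hyperplanes $H_u$ satisfy $n(u) = A u / \snorm{A u}$ for a self-adjoint linear operator $A$ on $\R^n$, obtained by polarizing the bilinear dependence of $H_u$ on $u$; positive definiteness of $A$ follows from non-degeneracy of the map $u \mapsto H_u$. The linear change of variables $x \mapsto A^{1/2} x$ turns every $R_u$ into an orthogonal reflection through a hyperplane through $0$, so the image of $K$ is invariant under every orthogonal reflection and hence under $O(n)$ --- thus a Euclidean ball --- and undoing the change of variables shows $K$ itself is an ellipsoid.
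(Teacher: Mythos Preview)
The paper does not actually prove this theorem: it is quoted as Soltan's result \cite[Corollary~1]{Soltan-EllipsoidViaMidpoints}, and in Section~7.4 the paper only remarks that Soltan's argument uses nothing about $K$ beyond $u$-graphicality over $B_{u^\perp}(\delta)$, in order to extend the statement to Lipschitz star bodies. So there is no in-paper proof to compare against; I can only assess your outline on its own merits.

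Your two main steps are not carried out, and as written they do not go through. For the common-center step, you claim $R_{u_1}R_{u_2}$ preserves $K\cap V$ for a small neighborhood $V$ of $0$. But $R_{u_2}$ reflects through $H_{u_2}$, which passes through the midpoint $\phi_{u_2}(0)\,u_2$ of the $u_2$-chord through the origin; if that chord is highly asymmetric about $0$, this midpoint --- and hence $R_{u_2}(V)$ --- can lie far from $0$, outside the cylinder $C_{u_1}$ on which you know $R_{u_1}$ preserves $K$. So the composition need not preserve anything, and the fixed-point bookkeeping has no footing. The subsequent claim that compositions over $n$ directions yield $-\mathrm{Id}$ already presupposes the $R_u$ are linear, which is precisely what you are trying to establish.

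For the local-to-global step, your chaining observation is that the radial chord through any $q\in K$ meets $B_n(\delta)$, so $R_{q/|q|}(q)\in K$. True, but this controls only reflections in the radial direction; it says nothing about $R_u(q)$ for a \emph{fixed} $u$ when the $u$-chord through $q$ misses $B_n(\delta)$ entirely. ``Chaining auxiliary reflections through a finite sequence of directions, using convexity and affinity'' is not an argument: each $R_{u'}$ is only known to preserve $K$ on its own thin cylinder $C_{u'}$, and you give no mechanism for transporting information across cylinders. This passage from local to global is exactly the substance of Soltan's theorem, and your proposal does not supply it.
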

\noindent It turns out that it is enough that $K$ is a Lipschitz star body, and for the assumption to hold only for a dense 
set of $u$'s (see Theorem \ref{thm:Soltan} for a precise statement). 

\medskip

We thus conclude that a Lipschitz star body $K$ satisfying (\ref{eq:intro-I2K}) must satisfy (\ref{eq:intro-dIdt}), and hence the equality conditions of Theorem \ref{thm:intro-dIdt} for a.e.~$u \in \S^{n-1}$.
When $n \geq 3$, we deduce by Lemma \ref{lem:intro-linear} that for a.e.~$u \in \S^{n-1}$, all of the mid-points of segments parallel to $u$ passing through $B_n(\delta)$ lie on a common hyperplane through the origin. It now follows that $K$ is a centered ellipsoid by an appropriate version of Theorem \ref{thm:intro-Soltan}. 

\smallskip
Along the way, we also prove the following counterpart to Corollary \ref{cor:intro-SuK} (which in itself does not help in establishing Theorem \ref{thm:intro-main}).

\begin{cor} \label{cor:intro-SuK-equality}
Let $K$ be a Lipschitz star body in $\R^n$, $n \geq 3$. Then, the following statements are equivalent:
\begin{enumerate}
\item $|I K| = |I(S_u K)|$ for all $u \in \S^{n-1}$. 
\item $|I K| = |I(S_u K)|$ for a.e.~$u \in \S^{n-1}$. 
\item $K$ is a centered ellipsoid. 
\end{enumerate}
\end{cor}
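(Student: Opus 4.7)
The implications $(3) \Rightarrow (1) \Rightarrow (2)$ are short. If $K$ is a centered ellipsoid, then so is $S_u K$, and $|S_u K| = |K|$, so by the equality case of Busemann's inequality (\ref{eq:intro-II}) (valid for $n \geq 3$), both $|IK|$ and $|I(S_u K)|$ equal $|I B_K|$, giving (1). The implication $(1) \Rightarrow (2)$ is trivial.

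The substantive direction is $(2) \Rightarrow (3)$, which I plan to reduce to the equality case of Theorem \ref{thm:intro-dIdt}. By that theorem, $t \mapsto |I(S_u^t K)|$ is non-decreasing on $[0,1]$ for every $u$ in a full-measure set $\U \subseteq \S^{n-1}$; hence if $|IK| = |I(S_u K)|$ then this map is constant on $[0,1]$, forcing $\left . \frac{d}{dt} \right |_{t=0^+} |I(S_u^t K)| = 0$. Hypothesis (2) thus places us in the equality case of Theorem \ref{thm:intro-dIdt} for a.e.\ $u \in \S^{n-1}$. Since $K$ is a Lipschitz star body, there is a $\delta > 0$ so that for every $u$ and every $y \in B_{u^\perp}(\delta)$, the chord $J_u^y = K \cap (y + \R u)$ is a single interval containing $y$ in its relative interior. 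Therefore, for every $\y \in B_{u^\perp}(\delta)^n$ the set $R_\y$ is a single rectangle containing the origin in its interior, ruling out condition (\ref{it:intro-cond1}). Using the geometric consequence (\ref{eq:intro-geometric}) together with the strict failure $B_\infty^n \not\subset 2 B_1^n$ when $n \geq 3$, I fix a non-empty open $\Theta \subset \R^n$ for which condition (\ref{it:intro-cond3}) also fails whenever $\theta_\y \in \Theta$. Only condition (\ref{it:intro-cond2}) can then hold: writing $c(y)$ for the $u$-height of the midpoint of $J_u^y$, this says $\sum_{i=1}^n \theta_\y^i c(y_i) = 0$ for a.e.\ $\y \in B_{u^\perp}(\delta)^n$ with $\theta_\y \in \Theta$.

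Lemma \ref{lem:intro-linear}, applied to $f = c$ on the centered open ball $B_{u^\perp}(\delta) \subset u^\perp$, then forces $c$ to be linear, so the midpoints of all chords of $K$ parallel to $u$ through $B_{u^\perp}(\delta)$ lie on a common hyperplane through the origin. Since this holds for a.e.\ $u \in \S^{n-1}$ (in particular for a dense set of directions), invoking the Lipschitz-star-body variant of Soltan's criterion promised immediately after Theorem \ref{thm:intro-Soltan} (applied with $p = 0$), I conclude that $K$ is a centered ellipsoid. The main technical obstacle, apart from assembling the above ingredients, is confirming that the relaxed form of Soltan's theorem indeed applies in our setting --- namely, for a Lipschitz star body with the midpoint-hyperplane property imposed only on a full-measure set of directions; all other steps are direct consequences of results already stated in the excerpt.
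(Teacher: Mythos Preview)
Your proposal is correct and follows essentially the same approach as the paper, which likewise deduces $\left.\frac{d}{dt}\right|_{0^+}|I(S_u^t K)|=0$ from the constancy of $t\mapsto|I(S_u^t K)|$ and then invokes Theorems \ref{thm:mid-points} and \ref{thm:Soltan} (which package exactly the argument you unpack via Lemma \ref{lem:intro-linear} and the Soltan criterion). Two small points you glide over --- shrinking $\delta$ so that the half-lengths $\h^i_\y$ are nearly equal (which is what actually allows you to rule out condition (\ref{it:intro-cond3}) for $\theta_\y\in\Theta$, since (\ref{eq:intro-geometric}) involves the \emph{stretched} balls $B^n_\infty(\h_\y)$, $B^n_1(\h_\y)$), and using continuity of $c(\cdot)$ to upgrade the ``a.e.\ $\y$'' conclusion to ``all $\y$'' before Lemma \ref{lem:intro-linear} applies --- are supplied in the paper's proof of Theorem \ref{thm:mid-points}.
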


\subsection{Organization}

The rest of this work is organized as follows. In Section \ref{sec:prelim} we introduce some standard preliminaries and notation. In Section \ref{sec:formulas} we derive the new formulas for $|I(K)|$ given by $\I_0(K)$ and $\I_u(K)$ and establish Theorems \ref{thm:intro-I0K} and \ref{thm:intro-Iu}. In Section \ref{sec:Steiner} we recall the definition of the classical Steiner symmetrization (establishing Corollary \ref{cor:intro-SuK} along the way), introduce a continuous version for $u$-multi-graphical compact sets, and study its properties. In Section~\ref{sec:Lip} we study the graphical properties of Lipschitz star bodies $K$ and their continuous Steiner symmetrization $\{S_u^t K\}$. In Section \ref{sec:admissible} we show that $\{S_u^t K\}$ constitute an admissible radial perturbation of $K$, and that the equation $I^2 K = c K$ characterizes stationary points for the functional $\F_c$ from (\ref{eq:intro-F}) under such perturbations. In Section \ref{sec:equality} we give several implications of  $\left . \frac{d}{dt} \right|_{t=0^+} |I(S_u^t K)| = 0$ and in particular establish Theorem \ref{thm:intro-dIdt} and Lemma \ref{lem:intro-linear}. In Section~\ref{sec:proof} we conclude the proofs of Theorem \ref{thm:intro-main} (taking into account Remark \ref{rem:intro-gen}) and Corollaries \ref{cor:intro-main} and \ref{cor:intro-SuK-equality}. In Section \ref{sec:conclude} we provide some concluding remarks regarding additional applications of our method. In Appendix \ref{sec:appendix} we show that a solution to $I^2 K = c K$ is necessarily smooth when $n \geq 3$.

\medskip

\noindent
\textbf{Acknowledgments.} We thank Gabriele Bianchi and Richard Gardner for their comments and for informing us of Brock's work. 
We also thank the anonymous referees for carefully reading the manuscript and for their very helpful comments.

\section{Preliminaries and notation} \label{sec:prelim}

We assume that $n \geq 2$ throughout this work. Given a Euclidean space $E$, we denote by $B_E(p,r)$ the closed Euclidean unit ball of radius $r > 0$ in $E$ centered at $p \in E$, abbreviating $B_E(r) = B_E(0,r)$ and $B_E = B_E(1)$. When $E = \R^n$, we simply use $B_n$ instead of $B_E$, and denote by $\S^{n-1} = \partial B_n$ the Euclidean unit sphere. A Euclidean ball, and more generally, an ellipsoid, are called centered if their center is at the origin. 
We denote the unit ball of the normed space $\L_p^n$ by $B_p^n$, and the corresponding norm by $\norm{\cdot}_p$. We denote the $k$-dimensional Hausdorff measure by $\H^k$, sometimes utilizing $|\cdot|_k$ instead. Given a set $A$ in a topological space $X$, we denote by $\interior A$ and $\closure A$ its interior and closure, respectively. The family of continuous functions on $X$ is denoted by $C(X)$. 
The support $\supp(f)$ of a function $f$ on $X$ is defined as $\closure \{ f \neq 0 \}$. The family of compactly supported continuous functions on $X$ is denoted by $C_c(X)$, and the subset of compactly supported non-negative functions is denoted by $C_c(X,\R_+)$. 

\medskip

A Borel subset $K \subset \R^n$ is called star-shaped (with respect to the origin) if
\[
K = \{ r \theta : r \in [0,\rho_K(\theta)] ~,~ \theta \in \S^{n-1} \} ,
\]
for some (Borel) function $\rho_K : \S^{n-1} \rightarrow \R_+$ called its radial function. Note that our definition does not require $K$ to be compact or closed like some authors, to ensure that the intersection body $I(g)$ is star-shaped for a general (say, compactly-supported and bounded) Borel measurable function $g : \R^n \rightarrow \R_+$. When $K$ is a compact set, clearly it is star-shaped iff for any $x \in K$, the interval $[0,x]$ from the origin to $x$ is contained in $K$, iff $\lambda K \subseteq K$ for all $\lambda \in [0,1]$. 

When $\rho_K$ is positive and continuous (and hence $K$ is necessarily compact), $K$ is called a star body (with respect to the origin). When $K$ is a star body, note that $\interior(K) = \{ r \theta : r \in [0,\rho_K(\theta)) , \theta \in \S^{n-1} \}$ (see e.g. \cite[Lemma 3.2]{Zhu-OrliczCentroidInqForStarBodies}), so that every ray $\R_+ \theta$ intersects $\partial K$ at exactly one point and $\partial K = \{ \rho_K(\theta) \theta : \theta \in \S^{n-1} \}$. We will at times consider $\rho_K$ as a function on $\R^n \setminus \{0\}$ by extending it as a $-1$-homogeneous function, so that $\rho_K(x) = 1$ iff $x \in \partial K$. 
The corresponding gauge function is defined as $\norm{x}_K := \inf \{ t \geq 0 : x \in t K\}$ --- it is a $1$-homogeneous function on $\R^n$ satisfying $\norm{x}_K = 1$ iff $x \in \partial K$ and thus coincides with $1/\rho_K(x)$ (the norm notation is standard, despite not satisfying the triangle inequality nor being an even function in general). 

More generally, we will say that $K$ is star-shaped (star body) with respect to $p \in \R^n$ if $K-p$ is star-shaped (star body), and that $K$ is star-shaped (star body) with respect to a subset $P \subset \R^n$ if it is star-shaped (star body) with respect to all $p \in P$.

\medskip

Here and throughout, we use $dy$ to denote integration with respect to the Haar volume measure on the corresponding homogeneous space where $y$ is defined. Integrating in polar coordinates, we have for any star-shaped set $K$ in $\R^n$,
\begin{equation} \label{eq:vol-polar}
|K| = \frac{1}{n} \int_{\S^{n-1}} \rho^n_{K}(\theta) \;  d\theta = \omega_n \int_{\S^{n-1}} \rho^n_{K}(\theta) \, d\sigma_{\S^{n-1}}(\theta) , 
\end{equation}
where $\sigma_{\S}$ denotes the Haar \emph{probability} measure on the sphere $\S$ and we set $\sigma_n := |\S^{n-1}|_{n-1}$
 and $\omega_n := |B_n| = \sigma_n / n$.

\smallskip
We will make use of the following particular case of the Blaschke--Petkantschin formula (see \cite[Theorem 7.2.1]{SchneiderWeil-Book}), stating that for any non-negative Borel measurable function $g$ on $(\R^n)^{n-1}$,
\begin{align}
\nonumber & \int_{(\R^n)^{n-1}} g(x_1,\ldots,x_{n-1}) \, dx_1 \ldots dx_{n-1} \\
\nonumber & = \int_{G_{n,n-1}} \int_{E^{n-1}} \Delta(x_1,\ldots,x_{n-1}) g(x_1,\ldots,x_{n-1}) \, dx_1 \ldots dx_{n-1} dE \\
\label{eq:BP} & = \frac{1}{2} \int_{\S^{n-1}} \int_{(\theta^{\perp})^{n-1}} \Delta(x_1,\ldots,x_{n-1}) g(x_1,\ldots,x_{n-1}) \, dx_1 \ldots dx_{n-1} d\theta ,
\end{align}
where $G_{n,n-1}$ denotes the Grassmannian of all $(n-1)$-dimensional linear subspaces of $\R^n$, equipped with its natural Haar volume measure 
normalized so that $\int_{G_{n,n-1}} dE = \frac{1}{2} |\S^{n-1}|_{n-1}$.
Here $\Delta(x_1,\ldots,x_m)$ denotes the $m$-dimensional volume of the parallelotope linearly spanned by $x_1,\ldots,x_m \in \R^n$.

\smallskip
We will use the standard fact (see e.g.~\cite[Lemma 1.3.3]{Groemer}) that \begin{equation} \label{eq:double-integration}
\sigma_{\S^{n-1}} = \int_{\S^{n-1}} \sigma_{\S^{n-1} \cap \theta^{\perp}} \, d\sigma_{\S^{n-1}}(\theta) .
\end{equation}
The spherical Radon (or Funk) transform $\Rad : C(\S^{n-1}) \rightarrow C(\S^{n-1})$ is defined as
\[
\Rad(f)(u) := \int_{\S^{n-1} \cap u^\perp} f(\theta) \, d\sigma_{\S^{n-1} \cap u^\perp}(\theta) . 
\]
It follows immediately by Jensen's inequality and (\ref{eq:double-integration}) that $\Rad$ is a contraction in $L^2(\S^{n-1})$ (in fact, any $L^p(\S^{n-1})$),
and so by density its action extends to this entire space. The resulting operator $\Rad: L^2(\S^{n-1}) \rightarrow L^2(\S^{n-1})$ is symmetric, namely,
\begin{equation} \label{eq:Rad-symmetric}
\int_{\S^{n-1}} \Rad(f) g \, du = \int_{\S^{n-1}} f \Rad(g) \, du \;\;\; \forall f,g \in L^2(\S^{n-1}) . 
\end{equation}

\medskip

The Minkowski sum of two sets $A,B \subset \R^n$ is defined as $A + B  = \{ a+b : a\in A , b \in B\}$. By the Brunn-Minkowski inequality \cite{GardnerSurveyInBAMS, GardnerGeometricTomography2ndEd, Gruber-ConvexAndDiscreteGeometry,  Schneider-Book-2ndEd}, if $K,L$ are convex bodies in $\R^n$ then $|K+L|^{1/n} \geq |K|^{1/n} + |L|^{1/n}$. An equivalent form is given by Brunn's concavity principle \cite[Theorem 8.4]{Gruber-ConvexAndDiscreteGeometry}, stating that if $K$ is a convex body in $\R^n$ and $\theta \in \S^{n-1}$ then
\begin{equation} \label{eq:Brunn}
\R \ni t \mapsto g(t) = |K \cap (t \theta + \theta^{\perp})|^{\frac{1}{n-1}}_{n-1} \text{ is concave on its support.}
\end{equation}
If $g$ is constant on $[a,b] \subseteq \supp g$, then by the equality cases of the Brunn-Minkowski inequality \cite[Theorem 7.1.1]{Schneider-Book-2ndEd}, $K \cap  (t \theta + \theta^{\perp})$ must be translates of each other for all $t \in [a,b]$. 

\medskip

Given $u \in \S^{n-1}$, we denote $L_u = \sspan(u)$, and given $y \in u^{\perp}$, we let $L_u^y = y + L_u$ be the line through $y$ in the direction of $u$. 
We denote by $P_E$ the orthogonal projection in $\R^n$ onto a linear subspace $E$. 

\medskip
Lastly, given a function $f : J \rightarrow \R$ on an interval $J$ so that $f(t)$ is differentiable from the right at $t=a$, we denote by $\dta{f(t)}{a} := \left  .\frac{d}{dt} \right |_{t=a^+} f(t)$ its right-derivative at $t=a$.

\section{New formulas for $|I K|$} \label{sec:formulas}

\subsection{Radially negligible boundary}

\begin{lemma} \label{lem:radially-negligible}
For any Borel set $L$ in $\R^n$,
\begin{equation} \label{eq:radially-negligible}
\int_{\S^{n-1}} |L \cap \R_+ u|_{1} \, d\sigma_{\S^{n-1}}(u)  = 0  \;\; \text{ iff } \;\; 
\int_{\S^{n-1}} |L \cap \theta^{\perp}|_{n-1} \, d\sigma_{\S^{n-1}}(\theta)  = 0 
.
\end{equation}
In particular, $K$ has radially negligible boundary according to Definition  \ref{def:intro-radially-negligible} iff
\[
\int_{\S^{n-1}} |\partial K \cap \theta^{\perp}|_{n-1} \, d\sigma_{\S^{n-1}}(\theta)  = 0  .
\]
\end{lemma}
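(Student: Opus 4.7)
The plan is to rewrite both sides of (\ref{eq:radially-negligible}) as integrals of $1_L$ over the polar cone $\R_+ \times \S^{n-1}$ against strictly positive weights, and then observe that each vanishes precisely when $|L \cap \R_+ u|_1 = 0$ for $\sigma_{\S^{n-1}}$-a.e.~$u$. This reduces the two \emph{a priori} distinct conditions---on ray-slices versus hyperplane-slices---to the very same statement about the one-dimensional radial measure of $L$.

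For the hyperplane side, I would first use polar coordinates within each $\theta^{\perp}$ to write
\[
|L \cap \theta^{\perp}|_{n-1} = \sigma_{n-1} \int_{\S^{n-1} \cap \theta^{\perp}} \int_0^\infty 1_L(r\phi)\, r^{n-2}\, dr\, d\sigma_{\S^{n-1}\cap\theta^{\perp}}(\phi),
\]
where $\sigma_{n-1} = |\S^{n-2}|_{n-2}$ converts between the Haar probability measure on the great $(n-2)$-sphere $\S^{n-1}\cap\theta^{\perp}$ and its $(n-2)$-Hausdorff measure. Integrating over $\theta \in \S^{n-1}$ and exchanging the order of the $\theta$- and $\phi$-integrations via the identity (\ref{eq:double-integration}) then yields
\[
\int_{\S^{n-1}} |L \cap \theta^{\perp}|_{n-1}\, d\sigma_{\S^{n-1}}(\theta) = \sigma_{n-1} \int_{\S^{n-1}} \int_0^\infty 1_L(r\phi)\, r^{n-2}\, dr\, d\sigma_{\S^{n-1}}(\phi).
\]
On the other hand, Tonelli applied directly to the ray-side of (\ref{eq:radially-negligible}) gives
\[
\int_{\S^{n-1}} |L \cap \R_+ u|_1\, d\sigma_{\S^{n-1}}(u) = \int_{\S^{n-1}} \int_0^\infty 1_L(ru)\, dr\, d\sigma_{\S^{n-1}}(u).
\]

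Since $n \geq 2$, the weight $r^{n-2}$ is strictly positive on $(0,\infty)$, so for any non-negative Borel function $h$ on $(0,\infty)$ one has $\int_0^\infty h(r)\, dr = 0$ iff $h \equiv 0$ a.e.~iff $\int_0^\infty h(r)\, r^{n-2}\, dr = 0$. Applying this to $h(r) = 1_L(ru)$ in the two displays above shows that both integrals in (\ref{eq:radially-negligible}) vanish precisely when $|L \cap \R_+ u|_1 = 0$ for $\sigma_{\S^{n-1}}$-a.e.~$u$, which establishes the equivalence. The ``in particular'' statement then follows immediately by taking $L = \partial K$.

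I do not expect any substantive obstacle: the whole argument is a routine combination of polar coordinates and Fubini--Tonelli, with (\ref{eq:double-integration}) serving as the key bridge between integrating over all hyperplanes through the origin and integrating over all rays. The fact that the weight $r^{n-2}$ vanishes at $r=0$ causes no issue, since $\{0\}$ has measure zero in every relevant sense.
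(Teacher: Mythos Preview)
Your proposal is correct and follows essentially the same argument as the paper: both use polar coordinates in each $\theta^{\perp}$, invoke (\ref{eq:double-integration}) to swap the $\theta$- and $\phi$-integrations, and conclude by noting that the measures $dr$ and $r^{n-2}\,dr$ on $\R_+$ are mutually absolutely continuous. The paper is slightly terser (it does not separately write out the ray-side integral), but the substance is identical.
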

\begin{proof}
Integrating in polar coordinates on each $\S^{n-1} \cap \theta^{\perp}$, we have
\begin{align*}
& \int_{\S^{n-1}} |L \cap \theta^{\perp}|_{n-1} \, d\sigma_{\S^{n-1}}(\theta) \\
& = \int_{\S^{n-1}} \int_{\theta^{\perp}} 1_{L}(y) \;  dy \, d\sigma_{\S^{n-1}}(\theta) \\
& = \omega_{n-1} \int_{\S^{n-1}} \int_{\S^{n-1} \cap \theta^{\perp}} \int_0^\infty r^{n-2} 1_{L}(r u) \, dr \, d\sigma_{\S^{n-1} \cap \theta^{\perp}}(u)  \, d\sigma_{\S^{n-1}}(\theta) \\
& = \omega_{n-1} \int_{\S^{n-1}} \int_0^\infty r^{n-2} 1_{L}(r u) \, dr \, d\sigma_{\S^{n-1}}(u) ,
\end{align*}
where we used (\ref{eq:double-integration}). 
Since the measures given by $r^{n-2} dr$ and $dr$ on $\R_+$ are mutually absolutely continuous, the assertion follows.
\end{proof} 

As explained in Remark \ref{rem:intro-radially-negligible}, the first variant in (\ref{eq:radially-negligible}) immediately verifies that any star body $K$ has radially negligible boundary, but we shall employ the second variant in the sequel. 

\subsection{First formula --- $\I_0(K)$}

Let $f$ be a non-negative, bounded and compactly supported Borel measurable function on $\R^n$. Recall that $I(f)$ denotes the star-shaped set in $\R^n$ whose radial function is given by
\[
\rho_{I(f)}(\theta) = \int_{\theta^{\perp}} f(y) \;  dy. 
\]
Let
\begin{equation} \label{eq:I0}
\overline{\I}_0(f) := \limsup_{p \rightarrow -1^+} \; \frac{p+1}{n} \int_{\R^n} \cdots \int_{\R^n} \Delta(x_1,\ldots,x_n)^{p} f(x_1) \cdots f(x_n) \, dx_1 \ldots dx_n .
\end{equation}
Similarly, define $\underline{\I}_0(f)$ by replacing the $\limsup$ with a $\liminf$, and if the two limits agree, define $\I_0(f)$ to be their common value. Here and throughout we use $p \rightarrow a^+$ to denote taking the limit to $a$ from the right. We denote $\I(K) = \I(1_K)$ for $\I \in \{ \overline{\I}_0, \underline{\I}_0, \I_0 \}$ (assuming that the limit exists in the latter case). The following is an extended version of Theorem \ref{thm:intro-I0K}:

\begin{thm} \label{thm:I0}
Let $f$ be a non-negative, bounded and compactly supported Borel measurable function on $\R^n$.
\begin{enumerate}
\item If $f$ is lower semi-continuous then $|I(f)| \leq \underline{\I}_0(f)$.
\item If $f$ is upper semi-continuous then $|I(f)| \geq \overline{\I}_0(f)$. 
\item In particular, if $K \subset \R^n$ is compact, then $|I K| \geq \overline{\I}_0(K)$, and if $f$ is continuous then the limit in (\ref{eq:I0}) exists and $|I(f)| = \I_0(f)$. 
\item If $K \subset \R^n$ is compact with radially negligible boundary then the limit in (\ref{eq:I0}) exists and $|IK| = \I_0(K)$. 
\end{enumerate}
\end{thm}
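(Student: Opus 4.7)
The plan is to reduce the $n$-fold integral in (\ref{eq:I0}) to a one-dimensional Abelian limit followed by the Blaschke--Petkantschin formula (\ref{eq:BP}) in the remaining $n-1$ variables. I will integrate out a single variable $x_n$ against the direction normal to $\sspan(x_1,\ldots,x_{n-1})$; the $(p+1)$ prefactor will combine with the resulting $\int_\R |t|^p\, dt$ to produce $2\rho_{I(f)}(\theta)$ in the limit $p\to -1^+$; and BP will convert what remains into $\tfrac{1}{n}\int_{\S^{n-1}}\rho_{I(f)}^n\, d\theta = |I(f)|$ via (\ref{eq:vol-polar}).

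Concretely, for a.e.~$\y=(x_1,\ldots,x_{n-1})\in (\R^n)^{n-1}$ the vectors are linearly independent, and I let $\theta=\theta(\y)\in\S^{n-1}$ denote a unit normal to their linear span. The change of variables $x_n = y_n + t\theta$ with $y_n\in\theta^{\perp}$, $t\in\R$ yields $\Delta(x_1,\ldots,x_n)=\Delta(\y)\,|t|$, and Fubini gives
\[
\int_{\R^n} f(x_n)\Delta^p\, dx_n \;=\; \Delta(\y)^p \int_\R |t|^p F_{\theta}(t)\, dt, \qquad F_\theta(t) := \int_{\theta^{\perp}} f(y+t\theta)\, dy ,
\]
with $F_\theta(0)=\rho_{I(f)}(\theta)$. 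A standard Abelian estimate (split the $t$-integral at $|t|=1$) will show that, for bounded compactly-supported $F$, $(p+1)\int_\R |t|^p F(t)\,dt \to 2F(0)$ whenever $F$ is continuous at $0$, with $\limsup\leq 2F(0)$ (resp.\ $\liminf\geq 2F(0)$) when $F$ is u.s.c.\ (resp.\ l.s.c.) at~$0$. Now $F_\theta$ inherits the corresponding one-sided semicontinuity at $0$ from $f$ via Fatou's lemma (resp.\ its reverse, with $\|f\|_\infty 1_{\supp f}$ serving as the integrable dominator on the cross-section).

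For Part~(1), with $f$ l.s.c., Fatou's lemma commutes $\liminf_{p\to-1^+}$ with the outer $d\y$-integration (the integrand is non-negative) to yield
\[
\underline{\I}_0(f) \;\geq\; \frac{2}{n}\int_{(\R^n)^{n-1}} \prod_{i=1}^{n-1} f(x_i)\,\Delta(\y)^{-1}\rho_{I(f)}(\theta(\y))\,d\y.
\]
Applying (\ref{eq:BP}) to this integral collapses $\theta(\y)=\theta$ on $\y\in(\theta^{\perp})^{n-1}$ and cancels $\Delta\cdot\Delta^{-1}=1$, leaving $\tfrac{2}{n}\cdot\tfrac{1}{2}\int_{\S^{n-1}}\rho_{I(f)}(\theta)\bigl(\int_{\theta^\perp} f\bigr)^{n-1}\, d\theta = \tfrac{1}{n}\int_{\S^{n-1}}\rho_{I(f)}^n\, d\theta = |I(f)|$. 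For Part~(2), the u.s.c.\ case follows the same architecture via reverse Fatou, which requires a $p$-uniform integrable majorant. With $\supp f \subset B_n(R)$ we have $F_\theta \leq \|f\|_\infty\omega_{n-1}R^{n-1}$ and $F_\theta \equiv 0$ on $|t|>R$, so $(p+1)\int |t|^p F_\theta\, dt$ is uniformly bounded for $p\in(-1,0)$. Combined with the elementary bound $\Delta(\y)^p\leq 1+\Delta(\y)^{-1}$ in that range, the full integrand is majorized by a multiple of $(1+\Delta(\y)^{-1})\,1_{B_n(R)^{n-1}}(\y)$, whose $\Delta^{-1}$ term is itself integrable by a further application of BP, and reverse Fatou yields $\overline{\I}_0(f)\leq |I(f)|$.

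Parts~(3) and~(4) will be short corollaries. A continuous $f$ is both u.s.c.\ and l.s.c., so (1)+(2) give $\I_0(f)=|I(f)|$; and $1_K$ is u.s.c.\ for any compact $K$, yielding $|IK|\geq \overline{\I}_0(K)$ from~(2). If $K$ additionally has radially negligible boundary, then Lemma~\ref{lem:radially-negligible} gives $|K\cap\theta^\perp|=|K^\circ\cap\theta^\perp|$ for a.e.~$\theta\in\S^{n-1}$ and hence $|IK|=|I(1_{K^\circ})|$; since $1_{K^\circ}$ is l.s.c.\ and $\I_0$ is monotone under pointwise inequality of non-negative integrands (as $\Delta^p\geq 0$), part~(1) pins the chain $|IK|=|I(1_{K^\circ})|\leq\underline{\I}_0(1_{K^\circ})\leq\underline{\I}_0(1_K)\leq\overline{\I}_0(1_K)\leq |IK|$ to a sequence of equalities. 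The main obstacle will be Part~(2): producing the $p$-uniform integrable majorant for reverse Fatou rests on the same $\Delta^{-1}$ singularity that BP tames, which is no accident---the cancellation $\Delta\cdot\Delta^{-1}=1$ inside BP simultaneously yields the $\rho_{I(f)}^n$ identity and guarantees the majorant's integrability.
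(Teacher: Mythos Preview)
Your proposal is correct and follows essentially the same architecture as the paper: integrate out one variable to reduce to the Abelian limit $(p+1)\int |t|^p F_\theta(t)\,dt \to 2F_\theta(0)$, use the semicontinuity of $F_\theta$ at $0$ inherited from $f$, exchange limit and integration via Fatou/reverse Fatou with the majorant $C(1+\Delta(\y)^{-1})1_{B_n(R)^{n-1}}$, and apply Blaschke--Petkantschin to convert the remaining $(n-1)$-fold integral into $|I(f)|$. The paper runs the computation in the other direction (starting from $|I(f)|$ and recognizing the $\Delta^p$ integral inside), but the ingredients and the key majorant are identical.

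The one genuine difference is your treatment of Part~(4). The paper handles the compact-with-radially-negligible-boundary case in parallel with the semicontinuous cases, invoking the continuity-set clause of the Portmanteau theorem (Lemma~\ref{lem:weak-convergence}(3)) directly on $1_K$ for a.e.~$\theta$. You instead derive Part~(4) as a corollary of Parts~(1) and~(2) by sandwiching: $1_{\interior K}$ is l.s.c., $1_K$ is u.s.c., radially negligible boundary gives $|I(1_{\interior K})| = |IK|$, and monotonicity of the integrand in $f$ closes the chain. Your route is slightly more elementary in that it avoids appealing to the continuity-set version of weak convergence, at the cost of one extra observation; both are perfectly valid.
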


For the proof, we will make use of the following standard lemma:
\begin{lemma} \label{lem:weak-convergence}
Given $\theta \in \S^{n-1}$ and $R > 0$, the (finite) Borel measures $\mu_p := \frac{p+1}{2} \abs{\scalar{\cdot,\theta}}^p 1_{B_n(R)} \H^n$ converge weakly to $\mu_{-1} := 1_{B_n(R)} \H^{n-1}|_{\theta^{\perp}}$ as $p \rightarrow -1^+$, in the sense that for every bounded continuous function $f$ on $\R^n$ the following limit exists and is equal to
\begin{equation} \label{eq:weak-convergence}
\lim_{p \rightarrow -1^+} \int f \, d\mu_p = \int f \, d\mu_{-1} . 
\end{equation}
Moreover, the following hold:
\begin{enumerate}
\item For any bounded lower semi-continuous function $f_l$, $\liminf_{p \rightarrow -1^+} \int f_l \, d\mu_p \geq \int f_l \, d\mu_{-1}$. 
\item For any bounded upper semi-continuous function $f_u$, $\limsup_{p \rightarrow -1^+} \int f_u \, d\mu_p \leq \int f_u \, d\mu_{-1}$. 
\item For any continuity set $K \subseteq \R^n$ of $\mu_{-1}$, namely a Borel set such that $\mu_{-1}(\partial K) = 0$, the limit in (\ref{eq:weak-convergence}) exists and (\ref{eq:weak-convergence}) holds for $f=1_K$. 
\end{enumerate}
\end{lemma}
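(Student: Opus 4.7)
The plan is to reduce everything to the elementary one-dimensional fact that $\tfrac{p+1}{2}|t|^p\,dt$ concentrates at $t=0$ as $p \to -1^+$. Decomposing $x = y + t\theta$ with $y = P_{\theta^{\perp}} x$ and $t = \scalar{x,\theta}$, Fubini yields
\[
\int f \, d\mu_p = \int_{\theta^{\perp}} F_p(y) \, dy, \qquad F_p(y) := \int_{I(y)} f(y+t\theta) \tfrac{p+1}{2}|t|^p\, dt,
\]
where $I(y) := \{t \in \R : y + t\theta \in B_n(R)\} = [-a(y), a(y)]$ with $a(y) := \sqrt{(R^2 - |y|^2)_+}$ (using $y \perp \theta$). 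The one-dimensional identity $\int_0^a (p+1) t^p \, dt = a^{p+1}$ gives the uniform bound $|F_p(y)| \leq \norm{f}_\infty a(y)^{p+1} \leq \norm{f}_\infty \max(1, R^{p+1})$ on the compact set $B_{\theta^{\perp}}(R)$.

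For the main weak convergence (\ref{eq:weak-convergence}), I will show that $F_p(y) \to f(y) 1_{B_n(R)}(y)$ for a.e.~$y \in \theta^{\perp}$, which combined with the uniform bound and bounded convergence closes the argument. For $|y| > R$ this is trivial, and $\{|y| = R\}$ is a null set in $\theta^{\perp}$. For $|y| < R$, given $\eps > 0$, choose $\eta \in (0, a(y))$ so that $|f(y+t\theta) - f(y)| \leq \eps$ for $|t| \leq \eta$. Splitting $F_p(y)$ into the contributions from $|t| \leq \eta$ and $\eta < |t| \leq a(y)$: the first piece equals $f(y)\eta^{p+1}$ up to an error bounded by $\eps\eta^{p+1}$, while the second is bounded by $\norm{f}_\infty(a(y)^{p+1} - \eta^{p+1})$. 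Sending $p \to -1^+$ makes both $\eta^{p+1}$ and $a(y)^{p+1}$ tend to $1$, giving $\limsup_{p \to -1^+} |F_p(y) - f(y)| \leq \eps$; then let $\eps \to 0^+$.

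For items (1)--(3), since all measures are supported in the compact set $B_n(R)$, I will view them as finite Borel measures on this compact metric space and invoke standard Portmanteau-type arguments. For bounded lower semi-continuous $f_l$, I would approximate it from below by an increasing sequence $\{f_l^{(m)}\}$ of bounded continuous functions (e.g., the Moreau inf-convolutions $f_l^{(m)}(x) := \inf_y (f_l(y) + m|x-y|)$); applying the main claim to each $f_l^{(m)}$ and passing to the limit via monotone convergence on the $\mu_{-1}$-side together with the trivial inequality $\int f_l^{(m)}\,d\mu_p \leq \int f_l\,d\mu_p$ on the $\mu_p$-side yields item (1), and item (2) is dual. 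For item (3), the hypothesis $\mu_{-1}(\partial K) = 0$ lets me sandwich $1_K$ between $1_{\interior K}$ (lower semi-continuous) and $1_{\closure K}$ (upper semi-continuous), which agree $\mu_{-1}$-a.e.; items (1)--(2) then squeeze the limit. The main obstacle is the pointwise convergence $F_p(y) \to f(y)$ for $|y| < R$, which requires carefully handling the integrable singularity of $|t|^p$ at $t=0$ against the continuity of $f$; everything else is standard bookkeeping.
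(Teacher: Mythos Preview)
Your proof is correct and follows essentially the same approach as the paper: both reduce the weak convergence to the one-dimensional fact that $\tfrac{p+1}{2}|t|^p\,dt$ concentrates at $\delta_0$ via Fubini, and both deduce items (1)--(3) from Portmanteau-type consequences of weak convergence. You have simply written out in detail what the paper compresses into two sentences and a citation.
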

\begin{proof}
The convergence (\ref{eq:weak-convergence}) for any bounded continuous $f$ immediately reduces by Fubini's theorem to the corresponding statement in dimension $n=1$, namely that $\frac{p+1}{2} 1_{[-R,R]}(t) |t|^p dt$ converges weakly to the delta-measure at the origin $\delta_0$ as $p \rightarrow -1^+$, which is straightforward to verify. The other assertions follow by the Portmanteau theorem \cite[Theorem 13.16]{KlenkeBook3rdEd} (see also \cite[Corollary 2.2.6]{Bogachev-WeakConvergenceBook}).
\end{proof}

\begin{proof}[Proof of Theorem \ref{thm:I0}]
Let $f_l,f_u$ be bounded non-negative compactly-supported lower and upper semi-continuous functions on $\R^n$, respectively. Let $K$ be a compact set in $\R^n$ with radially negligible boundary. Let $R > 0$ be large enough so that $\supp(f_l),\supp(f_u) , K \subset B_n(R)$. 
Note that by Lemma \ref{lem:radially-negligible}, since $K$ has radially negligible boundary then it is a continuity set for $\mu_{-1}^{\theta} := 1_{B_n(R)} \H^{n-1}|_{\theta^{\perp}}$ for a.e.~$\theta \in \S^{n-1}$.

\smallskip
Consequently, by Lemma \ref{lem:weak-convergence}, for any $x_1,\ldots,x_{n-1} \in \theta^{\perp}$ such that $\Delta(x_1,\ldots,x_{n-1}) > 0$, we have
\begin{align*}
  & \; \limsup_{p \rightarrow -1^+} \frac{p+1}{2} \int_{\R^n} \Delta(x_1,\ldots,x_{n-1},x_n)^p f_u(x_n) \, dx_n \\
 =  & \; \limsup_{p \rightarrow -1^+} \frac{p+1}{2} \int_{\R^n} \Delta(x_1,\ldots,x_{n-1})^p \abs{\scalar{x_n,\theta}}^p f_u(x_n) \, dx_n \\
 \leq & \; \Delta(x_1,\ldots,x_{n-1})^{-1} \int_{\theta^{\perp}} f_u(y) \, dy ,
\end{align*}
with a reversed inequality for the $\liminf$ and $f_l$, and equality of the limit for $1_K$ and a.e.~$\theta \in \S^{n-1}$. As $\Delta(x_1,\ldots,x_{n-1}) > 0$ 
for a.e.~$(x_1,\ldots,x_{n-1}) \in (\theta^{\perp})^{n-1}$, it follows by integration in polar coordinates and Fubini's theorem that
\begin{align}
\nonumber  |I(f_u)| & = \frac{1}{n} \int_{\S^{n-1}} \brac{\int_{\theta^{\perp}} f_u(y) dy }^n d\theta \\
\nonumber  & = \frac{1}{n} \int_{\S^{n-1}} \int_{(\theta^{\perp})^{n-1}} f_u(x_1) \cdots f_u(x_{n-1}) dx_1 \ldots dx_{n-1} \int_{\theta^{\perp}} f_u(y) \, dy  \, d\theta \\
\label{eq:DCT1} & \geq  \frac{1}{n} \int_{\S^{n-1}} \int_{(\theta^{\perp})^{n-1}} f_u(x_1) \cdots f_u(x_{n-1})   \Delta(x_1,\ldots,x_{n-1}) \\
\nonumber & \;\; \brac{\limsup_{p \rightarrow -1^+} \frac{p+1}{2} \int_{\R^n} \Delta(x_1,\ldots,x_n)^p f_u(x_n) dx_n } dx_1 \ldots dx_{n-1} \, d\theta ,
\end{align}
with a reversed inequality for the $\liminf$ and $f_l$, and equality of the limit for $1_K$. Assuming we could exchange integration and limit above 
(while respecting the direction of the inequality), after an application of the Tonelli--Fubini theorem we could continue
\begin{align}
\label{eq:DCT1b}  \geq \frac{1}{n}  \limsup_{p \rightarrow -1^+} \frac{p+1}{2} \int_{\R^n} \int_{\S^{n-1}} \int_{(\theta^{\perp})^{n-1}} &  f_u(x_1) \cdots f_u(x_{n-1})   \Delta(x_1,\ldots,x_{n-1}) \\
\nonumber &  \Delta(x_1,\ldots,x_n)^p \, dx_1 \ldots dx_{n-1} \, d\theta \; f_u(x_n) \, dx_n ,
\end{align}
and so by the Blaschke--Petkantschin formula (\ref{eq:BP}),
the previous quantity is equal to
\begin{align*}
& = \limsup_{p \rightarrow -1^+} \frac{p+1}{n} \int_{\R^n} \int_{(\R^n)^{n-1}} f_u(x_1) \cdots f_u(x_{n-1}) f_u(x_n) \Delta(x_1,\ldots,x_n)^p \, dx_1 \ldots dx_{n-1} \, dx_n ,
\end{align*}
with a reversed inequality in (\ref{eq:DCT1b}) for the $\liminf$ and $f_l$, and equality of the limit for $1_K$, thereby concluding the proof of all assertions. 

It remains to justify the exchange of integration and limit in (\ref{eq:DCT1}). Let $M \in (0,\infty)$ be such that $0 \leq f_l,f_u \leq M$, and recall that $\supp(f_l),\supp(f_u), K \subset B_n(R)$. Then for $f \in \{f_l,f_u,1_K\}$, for all $\theta \in \S^{n-1}$ and $x_1,\ldots,x_{n-1} \in \theta^{\perp}$ such that $\Delta(x_1,\ldots,x_{n-1}) > 0$, and for all $p > -1$, the integrand in (\ref{eq:DCT1}) may be bounded from above as follows:
\begin{align*}
& f(x_1) \cdots f(x_{n-1}) \Delta(x_1,\ldots,x_{n-1}) \frac{p+1}{2} \int_{\R^n} \Delta(x_1,\ldots,x_n)^p f(x_n) \, dx_n \\
& \leq M^n \Pi_{i=1}^{n-1} 1_{B_n(R)}(x_i) \Delta(x_1,\ldots,x_{n-1})  \frac{p+1}{2} \int_{B_n(R)} \Delta(x_1,\ldots,x_{n-1})^p \abs{\scalar{x_n,\theta}}^p  \, dx_n \\
& = M^n \Pi_{i=1}^{n-1} 1_{B_n(R)}(x_i) \Delta(x_1,\ldots,x_{n-1})^{1+p}  \frac{p+1}{2} \int_{B_n(R)}  \abs{\scalar{x_n,\theta}}^p  \, dx_n \\
& \leq M^n R^{(n-1)(1+p)} \Pi_{i=1}^{n-1} 1_{B_n(R)}(x_i) \frac{p+1}{2} \int_{B_n(R)}  \abs{\scalar{x_n,\theta}}^p  \, dx_n  . 
\end{align*}
The function $R^{(n-1)(1+p)} \frac{p+1}{2} \int_{B_n(R)}  \abs{\scalar{x_n,\theta}}^p  dx_n$ is continuous in $p \in (-1,0]$, and converges to $|B_n(R) \cap \theta^{\perp}|_{n-1} = R^{n-1} \omega_{n-1}$ as $p \rightarrow -1^+$. Consequently, there is a constant $C_R$ such that this function is bounded from above by $C_R$ uniformly for all $p \in (-1,0]$, and we conclude that the integrand in (\ref{eq:DCT1}) is bounded from above by 
\[
\leq C_R M^n \Pi_{i=1}^{n-1} 1_{B_n(R)}(x_i) . 
\]
Since
\[
\int_{\S^{n-1}} \int_{(\theta^{\perp})^{n-1}} C_R M^n \Pi_{i=1}^{n-1} 1_{B_n(R)}(x_i) \, dx_1 \ldots dx_{n-1} \, d\theta < \infty, \]
the exchange of integration and limit (while respecting the direction of the inequality)  in (\ref{eq:DCT1}) for $f_\ell$, $f_u$ and $1_K$ is justified by Fatou's Lemma, the Reverse Fatou Lemma and Lebesgue's Dominant Convergence Theorem, respectively. This concludes the proof. 
\end{proof}

\subsection{Second formula --- $\I_u(K)$}

Recall from Definition \ref{def:intro-u-finite} that a compact set $K$ in $\R^n$ is called $u$-finite for a given $u \in \S^{n-1}$, if for a.e.~$y \in u^{\perp}$, $K \cap (y + \sspan u)$ consists of a finite disjoint union of closed intervals of positive length. We will see in Section \ref{sec:Lip} that a Lipschitz star body is necessarily $u$-finite for a.e.~$u \in \S^{n-1}$. For a $u$-finite compact set $K$, we now show that the limit in (\ref{eq:I0}) when $f = 1_K$ exists, and obtain an explicit expression for it. For the reader's convenience, we repeat the formulation of Theorem \ref{thm:intro-Iu} below. 

\begin{thm} \label{thm:Iu}
Let $K$ be a $u$-finite compact set in $\R^n$. 
Then the limit in (\ref{eq:I0}) for $f = 1_K$ exists and $\I_0(K) = \I_u(K)$, where
\begin{equation} \label{eq:Iu}
\I_u(K) := \frac{2}{n}\int_{(P_{u^\perp} K)^n} \Delta(\tilde y_1,\ldots,\tilde y_{n-1})^{-1} |R_\y \cap \theta_\y^{\perp}|_{n-1} \, dy_1 \ldots dy_n .
\end{equation}
Here $\y = (y_1,\ldots,y_n) \in (u^{\perp})^n$, $R_\y = \{ (s^1,\ldots,s^n)  \in \R^n :  y_i + s^i u \in K \, , \,  i=1,\ldots,n\}$, $\theta_\y \in \S^{n-1}$ denotes a linear dependency satisfying $\sum_{i=1}^n \theta_\y^i y_i = 0$, and $(\tilde y_1,\ldots,\tilde y_{n-1})$ denote the $n-1$ rows of the $(n-1) \times n$ matrix whose $n$ columns in $u^{\perp}$ are $(y_1,\ldots,y_n)$. 
\end{thm}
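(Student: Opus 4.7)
The plan is to compute the defining limit of $\I_0(1_K)$ by changing variables along the $u$-direction, reducing the singularity at $p=-1$ to a one-dimensional line integral, and passing to the limit under the transverse integration by dominated convergence. Writing each $x_i = y_i + s^i u$ with $y_i \in u^\perp$ and $s^i \in \R$, the indicator $\prod_i 1_K(x_i)$ restricts the $y_i$'s to $P_{u^\perp}K$ and forces $s=(s^1,\ldots,s^n)\in R_\y$, so by Fubini the defining integral becomes
\begin{equation*}
\frac{p+1}{n}\int_{(P_{u^\perp}K)^n}\int_{R_\y}\Delta(y_1+s^1u,\ldots,y_n+s^n u)^p\,ds\,d\y.
\end{equation*}
Laplace expansion of the $n\times n$ determinant along the $u$-row produces $\det=\sum_i s^iC_i$, where the cofactor vector $(C_i)_{i=1}^n$ lies in the one-dimensional kernel of the $(n-1)\times n$ matrix $[y_1\mid\cdots\mid y_n]$. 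By the Gram identity $\sum_i C_i^2 = \Delta(\tilde y_1,\ldots,\tilde y_{n-1})^2$ (which is just $\det(A A^T)$ for $A=[y_1\mid\cdots\mid y_n]$), this vector equals $\pm\Delta(\tilde y_1,\ldots,\tilde y_{n-1})\theta_\y$, giving the key factorization
\begin{equation*}
\Delta(y_1+s^1u,\ldots,y_n+s^nu)\;=\;\Delta(\tilde y_1,\ldots,\tilde y_{n-1})\,|\langle s,\theta_\y\rangle|.
\end{equation*}

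For a.e.~fixed $\y$ with $\Delta(\tilde y_1,\ldots,\tilde y_{n-1})>0$, the one-dimensional version of Lemma~\ref{lem:weak-convergence} (after rotating $\theta_\y$ to a coordinate axis and invoking Fubini) yields
\begin{equation*}
\lim_{p\to -1^+}(p+1)\int_{R_\y}|\langle s,\theta_\y\rangle|^p\,ds\;=\;2\,|R_\y\cap\theta_\y^\perp|_{n-1},
\end{equation*}
provided $R_\y$ is a continuity set of $\H^{n-1}$ restricted to $\theta_\y^\perp$. Since $K$ is $u$-finite, $R_\y=R_\y^1\times\cdots\times R_\y^n$ is a finite union of axis-aligned boxes, so $\partial R_\y$ lies in finitely many coordinate-orthogonal hyperplanes; its intersection with $\theta_\y^\perp$ has Hausdorff dimension $n-2$ whenever $\theta_\y$ is not parallel to a coordinate axis, which is a null condition on $\y$. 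Combining this with the factorization, the $\y$-integrand converges pointwise a.e.~to $2\,\Delta(\tilde y_1,\ldots,\tilde y_{n-1})^{-1}|R_\y\cap\theta_\y^\perp|_{n-1}$; integrating this limit over $(P_{u^\perp}K)^n$ and dividing by $n$ produces exactly $\I_u(K)$.

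The only delicate step is the exchange of limit and $\y$-integration via dominated convergence. The factor $(p+1)\int_{R_\y}|\langle s,\theta_\y\rangle|^p\,ds$ is uniformly bounded in $\y$ and $p\in(-1,0)$: since $R_\y\subseteq[-M,M]^n$ for $M$ a diameter bound on $K$, rotating $\theta_\y$ onto a coordinate axis reduces to the one-dimensional bound $(p+1)\int_{-M}^M|t|^p\,dt=2M^{p+1}$. The factor $\Delta(\tilde y_1,\ldots,\tilde y_{n-1})^p$ is dominated on the bounded set $(P_{u^\perp}K)^n$ by $\max(1,\Delta(\tilde y_1,\ldots,\tilde y_{n-1})^{-1})$, so the outstanding input for DCT is the integrability $\Delta(\tilde y_1,\ldots,\tilde y_{n-1})^{-1}\in L^1((P_{u^\perp}K)^n)$. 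I expect this to be the main technical point: the zero locus of $\Delta(\tilde y_1,\ldots,\tilde y_{n-1})$ is the rank-deficient locus of the $(n-1)\times n$ matrix $[y_1\mid\cdots\mid y_n]$, a determinantal variety of codimension~$2$, so $\Delta^{-1}$ has a codimension-two singularity and is locally integrable. A clean justification can be given by transverse slicing near the singular locus, or alternatively by a Blaschke--Petkantschin-type decomposition inside $u^\perp$.
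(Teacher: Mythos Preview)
Your proposal is correct and follows essentially the same route as the paper: the same fiberwise change of variables, the same factorization $\Delta(x_1,\ldots,x_n)=\Delta(\tilde y_1,\ldots,\tilde y_{n-1})\,|\langle s,\theta_\y\rangle|$, the same use of Lemma~\ref{lem:weak-convergence} on the fibers, and the same DCT structure with the dominating function $\Delta(\tilde y_1,\ldots,\tilde y_{n-1})^{-1}$. The one point you leave as a sketch---integrability of $\Delta(\tilde y_1,\ldots,\tilde y_{n-1})^{-1}$ over $(P_{u^\perp}K)^n$---is exactly where the paper invokes Blaschke--Petkantschin (\ref{eq:BP}) in $\R^n$ (not inside $u^\perp$): switching from column variables $y_i\in u^\perp$ to row variables $\tilde y_j\in\R^n$ sends $(P_{u^\perp}K)^n$ into a cube $(RQ_n)^{n-1}$, and then $\int_{(RQ_n)^{n-1}}\Delta(\tilde y_1,\ldots,\tilde y_{n-1})^{-1}\,d\tilde y = \int_{G_{n,n-1}}|RQ_n\cap E|^{n-1}\,dE<\infty$, which cleanly realizes your ``codimension-two'' heuristic.
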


Here the integration in each $dy_i$ is of course with respect to the $\H^{n-1}$ measure on $u^{\perp}$, and all references to a.e.~$\y=(y_1,\ldots,y_n) \in (P_{u^{\perp}} K)^n \subset (u^\perp)^n$ are with respect to the corresponding product measure. 
Note that for a.e.~$\y=(y_1,\ldots,y_n)$, $\{y_1,\ldots,y_n\}$ are affinely independent, and hence the linear dependency $\theta_\y \in \S^{n-1}$  above is unique up to sign and a Borel measurable function of $\y$, and so the above integral is well defined. Instead of using $\theta_\y^{\perp}$ in (\ref{eq:Iu}), we could write $\sspan(\tilde y_1,\ldots,\tilde y_{n-1})$,
as these coincide for a.e.~$\y$, but the present form is more convenient. 

\begin{proof}
Complete $u$ to an orthonormal basis $\mathcal{B} = \{v_1,\ldots,v_{n-1}, u\}$ of $\R^n$. Given 
$\y = (y_1,\ldots,y_n) \in (P_{u^\perp} K)^n$ and $s = (s^1,\ldots,s^n) \in \R^n$, 
let $\Y_s$ denote the $n\times n$ matrix whose $i$-th column is $y_i + s^i u$ written in the basis~$\mathcal{B}$. By definition, the rows of $\Y_s$ are precisely $(\tilde y_1,\ldots,\tilde y_{n-1},s)$, and hence $\Delta(y_1 + s^1 u, \ldots, y_n + s^n u) = \abs{\det(\Y_s)} = \Delta(\tilde y_1, \ldots , \tilde y_{n-1}, s)$. 
In addition, $\theta_\y$  is perpendicular to $\{\tilde y_j\}_{j=1,\ldots,n-1}$. Consequently,
\begin{align}
\nonumber \overline{\I}_0(K) & = \limsup_{p \rightarrow -1^+} \frac{p+1}{n} \int_{K^n} \Delta(x_1,\ldots,x_n)^p \, dx_1 \ldots dx_n \\
\nonumber & = \limsup_{p \rightarrow -1^+} \frac{p+1}{n} \int_{(P_{u^{\perp}} K)^n} \int_{R_{\y}} \Delta(y_1 + s^1 u, \ldots, y_n + s^n u)^p \, ds \, d\y  \\
\nonumber  &= \limsup_{p \rightarrow -1^+} \frac{p+1}{n}\int_{(P_{u^{\perp}} K)^n} \int_{R_{\y}} \Delta(\tilde y_1, \ldots , \tilde y_{n-1}, s)^p \, ds \, d\y \\
\label{eq:DCT2} & = \limsup_{p \rightarrow -1^+} \frac{p+1}{n} \int_{(P_{u^{\perp}} K)^n}  \Delta(\tilde y_1, \ldots , \tilde y_{n-1})^p \int_{R_{\y}}  \abs{\scalar{\theta_\y,s}}^p\;  ds \, d\y .
\end{align}
If we could exchange limit and integration, we could then proceed as follows:
\begin{equation} \label{eq:DCT2-end}
= \frac{2}{n} \int_{(P_{u^{\perp}} K)^n}  \Delta(\tilde y_1, \ldots , \tilde y_{n-1})^{-1} \limsup_{p \rightarrow -1^+} \frac{p+1}{2}\int_{R_{\y}}  \abs{\scalar{\theta_\y,s}}^p ds \, d\y . 
\end{equation}
Since $K$ is assumed to be $u$-finite, we know that $R_\y$ is a finite disjoint union of compact rectangles with non-empty interior for a.e.~$\y \in (P_{u^{\perp}} K)^n$. 
An axis-aligned rectangle $R$ in $\R^n$ trivially satisfies $|\partial R \cap \theta^{\perp}|_{n-1} = 0$ unless $\theta \in \{ \pm e_i \}_{i=1,\ldots,n}$. Since $\theta_\y \neq \{ \pm e_i \}$ unless $y_i = 0$, we conclude that $R_\y$ is a continuity set for the measure $\H^{n-1}|_{\theta_\y^{\perp}}$ for a.e.~$\y \in  (P_{u^{\perp}} K)^n$. By Lemma \ref{lem:weak-convergence}, it follows that for a.e.~$\y \in  (P_{u^{\perp}} K)^n$,
\[
\lim_{p \rightarrow -1^+} \frac{p+1}{2}\int_{R_{\y}}  \abs{\scalar{\theta_\y,s}}^p \, ds =  |R_\y \cap \theta_\y^{\perp}|_{n-1} .
\]
Plugging this into (\ref{eq:DCT2-end}) would then verify that the outer limit exists and complete the proof of (\ref{eq:Iu}). 

It remains to justify exchanging limit and integration in (\ref{eq:DCT2}) by invoking Lebesgue's Dominant Convergence Theorem. Let $R > 0$ be large enough so that $K \subset B_n(R)$, and hence $R_{\y} \subset R Q_n \subset B_n(R \sqrt{n})$ for all $\y \in  (P_{u^{\perp}} K)^n$, where $Q_n = [-1,1]^n = B_\infty^n$. Then, for every $p > -1$,
\[
\frac{p+1}{2} \int_{R_\y} \abs{\scalar{\theta_\y,s}}^p \, ds \leq \frac{p+1}{2} \int_{B_n(R \sqrt{n})} \abs{\scalar{\theta_\y,s}}^p \, ds .
\]
The right-hand side is independent of $\theta_\y$, continuous in $p \in (-1,0]$, and converges as $p \rightarrow -1^+$ to $|B_n(R \sqrt{n}) \cap e_1^{\perp}|_{n-1} < \infty$; consequently, it is bounded by some constant $C_{n,R}$ uniformly in $p \in (-1,0]$. 
In addition, since $t^\alpha \leq 1 + t$ for all $t \geq 0$ and $\alpha \in [0,1]$, we have $\Delta(\tilde y_1,\ldots,\tilde y_{n-1})^p \leq 1 + \Delta(\tilde y_1,\ldots,\tilde y_{n-1})^{-1}$ for all $p \in [-1,0]$. Consequently, for a.e.~$\y \in  (P_{u^{\perp}} K)^n$ and all $p \in (-1,0]$,
\[
\Delta(\tilde y_1,\ldots,\tilde y_{n-1})^p \frac{p+1}{2} \int_{R_\y} \abs{\scalar{\theta_\y,s}}^p \;  ds \leq C_{n,R} (1 + \Delta(\tilde y_1,\ldots,\tilde y_{n-1})^{-1}) . 
\]
It remains to show that the right-hand side is integrable over $(P_{u^{\perp}} K)^n$. Note that $P_{u^{\perp}} K \subset B_{u^{\perp}}(R) \subset R Q_{u^{\perp}}$, where $Q_{u^{\perp}} = Q_{u^{\perp}}(\mathcal{B})$ denotes the unit cube in $u^{\perp}$ in the $\mathcal{B}$-basis. When the columns of the matrix $\Y_0$ range over $R Q_{u^{\perp}}$, the first $n-1$ rows of $\Y_0$ range over $R Q_n$. Consequently, applying a change of variables and the Blaschke--Petkantschin formula (\ref{eq:BP}), we obtain 
\begin{align*}
\int_{(P_{u^{\perp}} K)^n} \Delta(\tilde y_1,\ldots,\tilde y_{n-1})^{-1} \, d\y & \leq \int_{(R Q_{u^{\perp}})^n} \Delta(\tilde y_1,\ldots,\tilde y_{n-1})^{-1} \, d\y \\
& = \int_{(R Q_n)^{n-1}} \Delta(\tilde y_1,\ldots,\tilde y_{n-1})^{-1} \, d\tilde y_1 \ldots d\tilde y_{n-1} \\
& =  \int_{G_{n,n-1}} \int_{(R Q_n \cap E)^{n-1}} \, d\tilde y_1 \ldots d\tilde y_{n-1} \, dE \\
& = \int_{G_{n,n-1}} |R Q_n \cap E|^{n-1} \, dE < \infty . 
\end{align*}
This concludes the proof. 
\end{proof}

\section{Continuous Steiner symmetrization} \label{sec:Steiner}

\subsection{Steiner symmetrization} \label{subsec:Steiner}

Let $u \in \S^{n-1}$,  and recall our notation $L_u = \sspan u$ and $L_u^y = y + L_u$ for $y \in u^{\perp}$. 
Given a compact set $K$ in $\R^n$, its Steiner symmetral $S_u K$ is defined by replacing for every $y \in P_{u^{\perp}} K$ the one-dimensional fiber $K \cap L_u^y$  by a symmetric closed interval in $L_u^y$ having the same one-dimensional Lebesgue measure. In other words,
\[
S_u K \cap L_u^y = y + [-\h_y,\h_y] u \;\; , \;\;  \h_y = \frac{1}{2} |K \cap L_u^y|_1 \;\;\;  \forall y \in P_{u^{\perp}} K 
\]
(and $S_u K \cap L_u^y = \emptyset$ for $y \notin P_{u^{\perp}} K$). 
It is well known that the resulting $S_u K$ remains compact \cite[Proposition 7.1.4]{KrantzParks-GeometryBook}. It is also possible to extend this definition to general Borel sets, but this requires caution since the resulting symmetral may not be a Borel set, only Lebesgue measurable (see \cite[Remark 7.1.6]{KrantzParks-GeometryBook} and \cite[Theorem 2.3]{EvansGariepy-BookRevised}); we refrain from this unnecessary generality here. 

By passing to a layer-cake representation, the definition of Steiner symmetrization immediately extends to very general functions on $\R^n$. Since we only consider the Steiner symmetrization of compact sets, we restrict to upper semi-continuous compactly supported non-negative functions $f \in \usc_c(\R^n,\Real_+)$, since for such functions $\{ f \geq t \}$ is a compact set for all $t > 0$. Writing $f(x) = \int_0^\infty 1_{\{f \geq t\}}(x) \, dt$, we define
\[
S_u f(x) := \int_0^\infty 1_{S_u \{ f \geq t\}}(x) \, dt . 
\]
Since $S_u \{ f \geq t\}$ is compact for all $t > 0$, it follows that $S_u f \in \usc_c(\R^n,\Real_+)$. It is known that if $f \in C_c(\R^n, \R_+)$ then $S_u f \in C_c(\R^n , \R_+)$ \cite[Theorem 6.10]{Baernstein-Book}. 
Note that unlike some authors (e.g., \cite{Baernstein-Book}), we consider the level set $\{ f \geq t\}$ instead of $\{ f > t \}$, which alters the direction of various convergence statements for $\{S_u f_k\}$ in the literature, but the adaptation to our convention is straightforward. If $\{f_k\} \subset \usc_c(\R^n,\R_+)$ is such that $f_k \searrow f$ then of course $f \in \usc_c(\R^n,\R_+)$, and it is known that $S_u f_k \searrow S_u f$ (see \cite[Propositions 1.39, 1.43, 6.3]{Baernstein-Book} and recall that the direction of monotonicity of the convergence should be reversed). 

\smallskip

A function $F : \R^N \rightarrow \R_+$ is called quasi-concave if its super level sets $\{F \geq t\}$ are convex for all $t \geq 0$. 
A functional $G : (\R^n)^N \rightarrow \R_+$ is called Steiner concave if for every $u \in \S^{n-1}$ and $\y = (y_1,\ldots,y_N) \in (u^{\perp})^N$, the function $G_{u,\y} : \R^N \rightarrow \R_+$ given by
\[
G_{u,\y}(t) = G(y_1 + t^1 u,\ldots, y_N + t^N u) 
\]
is even and quasi-concave. It is known and easy to check that $\Delta(x_1,\ldots,x_n)^p$ is Steiner concave for all $p < 0$. We refer to the excellent survey \cite{PaourisPivovarov-Survey} for all references and additional details. 
A very useful property of Steiner concave functionals $G$ is that they cannot decrease under Steiner symmetrization:
\begin{align*}
& \int_{(\R^n)^N} G(x_1,\ldots,x_N) f_1(x_1)\ldots f_N(x_N) \, dx_1 \ldots dx_N \\
& \leq \int_{(\R^n)^N} G(x_1,\ldots,x_N) S_u f_1(x_1)\ldots S_u f_N(x_N) \, dx_1 \ldots dx_N  . 
\end{align*}

While we do not require this for the sequel, we can now easily deduce the following extended version of Corollary \ref{cor:intro-SuK} from the Introduction.\begin{proposition}
Let $f \in \usc_c(\R^n,\R_+)$. Then, for all $u \in \S^{n-1}$,
\[
\overline{\I}_0(f) \leq \overline{\I}_0(S_u f) \text{ and } |I(f)| \leq |I(S_u f)|.
\] 
\end{proposition}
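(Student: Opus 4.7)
The plan is to derive the first inequality directly from the Steiner concavity of $\Delta^p$ for $p<0$ recalled in Subsection \ref{subsec:Steiner}, and then to obtain the second inequality by approximating $f$ from above by continuous compactly supported functions, for which Theorem \ref{thm:I0}(3) identifies $\I_0$ with $|I(\cdot)|$.

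For the first inequality, fix any $p \in (-1,0)$. Since $\Delta(x_1,\ldots,x_n)^p$ is Steiner concave on $(\R^n)^n$, the integral property of Steiner concave functionals recalled in Subsection \ref{subsec:Steiner} (applied with $f_1=\cdots=f_n=f \in \usc_c(\R^n,\R_+)$) gives
\[
\int_{(\R^n)^n} \Delta(x_1,\ldots,x_n)^p \, f(x_1)\cdots f(x_n)\, dx_1\cdots dx_n \; \leq \; \int_{(\R^n)^n} \Delta(x_1,\ldots,x_n)^p \, S_u f(x_1)\cdots S_u f(x_n)\, dx_1\cdots dx_n.
\]
Multiplying by $(p+1)/n > 0$ and taking $\limsup$ as $p \to -1^+$ yields $\overline{\I}_0(f) \leq \overline{\I}_0(S_u f)$.

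For the second inequality, I would choose a decreasing sequence $\{f_k\} \subset C_c(\R^n,\R_+)$ with $f_k \searrow f$, which exists by a standard regularization of bounded USC compactly supported functions (e.g.\ via sup-convolution with a steep Lipschitz kernel followed by compact truncation). By the cited property of Steiner symmetrization, $S_u f_k \searrow S_u f$, and $S_u f_k \in C_c(\R^n,\R_+)$ as well. Since $f_k$ is continuous, Theorem \ref{thm:I0}(3) gives $\I_0(f_k) = |I(f_k)|$ (the limit defining $\I_0$ exists), and likewise $\I_0(S_u f_k) = |I(S_u f_k)|$. Applying the first inequality, proved above, to $f_k$ (where $\overline{\I}_0 = \underline{\I}_0 = \I_0$) yields $|I(f_k)| \leq |I(S_u f_k)|$ for each $k$. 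It then remains to pass to the limit in $k$: choosing $R>0$ so that $\supp f_k \subset B_n(R)$ uniformly, and noting that $f_k \leq \|f_1\|_\infty$, dominated convergence on each hyperplane $\theta^\perp$ gives $\rho_{I(f_k)}(\theta) \to \rho_{I(f)}(\theta)$ pointwise on $\S^{n-1}$, with uniform bound $R^{n-1}\omega_{n-1}\|f_1\|_\infty$; a second application of dominated convergence to $|I(g)|=\tfrac{1}{n}\int_{\S^{n-1}} \rho_{I(g)}^n\, d\theta$ yields $|I(f_k)| \to |I(f)|$, and the same argument applied to $S_u f_k \searrow S_u f$ gives $|I(S_u f_k)| \to |I(S_u f)|$, finishing the proof.

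The only nontrivial step is the approximation argument; once one has a decreasing continuous approximation $f_k \searrow f$, everything else follows from the already-established results (Theorem \ref{thm:I0}(3), Steiner concavity, and the known monotone behavior $S_u f_k \searrow S_u f$) together with routine applications of dominated convergence. I expect the main obstacle to be purely bookkeeping: verifying the uniform compact support, and ensuring that neither the $\limsup$ in $\overline{\I}_0$ nor the bookkeeping of decreasing versus increasing monotonicity produces a sign error in the chain of inequalities.
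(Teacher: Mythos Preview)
Your proposal is correct and follows essentially the same route as the paper: Steiner concavity of $\Delta^p$ gives the $\overline{\I}_0$ inequality, then for the $|I(\cdot)|$ inequality one first handles $f \in C_c(\R^n,\R_+)$ via Theorem~\ref{thm:I0}(3) and extends to $\usc_c$ by approximating $f_k \searrow f$ with continuous functions (the paper invokes Baire's theorem for this step, you use sup-convolution), using $S_u f_k \searrow S_u f$ and dominated convergence to pass to the limit in $|I(\cdot)|$.
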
 
\begin{proof}
Whenever $p < 0$, $\Delta(x_1,\ldots,x_n)^p$  appearing in (\ref{eq:I0}) is a Steiner concave function, and hence the integral cannot decrease under Steiner symmetrization. The same holds after taking the $\limsup$ as $p \rightarrow -1^+$ and so $\overline{\I}_0(f) \leq \overline{\I}_0(S_u f)$ for any $f \in \usc_c(\R^n, \R_+)$. If $f \in C_c(\R^n,\R_+)$ then $S_u f \in C_c(\R^n,\R_+)$, and hence by Theorem \ref{thm:I0},
\[
|I(f)| = \I_0(f) \leq   \I_0(S_u f) = |I(S_u f)| . \]
To obtain the inequality between the left- and right-most terms for general $f \in \usc_c(\R^n, \R_+)$, we apply Baire's theorem, stating that there exists a sequence $\{f_k\} \subset C_c(\R^n, \R_+)$ such that
$f_k \searrow f$ pointwise. For this sequence, we know that
\[
|I(f_k)| \leq |I(S_u f_k)| . 
\]
As explained above, it is known that $S_u f_k \searrow S_u f$ pointwise. It remains to note that if $\{g_k\}$ are uniformly bounded Borel functions supported in a common compact set which converge pointwise to $g$ then $|I(g_k)|$ converges to $|I(g)|$ by Lebesgue's Dominant Convergence Theorem.
\end{proof}

\subsection{Continuous version on multi-graphical sets} \label{subsec:StK}

When $K$ is a convex body, ensuring that $K \cap L_u^y$ is a compact interval $[c_y-\h_y ,c_y + \h_y]$, an obvious continuous version of Steiner symmetrization $\{S^t_u K\}_{t \in [0,1]}$ may be defined as
\begin{equation} \label{eq:SteinerForConvex}
S^t_u K \cap L_u^y = y + ((1-t) c_y + [-\h_y,\h_y]) u \;\; , \;\;  \h_y = \frac{1}{2} |K \cap L_u^y|_1 \;\;\;  \forall y \in P_{u^{\perp}} K 
\end{equation}
(with $S^t_u K \cap L_u^y = \emptyset$ for $y \notin P_{u^{\perp}} K$). 
For an illustration, see Figure~\ref{fig:contS}.
This is a particular case of a shadow system, introduced and studied by Rogers and Shephard \cite{RogersShephard-ShadowSystems,Shephard-ShadowSystems}, which has proved extremely useful in geometric applications and extremization problems. In particular, $S^t_u K$ remains a convex body for all $t \in [0,1]$. 

\begin{figure} \centering
\includegraphics[width=4.5cm]{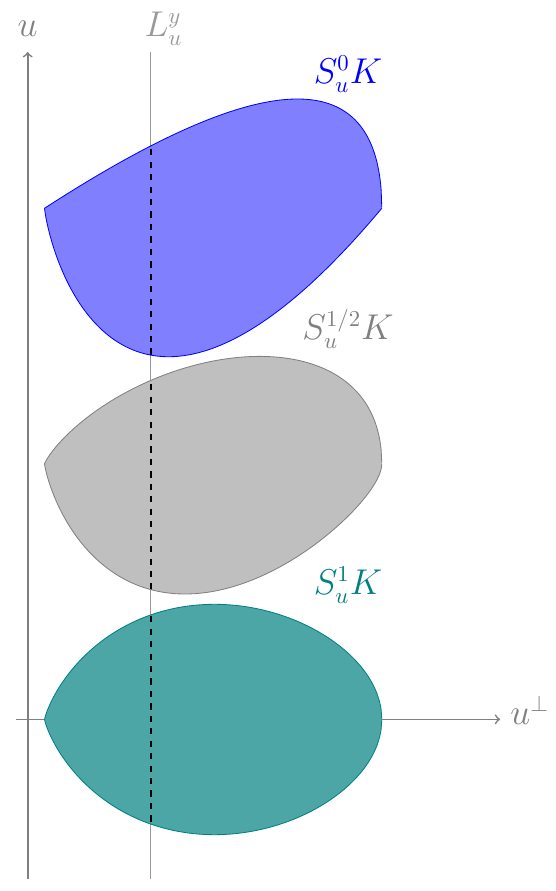}
\caption{An illustration of a continuous Steiner symmetrization
of a convex body $K$ in direction $u$ at three times $0,1/2,1$.
The dashed interval $S^t_u K \cap L_u^y$ of length $2 \h_y$ is being translated towards the origin in $L_u^y$ at a constant velocity (each fiber has its own velocity).}
\label{fig:contS}
\end{figure}

For more general measurable sets $K$ in $\R^n$, various notions of continuous Steiner symmetrization, defined up to null-sets, have been proposed in the literature (see \cite{Brock-ContSteiner, Brock-ContSteiner2, McNabb,Solynin-Cont1, Solynin-Cont2} and also \cite{BGGK-PolyaSzego} for a unified treatment).  
However, for a general compact set $K$, we are not aware of a known 
definition of $S^t_u K$ which leads to a well-defined (not up to null-sets!) compact set on one hand, and which is useful for the geometric applications we have in mind on the other. In this work, we propose such a definition for a certain class of compact sets. 

\begin{defn}[$u$-multi-graphical set] \label{def:mgs} Given $u \in \S^{n-1}$, a compact set $K$ in $\R^n$ is called $u$-multi-graphical, if there exist disjoint open sets $\Omega_1, \Omega_2,\ldots \subset P_{u^{\perp}} K$ and two sequences of
continuous functions
\[
f_i, g_i : \bigcup_{m=i}^\infty \Omega_ m \rightarrow \R ,
\]
such that the following properties hold: \begin{enumerate}
\item \label{it:mgs-1} Denoting $\Omega_\infty := \cup_m \Omega_m$, we have $\H^{n-1}(P_{u^\perp} K \setminus \Omega_\infty) = 0$. 
\item $f_1 < g_1 < f_2 < g_2 < \cdots < f_m < g_m$ on $\Omega_m$. 
\item \label{it:mgs-3} For all $y \in \Omega_m$, $K \cap L_u^y = y + u \cup_{i=1}^m [f_i(y),g_i(y)]$. 
\end{enumerate}
\end{defn}

\begin{rem} \label{rem:mgs}
Since the functions $\{f_i,g_i\}_{i=1,\ldots,m}$ are continuous on the open $\Omega_m$, it follows that $\partial K \cap L_u^y = y + u \cup_{i=1}^m \{f_i(y),g_i(y)\}$ for all $y \in \Omega_m$. In addition, note that a $u$-multi-graphical set $K$ is trivially $u$-finite (recall Definition \ref{def:intro-u-finite}). 
\end{rem}

To define $S^t_u K$ for a $u$-multi-graphical compact set $K$ in $\R^n$, we first define $S^t J$ when $J \subset \R$ is a finite disjoint union of closed intervals $J = \cup_{i=1}^m [c_i-\h_i,c_i + \h_i]$ ($\h_i > 0$) --- we denote the collection of such sets by $\J_m$. The idea, going back to the work of Rogers \cite{Rogers-BLL} and Brascamp--Lieb--Luttinger \cite{BrascampLiebLuttinger}, is as follows. Each interval $[c_i - \h_i,c_i+\h_i]$ is moved independently towards the origin at a constant speed of $-c_i$ until the first time $\tau \in (0,1)$ at which two intervals touch (if there is only one interval set $\tau = 1$). In other words, we define
\[
S^t J = \cup_{i=1}^m ((1-t) c_i + [-\h_i,\h_i]) \;\;\; t \in [0,\tau] . 
\]
If $\tau < 1$, this means that at time $\tau$ the number of intervals $m'$ in $S^{\tau} J = \cup_{i=1}^{m'} [c'_i - \h'_i,c'_i + \h'_i]$ has decreased, and we recursively set
\begin{equation} \label{eq:semi-group}
S^t J = S^{\frac{t-\tau}{1-\tau}} (S^{\tau} J) \;\;\; t \in [\tau , 1] . 
\end{equation}
Clearly $|S^t J|_1 = |J|_1$ for all $t \in [0,1]$ and $S^1 J = [-\frac{|J|_1}{2} , \frac{|J|_1}{2}]$. It is also easy to check 
that the ``semi-group" property (\ref{eq:semi-group}) remains valid for all $\tau \in [0,1]$ (interpreting $0/0$ as $0$); this is easier to see using an alternative time parametrization $[0,\infty] \ni s \mapsto t = 1 - e^{-s} \in [0,1]$ and setting $\bar S^s = S^t$, whence (\ref{eq:semi-group}) becomes
\[ \bar S^{s_1 + s_2} J = \bar S^{s_2} \bar S^{s_1} J \;\;\; \forall s_1,s_2 \in [0,\infty] . 
\] 
After a preliminary version of this work was completed, we learned from G.~Bianchi and R.~Gardner about Brock's work \cite{Brock-ContSteiner,Brock-ContSteiner2}, where he uses the $\bar S^s$ parametrization to define the continuous symmetrization $\bar S^s J$, first for $J \in \J_m$ and then up to null-sets for general measurable subsets $J \subset \R$ (of finite Lebesgue measure). Brock then applies this operation fiberwise to extend his definition to $\R^n$, but it is not clear why this would preserve compactness, nor how to describe the boundary of the resulting sets $\bar S^s_u K$. In contrast, our idea is to only apply $S^t$ to the ``good fibers" over $\Omega_\infty$ and then take the closure of the resulting set, but we then need to justify that this does not alter the action of $S^t$ on the good fibers; consequently, our construction is restricted to $u$-multi-graphical sets where this can be ensured. To this end, we require several simple lemmas. Lemmas \ref{lem:StMonotone} and \ref{lem:St-sum} below were also obtained by Brock, but for completeness and to keep our presentation self-contained, we have left our original proofs as they first appeared. The other statements below, in particular those regarding control of and convergence in the Hausdorff distance, as well as preservation of star-shapedeness, appear to be new. 

\begin{lemma}[Monotonicity] \label{lem:StMonotone}
Let $J^1 \in \J_{m_1}$ and $J^2 \in \J_{m_2}$. If $J^1 \subseteq J^2$ then $S^t J^1 \subseteq S^t J^2$ for all $t \in [0,1]$.
\end{lemma}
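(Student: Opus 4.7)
The plan is strong induction on the total interval count $m_1 + m_2$. For the base case $m_1 = m_2 = 1$, write $J^i = [c_i - \h_i, c_i + \h_i]$ and observe that $J^1 \subseteq J^2$ is the arithmetic condition $|c_1 - c_2| \leq \h_2 - \h_1$. Since $S^t$ fixes each half-width and scales the center to $(1-t)c_i$, we get $(1-t)|c_1 - c_2| \leq |c_1 - c_2| \leq \h_2 - \h_1$, yielding the claim for every $t \in [0,1]$.

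For the inductive step, I would define $\tau^i \in (0,1]$ to be the first time two adjacent intervals in the linearly translating system $\{(1-t)c^i_k + [-\h^i_k, \h^i_k]\}$ touch (set $\tau^i = 1$ if $m_i = 1$) and let $\tau := \min(\tau^1, \tau^2)$. On $[0,\tau)$ no intervals merge in either system, so each $S^t J^i$ is the disjoint union of these translating intervals. Because the components of $J^2$ are the connected components of $J^2$ viewed as a topological subspace of $\R$, every component $I^1_k$ of $J^1$ lies in a unique component $I^2_{j(k)}$ of $J^2$; applying the base-case calculation to each such pair and then taking unions gives $S^t J^1 \subseteq S^t J^2$ on $[0, \tau)$, and continuity of $t \mapsto S^t J^i$ up to the collision extends the inclusion to $t = \tau$. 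If $\tau = 1$ we are done; otherwise a merger has occurred, so $m_1(\tau) + m_2(\tau) < m_1 + m_2$, and the semi-group identity (\ref{eq:semi-group}) combined with the induction hypothesis applied to the pair $S^\tau J^1 \subseteq S^\tau J^2$ gives $S^t J^1 \subseteq S^t J^2$ for $t \in [\tau, 1]$.

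The main subtlety I expect is handling the possibility that two components of $J^1$ lying in different components of $J^2$ could collide within the $J^1$-system before their surrounding $J^2$-components do. The choice $\tau = \min(\tau^1, \tau^2)$ sidesteps this: the clean linear-translation calculation runs up to $\tau$ regardless of which system merges first, and any such cross-component $J^1$-collision is automatically postponed into the recursive step via the semi-group. The only remaining technical point is verifying that the inclusion survives the collision instant itself, which reduces to the closedness of $S^t J^i$ and the uniform continuity of $t \mapsto S^t J^i$ on $[0,\tau]$, both transparent from the explicit formula for $S^t J^i$ prior to the first collision.
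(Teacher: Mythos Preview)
Your proof is correct and follows essentially the same approach as the paper: strong induction on $m_1+m_2$, reducing via $\tau=\min(\tau^1,\tau^2)$ and the semi-group property to the single-interval case, which you handle by the arithmetic inequality $(1-t)|c_1-c_2|\le h_2-h_1$ (the paper phrases this equivalently via $S^tJ^i=(1-t)J^i+tS^1J^i$). The subtlety you flag is indeed a non-issue for exactly the reason you give, and in fact a cross-component $J^1$-collision cannot precede the corresponding $J^2$-collision since $S^tI^1_k\subseteq S^tI^2_{j(k)}$ for all $t$.
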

\begin{proof}
Since $S^t$ verifies the semi-group property (\ref{eq:semi-group}), then inducting on $m_1+m_2$, it is enough to prove that $S^t J^1 \subseteq S^t J^2$ for all $t \in [0,\min(\tau_1,\tau_2)]$, where $\tau_i$ is the first collision time for $S^t J^i$, because at that time the number of total intervals strictly decreases. Since until the first collision, each interval evolves independently of others, we further reduce to the case $m_1 = m_2 = 1$ and $\tau_1 = \tau_2 = 1$ which is the base of the induction. But this case is trivial: we are given that $J^1 \subseteq J^2$ and therefore $S^1 J^1 = \frac{1}{2} [-|J^1| , |J^1|] \subseteq \frac{1}{2} [-|J^2| ,|J^2|] = S^1 J^2$, and since $S^t J^i = (1-t) J^i + t S^1 J^i$, we conclude that $S^t J^1 \subseteq S^t J^2$ for all $t \in [0,1]$. 
\end{proof}

For $J \in \J_m$, we denote by $\{J_i\}_{i=1,\ldots,m}$ the individual intervals comprising $J$. We say that $J^1 \in \J_{m_1}$ entwines $J^2 \in \J_{m_2}$ if $J^1$ intersects each interval comprising $J^2$. 
For compact subsets $A,B \subset \R$, we denote by $A_\eps$ the compact set $A + [-\eps,\eps]$, and their Hausdorff distance by $d_H(A,B) := \inf \{\eps > 0 : A \subseteq B_\eps , B \subseteq A_\eps \}$. 

\begin{lemma} \label{lem:StHausdorff}
Let $J^1 \in \J_{m_1}$, $J^2 \in \J_{m_2}$ and $t \in [0,1]$. 
\begin{enumerate}
\item If $S^t J^2 \in \J_{m'_2}$, then for all $i=1,\ldots,m'_2$ there exists $j=1,\ldots,m_2$ such that $(S^t J^2)_i \supseteq S^t(J^2_j)$. 
\item If $J^1$ entwines $J^2$ then $S^t J^1$ entwines $S^t J^2$. 
\item If $J^1 \subseteq J^2$ and $J^1$ entwines $J^2$, then $S^t J^2 \subseteq  (S^t J^1)_\delta$, where $\delta = |J^2|-|J^1|$.
\item In particular, $S^t J^1_\eps \subseteq (S^t J^1)_{2 m_1 \eps}$ for all $\eps > 0$.
\item If $d_H(J^1,J^2) \leq \eps$ then $d_H(S^t J^1,S^t J^2) \leq 2 \max(m_1,m_2) \eps$.
\end{enumerate}
\end{lemma}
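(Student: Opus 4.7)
The plan is to prove the five items in the stated order, since each leverages the previous ones.

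For (1), I would induct on $m_2$. The base case $m_2 = 1$ is trivial. For $m_2 \geq 2$: if $t < \tau_2$, then $S^t J^2$ still consists of $m_2$ pairwise disjoint intervals $(1-t)c_j + [-\h_j, \h_j]$, and each $(S^t J^2)_i$ is exactly $S^t(J^2_i)$ after relabelling. If $t \geq \tau_2$, write $S^t J^2 = S^s(S^{\tau_2} J^2)$ with $s = (t-\tau_2)/(1-\tau_2)$. Since $S^{\tau_2} J^2$ has strictly fewer intervals, the inductive hypothesis yields $(S^t J^2)_i \supseteq S^s((S^{\tau_2} J^2)_\ell)$ for some $\ell$. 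Each component of $S^{\tau_2} J^2$ is a union of $S^{\tau_2}(J^2_j)$'s that just touched, hence contains at least one such $S^{\tau_2}(J^2_j)$; combining with Lemma~\ref{lem:StMonotone} and the identity $S^s(S^{\tau_2}(J^2_j)) = S^t(J^2_j)$ on single intervals closes the induction.

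For (2), fix $i$. By (1), $(S^t J^2)_i \supseteq S^t(J^2_j)$ for some $j$, and by the entwining hypothesis some $J^1_k$ intersects $J^2_j$. A direct computation shows that whenever two intervals $[c-\h, c+\h]$ and $[c'-\h', c'+\h']$ intersect, so do $(1-t)c + [-\h,\h]$ and $(1-t)c' + [-\h',\h']$, because the center-distance contracts from $|c-c'| \leq \h+\h'$ to $(1-t)|c-c'| \leq \h+\h'$. Hence $S^t(J^1_k) \cap S^t(J^2_j) \neq \emptyset$, and Lemma~\ref{lem:StMonotone} gives $S^t J^1 \supseteq S^t(J^1_k)$, so $S^t J^1 \cap (S^t J^2)_i \neq \emptyset$.

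For (3), I would induct on $m_1 + m_2$. The inductive step handles the post-collision case: if $\tau := \min(\tau_1,\tau_2) \leq t$, set $A^i := S^\tau J^i$; then $A^1 \subseteq A^2$ by Lemma~\ref{lem:StMonotone}, $A^1$ entwines $A^2$ by (2), $|A^2|-|A^1| = \delta$ since $S^\tau$ preserves Lebesgue measure, and $A^1,A^2$ have strictly fewer total components, so the inductive hypothesis applied at time $(t-\tau)/(1-\tau)$ concludes via the semi-group. The crux is therefore the pre-collision case $t \leq \min(\tau_1,\tau_2)$, in which both $S^t J^1$ and $S^t J^2$ are disjoint unions of translated intervals. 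Fix $x \in S^t(J^2_{j^*})$ and set $\delta' := d(x, S^t J^1)$, so $(x-\delta', x+\delta') \cap S^t J^1 = \emptyset$. The containment $J^1_k \subseteq J^2_{j(k)}$ forces $|c_k^1 - c_{j(k)}^2| \leq \h_{j(k)}^2 - \h_k^1$, which survives the $(1-t)$-contraction of center-differences, so $S^t(J^1_k) \subseteq S^t(J^2_{j(k)})$ throughout $[0,1]$; moreover the $S^t(J^2_j)$'s are pairwise disjoint before $\tau_2$. Therefore $I := (x-\delta',x+\delta') \cap S^t(J^2_{j^*})$ meets no $S^t(J^1_k)$, and hence $|I| \leq |J^2_{j^*}| - |J^1 \cap J^2_{j^*}| =: \delta_{j^*} \leq \delta$. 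A short case analysis on how $(x-\delta', x+\delta')$ sits relative to $S^t(J^2_{j^*})$ now yields $\delta' \leq \delta$: when both endpoints lie inside $S^t(J^2_{j^*})$, $|I| = 2\delta'$ gives $\delta' \leq \delta/2$; when exactly one endpoint escapes, $|I| \geq \delta'$ gives $\delta' \leq \delta$; when both escape, $I = S^t(J^2_{j^*})$, forcing $|J^1 \cap J^2_{j^*}| = 0$ and contradicting entwining.

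Parts (4) and (5) are immediate corollaries. For (4), take $J^2 := J^1_\eps$: then $J^1 \subseteq J^2$, $J^1$ entwines $J^2$ (every component of $J^1_\eps$ is a thickening of a non-empty union of $J^1_k$'s), and $|J^2|-|J^1| \leq 2m_1 \eps$, so (3) gives $S^t(J^1_\eps) \subseteq (S^t J^1)_{2m_1 \eps}$. For (5), $d_H(J^1,J^2) \leq \eps$ implies $J^1 \subseteq J^2_\eps$; Lemma~\ref{lem:StMonotone} yields $S^t J^1 \subseteq S^t(J^2_\eps)$, and (4) gives $S^t(J^2_\eps) \subseteq (S^t J^2)_{2m_2 \eps}$, with the reverse direction symmetric. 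I expect the main obstacle to be the pre-collision analysis in (3): one must rule out that the forbidden interval around $x$ picks up contributions from $S^t(J^1_k)$'s lying in distant fibers $S^t(J^2_j)$ with $j \neq j^*$, and invoke entwining to eliminate the pathological subcase where $S^t(J^2_{j^*})$ is entirely disjoint from $S^t J^1$.
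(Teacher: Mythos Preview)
Your argument is correct in all five parts, and parts (1), (4), (5) match the paper's proof essentially verbatim. The differences lie in (2) and especially (3).

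For (2), you pick a single $J^1_k$ meeting $J^2_j$ and verify directly that two intersecting intervals still intersect after the $(1-t)$-contraction of centers. The paper instead sets $J^3 = J^1 \cap J^2_j$ and invokes monotonicity (Lemma~\ref{lem:StMonotone}) once: $\emptyset \neq S^t J^3 \subseteq S^t J^1 \cap S^t(J^2_j)$. Both work; the paper's is marginally cleaner.

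For (3), your route and the paper's diverge substantially. You induct on $m_1+m_2$ with a pre/post-collision split and a three-case endpoint analysis in the pre-collision regime. The paper avoids all of this: it proves (3) directly from (2) and monotonicity, with no induction and no reference to collision times. The key observation is the elementary fact that if $I$ is a compact interval and $A \subseteq I$ is nonempty and compact, then $I \subseteq A_{|I \setminus A|}$. Applying this with $I = (S^t J^2)_i$ and $A = S^t J^1 \cap I$ (nonempty by (2)), and noting $|I \setminus A| \leq |S^t J^2 \setminus S^t J^1| = |J^2| - |J^1| = \delta$ since $S^t J^1 \subseteq S^t J^2$ by monotonicity, one gets $(S^t J^2)_i \subseteq (S^t J^1)_\delta$ immediately. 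Your approach is correct but does a lot of unnecessary work; the paper's proof is three lines and makes the role of entwining (via (2)) transparent.
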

\begin{proof} \hfill
\begin{enumerate}
\item We verify the claim by induction on $m_2$, with the case $m_2 = 1$ being trivial. Since until the first collision time $\tau$ the intervals comprising $J^2$ evolve independently, the claim holds trivially for $t \in [0,\tau)$, and also at $t = \tau$ as some intervals get merged. Since $S^\tau  J^2 \in \J_{m_2'}$ with $m_2' < m_2$, by the induction hypothesis $(S^{t'} S^\tau J^2)_i \supseteq S^{t'}( (S^\tau J^2)_k)$ for some $k$, where we denote $t' = \frac{t-\tau}{1-\tau}$. But $(S^\tau J^2)_k \supseteq S^\tau(J^2_j)$ for some $j$, and so by monotonicity $S^{t'}( (S^\tau J^2)_k) \supseteq S^{t'} S^\tau (J^2_j)$. By (\ref{eq:semi-group}), we conclude that $(S^t J^2)_i \supseteq S^t(J^2_j)$, as required.
\item By the first part, given $(S^t J^2)_i$, there exists $j$ such that $(S^t J^2)_i \supseteq S^t(J^2_j)$. Since $J^1$ entwines $J^2$, $J^3 = J^1 \cap J^2_j \neq \emptyset$. By monotonicity,  $\emptyset \neq S^t J^3 \subseteq S^t J^1 \cap S^t (J^2_j) \subseteq S^t J^1 \cap (S^t J^2)_i$. This shows that $S^t J^1$ entwines $S^t J^2$. 
\item
By the previous part, every interval comprising $S^t J^2$ intersects $S^t J^1$. Note that if $I$ is a compact interval and $A$ is a non-empty compact subset of $I$, then $I \subseteq A_\delta$ for $\delta = |I \setminus A| = |I| - |A|$ (since otherwise, there exists $x \in I$ with $|x-a| = d(x,A) > \delta$, and therefore $|I| \geq |A| + |[x,a]| > |I|$, a contradiction). Applying this to $I = (S^t J^2)_i$ and $A = S^t J^1 \cap (S^t J^2)_i \neq \emptyset$,  since $S^t J^1 \subseteq S^t J^2$ by monotonicity, we have
\[
|(S^t J^2)_i \setminus (S^t J^1 \cap (S^t J^2)_i)| \leq |S^t J^2 \setminus S^t J^1| = |S^t J^2| - |S^t J^1| = |J^2| - |J^1| = \delta ,
\]
and hence $(S^t J^2)_i  \subseteq (S^t J^1 \cap (S^t J^2)_i)_\delta$. Taking the union over $i$, we obtain
\[
S^t J^2 \subseteq \cup_i (S^t J^1 \cap (S^t J^2)_i)_\delta = (S^t J^1 \cap \cup_i (S^t J^2)_i)_\delta = (S^t J^1)_\delta . 
\]
\item Clearly $|J^1_\eps| - |J^1|\leq 2 m_1 \eps$, and since $J^1$ trivially entwines $J^1_\eps$, we have by the previous part that $S^t J^1_\eps \subseteq (S^t J^1)_{2 m_1 \eps}$. 
\item If $J^2 \subseteq J^1_\eps$ then by monotonicity and the previous part $S^t J^2 \subseteq S^t J^1_\eps \subseteq (S^t J^1)_{2 m_1 \eps}$. Exchanging the roles of $J^1,J^2$, the assertion follows. 
\end{enumerate}
\end{proof}

Identifying $L_u^y$ and $\R$, the definition of $S^t$ extends to finite disjoint unions of closed intervals in $L_u^y$. 
\begin{cor} \label{cor:fibers-converge}
If $y_k \rightarrow y \in \Omega_\infty$ then $S^t (K \cap L^{y_k}_u) \rightarrow S^t (K \cap L^{y}_u)$ in the Hausdorff metric for all $t \in [0,1]$. 
\end{cor}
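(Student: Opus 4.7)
The plan is to reduce the statement to the stability result for $S^t$ on $\J_m$ established in Lemma \ref{lem:StHausdorff}(5). The key is that for all sufficiently large $k$ the fiber $K\cap L^{y_k}_u$ will be a finite union of exactly $m$ intervals whose endpoints converge to those of $K\cap L^y_u$, after which Hausdorff stability of $S^t$ on $\J_m$ finishes the job.

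First I would locate $y$ in the stratification: since $y\in\Omega_\infty$, there is a (unique) $m$ with $y\in\Omega_m$. Because $\Omega_m$ is open and the family $\{\Omega_\ell\}$ is pairwise disjoint, a small ball around $y$ lies inside $\Omega_m$ and meets no other $\Omega_\ell$. Hence $y_k\in\Omega_m$ for all sufficiently large $k$. For such $k$, property (\ref{it:mgs-3}) of Definition~\ref{def:mgs} identifies, via the canonical isometry $L_u^{y_k}\ni y_k+tu\leftrightarrow t\in\R$ (and similarly for $L_u^y$),
\[
K\cap L_u^{y_k}\leftrightarrow A_k:=\bigcup_{i=1}^m[f_i(y_k),g_i(y_k)]\in\J_m,\qquad K\cap L_u^y\leftrightarrow A:=\bigcup_{i=1}^m[f_i(y),g_i(y)]\in\J_m.
\]
The continuity of $f_i,g_i$ on $\bigcup_{\ell\geq i}\Omega_\ell$ then yields $f_i(y_k)\to f_i(y)$ and $g_i(y_k)\to g_i(y)$ for each $i=1,\ldots,m$, and therefore $d_H(A_k,A)\to 0$ in $\R$.

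Finally I would apply Lemma \ref{lem:StHausdorff}(5) to obtain $d_H(S^tA_k,S^tA)\leq 2m\,d_H(A_k,A)\to 0$ for every $t\in[0,1]$. Translating back, $S^t(K\cap L_u^{y_k})$ corresponds to $y_k+u\cdot S^tA_k$ and $S^t(K\cap L_u^y)$ to $y+u\cdot S^tA$, so in $\R^n$
\[
d_H\bigl(S^t(K\cap L_u^{y_k}),\,S^t(K\cap L_u^y)\bigr)\leq |y_k-y|+d_H(S^tA_k,S^tA)\longrightarrow 0.
\]
There is no real obstacle: the only subtlety worth flagging is that we genuinely need $K$ to be $u$-multi-graphical (rather than merely $u$-finite), both to guarantee that $K\cap L_u^{y_k}$ is eventually a union of the \emph{same} number of intervals as $K\cap L_u^y$, and to supply the continuous boundary functions that make the endpoints vary continuously with~$y$.
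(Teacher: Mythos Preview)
Your proof is correct and follows essentially the same route as the paper's own proof: locate $y\in\Omega_m$, use openness of $\Omega_m$ so that $y_k\in\Omega_m$ eventually, invoke continuity of $f_i,g_i$ to get $d_H(A_k,A)\to 0$, and then apply Lemma~\ref{lem:StHausdorff}(5). The only minor difference is that the paper simply identifies both $L_u^{y_k}$ and $L_u^y$ with $\R$ when measuring the Hausdorff distance, whereas you also account for the drift $|y_k-y|$ in $\R^n$; either convention suffices for the subsequent applications. (The disjointness of the $\Omega_\ell$'s you mention is true but not actually needed---openness of $\Omega_m$ alone guarantees $y_k\in\Omega_m$ eventually.)
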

\begin{proof}
Let $\eps > 0$. If $y \in \Omega_m$, there exists $\delta > 0$ such that for all $y' \in B_{u^{\perp}}(y,\delta) \subset \Omega_m$, $\abs{f_i(y') - f_i(y)} , \abs{g_i(y') - g_i(y)} < \eps$ for all $i=1,\ldots,m$ (as these functions are all continuous). Consequently, $d_H(K \cap L^{y'}_u, K \cap L^y_u) < \eps$, and hence by Lemma \ref{lem:StHausdorff} we have $d_H(S^t (K \cap L^{y'}_u), S^t (K \cap L^{y}_u)) \leq 2 m \eps$ (here we identify both $L^{y}_u$ and $L^{y'}_u$ with $\R$ when evaluating the Hausdorff distance). As $\eps > 0$ was arbitrary, this concludes the proof. 
\end{proof}

We can now give the following definition.\begin{defn}[Continuous Steiner symmetrization of a $u$-multi-graphical compact set $K$] \label{def:SutK-general}
\[
\mathring S^t_{u,\{\Omega_m\}} K := \cup_{y \in \Omega_\infty} S^t (K \cap L^y_u)  ~,~ S_u^t K  := \closure(\mathring S_{u,\{\Omega_m\}}^t K) .
\]
\end{defn}
Note that the definition of $\mathring S^t_{u,\{\Omega_m\}} K$ depends on the particular choice of open sets $\{\Omega_m\}$ in the $u$-multi-graphical representation of $K$, but when this choice is fixed we will simply abbreviate by $\mathring S^t_u K$. Furthermore, $\mathring S^t_u K$ is not a closed set. In contrast, $S_u^t K$ does not possess these two caveats: it is trivially closed (and hence compact), and in addition
the following holds.
\begin{prop} \label{prop:independent}
For all $t \in [0,1]$, $S_u^t K$ does not depend on the particular choice of $\{ \Omega_m \}$ in its $u$-multi-graphical representation. 
\end{prop}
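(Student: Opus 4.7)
The plan is to fix two $u$-multi-graphical representations of $K$, say with open sets $\{\Omega_m\}$ and $\{\Omega_m'\}$ (with unions $\Omega_\infty$ and $\Omega_\infty'$), and to show that $\mathring S^t_{u,\{\Omega_m\}} K \subseteq \closure(\mathring S^t_{u,\{\Omega_m'\}} K)$; the reverse inclusion then follows by symmetry, and taking closures gives the proposition. The basic observation driving the argument is that on $\Omega_\infty \cap \Omega_\infty'$, both symmetrizations agree fiberwise, since $S^t(K \cap L_u^y)$ depends only on the compact set $K \cap L_u^y$ (and hence on $K$ and $y$), not on which representation we use to certify that $y$ lies in a ``good" fiber.

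The key point is therefore a density argument. Fix $y \in \Omega_\infty$, so $y \in \Omega_m$ for some $m$. Since $\Omega_m$ is open (in $u^\perp$) and contained in $P_{u^\perp} K$, there is an open neighborhood $U \subseteq \Omega_m$ of $y$ in $u^\perp$. By property (\ref{it:mgs-1}) of the representation $\{\Omega_m'\}$, the set $U \setminus \Omega_\infty'$ is contained in $P_{u^\perp} K \setminus \Omega_\infty'$ and therefore has $\H^{n-1}$-measure zero. Since every nonempty open subset of $u^\perp$ has positive $\H^{n-1}$-measure, it follows that $U \cap \Omega_\infty'$ is dense in $U$. Consequently we may pick a sequence $y_k \in U \cap \Omega_\infty' \subseteq \Omega_\infty \cap \Omega_\infty'$ with $y_k \to y$.

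Now apply Corollary \ref{cor:fibers-converge}: since $y_k \to y$ and $y \in \Omega_\infty$, we have $S^t(K \cap L_u^{y_k}) \to S^t(K \cap L_u^y)$ in the Hausdorff metric. Each $S^t(K \cap L_u^{y_k})$ is a fiber of both $\mathring S^t_{u,\{\Omega_m\}} K$ and $\mathring S^t_{u,\{\Omega_m'\}} K$, so in particular it lies in $\mathring S^t_{u,\{\Omega_m'\}} K$. Hausdorff convergence of compact sets implies every point of $S^t(K \cap L_u^y)$ is the limit of a sequence of points from $\mathring S^t_{u,\{\Omega_m'\}} K$, hence $S^t(K \cap L_u^y) \subseteq \closure(\mathring S^t_{u,\{\Omega_m'\}} K)$. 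Taking the union over $y \in \Omega_\infty$ yields $\mathring S^t_{u,\{\Omega_m\}} K \subseteq \closure(\mathring S^t_{u,\{\Omega_m'\}} K)$, and taking closures gives one inclusion between the two candidates for $S_u^t K$.

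The main (mild) obstacle is just this density step: one must use both the openness of the $\Omega_m$'s and the full-measure property (\ref{it:mgs-1}) simultaneously to ensure that any ``good" fiber $y$ of one representation is approximable by fibers that are good for both. Once that is in hand, the proposition is a direct application of the Hausdorff continuity of $t \mapsto S^t(K \cap L_u^\cdot)$ in the base point, already established in Corollary \ref{cor:fibers-converge}.
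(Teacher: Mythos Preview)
Your proposal is correct and follows essentially the same approach as the paper: the paper also picks a sequence $y_k \in \Omega'_\infty$ converging to $y \in \Omega_\infty$, invokes Corollary~\ref{cor:fibers-converge} to get Hausdorff convergence of the symmetrized fibers, deduces the inclusion $\mathring S^t_{u,\{\Omega_m\}} K \subseteq \closure(\mathring S^t_{u,\{\Omega'_m\}} K)$, and then takes closures and reverses roles. The paper packages the fiberwise statement into a separate lemma (Lemma~\ref{lem:closure}), but the underlying argument is identical to yours, and your density justification (using openness of $\Omega_m$ together with the full-measure property) is in fact slightly more explicit than the paper's.
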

We will prove the following more general statement.
\begin{lem} \label{lem:closure}
Let $\{\Omega'_m\}$ denote another sequence of open sets satisfying the requirements in Definition \ref{def:mgs}. 
Then, for all $t \in [0,1]$ and $y \in \Omega_\infty$,
\[
\mathring S^t_{u,\{\Omega_m\}} K \cap L_u^y = \closure(\mathring S^t_{u , \{\Omega'_m\}} K) \cap L_u^y .
\]
\end{lem}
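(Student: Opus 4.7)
The plan is to exploit two complementary facts. First, the index sets $\Omega_\infty$ and $\Omega'_\infty$ associated with the two decompositions each have full $\H^{n-1}$-measure in $P_{u^\perp} K$, so their intersection is dense in any subset of $P_{u^\perp} K$ of positive $\H^{n-1}$-measure. Second, Corollary \ref{cor:fibers-converge} provides Hausdorff continuity of $y' \mapsto S^t(K \cap L_u^{y'})$ at every $y \in \Omega_\infty$, and its proof only requires the approximating sequence to eventually lie in the open (in $u^\perp$) set $\Omega_m$ containing $y$. Thus any sequence $y'_k \to y$, regardless of which representation it originates from, becomes admissible for the Corollary once it enters a small neighborhood of $y$ in $\Omega_m$.

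For the inclusion $\mathring S^t_{u,\{\Omega_m\}} K \cap L_u^y \subseteq \closure(\mathring S^t_{u,\{\Omega'_m\}} K) \cap L_u^y$, since $y \in \Omega_\infty$ the left-hand side equals $S^t(K \cap L_u^y)$. I would choose a sequence $y_k \to y$ with $y_k \in \Omega_\infty \cap \Omega'_\infty$, which exists because $y$ lies in an open ball $B_{u^\perp}(y,\delta) \subset \Omega_m \subset P_{u^\perp} K$ of positive $\H^{n-1}$-measure on which both $\Omega_\infty$ and $\Omega'_\infty$ are co-null. Each set $S^t(K \cap L_u^{y_k})$ is contained in $\mathring S^t_{u,\{\Omega'_m\}} K$, while by Corollary \ref{cor:fibers-converge} the sets $S^t(K \cap L_u^{y_k})$ converge in Hausdorff metric to $S^t(K \cap L_u^y)$; hence every point of the latter is a limit of points in $\mathring S^t_{u,\{\Omega'_m\}} K$, and obviously lies on $L_u^y$.

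For the reverse inclusion, let $z \in \closure(\mathring S^t_{u,\{\Omega'_m\}} K) \cap L_u^y$ and pick $z_k \to z$ with $z_k \in S^t(K \cap L_u^{y'_k})$ for some $y'_k \in \Omega'_\infty$. Since $S^t(K \cap L_u^{y'_k}) \subset L_u^{y'_k}$, continuity of $P_{u^\perp}$ forces $y'_k = P_{u^\perp}(z_k) \to P_{u^\perp}(z) = y$. As $y$ lies in $\Omega_m$, which is open in $u^\perp$, for all sufficiently large $k$ we have $y'_k \in \Omega_m \subset \Omega_\infty$, so Corollary \ref{cor:fibers-converge} applied within the first representation yields $S^t(K \cap L_u^{y'_k}) \to S^t(K \cap L_u^y)$ in Hausdorff metric. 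Taking limits then places $z$ in $S^t(K \cap L_u^y) = \mathring S^t_{u,\{\Omega_m\}} K \cap L_u^y$.

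The only genuine obstacle is the mild tension that Corollary \ref{cor:fibers-converge} is stated relative to a single decomposition while the lemma compares two; this is resolved precisely by the fact that each $\Omega_m$ is open in $u^\perp$, so any sequence converging to a point of $\Omega_m$ is eventually trapped in $\Omega_m$, after which the finite-union-of-intervals symmetrization and its continuity are both controlled by the continuous representation $\{f_i,g_i\}_{i=1}^m$ of the first decomposition, independently of the origin of the sequence.
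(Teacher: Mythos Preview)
Your proof is correct and follows essentially the same approach as the paper's: both rely on the density of $\Omega'_\infty$ in $\Omega_\infty$ (via the full-measure property) together with the Hausdorff continuity of fibers from Corollary \ref{cor:fibers-converge}, exploiting that $\Omega_m$ is open so any sequence converging to $y\in\Omega_m$ is eventually trapped there. The paper compresses both inclusions into the single observation that $S^t(K\cap L_u^{y_k}) \to S^t(K\cap L_u^y)$ for \emph{any} sequence $y_k \in \Omega'_\infty$ converging to $y$, whereas you spell out the two directions separately (and harmlessly require $y_k \in \Omega_\infty \cap \Omega'_\infty$ in the forward direction, which is not needed).
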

\begin{proof}
If $\{ f'_i, g'_i \}$ are the sequences of continuous functions corresponding to $\{\Omega'_m\}$, then by property (\ref{it:mgs-3}) of Definition \ref{def:mgs}, $f_i,g_i$ coincide with $f'_i,g'_i$ on $\Omega_m \cap \Omega'_m$ for all $m \geq i$. 
By property (\ref{it:mgs-1}), $\Omega'_\infty$ is dense in $P_{u^\perp} K$ and in particular in $\Omega_\infty$. Let $\{y_k\} \subset \Omega'_\infty$ be any sequence converging to $y \in \Omega_\infty$. By Corollary \ref{cor:fibers-converge}, we see that $S^t (K \cap L^{y_k}_u)$ converges to $S^t(K \cap L^y_u)$ in the Hausdorff metric, thereby concluding the proof. 
\end{proof}
\begin{proof}[Proof of Proposition \ref{prop:independent}]
By Lemma \ref{lem:closure}, taking the union over all $y \in \Omega_\infty$ we have:
\[
\mathring S^t_{u,\{\Omega_m\}}  \subseteq \closure(\mathring S^t_{u , \{\Omega'_m\}} K) .
\]
Taking the closure of the left-hand side and reversing the roles of $\{\Omega_m\}$ and $\{\Omega'_m\}$, we confirm that both closures coincide. 
\end{proof}

Since the choice of $\{\Omega_m\}$ makes no difference, we revert back to our abbreviated notation and restate Lemma \ref{lem:closure} as follows: 
\begin{cor} \label{cor:closure}
For all $y \in \Omega_\infty$ and $t \in [0,1]$, $S_u^t K \cap L_u^y = \mathring S_u^t K \cap L_u^y$. 
\end{cor}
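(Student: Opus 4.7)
The plan is to deduce Corollary \ref{cor:closure} as a direct specialization of Lemma \ref{lem:closure}, with no new geometric content required. Concretely, Lemma \ref{lem:closure} compares the ``good-fiber'' set $\mathring S^t_{u,\{\Omega_m\}} K$ built from one valid choice of open sets with the closure of the same construction for a \emph{different} valid choice $\{\Omega'_m\}$, and asserts that these agree after intersecting with $L_u^y$ for any $y \in \Omega_\infty$. I would simply take $\{\Omega'_m\} = \{\Omega_m\}$, which is obviously also a valid choice.

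With that choice, Lemma \ref{lem:closure} reads
\[
\mathring S^t_u K \cap L_u^y \;=\; \closure(\mathring S^t_u K) \cap L_u^y \qquad \text{for all } y \in \Omega_\infty \text{ and } t \in [0,1].
\]
By Definition \ref{def:SutK-general}, the right-hand side is exactly $S_u^t K \cap L_u^y$, yielding the stated equality. The trivial inclusion $\mathring S_u^t K \subseteq S_u^t K$ recovers one direction, while the nontrivial direction (that every limit point of $\mathring S_u^t K$ lying on a good fiber already belongs to $\mathring S_u^t K$ on that fiber) is precisely what the application of Corollary \ref{cor:fibers-converge} inside the proof of Lemma \ref{lem:closure} delivers.

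There is no genuine obstacle here: the work was already carried out in Lemma \ref{lem:closure} via the Hausdorff-continuity of the one-dimensional continuous symmetrization (Corollary \ref{cor:fibers-converge}) combined with the density of $\Omega'_\infty$ in $P_{u^\perp} K$. The corollary is just the specialization that fixes the single sequence $\{\Omega_m\}$ used to define $\mathring S^t_u K$ in the first place, and it is included primarily to be invoked cleanly in later sections when analyzing the fibers of $S_u^t K$ over $\Omega_\infty$.
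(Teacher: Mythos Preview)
Your proposal is correct and is exactly the paper's approach: the paper explicitly introduces Corollary \ref{cor:closure} as a restatement of Lemma \ref{lem:closure} in the abbreviated notation, i.e., by taking $\{\Omega'_m\} = \{\Omega_m\}$ and recalling that $S_u^t K = \closure(\mathring S_u^t K)$ by definition.
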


\begin{cor} \label{cor:u-finite}
For all $t \in [0,1]$, $S_u^t K$ is $u$-finite. 
\end{cor}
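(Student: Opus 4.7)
The corollary follows quickly from the structural description already in hand. By Definition \ref{def:mgs} together with Remark \ref{rem:mgs}, the $u$-multi-graphical structure of $K$ implies that for each $y \in \Omega_m$ the fiber $K \cap L_u^y$, identified with a subset of $\R$, equals $\cup_{i=1}^m [f_i(y), g_i(y)] \in \J_m$. The recursive definition of the one-dimensional operation $S^t$ preserves membership in $\cup_m \J_m$: between consecutive collision times the individual intervals are translated independently at constant velocity, keeping the configuration in the same $\J_m$; at each collision two intervals merge into one and the count strictly drops. A straightforward induction on $m$ therefore shows that $S^t(K \cap L_u^y) \in \J_{m'}$ for some $m' \leq m$, for every $y \in \Omega_\infty$ and every $t \in [0,1]$; in particular, $S^t(K \cap L_u^y)$ is a finite disjoint union of closed intervals of positive length.

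Invoking Corollary \ref{cor:closure}, for every such $y \in \Omega_\infty$ the fiber $S_u^t K \cap L_u^y$ coincides with $\mathring S_u^t K \cap L_u^y = S^t(K \cap L_u^y)$ and hence has the desired form. It therefore remains to verify that this description covers $\H^{n-1}$-a.e.~$y \in u^\perp$. This reduces to two observations: for $y \notin P_{u^\perp}(S_u^t K)$ the fiber is empty, which is vacuously a (zero-term) finite disjoint union of positive-length closed intervals; and $P_{u^\perp}(S_u^t K) \subseteq P_{u^\perp} K$, so the ``bad'' set $P_{u^\perp}(S_u^t K) \setminus \Omega_\infty$ is contained in $P_{u^\perp} K \setminus \Omega_\infty$, which is $\H^{n-1}$-null by condition~(\ref{it:mgs-1}) of Definition \ref{def:mgs}.

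For the projection inclusion: by construction $\mathring S_u^t K \subseteq L_u + \Omega_\infty \subseteq L_u + P_{u^\perp} K$, and since $P_{u^\perp} K$ is the continuous image of the compact set $K$ it is closed, so $L_u + P_{u^\perp} K$ is closed. Taking closures gives $S_u^t K \subseteq L_u + P_{u^\perp} K$, whence $P_{u^\perp}(S_u^t K) \subseteq P_{u^\perp} K$. I do not anticipate a serious obstacle: the argument is essentially bookkeeping, and the one step that genuinely relies on the preceding development is the appeal to Corollary \ref{cor:closure}, which is exactly what legitimizes identifying the fibers of $S_u^t K$ over $\Omega_\infty$ with the one-dimensional symmetrizations $S^t(K \cap L_u^y)$ despite having passed to the closure.
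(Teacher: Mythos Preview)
Your argument is correct and matches the paper's (implicit) reasoning: the paper states Corollary~\ref{cor:u-finite} immediately after Corollary~\ref{cor:closure} without proof, and your write-up is precisely the natural unpacking of why it follows, namely that $S^t$ preserves $\cup_m \J_m$, that Corollary~\ref{cor:closure} identifies the fibers of $S_u^t K$ over $\Omega_\infty$ with $S^t(K\cap L_u^y)$, and that $P_{u^\perp}(S_u^t K) \setminus \Omega_\infty \subseteq P_{u^\perp} K \setminus \Omega_\infty$ is $\H^{n-1}$-null. The projection-inclusion step via closedness of $L_u + P_{u^\perp} K$ is a nice touch that the paper does not spell out.
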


By Fubini's Theorem and property (\ref{it:mgs-1}) of Definition \ref{def:mgs}, we immediately deduce the following corollary.
\begin{cor} \label{cor:StVolume}
For all $t \in [0,1]$, $|S_u^t K| = |\mathring S_u^t K| = |K|$. 
\end{cor}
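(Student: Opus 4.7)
The strategy is to apply Fubini's theorem to the fibration $\R^n = \bigsqcup_{y \in u^\perp} L_u^y$, identify the fibers of $S_u^t K$ and $\mathring S_u^t K$ over $\Omega_\infty$ via Corollary \ref{cor:closure}, and use the elementary fact that the one-dimensional symmetrization $S^t : \J_m \to \J_{m'}$ preserves $\H^1$. The latter is immediate from the construction: before the first collision time, each component interval is translated rigidly (preserving lengths and mutual disjointness), so the total length is preserved; collisions merge intervals without changing the total length; and the semi-group relation \eqref{eq:semi-group} iterates the argument over the whole interval $[0,1]$.

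For $|S_u^t K|$: since $S_u^t K$ is compact, it is Borel measurable, so Fubini applies. By Corollary \ref{cor:closure}, for every $y \in \Omega_\infty$ one has $S_u^t K \cap L_u^y = S^t(K \cap L_u^y)$, giving $|S_u^t K \cap L_u^y|_1 = |K \cap L_u^y|_1$. Any point of $S_u^t K = \closure(\mathring S_u^t K)$ projects (by continuity of $P_{u^\perp}$) into $\closure(\Omega_\infty) \subseteq \closure(P_{u^\perp} K) = P_{u^\perp} K$, so the remaining fibers lie over $P_{u^\perp} K \setminus \Omega_\infty$, which has $\H^{n-1}$-measure zero by property (\ref{it:mgs-1}) of Definition \ref{def:mgs}. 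These fibers have uniformly bounded $\H^1$-measure by compactness of $S_u^t K$, so they contribute nothing to the Fubini integral, yielding
\[
|S_u^t K| = \int_{\Omega_\infty} |K \cap L_u^y|_1 \, dy = \int_{P_{u^\perp} K} |K \cap L_u^y|_1 \, dy = |K|.
\]

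For $|\mathring S_u^t K|$: since Corollary \ref{cor:closure} gives $\mathring S_u^t K \cap L_u^y = S_u^t K \cap L_u^y$ for every $y \in \Omega_\infty$, the difference $S_u^t K \setminus \mathring S_u^t K$ is contained in $P_{u^\perp}^{-1}(P_{u^\perp} K \setminus \Omega_\infty) \cap S_u^t K$, a Borel set whose Lebesgue measure is zero by the same null-base argument via Fubini. Thus $\mathring S_u^t K = S_u^t K \setminus N$ for some $N$ inside a Borel null set, and by completeness of Lebesgue measure $\mathring S_u^t K$ is Lebesgue measurable with $|\mathring S_u^t K| = |S_u^t K| = |K|$. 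The only minor technicality is this measurability argument for $\mathring S_u^t K$; there is no substantive obstacle beyond it.
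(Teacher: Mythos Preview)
Your proof is correct and follows essentially the same approach as the paper, which dispatches the corollary in one line by invoking Fubini's theorem and property (\ref{it:mgs-1}) of Definition \ref{def:mgs}. Your version simply fills in the details the paper leaves implicit; the only minor oversimplification is the measurability remark for $\mathring S_u^t K$: since $\Omega_\infty$ is open in $u^\perp$, Corollary \ref{cor:closure} gives $\mathring S_u^t K = S_u^t K \cap P_{u^\perp}^{-1}(\Omega_\infty)$, which is Borel outright, so no appeal to completeness of Lebesgue measure is needed.
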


In addition, since both pairs $\mathring S_u^0 K \subset K$ and $\mathring S_u^1 K \subset S_u K$ coincide on $\Omega_\infty \times L_u$, and $K$ and $S_u K$ are closed, we deduce the following.
\begin{cor} \label{cor:coincide}
Both pairs $S_u^0 K \subseteq K$ and $S_u^1 K \subseteq S_u K$ coincide up to an $\H^n$-null set. 
\end{cor}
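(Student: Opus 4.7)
The plan is to verify the two inclusions directly from the definitions at the endpoints $t=0$ and $t=1$, and then upgrade each inclusion to equality up to an $\H^n$-null set by matching Lebesgue measures via Corollary \ref{cor:StVolume}.

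For the $t=0$ case, I would first observe that for any $J \in \J_m$ written as $\cup_i [c_i - \h_i, c_i + \h_i]$, the prescription defining $S^t$ gives $S^0 J = J$, since at time $t=0$ no interval has been translated. Consequently, for every $y \in \Omega_\infty$, $S^0(K \cap L_u^y) = K \cap L_u^y$, so by the definition of $\mathring S_u^0 K$ we have $\mathring S_u^0 K \subseteq K$. Since $K$ is closed, taking the closure gives $S_u^0 K \subseteq K$. Corollary \ref{cor:StVolume} then says $|S_u^0 K| = |K|$, and hence the two compact sets $S_u^0 K$ and $K$ coincide up to an $\H^n$-null set.

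For the $t=1$ case, I would use that $S^1 J$ is by construction the centered symmetric interval $[-|J|_1/2,|J|_1/2]$, which is exactly what classical Steiner symmetrization produces on each fiber. Therefore, for every $y \in \Omega_\infty$,
\[
S^1(K \cap L_u^y) = y + [-\h_y,\h_y]\,u = S_u K \cap L_u^y,
\]
with $\h_y = \tfrac{1}{2}|K \cap L_u^y|_1$. Hence $\mathring S_u^1 K \subseteq S_u K$, and since $S_u K$ is compact, taking the closure gives $S_u^1 K \subseteq S_u K$. Combining $|S_u^1 K| = |K|$ from Corollary \ref{cor:StVolume} with the classical identity $|S_u K| = |K|$ yields the desired null-set equality.

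There is no substantial obstacle here: the heavy lifting (volume preservation, independence of the choice of $\{\Omega_m\}$, compactness of $S_u^t K$) has already been done in the preceding lemmas and corollaries, and the argument reduces to checking that the fiberwise map $S^t$ reproduces the trivial operation at $t=0$ and the classical Steiner symmetrization at $t=1$, both of which are immediate from the definition.
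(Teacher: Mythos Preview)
Your proof is correct and follows essentially the same route as the paper: verify the fiberwise identities $S^0 J = J$ and $S^1 J = [-|J|_1/2,|J|_1/2]$ to obtain the inclusions $\mathring S_u^0 K \subseteq K$ and $\mathring S_u^1 K \subseteq S_u K$, pass to closures using that $K$ and $S_u K$ are closed, and then match volumes. The paper compresses this into a single sentence before the corollary (noting that the pairs coincide on $\Omega_\infty \times L_u$), but the content is identical to what you wrote.
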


So at least up to null-sets, $S_u^t K$ is indeed a continuous-time version of the classical Steiner symmetrization (for $u$-multi-graphical sets $K$). 
We also record the following.
\begin{cor} \label{cor:StBalls}
If $K_1 \subseteq K_2$ are both $u$-multi-graphical, then $S_u^t K_1 \subseteq S_u^t K_2$ for all $t \in [0,1]$. In particular, if $B_n(r) \subseteq K \subseteq B_n(R)$ then $B_n(r) \subseteq S^t_u K \subseteq B_n(R)$ for all $t \in [0,1]$. 
\end{cor}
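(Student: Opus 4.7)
The plan is to deduce both inclusions from the fiberwise monotonicity established in Lemma \ref{lem:StMonotone}, handling the possible mismatch between the multi-graphical representations of $K_1$ and $K_2$ via a density argument combined with the fiberwise Hausdorff continuity from Corollary \ref{cor:fibers-converge}. The second assertion will then follow from the first by sandwiching $K$ between two centered balls, once I verify that $S_u^t B_n(R) = B_n(R)$, which is immediate since every fiber of $B_n(R)$ transverse to $u$ is already centered.

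For the first part, I fix $u$-multi-graphical representations $\{\Omega_m^i, f_j^i, g_j^i\}$ of $K_i$, $i = 1,2$. Since $K_1 \subseteq K_2$ implies $P_{u^{\perp}} K_1 \subseteq P_{u^{\perp}} K_2$, and both $P_{u^{\perp}} K_i \setminus \Omega_\infty^i$ are $\H^{n-1}$-null, the set $\Omega_\infty^1 \setminus \Omega_\infty^2$ is contained in $P_{u^\perp} K_2 \setminus \Omega_\infty^2$ and is therefore null. As $\Omega_\infty^1$ is open, removing a null set cannot destroy density inside it, so $\Omega_\infty^1 \cap \Omega_\infty^2$ is dense in $\Omega_\infty^1$. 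For every $y$ in this intersection, both $K_1 \cap L_u^y$ and $K_2 \cap L_u^y$ are finite disjoint unions of closed intervals with $K_1 \cap L_u^y \subseteq K_2 \cap L_u^y$, so Lemma \ref{lem:StMonotone} gives $S^t(K_1 \cap L_u^y) \subseteq S^t(K_2 \cap L_u^y) \subseteq \mathring S_u^t K_2 \subseteq S_u^t K_2$.

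For an arbitrary $y \in \Omega_\infty^1$, I pick $y_k \in \Omega_\infty^1 \cap \Omega_\infty^2$ with $y_k \to y$; by Corollary \ref{cor:fibers-converge} the fibers $S^t(K_1 \cap L_u^{y_k})$ converge to $S^t(K_1 \cap L_u^y)$ in the Hausdorff metric, and thus every $x \in S^t(K_1 \cap L_u^y)$ is a limit of points lying in $S_u^t K_2$, hence itself lies in the closed set $S_u^t K_2$. This proves $\mathring S_u^t K_1 \subseteq S_u^t K_2$, and taking closures gives $S_u^t K_1 \subseteq S_u^t K_2$. The main subtlety of the argument is exactly this mismatch of $\Omega_\infty$'s, which forces one to combine Lemma \ref{lem:StMonotone} with density and fiberwise Hausdorff continuity rather than apply it directly on a common set of fibers.

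For the second part, $B_n(R)$ is $u$-multi-graphical via $\Omega_1 = \{ y \in u^{\perp} : \|y\| < R \}$ and $f_1(y) = -g_1(y) = -\sqrt{R^2 - \|y\|^2}$, the complement $\partial B_{u^\perp}(R)$ being $\H^{n-1}$-null. Each fiber $B_n(R) \cap L_u^y$ for $y \in \Omega_1$ consists of a single closed interval symmetric about $y$, so $S^t$ acts as the identity on it for every $t \in [0,1]$; consequently $\mathring S_u^t B_n(R) = B_n(R) \setminus (\partial B_n(R) \cap u^{\perp})$, whose closure is $B_n(R)$. The same argument applies to $B_n(r)$, and applying the first part to the inclusions $B_n(r) \subseteq K \subseteq B_n(R)$ yields $B_n(r) = S_u^t B_n(r) \subseteq S_u^t K \subseteq S_u^t B_n(R) = B_n(R)$, as required.
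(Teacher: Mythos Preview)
Your proof is correct and follows essentially the same approach as the paper. The only difference is packaging: where you explicitly invoke density of $\Omega_\infty^1 \cap \Omega_\infty^2$ in $\Omega_\infty^1$ together with Corollary~\ref{cor:fibers-converge}, the paper observes that $\{\Omega_m^1 \cap \Omega_\infty^2\}$ is itself a valid $u$-multi-graphical representation of $K_1$ and then appeals directly to Proposition~\ref{prop:independent}; since Proposition~\ref{prop:independent} is proved precisely via the density and Hausdorff-continuity argument you wrote out, the two arguments are the same at different levels of abstraction.
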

\begin{proof}
Monotonicity implies that for all $t \in [0,1]$,
\begin{equation} \label{eq:StBalls}
\mathring S_u^t K_1 \cap ((\Omega^1_\infty \cap \Omega^2_\infty) \times L_u) \subseteq \mathring S_u^t K_2 , \end{equation}
where $\{\Omega^i_m\}$ and $\Omega^i_\infty$ are the sets from Definition \ref{def:mgs} corresponding to $K_i$. Note that $\{\Omega^1_m \cap \Omega^2_\infty\}$ 
also satisfy the requirements of Definition \ref{def:mgs} for $K_1$. Taking the closure in (\ref{eq:StBalls}) and applying Proposition \ref{prop:independent}, the assertion follows. 
\end{proof}

\subsection{Star-shapedness is preserved}

\begin{lemma} \label{lem:St-sum}
Let $J \in \J_m$. Then, for all $a,b \geq 0$ and $t \in [0,1]$,
\[
a \cdot S^t J + [-b,b] \subseteq S^t(a \cdot J + [-b,b]) . 
\]
\end{lemma}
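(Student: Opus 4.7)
Since $S^t$ is equivariant under positive scaling---$a \cdot S^t J = S^t(aJ)$ for $a > 0$, as collision times are scale-invariant and pre-collision evolution commutes with scaling---and the case $a = 0$ is trivial (both sides reduce to $[-b,b]$), I may assume $a = 1$ after replacing $J$ by $aJ$, so the task becomes $S^t J + [-b,b] \subseteq S^t(J+[-b,b])$. I would prove the following more flexible statement, which admits a clean induction: \emph{if $J \in \J_m$ and $A \in \J_{m'}$ satisfy $J + [-b,b] \subseteq A$, then $S^t J + [-b,b] \subseteq S^t A$ for every $t \in [0,1]$}. The lemma is recovered by taking $A = J + [-b,b]$.

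The proof proceeds by induction on $m'$. For the base case $m' = 1$, write $A = [L,R]$; the hypothesis $J+[-b,b] \subseteq A$ is equivalent to $J \subseteq [L+b,R-b]$, so Lemma~\ref{lem:StMonotone} gives $S^t J \subseteq S^t[L+b,R-b]$, and a direct computation yields $S^t[L+b,R-b] + [-b,b] = S^t[L,R] = S^t A$ (both sides being the centered interval about $(1-t)(L+R)/2$ of half-width $(R-L)/2$). For the inductive step $m' \geq 2$, decompose $A = \bigsqcup_l A_l$ and set $\tilde J_l := J \cap A_l$. Since each connected component of $J + [-b,b]$ is contained in a single $A_l$, one obtains a disjoint partition $J = \bigsqcup_l \tilde J_l$ with $\tilde J_l + [-b,b] \subseteq A_l$. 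Let $\tau$ denote the first collision time of $S^t A$. For $t \leq \tau$, the base case applied to each pair $(\tilde J_l, A_l)$ gives $S^t \tilde J_l + [-b,b] \subseteq S^t A_l$, and summing over $l$ produces the desired inclusion, contingent on the identity
\[
S^t J \;=\; \bigsqcup_l S^t \tilde J_l \qquad \text{for } t \in [0,\tau].
\]
For $t > \tau$ the semi-group property~(\ref{eq:semi-group}) reduces the problem to $(\tilde J, \tilde A) := (S^\tau J, S^\tau A)$: the $t = \tau$ instance above provides $\tilde J + [-b,b] \subseteq \tilde A$, and since the collision at $\tau$ strictly decreases the component count, $\tilde A \in \J_{m''}$ with $m'' < m'$, so the inductive hypothesis applies.

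The delicate point is the identity $S^t J = \bigsqcup_l S^t \tilde J_l$ on $[0,\tau]$, which is what legitimizes the fiberwise decomposition. Monotonicity (Lemma~\ref{lem:StMonotone}) provides $S^t \tilde J_l \subseteq S^t A_l$, and by the very definition of $\tau$ the sets $S^t A_l$ are pairwise disjoint for $t < \tau$; hence no interval of $\tilde J_l$ can collide in the joint flow of $J$ with one from a different block $\tilde J_{l'}$ before time $\tau$. Internal collisions \emph{within} a single block are permitted, but they coincide with those that would occur under the isolated flow of $\tilde J_l$, so the flow of $J$ genuinely splits as the disjoint union of the block flows on $[0,\tau]$. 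This cross-block decoupling, coupled with the semi-group reduction, is what I expect to be the main conceptual content and makes the induction on $m'$ close cleanly.
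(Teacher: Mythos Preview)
Your approach is correct and takes a genuinely different route from the paper's. You strengthen the statement to ``$J+[-b,b]\subseteq A \Rightarrow S^tJ+[-b,b]\subseteq S^tA$'' and induct on the number $m'$ of components of $A$, decomposing the flow of $J$ blockwise according to the components of $A$. The paper instead inducts on $m$, the number of components of $J$, and uses the first collision time $\tau$ of $J$ (not of $A$): before $\tau$, each interval $J_i$ moves independently and satisfies the trivial single-interval identity $S^tJ_i+[-b,b]=S^t(J_i+[-b,b])$, so by monotonicity (applied to $J_i+[-b,b]\subseteq J+[-b,b]$) one gets
\[
S^tJ+[-b,b]=\bigcup_i(S^tJ_i+[-b,b])=\bigcup_i S^t(J_i+[-b,b])\subseteq S^t(J+[-b,b]),\quad t\in[0,\tau],
\]
after which the semi-group property and the drop in $m$ close the induction. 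This entirely sidesteps your ``delicate point''. Your version buys a more flexible statement; the paper's buys a shorter proof.

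One wrinkle in your justification of $S^tJ=\bigsqcup_l S^t\tilde J_l$: you argue that cross-block collisions are ruled out before $\tau$ via $S^t\tilde J_l\subseteq S^tA_l$, but that inclusion concerns the \emph{isolated} block flow, not the block-$l$ part of the \emph{joint} flow of $J$---so as written the step is circular. The fix is immediate: apply monotonicity to $J\subseteq A$ directly, so $S^tJ\subseteq S^tA=\bigsqcup_l S^tA_l$ for $t<\tau$, which confines every component of the joint flow to a single $S^tA_l$ and forbids cross-block contact. After that, the claim that the within-block joint dynamics coincide with the isolated flows $S^t\tilde J_l$ is correct but merits one more remark: after a within-block collision at some $\tau_1<\tau$ in the joint flow, the time-rescaling by $\frac{t-\tau_1}{1-\tau_1}$ leaves the centers of the \emph{unaffected} intervals at $(1-t)c_i$, exactly as if nothing had happened; a short induction on collision events then identifies the two flows on $[0,\tau]$.
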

\begin{proof}
There is nothing to prove if $a = 0$, and if $a > 0$, by scaling we may assume that $a=1$. Write $J = \cup_{i=1}^m J_i$ with disjoint compact intervals $J_1,\ldots,J_m$, and let $\tau$ denote the first collision time in $S^t J$. Clearly $S^t J_i + [-b,b] = S^t (J_i + [-b,b])$. Since each interval evolves independently before the first collision, we have for $t \in [0,\tau]$,
\begin{equation} \label{eq:ab}
S^t J + [-b,b] = \cup_{i=1}^m (S^t J_i + [-b,b]) = \cup_{i=1}^m S^t(J_i + [-b,b]) \subseteq S^t(J + [-b,b]) ,
\end{equation}
where the last inclusion is by monotonicity. In particular, this confirms the claim for $m=1$. The general case follows by induction on $m$, since $S^{\tau} J \in \J_{m'}$ for $m' < m$ and hence by (\ref{eq:semi-group}), the induction hypothesis for $S^\tau J$, (\ref{eq:ab}) for $t = \tau$, and monotonicity, we obtain for all $t \in [\tau,1]$,
\begin{align*}
S^t J + [-b,b] & = S^{\frac{t-\tau}{1-\tau}} S^{\tau} J + [-b,b] \subseteq S^{\frac{t-\tau}{1-\tau}} (S^{\tau} J + [-b,b]) \\
& \subseteq S^{\frac{t-\tau}{1-\tau}} S^{\tau}(J + [-b,b]) = S^t(J + [-b,b]) . 
\end{align*}
\end{proof}

\begin{prop} \label{prop:Steiner-star-shaped}
Let $u \in \S^{n-1}$, and let $B$ be any subset of $\R^n$ so that for all $y \in u^{\perp}$, $B \cap L_u^y$ is a compact \emph{symmetric} interval (possibly a singleton or empty). Let $K$ be a $u$-multi-graphical compact subset of $\R^n$ which is star-shaped with respect to $B$. Then $S^t_u K$ remains star-shaped with respect to $B$ for all $t \in [0,1]$. 
\end{prop}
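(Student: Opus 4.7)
The plan is to reduce the claim to the one-dimensional properties of $S^t$ established previously --- notably the Monotonicity Lemma \ref{lem:StMonotone} and Lemma \ref{lem:St-sum} --- by working fiberwise. Identify each fiber $L_u^{y_*}$ with $\R$ via $y_* + s u \leftrightarrow s$, write $J_{y_*} := \{ s \in \R : y_* + s u \in K \}$, and by the hypothesis on $B$, write $B \cap L_u^{y_0} = y_0 + [-b_{y_0} , b_{y_0}] u$ with $b_{y_0} \geq 0$ whenever this fiber is non-empty (noting that $B \subseteq K$ since $K$ is star-shaped with respect to each $p \in B$). Since star-shapedness of $K$ with respect to $B$ is equivalent to $\lambda K + (1-\lambda) B \subseteq K$ for all $\lambda \in [0,1]$, it translates fiberwise into the one-dimensional Minkowski inclusion
\[
\lambda J_y + [-(1-\lambda) b_{y_0} , (1-\lambda) b_{y_0}] \; \subseteq \; J_{y_\lambda} , \qquad y_\lambda := \lambda y + (1-\lambda) y_0 ,
\]
valid for all $y \in P_{u^\perp} K$, $y_0 \in P_{u^\perp} B$ and $\lambda \in [0,1]$.

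Assume next that $y, y_\lambda \in \Omega_\infty$, so that $J_y, J_{y_\lambda} \in \J_m$ (and therefore $\lambda J_y + [-(1-\lambda) b_{y_0} , (1-\lambda) b_{y_0}] \in \J_{m'}$ for some $m'$, being a finite union of closed intervals after the possible mergers). Chaining the Monotonicity Lemma applied to the inclusion above with Lemma \ref{lem:St-sum} (with $a = \lambda$ and $b = (1-\lambda) b_{y_0}$), we obtain
\[
\lambda S^t J_y + [-(1-\lambda) b_{y_0} , (1-\lambda) b_{y_0}] \; \subseteq \; S^t \! \bigl( \lambda J_y + [-(1-\lambda) b_{y_0} , (1-\lambda) b_{y_0}] \bigr) \; \subseteq \; S^t J_{y_\lambda} .
\]
Geometrically, this says that for every $p = y_0 + c u \in B$ (with $|c| \leq b_{y_0}$) and every $x = y + s u \in \mathring S_u^t K$ (with $y \in \Omega_\infty$ and $s \in S^t J_y$), whenever $y_\lambda \in \Omega_\infty$ we have
\[
(1-\lambda) p + \lambda x = y_\lambda + (\lambda s + (1-\lambda) c) u \in \mathring S_u^t K \cap L_u^{y_\lambda} \subseteq S_u^t K .
\]

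To remove the assumption $y_\lambda \in \Omega_\infty$ (keeping $\lambda \in (0,1]$), I would approximate: the affine map $\phi(z) := \lambda z + (1-\lambda) y_0$ is a bi-Lipschitz bijection of $u^\perp$ for $\lambda > 0$, and since $P_{u^\perp} K$ is itself star-shaped with respect to $P_{u^\perp} B$, $\phi$ maps $\Omega_\infty$ into $P_{u^\perp} K$; combined with $\H^{n-1}(P_{u^\perp} K \setminus \Omega_\infty) = 0$, this shows that $G := \Omega_\infty \cap \phi^{-1}(\Omega_\infty)$ differs from $\Omega_\infty$ by an $\H^{n-1}$-null set. Hence we may pick $y^{(k)} \in G$ with $y^{(k)} \to y$ (since $y$ lies in the open set $\Omega_\infty$), and Corollary \ref{cor:fibers-converge} provides $s^{(k)} \in S^t J_{y^{(k)}}$ with $s^{(k)} \to s$. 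Applying the previous paragraph to $x^{(k)} := y^{(k)} + s^{(k)} u \in \mathring S_u^t K$ places $(1-\lambda) p + \lambda x^{(k)} \in S_u^t K$, and closedness of $S_u^t K$ then gives $(1-\lambda) p + \lambda x \in S_u^t K$. The general case $x \in S_u^t K$ is handled identically by approximating $x$ with a sequence in $\mathring S_u^t K$ (available since $S_u^t K = \closure(\mathring S_u^t K)$), and the boundary case $\lambda = 0$ follows by letting $\lambda \to 0^+$ and invoking closedness one final time to conclude $p \in S_u^t K$.

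The main technical obstacle is that $S_u^t K$ admits an explicit fiberwise description only over $\Omega_\infty$; the measure-theoretic approximation in the last paragraph, which exploits the bi-Lipschitz nature of $\phi$ for $\lambda > 0$ together with the $\H^{n-1}$-nullity of $P_{u^\perp} K \setminus \Omega_\infty$, is precisely what compensates for this loss and transfers the desired inclusion from $\mathring S_u^t K$ to its closure.
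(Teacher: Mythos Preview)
Your proof is correct and follows essentially the same strategy as the paper's: reduce fiberwise, apply Lemma~\ref{lem:St-sum} together with Monotonicity (Lemma~\ref{lem:StMonotone}) to obtain the inclusion over $\Omega_\infty$, and then pass to the closure. The one genuine difference is in the closure step: the paper first reduces to the case $B = [-b,b]u$ (so $y_0 = 0$) and then uses polar coordinates in $u^\perp$ --- for a.e.\ direction $\theta \in \S^{n-1} \cap u^\perp$, the set $\Omega_\infty \cap \R_+\theta$ is dense in $P_{u^\perp}K \cap \R_+\theta$, which suffices to propagate the inclusion along each ray through the origin. You instead keep the general base point $y_0$ and observe that the affine map $\phi(z) = \lambda z + (1-\lambda)y_0$ is bi-Lipschitz for $\lambda > 0$, so $\Omega_\infty \cap \phi^{-1}(\Omega_\infty)$ has full measure in $\Omega_\infty$ and is therefore dense near any $y \in \Omega_\infty$; Corollary~\ref{cor:fibers-converge} then lets you approximate. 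Your route avoids the preliminary reduction and is perhaps slightly more streamlined; the paper's polar-coordinate argument is more geometric but requires that reduction to work. Both are valid and equivalent in strength.
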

\begin{proof}
It is enough to prove the claim for $B = y + [-b,b] u$, $y \in u^{\perp}$. Since $S_u^t (K-y) = S_u^t K - y$ for all $y \in u^{\perp}$, we reduce to the case that $B = [-b,b] u$. Fix $\lambda \in [0,1]$, and let $y \in \Omega_\infty$ be such that $\lambda y \in \Omega_\infty$ as well. Since $K$ is star-shaped with respect to $B$, we know that $\lambda (K \cap L_u^y) + (1-\lambda) B \subseteq K \cap L_u^{\lambda y}$. By Lemma \ref{lem:St-sum} and monotonicity (Lemma \ref{lem:StMonotone}), we conclude that $\lambda S^t (K \cap L_u^y) + (1-\lambda) B \subseteq S^t (K \cap L_u^{\lambda y})$. In other words,
\[
\brac{\lambda \mathring S_u^t K + (1-\lambda) B} \cap (\Omega_\infty \times L_u) \subseteq \mathring S^t_u K . \]
Since this holds for all $\lambda \in [0,1]$, this means that
\begin{equation} \label{eq:rings}
\forall \beta \in [-b,b] \;\;\; \forall x \in \mathring S_u^t K \;\;\; [\beta u,x] \cap (\Omega_\infty \times L_u) \subseteq \mathring S^t_u K . 
\end{equation}
Setting $\Omega_0 = P_{u^{\perp}} K \setminus \Omega_\infty$, we are given that
\[
0 = \H^{n-1}(\Omega_0) = \int_{\S^{n-1} \cap u^{\perp}} \int_0^\infty 1_{\Omega_0}(r \theta) r^{n-2} \, dr \, d\theta .
\]
It follows that for a.e.~$\theta \in \S^{n-1} \cap u^{\perp}$, $|\Omega_0 \cap \R_+ \theta|_1 = 0$, and in particular, $\Omega_\infty \cap \R_+ \theta$ is dense in $P_{u^{\perp}} K \cap \R_+ \theta$. Taking the closure in (\ref{eq:rings}), we deduce that for a.e.~$\theta \in \S^{n-1} \cap u^{\perp}$, for all $r \geq 0$, $s \in \R$ and $\beta \in [-b,b]$, if $x = r \theta + s u \in \mathring S_u^t K$ then $[\beta u,x] \subset \closure(\mathring S^t_u K) = S^t_u K$. Since the set of such good $\theta$'s is dense in $\S^{n-1} \cap u^{\perp}$, and as $[\beta u,x] \subset \closure(\cup_k [\beta u,x_k])$ if $x_k \rightarrow x$, it follows that for all $x \in  S_u^t K$ and $\beta \in [-b,b]$, $[\beta u,x] \subset S^t_u K$. 
Since $S_u^t K$ is closed, this shows that $S_u^t K$ is star-shaped with respect to $[-b,b] u$, concluding the proof. \end{proof}

\subsection{Lipschitz continuity in time}

We will also need the following in the sequel.

\begin{lem} \label{lem:StLip}
Let $J \in \J_m$, and assume that all of its centers $\{c_i(J)\}_{i=1}^m$ are contained in $[-R,R]$. Then, $d_H(S^t J , J) \leq R t$ for all $t \in [0,1]$.
\end{lem}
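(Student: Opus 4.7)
The plan is to induct on the number of intervals $m$, using the semi-group property (\ref{eq:semi-group}) together with a key observation about how the centers of the components evolve through a collision.

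For the base case $m=1$, the convention $\tau=1$ gives $S^t J = (1-t) c_1 + [-h_1, h_1]$, which differs from $J = c_1 + [-h_1,h_1]$ by a translation of length $t|c_1| \le tR$, so $d_H(S^t J, J) \le tR$. For the inductive step, let $\tau$ be the first collision time of $J$. If $t \in [0,\tau]$, the intervals still evolve independently, so $S^t J = \cup_{i=1}^m ((1-t)c_i + [-h_i,h_i])$ and the same single-interval estimate applied componentwise (together with the fact that the Hausdorff distance of two unions $\cup_i A_i, \cup_i B_i$ is bounded by $\max_i d_H(A_i,B_i)$) yields $d_H(S^t J, J) \le t\max_i |c_i| \le tR$.

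The crux is the range $t \in (\tau, 1]$, where we wish to combine the semi-group identity $S^t J = S^{(t-\tau)/(1-\tau)}(S^\tau J)$ with the induction hypothesis applied to $S^\tau J \in \J_{m'}$ with $m' < m$. For this to close the induction cleanly, I need the centers of $S^\tau J$ to be bounded by $(1-\tau)R$, so that taking $R' = (1-\tau)R$ as the center-bound for $S^\tau J$ yields, by induction,
\[
d_H\bigl(S^{(t-\tau)/(1-\tau)}(S^\tau J),\, S^\tau J\bigr) \;\le\; R' \cdot \frac{t-\tau}{1-\tau} \;=\; R(t-\tau).
\]
Combined with $d_H(S^\tau J, J) \le R\tau$ from the previous case and the triangle inequality for $d_H$, this gives $d_H(S^t J, J) \le Rt$.

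The main technical step is therefore the center-bound at the collision time. Components of $S^\tau J$ not involved in a collision are translates $(1-\tau) c_i + [-h_i,h_i]$ with center $(1-\tau)c_i$, trivially bounded by $(1-\tau)R$. For a merged component formed from intervals with centers $c_i > c_j$, the collision condition gives $(1-\tau)(c_i - c_j) = h_i + h_j$, and a direct computation (or equivalently, conservation of the first moment $\int x\, dx$, since the merged interval is the disjoint union of the two colliding ones at the instant of collision) yields
\[
\tilde c \;=\; (1-\tau)\,\frac{h_i c_i + h_j c_j}{h_i + h_j},
\]
exhibiting $\tilde c/(1-\tau)$ as a convex combination of $c_i$ and $c_j$. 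Thus $|\tilde c| \le (1-\tau)\max(|c_i|,|c_j|) \le (1-\tau)R$, as required. (If several collisions happen simultaneously at time $\tau$, the same identity applies to each merged group, which merely strengthens the decrease in~$m$.) The anticipated obstacle is exactly this center-of-mass identity; once it is in hand, everything else is bookkeeping through the semi-group and triangle inequality.
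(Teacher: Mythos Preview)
Your proof is correct and follows essentially the same approach as the paper: induction on $m$, handling $t \in [0,\tau]$ by the componentwise translation bound, establishing that the centers of $S^\tau J$ lie in $(1-\tau)[-R,R]$, and then closing via the semi-group property and the triangle inequality. The only difference is cosmetic: the paper simply asserts that ``the new centers are a convex combination of the old centers,'' whereas you derive this explicitly via conservation of the first moment (which also cleanly covers the case of more than two intervals merging simultaneously).
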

\begin{proof}
We will prove the claim by induction on $m$. Note that before the first collision time $\tau$, the intervals comprising $J$ are being translated at a velocity of at most $R$. Consequently, for all $t \in [0,\tau]$, $d_H(S^t J,J) \leq R t$, establishing in particular the claim when  $m=1$ (and hence $\tau=1$). In addition, note that for all $t \in [0,\tau)$, $\{c_i(S^t J)\}_{i=1}^m \subset (1-t) [-R,R]$, and that at the collision time $t=\tau$, the new centers $\{c'_i(S^\tau J) \}_{i=1}^{m'}$ are a convex combination of the old centers, and hence $\{c'_i(S^\tau J)\}_{i=1}^{m'} \subset (1-\tau) [-R,R]$. Since $m' < m$, we may apply the induction hypothesis. Recalling that $S^t J = S^{\frac{t-\tau}{1-\tau}} S^\tau J$, we know that $d_H(S^t J , S^\tau J) \leq (1-\tau) R \frac{t-\tau}{1-\tau} = R (t-\tau)$ for all $t \in [\tau ,1]$. It remains to apply the triangle inequality for the Hausdorff distance, verifying that for all $t \in [\tau,1]$,
\[
d_H(S^t J , J) \leq  d_H(S^t J, S^\tau J) + d_H(S^\tau J , J) \leq  R (t-\tau) + R \tau \leq R t . 
\]
\end{proof}

\section{Lipschitz star bodies} \label{sec:Lip}

It is shown in the appendix that any star body $K$ satisfying $I^2 K = c K$ must have a $C^\infty$-smooth radial function $\rho_K$. For our purposes, there is  no benefit in utilizing any regularity of $\rho_K$ beyond Lipschitzness, 
 and so in this work we will concentrate on Lipschitz star bodies and their properties.

\begin{defn}[Lipschitz star bodies]
A star body $K$ in $\R^n$ is called a Lipschitz star body if its radial function $\rho_K : \S^{n-1} \rightarrow (0,\infty)$ is Lipschitz continuous. 
\end{defn}

Recall that the gauge function $\norm{x}_K$ is the $1$-homogeneous function on $\R^n$ coinciding with $1 / \rho_K(x)$, and that $\norm{x}_K \leq 1$ iff $x \in K$.  
Clearly, $\rho_K$ is Lipschitz and strictly positive on $\S^{n-1}$ iff $\norm{\cdot}_K$ is, and therefore so is the $1$-homogeneous extension of $\norm{\cdot}_K$ to $\R^n$.
In particular, it follows that the class of Lipschitz star bodies includes all convex bodies $K$ containing the origin in their interior, since $\norm{\cdot}_K$ is trivially Lipschitz by the triangle inequality. For a Lipschitz star body $K$, we denote by $\Lip_K < \infty$ the Lipschitz constant of $\norm{\cdot}_K$ on $\R^n$.

\smallskip
The following proposition is known (see \cite{LinWu-LipschitzStarBodiesReview} and the references therein).
\begin{prop} \label{prop:Lip}
Let $K$ be a compact set in $\R^n$ with $B_n(r) \subseteq K \subseteq B_n(R)$. The following statements are equivalent:
\begin{enumerate}
\item \label{it:Lip1} $K$ is a Lipschitz star body with $\Lip_K \leq L$.
\item \label{it:Lip2} There exists $\delta > 0$ such that $K$ is a star body with respect to $B_n(\delta)$. 
\item \label{it:Lip3} There exists $\eps > 0$ such that $K$ is star-shaped with respect to $B_n(\eps)$. 
\end{enumerate}
The equivalence is in the sense that the constants $\eps,\delta,L>0$ above only depend on each other and on $r, R > 0$. 
\end{prop}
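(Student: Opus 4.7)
The implication $(2) \Rightarrow (3)$ is immediate, since a star body with respect to a point set is in particular star-shaped with respect to it. The substance lies in the other two implications. The plan is to first prove $(3) \Rightarrow (1)$ by a direct geometric cone argument, which establishes a quantitative Lipschitz bound $\Lip_K \leq L(\eps,R)$ for the gauge of $K$ in terms of the hypothesized interior ball radius $\eps$ and the circumradius $R$. Then I will deduce $(1) \Rightarrow (2)$ by using the Lipschitz gauge to produce a small ball of star-centers, and bootstrapping via $(3) \Rightarrow (1)$ applied to translates $K-p$ to obtain the continuity of $\rho_{K-p}$ required for $K-p$ to be a star body.

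\textbf{The cone argument for $(3) \Rightarrow (1)$.} Assume $\eps \leq r$ without loss of generality, so $B_n(\eps) \subseteq K$. For $\theta_1 \in \S^{n-1}$ put $x_1 := \rho_K(\theta_1)\theta_1 \in \partial K$ and define the convex body $C := \conv(B_n(\eps) \cup \{x_1\})$. Star-shapedness of $K$ with respect to every $p \in B_n(\eps)$ forces $[p,x_1] \subseteq K$ for all such $p$, hence $C \subseteq K$. Since $B_n(\eps) \subseteq C \subseteq B_n(R)$, subadditivity of the convex gauge $\|\cdot\|_C$ together with $\|y\|_C \leq |y|/\eps$ makes $\|\cdot\|_C$ a $(1/\eps)$-Lipschitz function on $\R^n$; inverting on $\S^{n-1}$ (where $\|\theta\|_C \in [1/R,1/\eps]$) via
\[
\rho_C(\theta_1) - \rho_C(\theta_2) = \frac{\|\theta_2\|_C - \|\theta_1\|_C}{\|\theta_1\|_C\,\|\theta_2\|_C}
\]
shows that $\rho_C$ is $(R^2/\eps)$-Lipschitz on $\S^{n-1}$. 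Since $\rho_C(\theta_1) = \rho_K(\theta_1)$ (because $x_1 \in C \subseteq K$) and $\rho_C \leq \rho_K$ everywhere, this Lipschitz estimate transfers directly to $\rho_K$, yielding $\Lip_K \leq R^2/\eps$.

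\textbf{Bootstrap for $(1) \Rightarrow (2)$.} From $1$-homogeneity and the $L$-Lipschitz bound $|\|a\|_K - \|b\|_K| \leq L|a-b|$ on $\R^n$, one obtains $\|tx+(1-t)p\|_K \leq t\|x\|_K + L(1-t)|p|$ for any $x,p \in \R^n$ and $t \in [0,1]$; taking $x \in K$, this is at most $1$ whenever $L|p| \leq 1$. Setting $\delta_0 := \min(r/2,\,1/(2L))$, this shows $K$ is star-shaped with respect to every $p \in B_n(\delta_0)$. For any fixed $p \in B_n(\delta_0/2)$, the translate $K-p$ satisfies $B_n(r/2) \subseteq K-p \subseteq B_n(2R)$ and is star-shaped with respect to $B_n(\delta_0/2)$ since $p+B_n(\delta_0/2) \subseteq B_n(\delta_0)$. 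Applying the already-established $(3) \Rightarrow (1)$ to $K-p$ gives that $\rho_{K-p}$ is Lipschitz, hence positive and continuous, so $K-p$ is a star body. This is exactly condition $(2)$ with $\delta := \delta_0/2$, with all constants quantitatively controlled by $r$, $R$, $L$.

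\textbf{Anticipated obstacle.} The geometric core---the cone $C = \conv(B_n(\eps)\cup\{x_1\}) \subseteq K$ and the resulting Lipschitz bound via the inversion identity for $\rho_C$---is essentially a one-line convex-geometric estimate once assembled. The only step that takes genuine care is the continuity of $\rho_{K-p}$ inside $(1) \Rightarrow (2)$: the Lipschitz estimate on $\|\cdot\|_K$ yields star-shapedness of $K-p$ readily, but does \emph{not} directly give continuity of the radial function of the translate (which could a priori develop discontinuities where a ray from the new origin becomes tangent to $\partial K$). The cleanest workaround is the bootstrap above, reducing that continuity question to the already-solved $(3) \Rightarrow (1)$ applied to $K-p$ itself, at the cost of shrinking the admissible radius from $\delta_0$ to $\delta_0/2$.
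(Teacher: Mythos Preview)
Your proof is correct. The paper's own proof is essentially a citation to \cite{LinWu-LipschitzStarBodiesReview} (and \cite{Toranzos-LipschitzConstant} for the sharp constants in $(3)\Rightarrow(1)$), so you are supplying a self-contained argument where the paper defers to the literature. The cone comparison $C=\conv(B_n(\eps)\cup\{x_1\})\subseteq K$ with $\rho_C(\theta_1)=\rho_K(\theta_1)$ and $\rho_C\le\rho_K$ is exactly the classical device behind the cited results, and your bootstrap for $(1)\Rightarrow(2)$---first getting star-shapedness of $K$ with respect to $B_n(\delta_0)$ from the Lipschitz gauge, then reapplying $(3)\Rightarrow(1)$ to $K-p$ to upgrade to a star body---is a clean way to avoid a direct continuity argument for $\rho_{K-p}$.

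One small slip: the bound $R^2/\eps$ you derive is the Lipschitz constant of $\rho_K$ on $\S^{n-1}$, not of the gauge $\|\cdot\|_K$ on $\R^n$ (which is what $\Lip_K$ denotes in the paper). The conversion is routine (invert again using $\rho_K\in[r,R]$, then extend the $1$-homogeneous $\|\cdot\|_K$ from $\S^{n-1}$ to $\R^n$), so this does not affect the substance, but the stated constant for $\Lip_K$ should be adjusted accordingly.
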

\begin{proof}
As explained above, one may pass back and forth between upper bounds on the spherical Lipschitz constant of $\rho_K$ and $\Lip_K$, in a manner depending solely on $r , R$. Consequently, the equivalence between (\ref{it:Lip1}) and (\ref{it:Lip3}) follows from \cite[Theorem 2.1]{LinWu-LipschitzStarBodiesReview} (see \cite[Lemmas 3.2, 3.3 and 3.4]{LinWu-LipschitzStarBodiesReview}); see also \cite[Theorem 2]{Toranzos-LipschitzConstant} for the best dependence of the spherical Lipschitz constant of $\rho_K$ on the inner and outer radii of $K$ in the implication $(\ref{it:Lip3}) \Rightarrow (\ref{it:Lip1})$. 
Clearly (\ref{it:Lip2}) implies (\ref{it:Lip3}) with $\eps = \delta$. The other direction for any $\delta \in (0,\eps)$ follows by \cite[Lemma 3.1]{LinWu-LipschitzStarBodiesReview} and the subsequent comment. 
\end{proof}

\begin{lem} \label{lem:adding-ball}
Let $K$ be a Lipschitz star body in $\R^n$. Then, for all $d > 0$, $K + B_n(d)$ is a Lipschitz star body satisfying
\[
\rho_{K + B_n(d)}(\theta) \leq (1 + \Lip_K d) \rho_K(\theta) \;\;\; \forall \theta \in \S^{n-1} . 
\]
\end{lem}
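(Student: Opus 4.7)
The plan is to reduce both claims to the Lipschitz gauge $\norm{\cdot}_K$ and then invoke Proposition \ref{prop:Lip}. First, I would verify that $K+B_n(d)$ is a Lipschitz star body by checking criterion (\ref{it:Lip3}) in Proposition \ref{prop:Lip}. Since $K$ is a Lipschitz star body, Proposition \ref{prop:Lip} supplies some $\eps>0$ such that $K$ is star-shaped with respect to $B_n(\eps)$. A direct check shows that Minkowski summation preserves star-shapedness in the following sense: if $K$ is star-shaped with respect to every $p\in B_n(\eps)$ and $B_n(d)$ is star-shaped with respect to the origin, then for any $x=k+l\in K+B_n(d)$ and $\lambda\in[0,1]$ one has $\lambda x+(1-\lambda)p=(\lambda k+(1-\lambda)p)+\lambda l\in K+B_n(d)$. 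Since $K+B_n(d)$ is compact and contains $B_n(r+d)$, the reverse direction of Proposition \ref{prop:Lip} gives that it is a Lipschitz star body.

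For the radial bound, the key observation is that $\norm{\cdot}_K$ extended $1$-homogeneously to $\R^n$ is Lipschitz with constant $\Lip_K$. If $x\in K+B_n(d)$, write $x=y+z$ with $y\in K$, so $\norm{y}_K\le 1$ and $\norm{x-y}\le d$. Then
\[
\norm{x}_K\le \norm{y}_K+\abs{\norm{x}_K-\norm{y}_K}\le 1+\Lip_K\norm{x-y}\le 1+\Lip_K d.
\]
Applying this to $x=\rho_{K+B_n(d)}(\theta)\theta$, which lies in $K+B_n(d)$, and using $1$-homogeneity of $\norm{\cdot}_K$ together with $\norm{\theta}_K=1/\rho_K(\theta)$, gives $\rho_{K+B_n(d)}(\theta)/\rho_K(\theta)\le 1+\Lip_K d$, which is the claimed inequality.

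There is no substantial obstacle here; the only mildly delicate point is making sure that ``star-shaped with respect to a ball'' is preserved under Minkowski addition, which is immediate from the convex combination computation above. The rest is a one-line consequence of the Lipschitz property of the gauge and its $1$-homogeneity.
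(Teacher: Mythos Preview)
Your proof is correct and follows essentially the same route as the paper: both invoke Proposition \ref{prop:Lip} to pass between Lipschitzness and star-shapedness with respect to a ball, verify the latter for $K+B_n(d)$ via a convex-combination decomposition of a point in the Minkowski sum, and then derive the radial bound from the Lipschitz property of the gauge. The only cosmetic differences are that the paper shows star-shapedness with respect to the larger ball $B_n(\eps+d)$ (you use $B_n(\eps)$, which is equally sufficient), and the paper phrases the gauge estimate as a homogeneous inequality $\norm{x}_K\le(1+\Lip_K d)\norm{x}_{K_2}$ valid for all $x$, whereas you evaluate directly at the boundary point $x=\rho_{K+B_n(d)}(\theta)\theta$.
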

\begin{proof}
By Proposition \ref{prop:Lip}, $K$ is star-shaped with respect to $B_n(\eps)$. It follows that $K + B_n(d)$ is star-shaped with respect to $B_n(\eps + d)$, since if $x = y + z$ with $y \in K$ and $z \in B_n(d)$, then for any $x_0 \in B_n(\eps + d)$, write $x_0 = y_0 + z_0$ with $y_0 \in B_n(\eps)$ and $z_0 \in B_n(d)$, and note that $[y_0,y] \subset K$ and $[z_0,z] \subset B_n(d)$, and therefore $[y_0+z_0 , y+z] \subset K + B_n(d)$. Consequently, $K_2 = K + B_n(d)$ is a Lipschitz star body by Proposition \ref{prop:Lip}. In addition, we claim that
\begin{equation} \label{eq:inf}
\inf_{z \in B_n(d)} \norm{x - \norm{x}_{K_2} z}_K \leq \norm{x}_{K_2} \;\;\; \forall x \in \R^n . 
\end{equation}
Indeed, both sides are homogeneous in $x$, so it is enough to verify this for $\norm{x}_{K_2} = 1$. This means that $x \in K_2$, and so there exists $z \in B_n(d)$ such that $x - z \in K$, hence $\norm{x-z}_K \leq 1$, and (\ref{eq:inf}) is verified. Therefore,
\[
\norm{x}_K - \Lip_K d \norm{x}_{K_2} \leq \norm{x}_{K_2} \;\;\; \forall x \in \R^n . 
\]
Rearranging and recalling that $\rho(\theta) = \frac{1}{\norm{\theta}}$, this concludes the proof. 
\end{proof}

\subsection{Graphical properties}

\begin{lem} \label{lem:jointly-cont}
Let $K$ be a star body with respect to $B_n(\delta)$ in $\R^n$. Then, the function
\[
\Omega = B_n(\delta) \times \S^{n-1} \ni (x,\theta) \mapsto \rho_{K - x}(\theta) \in (0,\infty) 
\]
is jointly continuous.  \end{lem}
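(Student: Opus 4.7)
The plan is to argue by a subsequence/compactness argument. I would fix a base point $(x_0,\theta_0)\in\Omega$ and an arbitrary sequence $(x_k,\theta_k)\to(x_0,\theta_0)$ in $\Omega$, set $r_k := \rho_{K-x_k}(\theta_k)$ and $r := \rho_{K-x_0}(\theta_0)$, and show that $r_k\to r$. This suffices for joint continuity.

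First I would establish a uniform bound. Since $K$ is compact, fix $R>0$ with $K\subseteq B_n(R)$; then $x_k+r_k\theta_k\in\partial K$ immediately forces $r_k\leq R+\delta$. Thus $\{r_k\}$ is bounded, and it suffices to show that every convergent subsequence has the same limit $r$. Extract an arbitrary subsequence $r_{k_j}\to r^*\in[0,R+\delta]$. Because $\partial K$ is closed and $x_{k_j}+r_{k_j}\theta_{k_j}\in\partial K$, passing to the limit yields $x_0+r^*\theta_0\in\partial K$, i.e.~$r^*\theta_0\in\partial(K-x_0)$.

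The crux is identifying $r^*=r$. The hypothesis that $K-p$ is a star body for every $p\in B_n(\delta)$ means $0\in\interior(K-p)$, hence $B_n(\delta)\subseteq\interior K$; in particular $x_0\in\interior K$, which rules out $r^*=0$. Since $K-x_0$ is itself a star body, the preliminaries (Section \ref{sec:prelim}) guarantee that the ray $\R_+\theta_0$ meets $\partial(K-x_0)$ at the \emph{unique} point $r\theta_0$. Combined with $r^*\theta_0\in\partial(K-x_0)$ and $r^*>0$, this forces $r^*=r$. By the subsequence principle, the full sequence $r_k$ converges to $r$.

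I don't expect any real obstacle here: the argument rests only on compactness of $K$, closedness of $\partial K$, and the single-intersection property of the boundary of a star body, all of which are either standard or immediate from the hypotheses. The only point that deserves care is unpacking ``$K$ is a star body with respect to $B_n(\delta)$'' to extract $B_n(\delta)\subseteq\interior K$, which is exactly what prevents the limiting radius $r^*$ from collapsing to zero.
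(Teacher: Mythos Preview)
Your argument is correct and takes a slightly different route from the paper's. The paper does not pass to subsequences: instead it introduces the auxiliary direction $\theta'_k$ pointing from $x_0$ to the boundary point $x_k + r_k\theta_k \in \partial K$, observes that $\theta'_k \to \theta_0$ (using a uniform lower bound $r_k \geq r - \delta > 0$ to ensure $\theta'_k$ is well defined), and then writes $x_k + r_k\theta_k = x_0 + \rho_{K-x_0}(\theta'_k)\theta'_k$; continuity of the single radial function $\rho_{K-x_0}$ then forces convergence of the boundary points, and hence of $r_k$. Your compactness argument replaces this reparametrization with boundedness of $\{r_k\}$, closedness of $\partial K$, and the single-intersection property of $\partial(K-x_0)$ along each ray to identify every subsequential limit. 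Both proofs are short and elementary; yours avoids the mildly fiddly check that $\theta'_k \to \theta_0$, while the paper's avoids the subsequence machinery.
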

\begin{proof}
Clearly there exists $r > \delta$ such that $B_n(r) \subseteq K$ (otherwise $K-x$ would not be a star body for some $x \in \partial B_n(\delta)$). 
Assume $\Omega \ni (x_k,\theta_k) \rightarrow (x_0,\theta_0) \in \Omega$ as $k\rightarrow \infty$. Let $\theta'_k \in \S^{n-1}$ be the direction in which $x_k + \rho_{K - x_k}(\theta_k) \theta_k - x_0$ is pointing. Since $\rho_{K-x_k}(\theta_k) \geq r-\delta > 0$, this is well defined for large enough $k$, and since $\theta_k \rightarrow \theta_0$, it follows that $\theta'_k \rightarrow \theta_0$ (regardless of whether $\rho_{K - x_k}(\theta_k)$ converges to $\rho_{K-x_0}(\theta_0)$ or not). Since $K-x_0$ is a star body, this implies that $\rho_{K-x_0}(\theta'_k) \rightarrow \rho_{K-x_0}(\theta_0)$, and therefore
\[
 x_k + \rho_{K-x_k}(\theta_k) \theta_k = x_0 + \rho_{K-x_0}(\theta'_k)  \theta'_k \rightarrow x_0 + \rho_{K-x_0}(\theta_0) \theta_0 . 
 \]
 Since $x_k \rightarrow x_0$ and $\theta_k \rightarrow \theta_0$, it follows that $\rho_{K-x_k}(\theta_k) \rightarrow \rho_{K-x_0}(\theta_0)$. This concludes the proof. 
\end{proof}

\begin{defn}[$u$-graphical and equi-graphical]
Given $u \in \S^{n-1}$, we say that a compact set $K$ in $\R^n$ is $u$-graphical over a subset $\Omega_u \subset u^{\perp}$ if
\begin{equation} \label{eq:u-graphical}
K \cap (\Omega_u \times L_u) = \set{ y+ s u : y \in \Omega_u \; ,\; f(y) \leq s \leq g(y) } ,
\end{equation}
for some continuous functions $f < g : \Omega_u\rightarrow \R$. 

\medskip

We say that $K$ is equi-graphical over $\Omega \subset \R^n$ if for all $u \in \S^{n-1}$, $K$ is $u$-graphical over $\Omega \cap u^{\perp}$, and moreover, the corresponding graph functions $f_u,g_u$ satisfy $g_u(y) = F(y,u)$ and $f_u(y) = -F(y,-u)$ for some common uniformly continuous function $F : \Omega \times \S^{n-1} \rightarrow \R$. 
\end{defn}

\begin{prop} \label{prop:graphical}
Let $K$ be a Lipschitz star body in $\R^n$. There exists $\delta > 0$ (with $B_n(\delta) \subset \interior K$) such that $K$ is equi-graphical over $B_n(\delta)$. 
\end{prop}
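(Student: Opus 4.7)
The plan is to reduce the proposition to a single jointly continuous function on an appropriate compact set, defined via shifted radial functions. By Proposition \ref{prop:Lip}, since $K$ is a Lipschitz star body there exists $\delta_0 > 0$ such that $K$ is a star body with respect to $B_n(\delta_0)$ (where the closed ball is understood). In particular $B_n(\delta_0) \subset \interior K$, and for every $y \in B_n(\delta_0)$ the translate $K - y$ is a star body with respect to the origin, so its radial function $\rho_{K-y}: \S^{n-1} \to (0,\infty)$ is well defined and positive. I will define
\[
F(y, u) := \rho_{K-y}(u), \qquad (y,u) \in B_n(\delta_0) \times \S^{n-1},
\]
and show this is the desired function after shrinking $\delta_0$ slightly.

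Next I invoke Lemma \ref{lem:jointly-cont}, which says exactly that $F$ is jointly continuous on $B_n(\delta_0) \times \S^{n-1}$. Pick any $\delta \in (0, \delta_0)$, so that $\closure B_n(\delta) \times \S^{n-1}$ is a compact subset of the domain of continuity of $F$. Then $F$ restricted to this compact set is uniformly continuous, and in particular $F$ restricted to $B_n(\delta) \times \S^{n-1}$ is uniformly continuous. Moreover $B_n(\delta) \subset B_n(\delta_0) \subset \interior K$, as required by the statement.

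To verify the equi-graphical decomposition, fix $u \in \S^{n-1}$ and $y \in B_n(\delta) \cap u^\perp$. Since $K - y$ is a star body with respect to the origin, its intersection with the line $\R u$ equals
\[
(K - y) \cap \R u = \bigl[-\rho_{K-y}(-u),\, \rho_{K-y}(u)\bigr]\, u .
\]
Translating by $y$, we obtain
\[
K \cap L_u^y = \{ y + s u : -F(y,-u) \le s \le F(y,u) \},
\]
which has exactly the form (\ref{eq:u-graphical}) with $g_u(y) = F(y,u)$ and $f_u(y) = -F(y,-u)$. Positivity of $F$ gives $f_u(y) < 0 < g_u(y)$, so $f_u < g_u$, and continuity of $f_u, g_u$ in $y$ follows from joint continuity of $F$. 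Setting $\Omega_u := B_n(\delta) \cap u^\perp$, this establishes that $K$ is $u$-graphical over $\Omega_u$ with graph functions of the required form, and the common function $F$ is uniformly continuous on $B_n(\delta) \times \S^{n-1}$, completing the proof.

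There is no real obstacle here: the proposition is essentially a repackaging of Proposition \ref{prop:Lip} and Lemma \ref{lem:jointly-cont}. The only point that requires a moment of care is passing from pointwise continuity of $F$ on the open domain to uniform continuity, which is handled by shrinking $\delta$ so that the closed ball sits inside the open ball of continuity.
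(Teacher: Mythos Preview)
Your proof is correct and follows essentially the same approach as the paper: invoke Proposition \ref{prop:Lip} to obtain star-body structure with respect to a ball, define $F(y,u) = \rho_{K-y}(u)$, apply Lemma \ref{lem:jointly-cont} for joint continuity, and read off the graphical decomposition. The only difference is cosmetic: since $B_n(\delta_0)$ is already closed in the paper's convention, the domain $B_n(\delta_0) \times \S^{n-1}$ is already compact and your shrinking step to $\delta < \delta_0$ is unnecessary (though harmless).
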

\begin{proof}
By Proposition \ref{prop:Lip}, $K$ is a star body with respect to $B_n(\delta)$ for some $\delta > 0$, and so $(x,\theta) \mapsto F(x,\theta) = \rho_{K - x}(\theta)$ is uniformly continuous on the compact set $B_n(\delta) \times \S^{n-1}$ by Lemma  \ref{lem:jointly-cont}. 
Given $u \in \S^{n-1}$ and $y \in B_{u^{\perp}}(\delta)$, since $K$ is a star body with respect to $y$, it follows that $K \cap L_u^y$ is a closed interval of the form $y + [f_u(y),g_u(y)] u$ with $f_u(y) < 0 < g_u(y)$, having its endpoints in $\partial K$. Since $g_u(y) = F(y,u)$ and $f_u(y) = -F(y,-u)$, the equi-graphicality is established.
\end{proof}

In addition, the following multi-graphical version of Proposition \ref{prop:graphical} was shown by Lin and Xi \cite[Lemma 2.2, Section 3 and Theorem 4.1]{LinXi-LipschitzStarBodySymmetrization}. Recall the Definition \ref{def:mgs} of a $u$-multi-graphical set. 
\begin{thm}[\cite{LinXi-LipschitzStarBodySymmetrization}] \label{thm:Lip-multi-graphical}
Let $K$ be a Lipschitz star body in $\R^n$. Then, there exists a Lebesgue measurable $\U \subseteq \S^{n-1}$ of full measure such that for all $u \in \U$, $K$ is $u$-multi-graphical (and in particular, $u$-finite), and moreover, the following properties hold:
\begin{enumerate}
\item For all $m$, the corresponding functions $\{f_i,g_i\}_{i=1,\ldots,m}$ from Definition \ref{def:mgs} are differentiable in a Lebesgue measurable subset $\Omega_m^* \subseteq \Omega_m$ with $\H^{n-1}(\Omega_m \setminus \Omega_m^*) =0$, and
\item $\H^{n-1}(\partial K \setminus (\Omega_\infty \times L_u)) = 0$. 
\end{enumerate}
\end{thm}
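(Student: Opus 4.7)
The plan is to follow Lin and Xi's approach, exploiting the Lipschitz structure of $\partial K$ together with Rademacher's theorem and the coarea formula. Since $\rho_K : \S^{n-1} \to (0,\infty)$ is Lipschitz and bounded below, the map $\theta \mapsto \rho_K(\theta)\theta$ is a bi-Lipschitz parametrization of $\partial K$, so $\partial K$ is an $(n-1)$-rectifiable topological sphere with $\H^{n-1}(\partial K) < \infty$ and a well-defined tangent hyperplane $T_x \partial K$ at $\H^{n-1}$-a.e.\ $x \in \partial K$.

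First I would identify the full-measure set $\U \subseteq \S^{n-1}$. For each $u \in \S^{n-1}$, let $\pi_u : \R^n \to u^{\perp}$ be orthogonal projection. Applying the coarea formula to the Lipschitz map $\pi_u|_{\partial K}$ gives
\[
\int_{u^{\perp}} \#(\partial K \cap L_u^y)\, dy \;=\; \int_{\partial K} J_{n-1}(\pi_u)(x)\, d\H^{n-1}(x) \;\leq\; \H^{n-1}(\partial K) < \infty,
\]
so for each $u$, the fiber $\partial K \cap L_u^y$ is finite for a.e.\ $y \in u^{\perp}$. In parallel, Fubini applied to $\{(x,u) \in \partial K \times \S^{n-1} : u \in T_x \partial K\}$ --- whose fiber above each $x$ is a great sub-sphere of codimension one in $\S^{n-1}$, hence $\sigma_{\S^{n-1}}$-null --- yields a full-measure $\U \subseteq \S^{n-1}$ such that for every $u \in \U$ the tangency set $\mathcal{T}_u := \{x \in \partial K : u \in T_x \partial K\}$ satisfies $\H^{n-1}(\mathcal{T}_u) = 0$. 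Fixing $u \in \U$, combining these two facts shows that the set of ``good'' $y \in P_{u^{\perp}}K$, for which $\partial K \cap L_u^y$ is finite and every crossing is transversal to $u$, has full $\H^{n-1}$-measure.

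Next I would construct the open sets $\Omega_m$ and the graph functions. At a transversal crossing $x_0 = y_0 + t_0 u \in \partial K$, the local Lipschitz graph representation of $\partial K$ over $T_{x_0}\partial K$ combined with $u \notin T_{x_0}\partial K$ yields, via the Lipschitz implicit function theorem, a Lipschitz function $h$ on a neighborhood $V \subseteq u^{\perp}$ of $y_0$ with $\partial K \cap (V \times L_u) = \{y + h(y)u : y \in V\}$. Compactness of $K$ forces any good fiber to contain an even number of crossings $y + u\cdot \{t_1 < \cdots < t_{2m}\}$, whose strict ordering is preserved locally by continuity. I would then define
\[
\Omega_m := \interior \{y \in P_{u^{\perp}}K : \#(\partial K \cap L_u^y) = 2m \text{ and every crossing is transversal}\},
\]
and set $f_i(y) := t_{2i-1}(y)$, $g_i(y) := t_{2i}(y)$ on $\bigcup_{m' \geq i} \Omega_{m'}$. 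These are locally Lipschitz on each $\Omega_m$, hence continuous on their disjoint open domains, and differentiable on a full-measure subset $\Omega_m^* \subseteq \Omega_m$ by Rademacher.

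The main obstacle --- and the technical heart of Lin and Xi's analysis --- is establishing the two measure identities $\H^{n-1}(P_{u^{\perp}} K \setminus \Omega_\infty) = 0$ and $\H^{n-1}(\partial K \setminus (\Omega_\infty \times L_u)) = 0$. For the first, the complement of $\Omega_\infty$ in $P_{u^{\perp}}K$ consists of $y$'s with some non-transversal crossing or with accumulation of crossings; the former is contained in $\pi_u(\mathcal{T}_u)$, which is $\H^{n-1}$-null since $\pi_u$ is $1$-Lipschitz and $\H^{n-1}(\mathcal{T}_u) = 0$, while the latter is null by the coarea-based finiteness of generic fibers. For the second, setting $N := P_{u^{\perp}}K \setminus \Omega_\infty$ and $A := \pi_u^{-1}(N) \cap \partial K$, the coarea formula applied with the indicator $\mathbf{1}_A$ gives
\[
\int_A J_{n-1}(\pi_u)\, d\H^{n-1} \;=\; \int_N \#(\partial K \cap L_u^y)\, dy \;=\; 0,
\]
whence $J_{n-1}(\pi_u) = 0$ $\H^{n-1}$-a.e.\ on $A$, so $A \subseteq \mathcal{T}_u$ up to a null set and consequently $\H^{n-1}(A) = 0$, completing the proof.
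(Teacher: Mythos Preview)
The paper does not give its own proof of this theorem; it is quoted from Lin--Xi \cite[Lemma 2.2, Section 3 and Theorem 4.1]{LinXi-LipschitzStarBodySymmetrization} and used as a black box. Your outline is a reasonable reconstruction of the argument one expects there, and the main ingredients --- the bi-Lipschitz parametrization of $\partial K$, the Fubini argument producing $\U$ with $\H^{n-1}(\mathcal{T}_u)=0$, the coarea formula for a.e.\ finiteness of fibers, the Lipschitz implicit function theorem to build local graphs, Rademacher for the $\Omega_m^*$, and the final coarea computation for property (2) --- are all correct and assembled in the right order.

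There is, however, a genuine technical gap in your definition of $\Omega_m$. You set $\Omega_m := \interior\{y : \#(\partial K \cap L_u^y) = 2m \text{ and every crossing is transversal}\}$, where ``transversal'' presupposes that $T_x\partial K$ exists and $u\notin T_x\partial K$. For a merely Lipschitz boundary, the set where the tangent plane exists is only guaranteed to have full $\H^{n-1}$-measure; it can have empty interior (take any Lipschitz graph whose non-differentiability set is dense). Hence your ``good'' set may itself have empty interior, giving $\Omega_m=\emptyset$ and destroying the identity $\H^{n-1}(P_{u^\perp}K\setminus\Omega_\infty)=0$. The fix is to drop pointwise transversality from the definition of $\Omega_m$: let $\Omega_m$ be the (automatically open) set of $y$ admitting a neighborhood $V$ on which $\partial K\cap(V\times L_u)$ consists of exactly $2m$ disjoint continuous graphs over $V$. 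Your Lipschitz implicit function step at a transversal $y_0$ shows precisely that every good $y_0$ lies in this set, and since it is open the continuous functions $f_i,g_i$ glue over $\Omega_m$ as required; the measure identities then follow exactly as you wrote. With this correction the sketch is sound.
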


\subsection{Continuous Steiner Symmetrization of Lipschitz star bodies}

In view of Theorem \ref{thm:Lip-multi-graphical} and the discussion in Subsection \ref{subsec:StK}, the continuous Steiner symmetrization of a Lipschitz star body $K$ is well defined for a.e.~$u \in \S^{n-1}$. In addition, we have the following 
proposition. 
\begin{prop} \label{prop:StRemainsLip}
Let $K$ be a Lipschitz star body in $\R^n$. Then, there exists $L > 0$ such that for any $u \in \S^{n-1}$ for which $K$ is $u$-multi-graphical and for all $t \in [0,1]$, $S_u^t K$ is a Lipschitz star body with $\Lip_{S_u^t K} \leq L$.  
\end{prop}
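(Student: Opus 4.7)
The plan is to invoke the equivalence provided by Proposition \ref{prop:Lip} between being a Lipschitz star body (with quantitative control on $\Lip_K$) and being star-shaped with respect to a Euclidean ball, where all constants depend only on the inner/outer radii and the radius of the ball of star-shapedness. This reduces the problem to showing that appropriate inner balls, outer balls, and balls of star-shapedness can be chosen uniformly in $t\in[0,1]$.

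First, apply Proposition \ref{prop:Lip} to $K$ to obtain constants $R > \eps > 0$ such that $B_n(\eps) \subseteq K \subseteq B_n(R)$ and $K$ is star-shaped with respect to $B_n(\eps)$. I would then propagate these three properties to $S_u^t K$ using the tools of Section \ref{sec:Steiner}. For the outer inclusion, observe that $B_n(R)$ is itself $u$-multi-graphical and is reflection-symmetric across $u^{\perp}$, so each of its fibers $B_n(R)\cap L_u^y$ is already a centered symmetric interval; hence $S_u^t B_n(R) = B_n(R)$, and monotonicity (Corollary \ref{cor:StBalls}) gives $S_u^t K \subseteq B_n(R)$ for every $t \in [0,1]$. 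For the star-shapedness, apply Proposition \ref{prop:Steiner-star-shaped} with $B = B_n(\eps)$: the required hypothesis that $B_n(\eps) \cap L_u^y$ is a compact symmetric interval (possibly empty or a singleton) for every $y \in u^{\perp}$ follows immediately from the Euclidean symmetry of $B_n(\eps)$. This yields that $S_u^t K$ is star-shaped with respect to $B_n(\eps)$ for every $t \in [0,1]$, and in particular $B_n(\eps) \subseteq S_u^t K$, since star-shapedness with respect to a point $p$ contains that point (recall the convention from Section \ref{sec:prelim}).

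Finally, since $B_n(\eps) \subseteq S_u^t K \subseteq B_n(R)$ with $S_u^t K$ star-shaped with respect to $B_n(\eps)$, applying the reverse implication of Proposition \ref{prop:Lip} to $S_u^t K$ yields that $S_u^t K$ is a Lipschitz star body with $\Lip_{S_u^t K} \leq L$ for some constant $L = L(\eps, R)$ independent of both $t$ and $u$.

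I do not expect any serious obstacle: the construction of the continuous Steiner symmetrization and Proposition \ref{prop:Steiner-star-shaped} in particular were developed precisely to preserve star-shapedness with respect to Euclidean balls. The only substantive verification is the applicability of Proposition \ref{prop:Steiner-star-shaped} with $B = B_n(\eps)$, which reduces to the trivial geometric fact that Euclidean balls meet every affine line in a centered interval.
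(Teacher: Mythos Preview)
Your proposal is correct and follows essentially the same route as the paper: use Proposition~\ref{prop:Lip} to pass from the Lipschitz property to star-shapedness with respect to a ball, propagate the inner/outer radii via Corollary~\ref{cor:StBalls} and the star-shapedness via Proposition~\ref{prop:Steiner-star-shaped}, then apply Proposition~\ref{prop:Lip} again. The only cosmetic difference is that the paper keeps separate constants $r,R,\delta$ for the inner radius, outer radius, and star-shapedness radius, whereas you collapse the first and third into a single $\eps$; this changes nothing of substance.
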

\begin{proof}
If $B_n(r) \subseteq K \subseteq B_n(R)$, this remains valid for $S_u^t K$ and all $t \in [0,1]$ by Corollary \ref{cor:StBalls}. By Proposition \ref{prop:Lip}, $K$ is star-shaped with respect to $B_n(\delta)$ for some $\delta > 0$, and Proposition \ref{prop:Steiner-star-shaped} ensures that this remains valid too for $S_u^t K$ for all $t \in [0,1]$. Another application of Proposition \ref{prop:Lip} shows that for all $t \in [0,1]$, $S_u^t K$ is a Lipschitz star body with $\Lip_{S_u^t K} \leq L$ depending solely on $r,R,\delta > 0$. 
\end{proof}
\begin{cor} \label{cor:Su01}
Let $K$ be a Lipschitz star body in $\R^n$. Then, for any $u \in \S^{n-1}$ for which $K$ is $u$-multi-graphical, $S_u^0 K = K$ and $S_u^1 K = S_u K$.
\end{cor}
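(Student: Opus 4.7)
The strategy is to upgrade the almost-equalities supplied by Corollary \ref{cor:coincide} to genuine set-theoretic equalities, by observing that on both sides of each identity we have Lipschitz star bodies, which are uniquely determined by their continuous, strictly positive radial functions on $\S^{n-1}$.

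Under the hypotheses, Proposition \ref{prop:StRemainsLip} gives that $S_u^0 K$ and $S_u^1 K$ are Lipschitz star bodies, while $K$ is Lipschitz by assumption. For the classical Steiner symmetral $S_u K$, the same property is provided by the Lin--Xi results \cite{LinXi-LipschitzStarBodySymmetrization} (the reference already invoked for Theorem \ref{thm:Lip-multi-graphical}); alternatively, one may combine $B_n(r) \subseteq S_u K \subseteq B_n(R)$ (preserved by the classical fiber-wise symmetrization) with the star-shapedness of $S_u K$ with respect to $B_n(\delta)$ inherited fiber-wise from $K$, and invoke Proposition \ref{prop:Lip}. Either way, all four sets in question have continuous, strictly positive radial functions.

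I would then execute one short computation twice. For any pair $A \subseteq B$ of Lipschitz star bodies in $\R^n$ with $|B \setminus A| = 0$, the inclusion yields $\rho_A \leq \rho_B$ pointwise on $\S^{n-1}$, and polar integration via (\ref{eq:vol-polar}) gives
\begin{equation*}
0 \;=\; |B \setminus A| \;=\; \frac{1}{n} \int_{\S^{n-1}} \bigl(\rho_B^n(\theta) - \rho_A^n(\theta)\bigr)\, d\theta.
\end{equation*}
The integrand is non-negative and continuous on $\S^{n-1}$, so it vanishes identically, forcing $\rho_A \equiv \rho_B$ and hence $A = B$. Applying this to the two pairs $(A,B) = (S_u^0 K, K)$ and $(S_u^1 K, S_u K)$ furnished by Corollary \ref{cor:coincide} closes the proof.

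\textbf{Main obstacle.} The only non-formal input beyond the machinery of Sections \ref{sec:Steiner}--\ref{sec:Lip} is the Lipschitzness of $S_u K$, equivalently the continuity of the total-length function $y \mapsto \tfrac{1}{2} |K \cap L_u^y|_1$ on all of $P_{u^\perp} K$. Since a Lipschitz star body's fiber can split into several intervals whose number varies with $y$, this continuity is not immediate; it is precisely what Lin--Xi supply. Once it is granted, Corollary \ref{cor:Su01} reduces to the three-line radial-function computation above.
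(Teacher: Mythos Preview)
Your proof is correct and follows essentially the same approach as the paper: invoke Proposition \ref{prop:StRemainsLip} to see that $S_u^0 K$ and $S_u^1 K$ are star bodies, cite the literature for $S_u K$ being a star body, and then use the polar integration formula (\ref{eq:vol-polar}) together with Corollary \ref{cor:coincide} to upgrade the inclusion-up-to-null-sets to genuine equality via continuity of the radial functions. The only cosmetic difference is that the paper cites \cite{Zhu-OrliczCentroidInqForStarBodies} and \cite{LinWu-LipschitzStarBodiesReview} rather than \cite{LinXi-LipschitzStarBodySymmetrization} for the fact that $S_u K$ is a star body, and it only needs the star-body property (continuity of $\rho$), not the full Lipschitz property you emphasize.
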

\begin{proof}
By Proposition \ref{prop:StRemainsLip}, both $S_u^0 K$ and $S_u^1 K$ are (Lipschitz) star bodies. In addition, it is known that if $K$ is a star body then so is $S_u K$ \cite[Theorem 3.3]{Zhu-OrliczCentroidInqForStarBodies} (see also \cite[Lemma 5.1]{LinWu-LipschitzStarBodiesReview} for an analogous statement for Lipschitz star bodies). Formula (\ref{eq:vol-polar}) verifies that if $K_1 \subseteq K_2$ are two star bodies with $\H^n(K_2 \setminus K_1) = 0$ then continuity of $\rho_{K_i}$ implies $K_1 = K_2$, and so applying this to 
the nested pairs $S_u^0 K \subseteq K$ and $S_u^1 K \subseteq S_u K$ and recalling Corollary \ref{cor:coincide}, we conclude that $S_u^0 K = K$ and $S_u^1 K = K$.  
\end{proof}
\begin{rem}
Note that a convex body $K$ is $u$-multi-graphical for every $u \in \S^{n-1}$ (by taking $\Omega_1 = \interior P_{u^{\perp}} K$). 
An identical argument to the one above verifies that Definition \ref{def:SutK-general} of $S_u^t K$ coincides with the classical definition  (\ref{eq:SteinerForConvex}) of continuous Steiner symmetrization of a convex body for every $u \in \S^{n-1}$ and $t \in [0,1]$. 
\end{rem}

We will also require the following uniform estimates.
\begin{lem} \label{lem:uniform-ratio}
Let $K$ be a Lipschitz star body in $\R^n$. There exists a constant $M > 0$ such that for all $u \in \S^{n-1}$ for which $K$ is $u$-multi-graphical, for all $t \in [0,1]$ and for all $\theta \in \S^{n-1}$,
\[
\frac{1}{1 + M t} \leq \frac{\rho_{S^t_u K}(\theta)}{\rho_{K}(\theta)} \leq 1 + M t . 
\]
\end{lem}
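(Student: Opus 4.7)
The plan is to reduce the radial comparison to Hausdorff inclusions $S^t_u K \subseteq K + B_n(Rt)$ and $K \subseteq S^t_u K + B_n(Rt)$, and then convert these inclusions into pointwise radial bounds by invoking Lemma \ref{lem:adding-ball} together with the uniform Lipschitz control provided by Proposition \ref{prop:StRemainsLip}. Fix $R > 0$ with $K \subseteq B_n(R)$. For any $u$ for which $K$ is $u$-multi-graphical and any $y \in \Omega_\infty$, the fiber $K \cap L_u^y$, viewed as an element of some $\J_m$ after translating $y$ to the origin of $L_u^y$, has all its interval centers in $[-R,R]$ since $K \subseteq B_n(R)$. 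Lemma \ref{lem:StLip} then yields $d_H(S^t(K \cap L_u^y), K \cap L_u^y) \leq R t$ for all $t \in [0,1]$, which translates into the two fiberwise inclusions $S^t(K \cap L_u^y) \subseteq (K \cap L_u^y) + [-Rt,Rt]u$ and $K \cap L_u^y \subseteq S^t(K \cap L_u^y) + [-Rt,Rt]u$.

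Taking the union over $y \in \Omega_\infty$, the first inclusion yields $\mathring S^t_u K \subseteq K + B_n(Rt)$; since $K + B_n(Rt)$ is closed, passing to the closure gives $S^t_u K \subseteq K + B_n(Rt)$. For the reverse inclusion, the second fiberwise statement gives only $K \cap (\Omega_\infty \times L_u) \subseteq \mathring S^t_u K + B_n(Rt) \subseteq S^t_u K + B_n(Rt)$, so the key remaining point is that the left-hand side is dense in $K$. To see this, note that $\interior K$ is dense in $K$ (since $K$ is a star body), and for $x \in \interior K$ any small Euclidean ball $B_n(x,\varepsilon)$ is contained in $K$ and projects under $P_{u^\perp}$ to the open ball $B_{u^\perp}(P_{u^\perp} x, \varepsilon)$, which for small $\varepsilon$ lies in $\interior(P_{u^\perp} K)$. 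Since $\Omega_\infty$ is open in $u^\perp$ and its complement within $P_{u^\perp} K$ has $\H^{n-1}$-measure zero, this open ball must intersect $\Omega_\infty$, providing a point $x'$ with $P_{u^\perp} x' \in \Omega_\infty$ arbitrarily close to $x$. Hence $K \cap (\Omega_\infty \times L_u)$ is dense in $K$, and closedness of $S^t_u K + B_n(Rt)$ gives $K \subseteq S^t_u K + B_n(Rt)$.

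With both Hausdorff inclusions in hand, we apply Lemma \ref{lem:adding-ball}: since $K$ is a Lipschitz star body,
\[
\rho_{S^t_u K}(\theta) \leq \rho_{K + B_n(Rt)}(\theta) \leq (1 + \Lip_K \cdot Rt)\, \rho_K(\theta),
\]
and since by Proposition \ref{prop:StRemainsLip} each $S^t_u K$ is a Lipschitz star body with $\Lip_{S^t_u K} \leq L$ for a constant $L$ depending only on $K$,
\[
\rho_K(\theta) \leq \rho_{S^t_u K + B_n(Rt)}(\theta) \leq (1 + L \cdot Rt)\, \rho_{S^t_u K}(\theta).
\]
Setting $M := R \max(\Lip_K, L)$, both inequalities combine to yield the asserted two-sided bound uniformly in $u$, $t \in [0,1]$ and $\theta \in \S^{n-1}$.

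The main obstacle I anticipate is the density step in the second inclusion: one must verify that $\Omega_\infty$, which a priori is only known to be an open subset of $P_{u^\perp} K$ whose complement there has measure zero, intersects every small ball around a projected interior point; this is the one place where the specific structure of the $u$-multi-graphical decomposition (in particular, the openness of $\Omega_\infty$) interacts with the topology of $K$. All other steps are straightforward applications of the previously established fiberwise Hausdorff estimate and the radial-function bound for Minkowski sums with a ball.
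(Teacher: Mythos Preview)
Your proof is correct and follows essentially the same route as the paper: fiberwise Hausdorff control from Lemma~\ref{lem:StLip}, promoted to the inclusions $S^t_u K \subseteq K + B_n(Rt)$ and $K \subseteq S^t_u K + B_n(Rt)$, then converted to radial bounds via Lemma~\ref{lem:adding-ball} and Proposition~\ref{prop:StRemainsLip}. The only cosmetic difference is in the second inclusion: you argue directly that $K \cap (\Omega_\infty \times L_u)$ is dense in $K$, whereas the paper observes that this set is precisely $\mathring S^0_u K$, whose closure is $S^0_u K = K$ by Corollary~\ref{cor:Su01}; so your density step is already packaged there and need not be reproved. (Also note that $\Lip_K \leq L$ since $K = S^0_u K$, so your $M = R\max(\Lip_K,L)$ simplifies to the paper's $M = RL$.)
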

\begin{proof}
Let $R > 0$ be such that $K \subseteq B_n(R)$, and let $L>0$ be the constant from Proposition~\ref{prop:StRemainsLip}, ensuring that $\Lip_{S_u^t K} \leq L$ for all $t \in [0,1]$. By Lemma \ref{lem:StLip}, $\mathring S^t_u K \subseteq \mathring S^0_u K + [-Rt,Rt] u$ and $\mathring S^0_u K \subseteq \mathring S^t_u K + [-Rt,Rt] u$. Taking the closure and using that $S_u^0 K = K$ by Corollary \ref{cor:Su01}, we deduce in particular that $S^t_u K \subseteq K + B_n(Rt)$ and $K \subseteq S^t_u K + B_n(Rt)$. Applying Lemma \ref{lem:adding-ball}, the assertion follows with $M = R L$. 
\end{proof}

\section{Admissible radial perturbations} \label{sec:admissible}

\begin{defn}[Admissible radial perturbation] \label{def:admissible}
Let $K$ be a star body in $\R^n$. A family of star-shaped sets $\{K_t\}_{t \in [0,1]}$ is called an admissible radial perturbation if $K_0 = K$ and $\{ [0,1] \ni t \mapsto \rho_{K_t}(\theta) \}_{\theta \in \S^{n-1}}$ are a.e.~equi-differentiable at $t=0^+$ in the following sense:
\begin{enumerate}
\item For almost every $\theta \in \S^{n-1}$, the following limit exists:
\begin{equation} \label{eq:arp-1}
 \dht{\rho_{K_t}(\theta)}  := \lim_{t \rightarrow 0^+} \frac{\rho_{K_t}(\theta) - \rho_{K_0}(\theta)}{t} . 
 \end{equation}
\item There exists $M > 0$ and $t_0 \in (0,1]$ such that for almost every $\theta \in \S^{n-1}$,
\begin{equation} \label{eq:arp-2}
\sup_{t \in (0,t_0]} \frac{|\rho_{K_t}(\theta) - \rho_{K_0}(\theta)|}{t} \leq M . 
\end{equation}
\end{enumerate}
\end{defn}

\begin{prop}[Stationary points] \label{prop:stationary}
Let $\{K_t\}_{t \in [0,1]}$ be an admissible radial perturbation of a star body $K$ in $\R^n$. Then, denoting $f(\theta) := \dt{\rho_{K_t}(\theta)}$, the following derivatives exist and are given by
\begin{align*}
\dht{|K_t|} & = \int_{\S^{n-1}} \rho_{K}^{n-1}(\theta) f(\theta) \, d\theta , \\
\dht{|I(K_t)|} & = (n-1) \int_{\S^{n-1}} \frac{\rho_{I^2 K}(\theta)}{\rho_K(\theta)} \rho_{K}^{n-1}(\theta) f(\theta)  \, d\theta .  
 \end{align*}
 Consequently, $I^2 K = c K$ if and only if $K$ is a stationary point for the functional
 \[
 \F_c(K) := |I(K)| - (n-1) c |K|,
 \]
 meaning that $\dt{\F_c(K_t)}  = 0$ for any admissible radial perturbation $\{K_t\}_{t \in [0,1]}$. 
 
 \noindent
 In particular, if $I^2 K = c K$ and $\dt{|K_t|} = 0$ then $\dt{|I(K_t)|}=0$. 
\end{prop}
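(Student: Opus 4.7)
The plan is to compute both derivatives directly from the polar-coordinate formula $|L|=\frac{1}{n}\int_{\S^{n-1}}\rho_L^n\, d\theta$ and its analogue on $u^\perp$, exchange $\dt{}$ with integration, and then rearrange. Write $f(\theta):=\dt\rho_{K_t}(\theta)$. The admissibility hypothesis is used in two ways: (\ref{eq:arp-1}) gives the pointwise limit $f$ on a full-measure set, while (\ref{eq:arp-2}) together with the mean-value theorem yields a uniform bound of the form
\[
\frac{|\rho_{K_t}^k(\theta)-\rho_K^k(\theta)|}{t}\;\le\; k\, M\, (\norm{\rho_K}_{\infty}+M)^{k-1}
\]
for almost every $\theta\in\S^{n-1}$, all $t\in(0,t_0]$, and every $k\ge 1$ --- the integrable majorant needed to apply Lebesgue's dominated convergence theorem (DCT). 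A single such application to $|K_t|=\frac{1}{n}\int_{\S^{n-1}}\rho_{K_t}^n\, d\theta$ gives the first formula $\dt|K_t|=\int_{\S^{n-1}}\rho_K^{n-1}f\, d\theta$.

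For the second formula, I would first note that by polar integration on $u^\perp$,
\[
\rho_{IK_t}(u)=\frac{1}{n-1}\int_{\S^{n-1}\cap u^\perp}\rho_{K_t}^{n-1}(\theta)\, d\theta,
\]
and apply DCT inside each equator to obtain $\dt\rho_{IK_t}(u)=\int_{\S^{n-1}\cap u^\perp}\rho_K^{n-2}(\theta)f(\theta)\, d\theta$ for almost every $u$. The exceptional set in $u$ is null by Fubini combined with (\ref{eq:double-integration}), which guarantees that for a.e.~$u$ the equator $\S^{n-1}\cap u^\perp$ meets the full-measure set from (\ref{eq:arp-1}) in full measure. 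A second application of DCT to $|IK_t|=\frac{1}{n}\int_{\S^{n-1}}\rho_{IK_t}^n\, du$ yields
\[
\dt|IK_t|=\int_{\S^{n-1}}\rho_{IK}^{n-1}(u)\int_{\S^{n-1}\cap u^\perp}\rho_K^{n-2}(\theta)f(\theta)\, d\theta\, du.
\]
Swapping the order of integration via (\ref{eq:double-integration}) on the incidence set $\{(u,\theta)\in\S^{n-1}\times\S^{n-1}:u\perp\theta\}$ and recognising the inner integral $\int_{\S^{n-1}\cap\theta^\perp}\rho_{IK}^{n-1}(u)\, du=(n-1)\rho_{I^2K}(\theta)$ (polar integration on $\theta^\perp$ applied to the star body $IK$) gives the second formula after pulling one factor of $\rho_K$ upstairs.

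With both formulas at hand, the equivalence is algebraic. If $I^2K=cK$ then $\rho_{I^2K}/\rho_K\equiv c$, and $\dt\F_c(K_t)=\dt|IK_t|-(n-1)c\,\dt|K_t|=0$ for every admissible perturbation. For the converse, given any continuous $f$ on $\S^{n-1}$ the family $\rho_{K_t}:=\rho_K+tf$ defines a star body for $t>0$ small and is trivially an admissible radial perturbation with derivative $f$; stationarity of $\F_c$ then forces
\[
\int_{\S^{n-1}}\Bigl(\frac{\rho_{I^2K}(\theta)}{\rho_K(\theta)}-c\Bigr)\rho_K^{n-1}(\theta)f(\theta)\, d\theta=0
\]
for all such $f$, whence $\rho_{I^2K}\equiv c\rho_K$ by continuity of both sides, i.e.~$I^2K=cK$. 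The final ``in particular'' assertion is then immediate: if $\dt|K_t|=0$ and $\rho_{I^2K}=c\rho_K$ then the second formula collapses to $(n-1)c\,\dt|K_t|=0$. The main technical obstacle is the careful justification of the two successive DCT interchanges and the Fubini swap on the incidence variety; all three rely crucially on the strength of the uniform estimate (\ref{eq:arp-2}), which is exactly why the admissibility condition demands more than pointwise differentiability.
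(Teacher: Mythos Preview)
Your proof is correct and follows essentially the same approach as the paper's: polar-coordinate formulas, DCT justified via the uniform bound (\ref{eq:arp-2}), and a Fubini swap on the incidence set (which the paper phrases as the $L^2$-symmetry (\ref{eq:Rad-symmetric}) of $\Rad$). The only cosmetic difference is that you differentiate $\rho_{IK_t}$ first and then $|IK_t|$ in two DCT passes, whereas the paper collapses this into a single step; and for the converse the paper inserts a scaling factor $\eps$ in $\rho_{K_t}=\rho_K+\eps t f$ to ensure the perturbation is defined on all of $[0,1]$, a minor point you should make explicit.
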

\begin{proof}
Note that $f(\theta) = \dt{\rho_{K_t}(\theta)}$ exists  for a.e.~$\theta \in \S^{n-1}$ (and is thus Lebesgue measurable) by (\ref{eq:arp-1}), and is a bounded function on $\S^{n-1}$ by (\ref{eq:arp-2}); in particular, $f \in L^2(\S^{n-1})$.

If $K \subseteq B_n(R)$, (\ref{eq:arp-2}) implies in particular that for a.e.~$\theta \in \S^{n-1}$, $\sup_{t \in (0,t_0]} \rho_{K_t}(\theta) \leq R + M$, and so invoking (\ref{eq:arp-2}) again, we see that for all $m \geq 1$ and a.e.~$\theta \in \S^{n-1}$,
\[
\sup_{t \in (0,t_0]} \frac{\rho_{K_t}^m(\theta) - \rho_{K_0}^m(\theta)}{t} \leq C_{R,M,m} ,
\]
for some constant $C_{R,M,m} > 0$. By Lebesgue's Dominant Convergence Theorem, we may therefore exchange limit and integration:
\[
\dht{|K_t|} = \frac{d}{dt} \brac{\frac{1}{n} \int_{\S^{n-1}} \rho_{K_t}^n(\theta) \, d\theta }=  \int_{\S^{n-1}} \rho_{K}^{n-1}(\theta) f(\theta) \, d\theta .
\]
Similarly,
\begin{align*}
\dht{|IK_t|} & = \frac{d}{dt} \brac{\frac{1}{n} \int_{\S^{n-1}} |K_t \cap u^{\perp}|^n \, du} \\
& = \frac{d}{dt} \brac{\frac{1}{n} \int_{\S^{n-1}} \brac{ \frac{1}{n-1} \int_{\S^{n-1} \cap u^{\perp}} \rho^{n-1}_{K_t}(\theta) d\theta}^n \, du} \\
& = \int_{\S^{n-1}} |K \cap u^{\perp}|^{n-1} \int_{\S^{n-1} \cap u^{\perp}} \rho_K^{n-2}(\theta) f(\theta) \, d\theta \, du . 
\end{align*}
As $f  \in L^2(\S^{n-1})$, we proceed by (\ref{eq:Rad-symmetric}) as follows:
\begin{align*}
 & = \sigma_{n-1} \int_{\S^{n-1}} \rho^{n-1}_{IK}(u) \Rad(\rho_K^{n-2} f)(u) \, du \\
 & = \sigma_{n-1}  \int_{\S^{n-1}} \Rad(\rho^{n-1}_{IK})(u)  \rho_K^{n-2}(u) f(u) \, du \\
 & = (n-1) \int_{\S^{n-1}} \rho_{I^2 K}(u) \rho_K^{n-2}(u) f(u) \, du . 
\end{align*}
It follows that
\[
\dht{\F_c(K_t)} = (n-1) \int_{\S^{n-1}} \brac{\frac{\rho_{I^2 K}(\theta)}{\rho_K(\theta)} - c} \rho_{K}^{n-1}(\theta) f(\theta)  \, d\theta ,
\]
implying that $\dt{\F_c(K_t)} = 0$ if $I^2 K = c K$. Conversely, if $I^2 K \neq c K$, we can find a continuous $f : \S^{n-1} \rightarrow \R$ so that the right-hand side is non-zero; defining the star bodies $\{K_t\}_{t \in [0,1]}$ via $\rho_{K_t} = \rho_K + \eps t f$ for an appropriately small $\eps > 0$ yields an admissible radial perturbation for which $\dt{\F_c(K_t)} \neq 0$. This concludes the proof. 
\end{proof}

\begin{prop}[Continuous Steiner symmetrization is admissible for Lipschitz star bodies] \label{prop:admissible}
Let $K$ be a Lipschitz star body in $\R^n$, and let $u \in \U$ where $\U \subseteq \S^{n-1}$ is given by Theorem~\ref{thm:Lip-multi-graphical}. Then the continuous Steiner symmetrization $\{K_t := S_u^t K\}_{t \in [0,1]}$ is an admissible radial perturbation of $K$. 
\end{prop}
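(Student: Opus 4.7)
The plan is to verify the three defining properties of an admissible radial perturbation from Definition \ref{def:admissible}, working with the $u$-multi-graphical structure of $K$ provided by Theorem \ref{thm:Lip-multi-graphical} (recall $u \in \U$). The normalization $K_0 = K$ is immediate from Corollary \ref{cor:Su01}. For the uniform Lipschitz control (\ref{eq:arp-2}), I would apply Lemma \ref{lem:uniform-ratio}: the two-sided bound $(1+Mt)^{-1} \rho_K(\theta) \leq \rho_{S_u^t K}(\theta) \leq (1+Mt)\rho_K(\theta)$ combined with $\rho_K \leq R$ (where $K \subseteq B_n(R)$) yields $|\rho_{S_u^t K}(\theta) - \rho_K(\theta)| \leq M R t$ uniformly in $\theta \in \S^{n-1}$ and $t \in [0,1]$, which verifies (\ref{eq:arp-2}) with $t_0 = 1$.

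The crux is establishing the pointwise right-derivative (\ref{eq:arp-1}) for a.e.~$\theta \in \S^{n-1}$. Fix the data $\{\Omega_m, f_i, g_i, \Omega_m^*\}$ of Theorem \ref{thm:Lip-multi-graphical}, and let $\delta > 0$ be such that $K$ is star-shaped with respect to $B_n(\delta)$ (Proposition \ref{prop:Lip}). Define $A \subseteq \S^{n-1}$ to be the set of $\theta = a + bu$ (with $a = P_{u^\perp}\theta$, $b = \langle\theta,u\rangle$) for which $a \neq 0$, $b \neq 0$, the projection $y_0 := \rho_K(\theta)\, a$ lies in some $\Omega_m^*$, and $\partial K$ admits a tangent hyperplane at $x_0 := \rho_K(\theta)\theta$. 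The radial projection $\Psi: \partial K \to \S^{n-1}$, $\Psi(x) = x/|x|$, is bi-Lipschitz (inner and outer radii of $K$ are positive and $\rho_K$ is Lipschitz), so $\H^{n-1}$-null sets transfer in both directions. Combining this with the $\H^{n-1}$-null exceptional sets on $\partial K$ given by Theorem \ref{thm:Lip-multi-graphical} and the almost-everywhere existence of tangent hyperplanes on the Lipschitz boundary $\partial K$ (via Rademacher applied to the Lipschitz parameterization $\theta \mapsto \rho_K(\theta)\theta$), the complement of $A$ is $\H^{n-1}$-null in $\S^{n-1}$.

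For each $\theta \in A$, the plan is to locate the relevant branch and apply a pointwise implicit function argument. WLOG $b > 0$ and $s_0 := \rho_K(\theta)\, b = g_i(y_0)$ for some $i$ (the other sign being symmetric). For small $t > 0$, Corollary \ref{cor:fibers-converge} together with the fact that no collision has yet occurred at $y_0$ ensures that $\rho_t := \rho_{S_u^t K}(\theta)$ keeps $\rho_t\, a \in \Omega_m$ and is governed by the same branch:
\[
\rho_t\, b \;=\; (1-t)\, c_i(\rho_t\, a) + h_i(\rho_t\, a), \qquad c_i := \tfrac{f_i+g_i}{2},\;\; h_i := \tfrac{g_i-f_i}{2}.
\]
A first-order expansion using the pointwise differentiability of $f_i, g_i$ at $y_0$, combined with the a priori estimate $|\rho_t - \rho_0| = O(t)$ from the previous step, yields
\[
\dht{\rho_{S_u^t K}(\theta)} \;=\; \frac{-c_i(y_0)}{b - \nabla g_i(y_0)\cdot a}.
\]
The denominator equals $\nu(x_0) \cdot \theta$ up to the positive factor $\sqrt{1 + |\nabla g_i(y_0)|^2}$, where $\nu(x_0)$ is the outward unit normal to $\partial K$ at $x_0$. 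Since $K$ is star-shaped with respect to $B_n(\delta)$, the tangent hyperplane at $x_0$ is disjoint from $B_n(\delta)$, giving $\nu(x_0)\cdot x_0 \geq \delta$ and hence $|\nu(x_0)\cdot \theta| \geq \delta/R > 0$, so the denominator is nonzero.

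The main obstacle is the measure-theoretic transfer step together with running the implicit-function argument with only pointwise (rather than $C^1$) regularity of the fiber graph functions $f_i, g_i$. The first is handled via the bi-Lipschitz map $\Psi$, while the second is handled by a careful first-order expansion of the defining equation combined with the uniform control already obtained, with non-degeneracy of the linearization supplied by star-shapedness with respect to $B_n(\delta)$.
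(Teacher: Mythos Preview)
Your approach is essentially the same as the paper's: both establish (\ref{eq:arp-2}) via Lemma \ref{lem:uniform-ratio}, transfer null sets from $\partial K$ to $\S^{n-1}$ via the bi-Lipschitz radial map $\theta \mapsto \rho_K(\theta)\theta$, and then solve the boundary equation at points $\theta$ whose projection $y_0$ lies in $\Omega_m^*$. The one substantive difference is that the paper invokes Halkin's implicit function theorem for continuous functions differentiable at a single point, whereas you carry out the first-order expansion of the defining equation by hand; this works precisely because the a priori bound $|\rho_t - \rho_0| = O(t)$ from (\ref{eq:arp-2}) lets you absorb the $o(|\rho_t - \rho_0|)$ remainder into $o(t)$, which is exactly what Halkin's theorem does under the hood. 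Two minor points: your extra requirement that $\partial K$ have a tangent hyperplane at $x_0$ is redundant (it is equivalent to differentiability of the branch function $g_i$ at $y_0$, already guaranteed by $y_0 \in \Omega_m^*$); and your ``WLOG $s_0 = g_i(y_0)$'' should not be tied to the sign of $b$ --- the boundary point $x_0$ may sit on either an upper branch $g_i$ or a lower branch $f_j$ regardless of $\text{sgn}(b)$, and both cases are handled by the same expansion with the obvious sign change.
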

We remark that for convex bodies containing the origin in their interior, this was shown for all $u \in \S^{n-1}$ by Saroglou \cite[Section 4]{Saroglou-GeneralizedBCD}, but our setup is very different. 
\begin{proof}
The uniform estimates (\ref{eq:arp-2}) follow directly from Lemma \ref{lem:uniform-ratio} (and the fact that $B_n(r) \subseteq K \subseteq B_n(R)$). 
To establish (\ref{eq:arp-1}), we argue as follows. Denote $\Omega^*_\infty := \cup_m \Omega^*_m$, where $\Omega^*_m$ is the Lebesgue measurable subset of  $\Omega_m$ of full $\H^{n-1}$-measure where $\{f_i,g_i\}_{i=1,\ldots,m}$ are differentiable. Since $\H^{n-1}(\Omega_\infty \setminus \Omega^*_\infty) = 0$, since $\H^0(\partial K \cap L_u^y) < \infty$ for all $y \in \Omega_\infty$ and since $\H^{n-1}(\partial K \setminus (\Omega_\infty \times L_u)) = 0$, $\sigma$-sub-additivity implies $\H^{n-1}(\partial K \setminus (\Omega^*_\infty \times L_u)) = 0$. 
Since $\S^{n-1} \ni \theta \mapsto \rho_K(\theta) \theta \in \partial K$ is clearly a bi-Lipschitz map, it maps back and forth between $\H^{n-1}$-null-sets. 
It is therefore enough to show that (\ref{eq:arp-1}) holds for all $\theta \in \S^{n-1}$ such that $P_{u^{\perp}} \rho_K(\theta) \theta \in \Omega^*_\infty$. 

So let $\theta_0 \in \S^{n-1}$ be such that $\rho_K(\theta_0) \theta_0 = x_0 = y_0 + s_0 u$ with $y_0 \in \Omega^*_m$. By Remark \ref{rem:mgs}, as $x_0 \in \partial K$, we have $s_0 = h(y_0)$ for some $h \in \{ f_i,g_i \}_{i=1,\ldots,m}$, where the functions $f_1 < g_1 < \cdots < f_m < g_m$ are continuous in $\Omega_m$ and differentiable at $y_0$. Recall that $S_u^t K \cap (\Omega_m \times L_u) = \mathring S_u^t K \cap (\Omega_m \times L_u)$ by Corollary \ref{cor:closure}. By continuity, it follows that there exists $\eta > 0$ and $t_0 \in (0,1]$ such that defining $B = \interior B_{u^{\perp}}(y_0,\eta) \subset \Omega_m$, we have for all $y \in B$ and $t \in [0,t_0)$,
\[
\partial S_u^t K \cap L^y_u = y + u \set{ f_i(y) - \frac{f_i(y) + g_i(y)}{2} t , g_i(y) - \frac{f_i(y) + g_i(y)}{2} t  }_{i=1,\ldots,m} .
\]
Without loss of generality, we assume that $h = f_i$, and define
\[
\Phi : (B \times L_u)  \times \R \rightarrow \R ~,~ \Phi(y + s u , t) := s - \brac{ f_i(y) - \frac{f_i(y) + g_i(y)}{2} t } . 
\]
The function $\Phi$ is continuous on its domain and differentiable at $(x_0,0)$. Again, by continuity, we may choose $\delta > 0$ so that for all $x \in B \times (s_0-\delta,s_0+\delta)$ and $t \in [0,t_0)$,
\[
x \in \partial S_u^t K \;\; \Leftrightarrow \;\; \Phi(x,t) = 0  .
\]
Denoting $r_0 := \rho_K(\theta_0)$ and
\[
\varphi(r,t) := \Phi(r \theta_0 , t) = \frac{r}{r_0} s_0 - \brac{ f_i\brac{\frac{r}{r_0} y_0} - \frac{f_i(\frac{r}{r_0} y_0) + g_i(\frac{r}{r_0} y_0)}{2} t } ,
\]
we conclude that $\varphi(r,t)$ is differentiable at $(r_0,0)$ and continuous in a neighborhood thereof, and that for all $t \in [0,t_0)$,
\begin{equation} \label{eq:boundary}
\rho_{S_u^t K}(\theta_0) = r \;\; \Leftrightarrow \;\; r \theta_0 \in \partial S_u^t K \;\; \Leftrightarrow \;\; \varphi(r,t) = 0  
\end{equation}
(the first equivalence is due to the fact that $S_u^t K$ remain star bodies by Proposition \ref{prop:StRemainsLip}). 

By Proposition \ref{prop:Lip}, $K$ is star-shaped with respect to $B_n(\eps)$ for some $\eps > 0$, and so it contains the convex hull of $\{x_0\}$ and $B_n(\eps)$. This means that $\partial K$ must meet the ray $\R_+ \theta_0$ transversally at $x_0$ --- specifically, it is easy to check that
\[
\abs{\left . \frac{\partial \varphi(r,t)}{\partial r} \right |_{(r,t) = (r_0,0)}} = \frac{1}{r_0} \abs{s_0 - \scalar{\nabla f_i(y_0) , y_0} } \geq \frac{\eps}{r_0} > 0 .
\]
Consequently, by a version of the implicit function theorem for continuous functions which are differentiable at a given point \cite[Theorem E]{Halkin-ImplicitFunctionTheorems}, it follows that there exists $t_1 > 0$ and a continuous $\rho_0(t) : (-t_1,t_1) \rightarrow \R$,  differentiable at $t=0$, such that $\rho_0(0) = r_0 = \rho_K(\theta_0)$ and for $t \in (-t_1,t_1)$,
\[
\varphi(\rho_0(t),t) = 0  . 
\]
It follows by (\ref{eq:boundary}) that $\rho_{S_u^t K}(\theta_0) = \rho_0(t)$ for all $t \in [0,\min(t_0,t_1))$, and we conclude that $\dt{\rho_{S_u^t K}(\theta_0)} = \rho_0'(0)$ exists. This concludes the proof. 
\end{proof}

\begin{cor} \label{cor:derivative-exists}
Let $K$ be a Lipschitz star body in $\R^n$. Then, there exists a Lebesgue measurable $\U \subseteq \S^{n-1}$ of full measure (given by Theorem \ref{thm:Lip-multi-graphical}) so that for all $u \in \U$, $K$ is $u$-multi-graphical, $\dt{|I (S_u^t K)|}$ exists, and if $I^2 K = c K$ then $\dt{|I (S_u^t K)|}=0$.
\end{cor}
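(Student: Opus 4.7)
The plan is to assemble the corollary directly from the results accumulated in this section, namely Theorem \ref{thm:Lip-multi-graphical}, Proposition \ref{prop:admissible} and Proposition \ref{prop:stationary}, together with the volume-preservation property of continuous Steiner symmetrization (Corollary \ref{cor:StVolume}). No new analysis should be required; the work has all been done, and the main point is to apply each ingredient in the correct order.

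First, I would take $\U \subseteq \S^{n-1}$ to be precisely the full-measure set provided by Theorem \ref{thm:Lip-multi-graphical}. For any $u \in \U$, the Lipschitz star body $K$ is $u$-multi-graphical, so Definition \ref{def:SutK-general} produces a well-defined family $\{S_u^t K\}_{t \in [0,1]}$ of star-shaped compact sets. Proposition \ref{prop:StRemainsLip} guarantees that these are in fact (uniformly) Lipschitz star bodies, and Proposition \ref{prop:admissible} asserts that $\{K_t := S_u^t K\}_{t \in [0,1]}$ is an admissible radial perturbation of $K = K_0$ in the sense of Definition \ref{def:admissible}.

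Once admissibility is in hand, Proposition \ref{prop:stationary} supplies both the existence of $\dht{|I(K_t)|}$ and an explicit formula for it. In particular, the proposition yields the derivative of $|K_t|$ at $t=0^+$ and the derivative of $|I(K_t)|$ at $t=0^+$, and it asserts that whenever $\dt{|K_t|} = 0$ and $I^2 K = c K$ hold simultaneously, one automatically has $\dt{|I(K_t)|} = 0$.

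The final step is to observe that volume is preserved along continuous Steiner symmetrization: by Corollary \ref{cor:StVolume}, $|S_u^t K| = |K|$ for every $t \in [0,1]$, so the function $t \mapsto |K_t|$ is constant and $\dt{|K_t|} = 0$ automatically. Combining this with the admissibility established above and applying the last statement of Proposition \ref{prop:stationary} under the hypothesis $I^2 K = c K$ then forces $\dt{|I(S_u^t K)|} = 0$, completing the proof. There is no real obstacle here; the only thing to double-check is that the set $\U$ for which $K$ is $u$-multi-graphical is the same as the set on which Proposition \ref{prop:admissible} applies, which is immediate from how admissibility was established (it only used $u \in \U$ from Theorem \ref{thm:Lip-multi-graphical}).
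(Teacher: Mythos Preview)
Your proposal is correct and follows essentially the same approach as the paper's own proof: take $\U$ from Theorem \ref{thm:Lip-multi-graphical}, invoke Proposition \ref{prop:admissible} to get admissibility, apply Proposition \ref{prop:stationary} for the existence of the derivative, and use Corollary \ref{cor:StVolume} (volume preservation) to force $\dt{|K_t|}=0$ so that the last clause of Proposition \ref{prop:stationary} yields $\dt{|I(S_u^t K)|}=0$ under $I^2 K = cK$.
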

\begin{proof}
By Theorem  \ref{thm:Lip-multi-graphical} and the previous two propositions, for all $u \in \U$, $K$ is $u$-multi-graphical, $\{S_u^t K\}_{t \in [0,1]}$ is an admissible perturbation of $K$, and the derivative $\dt{|I(S_u^t K)|}$ exists. By Corollary \ref{cor:StVolume}, $|S_u^t K| = |K|$ for all $t \in [0,1]$, and hence $\dt{|S_u^t K|} = 0$. Consequently, if $I^2 K = c K$ then $\dt{|I(S_u^t K)|}=0$ by Proposition \ref{prop:stationary}.
\end{proof}

In the next section, we will see moreover that $\dt{|I (S_u^t K)|} \geq 0$, and characterize the equality conditions. 

\section{Characterization of equality under Steiner symmetrization} \label{sec:equality}

Let $K$ be a $u$-multi-graphical compact set in $\R^n$, let $\{\Omega_m\}_m$ and $\Omega_\infty$ be the open subsets of $P_{u^{\perp}} K$ from Definition \ref{def:mgs}, and recall that $\H^{n-1}(P_{u^{\perp}} K \setminus \Omega_\infty) = 0$. By Proposition \ref{prop:independent} and Corollary \ref{cor:u-finite}, $S_u^t K$ is well defined and $u$-finite for all $t \in [0,1]$. We also recall definition (\ref{eq:Iu}) of the functional $\I_u$, which when applied to $S_u^t K$ becomes
\begin{equation} \label{eq:Iu-again}
\I_u(S_u^t K) = \frac{2}{n}\int_{\Omega_\infty^n} \Delta(\tilde y_1,\ldots,\tilde y_{n-1})^{-1} |R_\y(S_u^t K) \cap \theta_\y^{\perp}|_{n-1} \, dy_1 \ldots dy_n ,
\end{equation}
where $\y = (y_1,\ldots,y_n) \in \Omega_\infty^n$, $\theta_\y \in \S^{n-1}$ denotes a linear dependency satisfying $\sum_{i=1}^n \theta_\y^i y_i = 0$ (uniquely defined up to sign on the subset of full measure where $\y$ is affinely independent), $(\tilde y_1,\ldots,\tilde y_{n-1})$ are explicit but presently irrelevant functions of $\y$,
and by Corollary \ref{cor:closure},
\begin{align*}
R_\y(S_u^t K) &= \{ (s^1,\ldots,s^n)  \in \R^n :  y_i + s^i u \in S_u^t K \, , \, i=1,\ldots,n\} \\
&= \{ (s^1,\ldots,s^n)  \in \R^n : y_i + s^i u \in \mathring S_u^t K \, , \, i=1,\ldots,n\}\\
& =  S^t (K \cap L_u^{y_1}) \times \cdots \times S^t (K \cap L_u^{y_n}) 
\end{align*}
is a finite disjoint union of rectangles in $\R^n$. Here and below, a rectangle always refers to a compact axis-aligned rectangle with non-empty interior,
and we identify $K \cap L_u^y$ with a subset of $\R$.

\subsection{Rectangles}

Let
\[
\dtl{\I_u(S_u^t K)} := \liminf_{t \rightarrow 0^+} \frac{\I_u(S_u^t K) - \I_u(S_u^0 K)}{t} . 
\]

\begin{lemma} \label{lem:rectangles-increasing}
Let $K$ be a $u$-multi-graphical compact set in $\R^n$. Then for a.e.~$\y \in \Omega_\infty^n$, $[0,1] \ni t \mapsto |R_\y(S_u^t K) \cap \theta_\y^{\perp}|_{n-1}$ is non-decreasing. In particular, $[0,1] \ni t\mapsto \I_u(S_u^t K)$ is non-decreasing and $\dtll{\I_u(S_u^t K)} \geq 0$. \end{lemma}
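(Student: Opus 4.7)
The plan is to fix a generic $\y \in \Omega_\infty^n$ and track the evolution of the disjoint union of axis-aligned rectangles $R_\y(S_u^t K) = \prod_{i=1}^n S^t(K \cap L_u^{y_i})$ as $t$ varies over $[0,1]$. Since each $K \cap L_u^{y_i}$ is a finite union of at most $m_i < \infty$ intervals (as $y_i \in \Omega_\infty$) and the total interval count across all fibers can only decrease, and strictly so at each collision time of the fiberwise symmetrization $S^t$, there exist finitely many collision times $0 = \tau_0 < \tau_1 < \cdots < \tau_N = 1$ such that no collision occurs on any open sub-interval $(\tau_k, \tau_{k+1})$. On each such sub-interval, the semi-group identity (\ref{eq:semi-group}) shows that each rectangle $P_j^k(t) \subseteq R_\y(S_u^t K)$ retains constant half-side lengths $h_j^k$ and has center $\lambda_k(t)\, c_j^k$ with $\lambda_k(t) = (1-t)/(1-\tau_k)$ decreasing monotonically from $1$.

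For each such translating rectangle, the slice measure computes as
\[
|P_j^k(t) \cap \theta_\y^{\perp}|_{n-1} = g_{j,k}\brac{\lambda_k(t) \scalar{c_j^k, \theta_\y}}, \quad g_{j,k}(r) := |B_\infty^n(h_j^k) \cap \set{q : \scalar{q, \theta_\y} = -r}|_{n-1}.
\]
Brunn's concavity principle (\ref{eq:Brunn}) applied to the origin-symmetric convex body $B_\infty^n(h_j^k)$ ensures that $g_{j,k}^{1/(n-1)}$ is an even concave function on its support, so $g_{j,k}$ is non-increasing in $|r|$. Because $\lambda_k(t) \geq 0$ decreases as $t$ increases on $(\tau_k, \tau_{k+1})$, so does $|\lambda_k(t) \scalar{c_j^k, \theta_\y}|$, whence $|P_j^k(t) \cap \theta_\y^{\perp}|_{n-1}$ is non-decreasing in $t$. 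Summing over the finite disjoint family $\{P_j^k(t)\}_j$ yields that $g(t) := |R_\y(S_u^t K) \cap \theta_\y^{\perp}|_{n-1}$ is non-decreasing on each open piece $(\tau_k, \tau_{k+1})$.

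It remains to verify continuity across each collision time $\tau_k$, which rules out a downward jump. At $\tau_k$, groups of rectangles merge pairwise along shared $(n-1)$-dimensional faces; since each such face arises from two $1$D intervals touching in some fiber $j$, it lies in a translate of the coordinate hyperplane $e_j^{\perp}$. For a.e.~$\y \in \Omega_\infty^n$, the coefficients $(\theta_\y^1, \ldots, \theta_\y^n)$ of the linear dependency $\sum_i \theta_\y^i y_i = 0$ of the $n$ vectors $y_i$ in the $(n-1)$-dimensional space $u^{\perp}$ are all nonzero, because the alternative $\theta_\y^k = 0$ would force $\{y_i\}_{i \neq k}$ to be linearly dependent in $u^{\perp}$ --- a measure-zero event. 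Consequently $\theta_\y$ is not parallel to any $\pm e_j$, so each shared face intersects $\theta_\y^{\perp}$ in an $(n-2)$-dimensional set of zero $(n-1)$-dimensional Hausdorff measure, and $g$ is continuous at $\tau_k$. Combining with the non-decreasing behaviour on each $(\tau_k, \tau_{k+1})$ gives monotonicity of $g$ on all of $[0,1]$. Integrating (\ref{eq:Iu-again}) against the non-negative weight $\Delta(\tilde y_1, \ldots, \tilde y_{n-1})^{-1}$ then yields the corresponding monotonicity of $\I_u(S_u^t K)$, and the lim inf difference quotient at $t=0^+$ is therefore non-negative, giving $\dtll{\I_u(S_u^t K)} \geq 0$. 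The main subtlety I expect is the continuity across collision times, which is crucially enabled by the a.e.~genericity of $\theta_\y$; the Brunn slicing argument itself becomes transparent once the product-of-rectangles picture is in place.
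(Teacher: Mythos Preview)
Your proof is correct and follows essentially the same approach as the paper: decompose $[0,1]$ by the finitely many collision times, use Brunn's concavity on the centered box $B_\infty^n(h_j^k)$ to obtain monotonicity of each rectangle's slice as its center shrinks toward the origin, and handle the collision times by observing that the shared facets lie in translates of coordinate hyperplanes, which intersect $\theta_\y^\perp$ in $\H^{n-1}$-null sets since generically $\theta_\y \notin \{\pm e_i\}$. The only cosmetic difference is that the paper packages the single-rectangle Brunn argument into a separate lemma (Lemma~\ref{lem:rectangle-equality}) and uses the slightly weaker genericity condition $\theta_\y \neq \pm e_i$ (i.e.\ $y_i \neq 0$) rather than your stronger but still a.e.\ condition that all $\theta_\y^i \neq 0$.
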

\begin{proof}
We may assume that $\y = (y_1,\ldots,y_n)$ are affinely independent so that $\theta_\y$ is uniquely defined (up to sign), and in addition exclude the case that $\theta_\y \in \{\pm e_i\}_{i=1,\ldots,n}$, as this corresponds to the null-set $\{ \y \in \Omega_\infty^n : \exists i =1,\ldots,n \;\; y_i = 0 \}$. 

For each $t \in [0,1]$, $R_t := R_\y(S_u^t K)$ is the disjoint union of finitely many rectangles $R_t^k$; we denote their centers by $c(R_t^k)$.  Let $0 = \tau_0 < \tau_1 < \cdots < \tau_N = 1$ denote the collision times of the $R_t^k$'s as they evolve in time. We will verify the monotonicity of $|R_t \cap \theta_\y^{\perp}|_{n-1}$ on $t \in [\tau_j,\tau_{j+1}]$ for each $j$. 
For $t \in [\tau_j, \tau_{j+1})$, each $R_{\tau_j}^k$ evolves independently as $R_t^k = R_{\tau_j}^k - \frac{t-\tau_j}{1-\tau_j} c(R_{\tau_j}^k)$. 
Therefore, for all $t \in [\tau_j, \tau_{j+1}]$:
\begin{equation} \label{eq:rectangle-sum}
|R_t \cap \theta_\y^{\perp}|_{n-1} = \sum_{k} \abs{\brac{R_{\tau_j}^k - \frac{t-\tau_j}{1-\tau_j} c(R_{\tau_j}^k)} \cap \theta_\y^{\perp}}_{n-1} ;
 \end{equation}
 this is trivial for $t \in [\tau_j,\tau_{j+1})$ since the rectangles on the right are disjoint, but also holds at the collision time $t = \tau_{j+1}$ since $|\partial R \cap \theta_y^{\perp}|_{n-1} = 0$ for any rectangle $R$, as $\theta_\y \notin \{\pm e_i\}_{i=1,\ldots,n}$. This reduces our task to showing that each of the summands on the right in (\ref{eq:rectangle-sum}) is non-decreasing in $t \in [\tau_j,\tau_{j+1}]$, which is a consequence of the next lemma. 
 \end{proof}

 \begin{lemma} \label{lem:rectangle-equality}
 Let $R = \Pi_{i=1}^n [c^i-\h^i ,c^i + \h^i]$ ($\h^i > 0$) denote a rectangle in $\R^n$ centered at $c = (c^i)$, and let $\theta \in \S^{n-1} \setminus \{\pm e_i\}_{i=1,\ldots,n}$. Then $[0,1] \ni t \mapsto f(t) := |(R - t c) \cap \theta^{\perp}|_{n-1}$ is non-decreasing, the right-derivative $\dt{f(t)} \geq 0$ exists, and it is equal to $0$ if and only if either
 \begin{enumerate}
 \item $|R \cap \theta^{\perp}|_{n-2} = 0$, or 
 \item $c \in \theta^{\perp}$, or 
   \item \label{it:rectangle-equality-3} 
 $\theta^{\perp}$ intersects exactly $n-1$ pairs of opposing facets of $R$ (and possibly an additional single facet, but not its interior).
  \end{enumerate}
A useful necessary condition for having $\dt{f(t)} = 0$ is obtained by replacing (\ref{it:rectangle-equality-3}) with
\begin{enumerate}
\item[(3')] $\theta^{\perp}$ intersects exactly $n-1$ pairs of opposing facets of the centered rectangle $R - c$ (and no other facets). 
\end{enumerate}
 \end{lemma}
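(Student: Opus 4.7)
I would reduce $f(t)$ to a one-dimensional cross-section function of the \emph{centered} rectangle $R_0 := R - c = \prod_{i=1}^n [-\h^i,\h^i]$, and then invoke Brunn's concavity principle together with its equality characterization. Writing $R = R_0 + c$ and setting $a := \scalar{c,\theta}$, the identity $R - tc = R_0 + (1-t)c$ and a translation in $\theta^\perp$ give
\[
f(t) \;=\; |R_0 \cap \{y : \scalar{y,\theta} = -(1-t)a\}|_{n-1} \;=\; g\bigl((1-t)|a|\bigr),
\]
where $g(\alpha) := |R_0 \cap \{y : \scalar{y,\theta} = \alpha\}|_{n-1}$ is even because $R_0$ is origin-symmetric. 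By Brunn's principle (\ref{eq:Brunn}), $g^{1/(n-1)}$ is concave on its support $[-\alpha^*,\alpha^*]$ with $\alpha^* = \sum_i |\theta^i|\h^i$, and being concave and even it is non-increasing on $[0,\alpha^*]$; hence $g$ is too. Since $(1-t)|a|$ decreases in $t$, $f$ is non-decreasing, and the one-sided derivative $g'_-(|a|) \leq 0$ exists by concavity of $g^{1/(n-1)}$, yielding $\dt{f(t)} = -|a|\,g'_-(|a|) \geq 0$.

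\textbf{Equality cases at the boundary of support.} The derivative vanishes iff $|a|=0$ (which is condition (2)) or $g'_-(|a|)=0$. I would first handle the case $|a| \geq \alpha^*$: here either $|a| > \alpha^*$ and $R \cap \theta^\perp = \emptyset$, or $|a| = \alpha^*$ and $\theta^\perp$ is a supporting hyperplane of $R$ meeting it in a single face $F = R \cap \theta^\perp$. Since $\theta \neq \pm e_i$, $\dim F \leq n-2$, and a short local expansion at the tangent vertex/face would give $g(s) \sim C\,(\alpha^* - s)^{n-1-\dim F}$ as $s \to \alpha^{*-}$. Hence $g'_-(\alpha^*) = 0$ precisely when $\dim F \leq n-3$, which is exactly $|R\cap\theta^\perp|_{n-2}=0$, i.e., condition (1).

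\textbf{The main case $0 < |a| < \alpha^*$.} Here $g(|a|) > 0$, and the chain rule reduces $g'_-(|a|)=0$ to $(g^{1/(n-1)})'_-(|a|)=0$. Since $g^{1/(n-1)}$ is even, concave, non-increasing on $[0,\alpha^*]$ and has zero derivative at $0$ by evenness, the monotonicity of one-sided derivatives of concave functions would force $g^{1/(n-1)}$ — and hence $g$ — to be constant on $[-|a|,|a|]$. The equality case of Brunn--Minkowski recalled right after (\ref{eq:Brunn}) then exhibits the slab $R_0 \cap \{|\scalar{y,\theta}| \leq |a|\}$ as an affine cylinder with some axis $v \in \R^n$ satisfying $\scalar{v,\theta}=1$. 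Setting $J' := \{i : \theta^\perp \text{ meets } \{y^i = \pm \h^i\} \cap R_0\} = \{i : 2|\theta^i|\h^i \leq \alpha^*\}$, I observe that for each $i \in J'$ the central cross-section $S_0 = R_0 \cap \theta^\perp$ touches the facet $\{y^i=\pm\h^i\}$, so $S_0 + \alpha v \subset R_0$ forces $v^i = 0$; since $\scalar{v,\theta}=1\neq 0$, some $i_0 \notin J'$ must exist and satisfy $\theta^{i_0}\neq 0$, giving $|J'| \leq n-1$. A short dominance argument (two distinct indices cannot both satisfy $|\theta^i|\h^i > \sum_{j\neq i}|\theta^j|\h^j$) gives $|J'| \geq n-1$ always, so $|J'| = n-1$: this is precisely condition (3$'$). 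To upgrade to condition (3), stated for the non-centered $R$ and allowing one extra tangentially-touched facet, I would translate this facet analysis back by $c$ and separately track the case when $\theta^\perp$ meets a translated facet of $R$ only along its boundary.

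\textbf{Main obstacle.} The technical crux lies in the $0 < |a| < \alpha^*$ sub-case: extracting the cylinder/translational structure from the Brunn--Minkowski equality and then correctly converting the axis-direction constraint $v^i = 0$ for $i \in J'$ into the stated combinatorial facet condition. The bookkeeping around the translation by $c$, and around transverse versus tangential meetings of $\theta^\perp$ with facets of $R$, is exactly where the ``possibly an additional single facet'' clause of condition (3) — absent from the cleaner necessary variant (3$'$) — enters.
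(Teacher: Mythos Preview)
Your proposal is correct and follows essentially the same route as the paper's proof: reduce to the even concave cross-section function of the centered rectangle $R_0$ via Brunn's principle, handle the boundary case $|a|\geq\alpha^*$ by a local expansion near the supporting face, and in the interior case use the Brunn--Minkowski equality characterization to extract the translation direction $v$ and count facet-pairs. The only cosmetic differences are in parametrization (the paper takes the $(n-1)$-th root so that $g$ itself is concave, and uses $s$ with $f(1+s)=g(s)^{n-1}$) and in the argument for $|J'|\geq n-1$ (the paper phrases it geometrically as ``otherwise the central section would be unbounded'' rather than via your dominance inequality).
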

 
  Figure~\ref{fig:Rmove} provides an illustration of the translation which a rectangle undergoes in Lemma~\ref{lem:rectangle-equality} as it is being centered. Figure~\ref{fig:3cases} illustrates the three equality cases described in the lemma. 
 
 \begin{figure} \centering
\includegraphics[width=7cm]{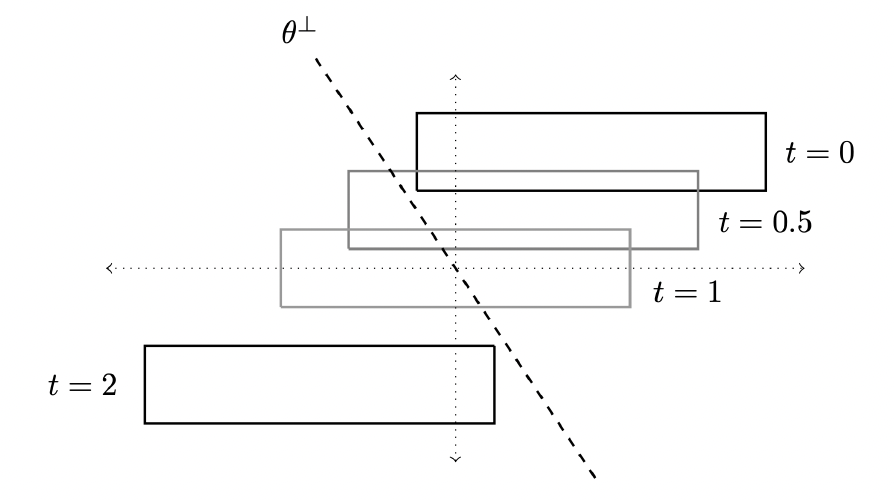}
\caption{An illustration of a rectangle being translated towards the origin in Lemma~\ref{lem:rectangle-equality}.}
\label{fig:Rmove}
\end{figure}

  \begin{figure} \centering
\includegraphics[width=6cm]{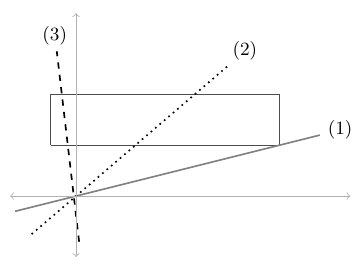}
\caption{An illustration of the three cases of equality in Lemma~\ref{lem:rectangle-equality}}.\label{fig:3cases}
\end{figure}

 \begin{proof}[Proof of Lemma \ref{lem:rectangle-equality}]
 Denote by $R_0 = R - c$ the centered rectangle, and define the function $g : \R \rightarrow \R_+$ by
\[
 g(s)^{n-1} := f(1+s) = |R_0 \cap (s c + \theta^{\perp})|_{n-1} = |R_0 \cap (s \scalar{c,\theta} \theta + \theta^{\perp})|_{n-1} .
\]
If $\scalar{c,\theta} = 0$ then $f$ is constant and there is nothing to prove, so we may exclude this case (as one of the cases of equality). 
Since $R_0$ is compact with non-empty interior, $g$ is continuous on its support $[-M,M]$ and $M \in (0,\infty)$. 
Moreover, $g$ is even and concave on its support by Brunn's concavity principle (\ref{eq:Brunn}). 
Consequently, $g$ is non-decreasing on $[-1,0]$ and hence $f$ is non-decreasing on $[0,1]$, yielding the first part of the claim. 

Now, if $R \cap \theta^{\perp} = \emptyset$ (equivalently, $M < 1$) then trivially $\dt{f(t)} = 0$. 
If $M=1$, since we assumed that $\theta \notin \{\pm e_i\}_{i=1,\ldots,n}$, necessarily $\theta^{\perp}$ intersects a face of $R$ of dimension $k \leq n-2$. In that case, for some $\eps > 0$ and all $t \in [0,\eps]$, $R \cap (c t + \theta^{\perp})$ is congruent to $R_k \times t \Delta_{n-k-1}$, where $R_k$ is a $k$-dimensional rectangle and $\Delta_{n-k-1}$ is an $(n-k-1)$-dimensional simplex. 
  Consequently, $f(t) = a t^{n-k-1}$ for some $a > 0$ and all $t \in [0,\eps]$, and so $f$ is differentiable from the right at $t=0$, and $\dt{f(t)} = 0$ iff $k < n-2$. Note that we may combine both of the prior two scenarios into the single statement that ``$M \leq 1$ and $\dt{f(t)} = 0$" iff $|R \cap \theta^{\perp}|_{n-2} = 0$. 

If $M > 1$, since $g$ is differentiable from the right on $(-M,M)$, 
$f$ is differentiable from the right at $t=0$, and as $g(-1) > 0$, $\dt{f(t)} =0$ iff $\dsa{g(s)}{-1}=0$ iff $g$ is constant on $[-1,1]$ (since by concavity, the non-negative right-derivative $\dsa{g(s)}{s_0}$ is non-increasing for $s_0 \in [-1,0)$). It follows by the equality conditions of the Brunn-Minkowski inequality that $R_0 \cap (s \scalar{c,\theta} \theta + \theta^{\perp})$ for $s \in [-1,1]$ are all translates of the central section $R_0 \cap \theta^{\perp}$, i.e.~coincide with $(R_0 \cap \theta^{\perp}) + s T$ for some $T \in \R^n$ such that $\scalar{T,\theta} = \scalar{c,\theta}$.
The central section is an origin-symmetric $(n-1)$-dimensional convex body, and hence $\theta^{\perp}$ must intersect $m \geq n-1$ pairs of opposing facets of $R_0$ (otherwise it would not be bounded). If $\theta^{\perp}$ intersects the pair of facets perpendicular to $e_i$, then necessarily the translation direction $T$ must satisfy $\scalar{T,e_i} = 0$ (otherwise $(R_0 \cap \theta^{\perp}) + s T$ would not lie inside $R_0$ for $s \neq 0$). So if $m=n$ this means $T=0$ and hence $\scalar{c,\theta}=\scalar{T,\theta} = 0$, but this case was already excluded. Consequently, if $\scalar{c,\theta}\neq 0$, $M > 1$ and $\dt{f(t)} =0$ then $\theta^{\perp}$ intersects exactly $n-1$ pairs of opposing facets of $R_0$ (and no other facets). The translated sections $(R_0 \cap \theta^{\perp}) + s T$ of $R_0$ will still intersect the same $n-1$ pairs of opposing facets of $R_0$ for all $s \in [-1,1]$, and no other facets if $s \in (-1,1)$; at times $s \in \{-1,1\}$, these sections may intersect an additional single facet but not its interior. 
Translating everything back to a statement regarding $R$ at time $t=0$, it follows that $\theta^{\perp}$ intersects exactly $n-1$ pairs of opposing facets of $R$, and possibly an additional single facet but not its interior. 

Conversely, assume that the latter scenario occurs  (a direction which we do not need in the sequel, but nevertheless establish for completeness). Then, there exists $i=1,\ldots,n$ so that $s c + \theta^{\perp}$ for $s=-1$ does not intersect $\interior F_i$, the union of the (relative) interiors of the pair of facets of $R_0$ perpendicular to $e_i$, and by symmetry also for $s=1$. Denoting the polytope $P_0 := R_0 \cap ([-1,1] c + \theta^{\perp})$, it follows, since $\partial P_0$ is connected and is a subset of $\partial R_0 \cup (R_0 \cap (\{-1,1\} c + \theta^{\perp}))$, that either $\partial P_0$ contains $\interior F_i$ or is disjoint from it. Since $P_0$ is closed and convex, the former possibility would imply that $P_0 = R_0$ which is impossible, since this would mean that either $R \cap \theta^{\perp} = \emptyset$ or (as $\theta \neq \pm e_i$) that $R \cap \theta^{\perp}$ is a face of $R$ of dimension at most $n-2$, and so in either case $\theta^{\perp}$ cannot intersect $n-1$ opposing pairs of facets of $R$. Consequently $\partial P_0 \cap \interior F_i = \emptyset$, and since $P_0 \subseteq \closure \conv(\interior F_i)$, we deduce that $P_0 \cap \interior F_i = \emptyset$. This means that for all $s \in [-1,1]$, $R_0 \cap (s c + \theta^{\perp})$ coincides with $C_0 \cap (s c + \theta^{\perp})$, where $C_0$ is the cylinder $\sum_{j \neq i} [-\h^j,\h^j] e_j + \R e_i$. This implies that these sections are all translates of each other, and hence the function $f$ is constant on $[0,1]$ and $\dt{f(t)} =0$. 
  \end{proof}

We can now immediately deduce the following. 

\begin{proposition} \label{prop:rectangle-equality}
Let $K$ be a $u$-multi-graphical compact set in $\R^n$. If $\dtll{\I_u(S_u^t K)} =  0$
then for a.e.~$\y = (y_1,\ldots,y_n) \in (P_{u^{\perp}} K)^n$, $R_\y$ consists of a finite disjoint union of rectangles $\{R_\y^k\}$ in $\R^n$, 
such that for each rectangle $R_\y^k$, either
\begin{enumerate} 
\item \label{it:equality-cond1} $\theta_\y^{\perp}$ essentially does not intersect $R_\y^k$, i.e., $|R_\y^k \cap \theta_\y^{\perp}|_{n-2} = 0$, or 
\item \label{it:equality-cond2} $\theta_\y^{\perp}$ passes through the center $c(R_\y^k)$ of $R_\y^k$, i.e., $\scalar{\theta_\y , c(R_\y^k)} = 0$, or 
\item \label{it:equality-cond3} $\theta_\y^{\perp}$ intersects exactly $n-1$ pairs of opposing facets of the centered rectangle $R_\y^k -  c(R_\y^k)$ (and no other facets). 
\end{enumerate}
\end{proposition}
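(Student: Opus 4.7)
The plan is a direct reduction to Lemma \ref{lem:rectangle-equality} via Fatou's lemma applied to the integral representation (\ref{eq:Iu-again}). First, restrict attention to the full-measure set of $\y \in \Omega_\infty^n$ for which $y_1,\ldots,y_n$ are affinely independent (so that $\theta_\y$ is uniquely defined up to sign), $\Delta(\tilde y_1,\ldots,\tilde y_{n-1}) > 0$, and $\theta_\y \notin \{\pm e_i\}_{i=1,\ldots,n}$ (the last condition excludes the null set $\bigcup_i \{y_i = 0\}$). For any such $\y$, $R_\y$ is a finite disjoint union of rectangles $\{R_\y^k\}$, and for all $t$ smaller than the first collision time $\tau_1(\y) > 0$ the rectangles evolve independently as $R_\y(S_u^t K) = \bigsqcup_k (R_\y^k - t\, c(R_\y^k))$, so
\[
|R_\y(S_u^t K) \cap \theta_\y^\perp|_{n-1} = \sum_k |(R_\y^k - t\, c(R_\y^k)) \cap \theta_\y^\perp|_{n-1}.
\]
By Lemma \ref{lem:rectangle-equality}, each summand has a non-negative right-derivative at $t=0$, and hence the sum does too; denote its value by $D(\y) \geq 0$.

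Next, by Lemma \ref{lem:rectangles-increasing} the expression $|R_\y(S_u^t K) \cap \theta_\y^\perp|_{n-1}$ is non-decreasing in $t$ for a.e.~$\y$, so the difference quotients appearing under the integral sign of (\ref{eq:Iu-again}), divided by $t$, are non-negative. Fatou's lemma therefore yields
\[
0 \;=\; \dtll{\I_u(S_u^t K)} \;\geq\; \frac{2}{n} \int_{\Omega_\infty^n} \Delta(\tilde y_1,\ldots,\tilde y_{n-1})^{-1}\, D(\y) \, d\y.
\]
Since the integrand on the right is non-negative and $\Delta^{-1}$ is strictly positive on our set, we conclude that $D(\y) = 0$ for a.e.~$\y$, which forces every summand $\dt{|(R_\y^k - t\,c(R_\y^k)) \cap \theta_\y^\perp|_{n-1}}$ to vanish.

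The conclusion then follows by invoking the necessary equality conditions of Lemma \ref{lem:rectangle-equality} for each rectangle $R_\y^k$ separately, using the looser alternative (3') which matches condition (\ref{it:equality-cond3}) in the statement being proved. The whole proof is essentially a bookkeeping exercise: the geometric heart of the matter is already encapsulated in Lemma \ref{lem:rectangle-equality}, and the only point of minor care is the justification of the Fatou exchange, which is immediate from the monotonicity established in Lemma \ref{lem:rectangles-increasing}. I do not foresee any substantive obstacle here.
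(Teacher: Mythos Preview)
Your proposal is correct and follows essentially the same route as the paper's proof: both restrict to the full-measure set where $\theta_\y$ is well-defined and not a coordinate direction, use the monotonicity from Lemma~\ref{lem:rectangles-increasing} to justify Fatou's lemma on the integral representation (\ref{eq:Iu-again}), deduce that the right-derivative of $|R_\y(S_u^t K)\cap\theta_\y^\perp|_{n-1}$ vanishes a.e., and then invoke the rectangle-by-rectangle representation (\ref{eq:rectangle-sum}) at $\tau_0=0$ together with Lemma~\ref{lem:rectangle-equality} (with alternative (3')) to obtain the trichotomy. Your write-up is in fact slightly more explicit than the paper's about the existence of the right-derivative $D(\y)$ before applying Fatou, but the argument is the same.
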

\begin{proof}
Recalling (\ref{eq:Iu-again}) and Lemma \ref{lem:rectangles-increasing}, we may appeal to Fatou's lemma to bound $\dtll{\I_u(S_u^t K)}$ from below. Consequently, if $\dtll{\I_u(S_u^t K)} = 0$, it follows that $ \dtll{|R_\y(S_u^t K) \cap \theta_\y^{\perp}|_{n-1}} = 0$ for a.e.~$\y \in \Omega_\infty^n$ and hence for a.e.~$\y \in (P_{u^{\perp}} K)^n$. The representation (\ref{eq:rectangle-sum}) for $\tau_0=0$ and Lemma \ref{lem:rectangle-equality} conclude the proof. 
\end{proof}

We can now provide a complete proof of Theorem \ref{thm:intro-dIdt} from the Introduction.
\begin{proof}[Proof of Theorem \ref{thm:intro-dIdt}]
If $K$ is a Lipschitz star body in $\R^n$, then by Corollary \ref{cor:derivative-exists}, for all $u \in \U$, $K$ is $u$-multi-graphical and the derivative $\dt{|I(S_u^t K)|}$ exists. By Proposition \ref{prop:StRemainsLip} and Corollary \ref{cor:u-finite}, we also know that for such $u$'s, $S_u^t K$ remains a Lipschitz star body (and hence with radially negligible boundary) and $u$-finite, and so by Theorems \ref{thm:I0} and \ref{thm:Iu}, $|I(S_u^t K)| = I_0(S_u^t K) = I_u(S_u^t K)$ for all $t \in [0,1]$. Lemma \ref{lem:rectangles-increasing} therefore implies that this function is non-decreasing and that $\dt{|I(S_u^t K)|} = \dt{\I_u(S_u^t K)}\geq 0$, and if equality occurs then the conclusion of Proposition \ref{prop:rectangle-equality} must hold.  
\end{proof}

\subsection{Intersecting all facets of a rectangle}

To better understand condition (\ref{it:equality-cond3}) from Proposition \ref{prop:rectangle-equality}, we have the following lemma. 

\begin{lemma} \label{lem:all-facets}
Let $R = \Pi_{i=1}^n [c^i - \h^i , c^i + \h^i]$ ($\h^i > 0$) be a rectangle in $\R^n$, and denote by $F_i$ the union of its two facets perpendicular to $e_i$ ($i=1,\ldots,n$). Let $\theta \in \R^n$, and assume that $R \cap \theta^{\perp} \neq \emptyset$. Then $ F_i \cap \theta^{\perp} \neq \emptyset$ for all $i=1,\ldots,n$ if and only if
\begin{equation} \label{eq:max-R0}
 \max_{x \in R} \abs{\scalar{\theta,x}} \geq 2 \max_{i=1,\ldots,n} |\theta_i| \h^i  .
 \end{equation}
 Equivalently, denoting $B^n_1(\h) = \conv\{\pm \h^i e_i\}_{i=1,\ldots,n}$, $\theta^{\perp}$ does not intersect all of the $F_i$'s if and only if
 \begin{equation} \label{eq:max-R}
 P_{\sspan \theta} R  \subset \interior(2 P_{\sspan \theta} B^n_1(\h)) . 
 \end{equation}
 \end{lemma}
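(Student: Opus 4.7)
\medskip

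\noindent\textbf{Proof plan for Lemma \ref{lem:all-facets}.} The plan is to reduce the statement to a direct computation of the images of the linear functional $\langle \theta, \cdot \rangle$ on $R$ and on each $F_i$. Set $a := \sum_{j=1}^n |\theta_j| h^j$, $b := \langle \theta, c \rangle$, and $m := \max_{i=1,\ldots,n} |\theta_i| h^i$, so that $\{\langle \theta, x \rangle : x \in R\} = [b-a, b+a]$ and hence, using the hypothesis $R \cap \theta^\perp \neq \emptyset$ (equivalently, $|b| \leq a$),
\[
M := \max_{x \in R} |\langle \theta, x \rangle| = |b| + a.
\]

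Next I would compute the image of $\langle \theta, \cdot \rangle$ on each facet of $R$. The facet $\{x \in R : x_i = c^i + \varepsilon h^i\}$ (for $\varepsilon \in \{\pm 1\}$) maps to a closed interval of length $2(a - |\theta_i| h^i)$ whose midpoint is $b + \varepsilon \theta_i h^i$. A quick case analysis on the sign of $\varepsilon \theta_i$ shows that the union of the two images for $\varepsilon = \pm 1$ is
\[
\{\langle \theta, x \rangle : x \in F_i\} = [b-a,\; b+a - 2|\theta_i|h^i] \;\cup\; [b-a + 2|\theta_i|h^i,\; b+a].
\]
This union contains $0$ if and only if either $a+b \geq 2|\theta_i|h^i$ or $a - b \geq 2|\theta_i|h^i$, i.e.\ if and only if $a + |b| \geq 2|\theta_i|h^i$, which is precisely $M \geq 2|\theta_i|h^i$. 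Taking this over all $i=1,\ldots,n$ yields the equivalence between $F_i \cap \theta^\perp \neq \emptyset$ for all $i$ and the inequality \eqref{eq:max-R0}.

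For the reformulation \eqref{eq:max-R}, parametrize $\mathrm{span}\,\theta$ by the linear functional $x \mapsto \langle \theta, x \rangle$. Under this identification, $P_{\mathrm{span}\,\theta} R$ corresponds to the interval $[b-a, b+a]$, while $P_{\mathrm{span}\,\theta} B_1^n(h)$ corresponds to $[-m, m]$, since the extrema of $\langle \theta, \cdot \rangle$ on the cross-polytope $\mathrm{conv}\{\pm h^i e_i\}$ are attained at $\pm h^{i^*} e_{i^*}$ for $i^* \in \arg\max_i |\theta_i| h^i$. Therefore $P_{\mathrm{span}\,\theta} R \subset \interior(2 P_{\mathrm{span}\,\theta} B_1^n(h))$ is equivalent to $[b-a, b+a] \subset (-2m, 2m)$, i.e.\ to $M = |b| + a < 2m$, which is the negation of \eqref{eq:max-R0}. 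This finishes the equivalence, since $\theta^\perp$ failing to meet all of the $F_i$'s means \eqref{eq:max-R0} fails.

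I do not anticipate a genuine obstacle: everything reduces to bookkeeping with absolute values and signs. The one point requiring a touch of care is the computation of the image of $\langle \theta, \cdot \rangle$ on each pair of opposite facets $F_i$, where one must verify that the two sub-intervals align correctly regardless of the sign of $\theta_i$ (and handle $\theta_i = 0$, in which case the condition is vacuous).
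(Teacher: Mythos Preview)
Your proposal is correct and takes essentially the same approach as the paper: both arguments compute the image of the linear functional $\langle\theta,\cdot\rangle$ on $R$ and on each facet pair $F_i$, reduce ``$\theta^\perp$ meets $F_i$'' to an interval membership condition, and arrive at the same inequality $|b|+a \geq 2|\theta_i|h^i$. The only cosmetic difference is that you introduce the named quantities $a,b,m$ explicitly, whereas the paper manipulates Minkowski sums of intervals directly.
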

 \begin{proof}
 Since we are given that $R \cap \theta^{\perp} \neq \emptyset$, we know that
 \begin{equation} \label{eq:zeroin}
 0 \in \sum_{i=1}^{n} \theta_i [c^i - \h^i, c^i + \h^i] . 
 \end{equation}
 Without loss of generality, we check the intersection of $\theta^{\perp}$ with $F_n$. 
 Note that
 \[
\theta^{\perp} \cap F_n = \emptyset \;\; \Leftrightarrow \;\; 0 \notin \sum_{i=1}^{n-1} \theta_i [c^i - \h^i, c^i + \h^i] + \theta_n \{ c^n - \h^n , c^n + \h^n \} .
 \]
 The right-hand side is the union of two intervals whose convex hull contains the origin by (\ref{eq:zeroin}). 
Consequently, we may proceed as follows:
 \begin{align*}
  \Leftrightarrow \; & \; \;  - \abs{\theta_n} \h^n  < \sum_{i=1}^{n-1} \theta_i [c^i - \h^i, c^i + \h^i] + \theta_n c^n < +  \abs{\theta_n} \h^n \\
 \Leftrightarrow \; & \; \;  - 2  \abs{\theta_n} \h^n  < \sum_{i=1}^{n-1} \theta_i [c^i - \h^i, c^i + \h^i] + \theta_n c^n + \theta_n [-\h^n, \h^n]  < +2  \abs{\theta_n} \h^n
 \\
 \Leftrightarrow \; & \; \;  \max_{x \in R} \abs{\scalar{\theta,x}} < 2 \abs{\theta_n} \h^n . 
 \end{align*}
Here we used $a < I < b$ to signify that $a < \min I \leq \max I < b$. 
Replacing the $n$-th coordinate with an arbitrary one, (\ref{eq:max-R0}) follows.  Since
 the linear functional $\scalar{\theta,\cdot}$ attains its maximum over $B^n_1(\h)$ on its vertices, the right-hand side of (\ref{eq:max-R0}) is equal to $2 \max_{x \in B^n_1(\h)} \scalar{\theta, x}$, and so the negation of (\ref{eq:max-R0}) is seen to be equivalent to (\ref{eq:max-R}). 
 \end{proof}
 
 Applying this to the centered rectangle $R^k_y - c(R^k_y) = B^n_\infty(\h^k_y)$, where $B^n_\infty(\h) = \Pi_{i=1}^n [-\h^i,\h^i]$, condition (\ref{it:equality-cond3}) of Proposition \ref{prop:rectangle-equality} implies that
\[ 
P_{\sspan \theta_\y} B^n_\infty(\h^k_y) \subset \interior (2 P_{\sspan \theta_\y} B^n_1(\h^k_\y)) .
\]

\subsection{Mid-points of fibers lie on a hyperplane}

We are now finally ready to utilize the crucial assumption that $n \geq 3$. 

\begin{thm} \label{thm:mid-points}
Let $K$ be a Lipschitz star body in $\R^n$ with $n \geq 3$, and let $\U \subseteq \S^{n-1}$ be the subset of full measure from Theorem \ref{thm:Lip-multi-graphical}.  There exists $\delta > 0$ so that if $\dt{|I(S_u^t K)|} = 0$ for some $u \in \U$, then for all $y \in B_{u^{\perp}}(\delta)$, the one-dimensional fibers $K \cap L_u^y$ are intervals whose mid-points lie on a common hyperplane through the origin.
\end{thm}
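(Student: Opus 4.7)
The plan is to combine Theorem \ref{thm:intro-dIdt} with the equi-graphical structure of $K$ near the origin provided by Proposition \ref{prop:graphical}, and then to invoke Lemma \ref{lem:intro-linear}. By Proposition \ref{prop:graphical} together with the joint continuity of $(y,v)\mapsto \rho_{K-y}(v)$ on the compact set $B_n(\delta_0)\times \S^{n-1}$ (Lemma \ref{lem:jointly-cont}), there exist $\delta_0>0$ and a uniformly continuous $F : B_n(\delta_0) \times \S^{n-1} \to (0,\infty)$, bounded below by a positive constant depending only on $K$, such that $K \cap L_u^y = y + [-F(y,-u), F(y,u)] u$ for all $u \in \S^{n-1}$ and $y \in B_{u^\perp}(\delta_0)$. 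Set $c_u(y) := \tfrac{1}{2}(F(y,u) - F(y,-u))$ and $h_u(y) := \tfrac{1}{2}(F(y,u) + F(y,-u))$, both uniformly continuous in $(y,u)$. For $\y \in B_{u^\perp}(\delta_0)^n$, the set $R_\y$ from Theorem \ref{thm:Iu} is then the single rectangle $\prod_i [-F(y_i,-u), F(y_i,u)] \subset \R^n$ with center $c(R_\y) = (c_u(y_1),\dots,c_u(y_n))$ and half-width vector $\h_\y = (h_u(y_1),\dots,h_u(y_n))$; crucially it contains $0 \in \R^n$ in its interior.

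By Corollary \ref{cor:derivative-exists} the hypothesis translates to $\dt{\I_u(S_u^t K)} = 0$, so Theorem \ref{thm:intro-dIdt} forces, for almost every $\y \in B_{u^\perp}(\delta_0)^n$, one of the conditions (\ref{it:intro-cond1})--(\ref{it:intro-cond3}) to hold for $R_\y$ and $\theta_\y$. Condition (\ref{it:intro-cond1}) fails because $\theta_\y^\perp$ passes through $0 \in \interior R_\y$. Condition (\ref{it:intro-cond3}), via Lemma \ref{lem:all-facets}, forces the strict inequality $\sum_i h_u(y_i)|\theta_\y^i| < 2 \max_i h_u(y_i)|\theta_\y^i|$. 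Consider the open cone
\[
\Theta := \set{\theta \in \R^n \setminus \{0\} : \sum_i |\theta_i| > 2 \max_i |\theta_i|},
\]
which contains a neighborhood of $(1, \dots, 1)$ precisely because $n \geq 3$ (here the sharpness of $B^n_\infty \subset n B^n_1$ is used). Using uniform continuity of $h_u$ and its uniform strict positivity, shrink $\delta_0$ to a $\delta \in (0, \delta_0]$, independent of $u$, so that for $\y \in B_{u^\perp}(\delta)^n$ the ratios $h_u(y_i)/h_u(y_j)$ stay close enough to $1$ that $\theta \in \Theta$ still implies $\sum_i h_u(y_i) |\theta_i| > 2 \max_i h_u(y_i) |\theta_i|$. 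Hence condition (\ref{it:intro-cond3}) also fails whenever $\theta_\y \in \Theta$, forcing condition (\ref{it:intro-cond2}) for almost every such $\y$: $\sum_i \theta_\y^i c_u(y_i) = 0$.

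Let $W := \{\y \in B_{u^\perp}(\delta)^n : \y \text{ is affinely independent and } \theta_\y \in \Theta\}$, a non-empty open set (any neighborhood of a centered simplex configuration meets it). The map $\psi(\y) := \sum_i \theta_\y^i c_u(y_i)$ is continuous on $W$ and vanishes almost everywhere there, hence vanishes identically on $W$. Since $\Theta$ is a cone invariant under $\theta \mapsto -\theta$, for every $\theta \in \Theta$ and every affinely independent $\y \in B_{u^\perp}(\delta)^n$ with $\sum_i \theta^i y_i = 0$, the vector $\theta$ is a nonzero scalar multiple of $\theta_\y$, so $\sum_i \theta^i c_u(y_i) = 0$. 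This is exactly the hypothesis of Lemma \ref{lem:intro-linear} --- the step where $n \geq 3$ is genuinely exploited --- so $c_u$ is linear on $B_{u^\perp}(\delta) \setminus \{0\}$; by continuity, $c_u(y) = \scalar{v_u, y}$ for some $v_u \in u^\perp$ and all $y \in B_{u^\perp}(\delta)$. The midpoints $\{y + c_u(y) u : y \in B_{u^\perp}(\delta)\}$ therefore lie on the hyperplane $H_u := \{z + \scalar{v_u, z} u : z \in u^\perp\}$, which passes through the origin. The main obstacle is choosing a single $\delta > 0$ that serves every $u \in \U$ simultaneously, so that the rectangle description, the failure of condition (\ref{it:intro-cond3}) over $\Theta$, and the linearity extraction all hold at once; this is resolved by the joint uniform continuity of $F$ on the compact $B_n(\delta_0) \times \S^{n-1}$.
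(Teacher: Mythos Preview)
Your approach mirrors the paper's proof almost exactly: equi-graphicality near the origin makes $R_\y$ a single rectangle containing $0$ in its interior, condition (\ref{it:intro-cond1}) is ruled out trivially, condition (\ref{it:intro-cond3}) is ruled out via Lemma \ref{lem:all-facets} on a suitable open cone $\Theta$, continuity upgrades the a.e.\ conclusion to an everywhere one on the set $W$, and Lemma \ref{lem:intro-linear} finishes.

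There is, however, a small but genuine slip in your choice of $\Theta$. With the threshold exactly $2$, i.e.\ $\Theta = \{\theta : \norm{\theta}_1 > 2 \norm{\theta}_\infty\}$, the subsequent claim that some $\delta > 0$ makes the implication ``$\theta \in \Theta \Rightarrow \sum_i h_u(y_i)|\theta_i| > 2 \max_i h_u(y_i)|\theta_i|$'' hold uniformly for all $\y \in B_{u^\perp}(\delta)^n$ is false. Your cone contains directions $\theta$ with $\norm{\theta}_1 / \norm{\theta}_\infty$ arbitrarily close to $2$, and for such $\theta$ an arbitrarily small spread in the half-widths already reverses the inequality: if $h_u(y_i) \in [(1-\eps)h_0,(1+\eps)h_0]$, the implication would require $(1-\eps)\norm{\theta}_1 \geq 2(1+\eps)\norm{\theta}_\infty$, which forces $\eps \leq 0$ as $\theta$ approaches $\partial \Theta$. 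The fix is immediate: replace $2$ by any constant strictly between $2$ and $n$ (the paper uses $2.5$), which still yields a non-empty open cone when $n \geq 3$ and now leaves a fixed margin to absorb the perturbation of the $h_u(y_i)$. With this correction your argument goes through and is essentially identical to the paper's.
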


For the proof, we will require Lemma \ref{lem:intro-linear} from the Introduction, which we repeat here for the reader's convenience.

\begin{lemma} \label{lem:linear}
Let $f : B \rightarrow \R$ be a function on a centered open Euclidean ball $B \subseteq \R^{n-1}$, $n \geq 3$, and let $\Theta \subset \R^n$ be a non-empty open set. Assume that for all $\theta \in \Theta$,
for every affinely independent $y_1,\ldots,y_n \in B$ such that $\sum_{i=1}^n \theta_i y_i = 0$, it holds that $\sum_{i=1}^n \theta_i f(y_i) = 0$. 
Then, $f$ must be a linear function on $B \setminus \{0\}$.   
\end{lemma}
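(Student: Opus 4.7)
The hypothesis is scale-invariant under $\theta \mapsto \lambda\theta$, so I will first replace $\Theta$ by the open cone $\bigcup_{\lambda>0}\lambda\Theta$, then shrink to the dense open subset $\{\theta : \theta_i \neq 0\ \forall i\}$ to ensure no coordinate vanishes on $\Theta$. By replacing $\Theta$ with $\bigcup_{\sigma \in S_n}\sigma\Theta$, I may also assume $\Theta$ is symmetric under coordinate permutations. Now fix $\theta^\star \in \Theta$ and, after relabeling, suppose $|\theta_n^\star| = \min_i |\theta_i^\star|$. Define $\alpha_i^\star := -\theta_i^\star/\theta_n^\star$ for $i < n$; the pivot choice gives $|\alpha_i^\star| \geq 1$, so $\|\alpha^\star\|_1 := \sum_{i < n} |\alpha_i^\star| \geq n - 1 \geq 2$. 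By openness there is an open neighborhood $U \subset \mathbb{R}^{n-1}$ of $\alpha^\star$ consisting of points $(-\theta_i/\theta_n)_{i<n}$ for $\theta \in \Theta$.

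\textbf{Local linearity.} Pivoting the constraint $\sum_{i=1}^n \theta_i y_i = 0$ on the $n$-th index, the hypothesis reads: for every $\alpha \in U$ and every tuple $(y_1, \ldots, y_{n-1}) \in B^{n-1}$ that linearly spans $\mathbb{R}^{n-1}$ and for which $y_n := \sum_{i<n} \alpha_i y_i \in B$, one has $f(y_n) = \sum_{i<n} \alpha_i f(y_i)$. Affine independence of the full tuple $(y_1,\ldots,y_n)$ is automatic, since a quick computation yields $\sum_{i<n}\alpha_i = 1 - 1/\theta_n^\star \neq 1$. For such a fixed basis $(y_1, \ldots, y_{n-1}) \subset B$, the linear isomorphism $L : \alpha \mapsto \sum_{i<n} \alpha_i y_i$ sends $U$ to an open set, and on $L(U) \cap B$ the function $f$ agrees with the unique linear function $\ell_{(y_i)}$ determined by $\ell(y_i) = f(y_i)$. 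If two such bases $(y_i)$ and $(y_i')$ produce an overlap $L(U) \cap L'(U) \cap B \neq \emptyset$, then $\ell_{(y_i)}$ and $\ell_{(y_i')}$ agree on that open set and hence globally as linear functions on $\mathbb{R}^{n-1}$. Continuously deforming the basis and exploiting the permutation symmetry of $U$ (which requires $n-1\geq 2$ to link the two orientation components of the space of bases) chains these local linear functions into a single global linear $\ell$ with $f = \ell$ on the open set $\Omega := \bigcup_{(y_i)} L(U) \cap B$.

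\textbf{Coverage of $B \setminus \{0\}$.} Given $y \in B \setminus \{0\}$, set $\gamma_i := \operatorname{sgn}(\alpha_i^\star)/\|\alpha^\star\|_1$, so $\sum_{i<n} \alpha_i^\star \gamma_i = 1$ and $|\gamma_i| \leq 1/(n-1) \leq 1/2$. The subspace $y^\perp \subset \mathbb{R}^{n-1}$ has dimension $n - 2 \geq 1$, so the linear system $\sum_{i<n} \alpha_i^\star v_i = 0$ in $(v_i) \in (y^\perp)^{n-1}$ has a solution space of dimension $(n-2)^2 \geq 1$. Choose $(v_i)$ in this space generically, so that the $n - 1$ vectors $\gamma_i y + v_i$ are linearly independent in $\mathbb{R}^{n-1}$ (an open, hence generic, condition). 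Setting $y_i := \gamma_i y + \epsilon v_i$ for sufficiently small $\epsilon > 0$, I obtain a linearly independent basis contained in $B$ (using $|y_i| \leq |\gamma_i||y| + \epsilon|v_i| < r$, as $|y| < r$) which satisfies $\sum_{i<n} \alpha_i^\star y_i = y + \epsilon \sum_{i<n} \alpha_i^\star v_i = y$. Therefore $y \in L_{(y_i)}(U) \cap B \subseteq \Omega$, and the previous step yields $f(y) = \ell(y)$ on all of $B \setminus \{0\}$.

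\textbf{Main obstacle.} The crux is the coverage step; it hinges on $n \geq 3$ in three linked ways: (i) the pivot bound $\|\alpha^\star\|_1 \geq n-1 \geq 2$ is what forces $|\gamma_i| \leq 1/2$ and thus keeps the constructed $y_i$ inside $B$; (ii) the perturbation space $y^\perp$ has dimension $n-2 \geq 1$, which is required to simultaneously impose $\sum \alpha_i^\star v_i = 0$ and linear independence of $(\gamma_i y + v_i)$; and (iii) the space of admissible bases (and $B \setminus \{0\}$ itself) is connected modulo permutation only when $n - 1 \geq 2$, which is what lets the local linear functions be glued into one global $\ell$.
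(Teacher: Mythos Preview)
Your approach mirrors the paper's: pivot on one coordinate to rewrite the hypothesis as $f(y_n)=\sum_{i<n}\alpha_i f(y_i)$, establish that $f$ coincides with a linear function on a neighborhood, and glue using connectedness. A few imprecisions are worth flagging. First, the computation $\sum_{i<n}\alpha_i = 1 - 1/\theta_n^\star$ is wrong; the correct value is $1-(\sum_i\theta_i^\star)/\theta_n^\star$, so you must also excise the hyperplane $\{\sum_i\theta_i=0\}$ from $\Theta$ to guarantee affine independence of $(y_1,\ldots,y_n)$. Second, your gluing ``by continuously deforming the basis'' is not justified as written: for a basis $(y_i)\in B^{n-1}$ with all $y_i$ near a common point of $B$, one may have $L(U)\cap B=\emptyset$ (since $|\alpha_i^\star|\geq 1$ forces $|L(\alpha^\star)|$ to be large), so the chain of overlaps can break along the deformation. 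The clean fix --- which is exactly what the paper does --- is to run your coverage step first, conclude $\Omega\supseteq B\setminus\{0\}$, and then glue the local linear pieces via connectedness of $B\setminus\{0\}$ (this is where $n-1\geq 2$ enters). Third, in the coverage step the vectors $\gamma_i y+\epsilon v_i$ degenerate to collinear as $\epsilon\to 0$, so ``sufficiently small $\epsilon$'' should read ``sufficiently small generic $\epsilon$'' (the relevant determinant is a polynomial in $\epsilon$, nonzero at $\epsilon=1$ by your genericity choice of $(v_i)$). None of these is a fatal gap; with these repairs the argument is correct and essentially identical to the paper's, differing mainly in that your coverage construction is more explicit while the paper simply scales and rotates an arbitrary initial configuration so that the target point has maximal norm among the $y_i$.
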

\begin{rem}
We do not assume that $f$ is continuous, hence the conclusion need only hold on the punctured ball. Indeed, if $\Theta$ does not intersect the coordinate axes, then we will never have access to $f(0)$ in our assumption, and so the value of $f$ at the origin can be arbitrary. In addition, note that without further assumptions on $\Theta$, the lemma is false for $n=2$, as $f$ may only be piecewise linear (separately on $(-\infty,0)$ and $(0,\infty)$). 
\end{rem}
\begin{proof}[Proof of Lemma \ref{lem:linear}]
Let $\theta \in \Theta$ be such that $\theta_i \neq 0$ for all $i=1,\ldots,n$ and $\sum_{i=1}^n \theta_i \neq 0$ (as $\Theta$ is an open set, this is always possible). Given linearly independent $y_1,\ldots,y_{n-1} \in \R^{n-1}$, define $y_n = -\frac{1}{\theta_n} \sum_{i=1}^{n-1} \theta_i y_i$, implying that $\sum_{i=1}^n \theta_i y_i = 0$, that $y_1,\ldots,y_n$ are affinely independent, and that moreover, the vectors $\{ y_i \}_{i \in I}$ for any $|I| = n-1$ are linearly independent. By relabeling indices if necessary, we may assume that $|y_n|= \max_{i=1,\ldots,n} |y_i|$.
 By simultaneously scaling and rotating all $y_i$'s, we may in fact ensure that $y_n$ is an arbitrary element of $B \setminus \{0\}$, and that all $y_i \in B \setminus \{0\}$. 
 
 Since $\Theta$ is an open set and since $y_1,\ldots,y_{n-1}$ are linearly independent (also after the relabeling of indices), we claim there exists an open neighborhood $N(y_n) \subset B \setminus \{0\}$ of $y_n$ such that for all $y_n' \in N(y_n)$, $y_1,\ldots,y_{n-1},y_n'$ are still affinely independent and there exists $\theta' \in \Theta$ such that $\sum_{i=1}^{n-1} \theta'_i y_i + \theta'_n y'_n = 0$. 
 Indeed, start with a neighborhood $N_0(y_n) \subset B \setminus \{0\}$ which is disjoint from the affine hull of $\{y_1,\ldots,y_{n-1}\}$, ensuring that $y_1,\ldots,y_{n-1},y_n'$ remain affinely independent for all $y_n' \in N_0(y_n)$. Now, denoting by $\Y$ the $(n-1) \times (n-1)$ the full-rank matrix whose columns are given by $y_1,\ldots,y_{n-1}$, we can choose $\theta'$ to satisfy the following linear system of equations:
\[
\Y \cdot P_{n-1} \theta' = -\theta'_n y'_n ~,~ \theta'_n = \theta_n ,
\]
where $P_{n-1} : \R^n \rightarrow \R^{n-1}$ denotes projection onto the first $n-1$ coordinates. Consequently, denoting $\delta y_n = y'_n - y_n$ and $\delta \theta = \theta' - \theta$, we have
\[
\Y \cdot P_{n-1} \delta \theta = -\theta_n \delta y_n . 
\]
Therefore, if $B_n(\theta,\eps) \subset \Theta$, we may take $N_1(y_n)$ to be the interior of the (non-degenerate) ellipsoid $y_n - \frac{1}{\theta_n} \Y B_{n-1}(0,\eps)$, and set $N(y_n) =N_0(y_n) \cap N_1(y_n)$.

 Our assumption implies that $\sum_{i=1}^{n-1} \theta'_i f(y_i) + \theta'_n f(y'_n) = 0$. Consider the $(n-1)$-dimensional linear subspace $H$ in $\R^{n-1} \times \R$ spanned by $\{(y_i,f(y_i))\}_{i=1,\ldots,n-1}$. It follows that $(y_n' , f(y_n'))$ must also lie on $H$ for all $y_n' \in N(y_n)$, and we conclude that $f$ is linear on $N(y_n)$. 
 
 We have shown that for an arbitrary point $y_n \in B \setminus \{0\}$ there exists an open neighborhood $N(y_n) \subset B \setminus \{0\}$ of $y_n$ such that $f$ coincides with a linear function $\ell_{y_n}$ on $N(y_n)$. To show that $f$ is linear on the entire $B \setminus \{0\}$, we need to show $\ell_{x_0} = \ell_{x_1}$ for all $x_0,x_1 \in B \setminus \{0\}$. Since $B \setminus \{0\}$ is connected when $n \geq 3$, we may connect $x_0,x_1$ using a (compact) path $P$. Extracting a finite open subcover of $P \subset \cup_{y_n \in P} N(y_n)$, and using that two linear functions defined on two overlapping open sets must coincide (because a linear function on a non-empty open set $\Omega \subset \R^{n-1}$ uniquely extends to the entire $\R^{n-1}$), it follows that $\ell_{x_0} = \ell_{x_1}$, concluding the proof. 
\end{proof}

\begin{proof}[Proof of Theorem \ref{thm:mid-points}]
Let $B_n(r) \subseteq K \subseteq B_n(R)$. 
By Proposition \ref{prop:graphical}, there exists $\delta >0 $ with $B_n(\delta) \subset \interior K$ such that $K$ is equi-graphical over $B_n(\delta)$. In particular, for all $u \in \S^{n-1}$ and $y \in B_{u^{\perp}}(\delta)$, $K \cap L_u^y$ is a closed interval $y + [f_u(y),g_u(y)] u$ with $f_u(y) < 0 < g_u(y)$; we denote its length by $2 \h_u(y) = g_u(y) - f_u(y)$ and center by $c_u(y) = \frac{f_u(y) + g_u(y)}{2}$. Since $g_u(y) = F(y,u)$ and $f_u(y) = -F(y,-u)$ for some uniformly continuous $F : B_n(\delta) \times \S^{n-1} \rightarrow \R$, the functions $\{f_u, g_u\}_{u \in \S^{n-1}}$ are equicontinuous (as their modulus of continuity is uniformly bounded from above by that of $F$). Consequently, by making $\delta > 0$ smaller if necessary, we can ensure that $\abs{\h_u(y) - \h_u(0)}\leq \eps r \leq \eps \h_u(0)$ for all $u \in \S^{n-1}$ and $y \in B_{u^{\perp}}(\delta)$. Here $\eps > 0$ is a fixed constant chosen such that $\frac{1+\eps}{1-\eps} < \frac{5}{4}$. 
Now fix $u \in \U$ and assume that $\dt{|I(S_u^t K)|}= 0$. By Theorem \ref{thm:intro-dIdt}, we know that for a.e.~$\y \in (P_{u^{\perp}} K)^n$ the conclusion of Proposition \ref{prop:rectangle-equality} holds for $R_\y= K \cap L_u^{y_1} \times \cdots \times  K \cap L_u^{y_n}$. Since when $\y \in B_{u^{\perp}}(\delta)^n$, $R_\y$ is a single rectangle $\Pi_{i=1}^n [c^i_\y - \h^i_\y , c^i_\y + \h^i_\y]$, we conclude by Lemma \ref{lem:all-facets} and the subsequent paragraph that for a.e.~$\y \in B_{u^{\perp}}(\delta)^n$, either $|R_\y \cap \theta_\y^{\perp}|_{n-2} = 0$, or else $\scalar{c_\y , \theta_\y} = 0$, or else $P_{\sspan \theta_\y} B^n_\infty(\h_\y) \subset \interior (2 P_{\sspan \theta_\y} B^n_1(\h_\y))$. The first scenario is impossible since $R_\y$ contains the origin in its interior, so we concentrate on the remaining two.

Let $\Theta = \{ \theta \in \R^n : \norm{\theta}_{1} > 2.5 \norm{\theta}_{\infty} \}$. Since $n \geq 3$, this is a non-empty open cone (this would not be the case if $n=2$ since $\norm{\theta}_{1} \leq n \norm{\theta}_{\infty}$). If $\theta_\y \in \Theta$, we claim that $|P_{\sspan \theta_\y} B^n_\infty(\h_\y) |_1 > 2 |P_{\sspan \theta_\y} B^n_1(\h_\y)|_1$, and so the third scenario is impossible. Indeed,
\begin{align*}
 |P_{\sspan \theta_\y} B^n_\infty(\h_\y)|_1 &  \geq (1-\eps) \h_u(0) |P_{\sspan \theta_\y} B_\infty^n|_1 = (1-\eps) \h_u(0) 2 \norm{\theta_\y}_{1} ,\\
|P_{\sspan \theta_\y} B^n_1(\h_\y)|_1 & \leq  (1+\eps) \h_u(0)  |P_{\sspan \theta_\y} B^n_1|_1 = (1+\eps) \h_u(0) 2 \norm{\theta_\y}_{\infty} ,
\end{align*}
and since $\norm{\theta_\y}_{1} > 2.5 \norm{\theta_\y}_{\infty}$ and $\frac{1-\eps}{1+\eps} > \frac{4}{5}$, the third scenario is disqualified as well.

Consequently, for a.e.~$\y = (y_1,\ldots,y_n) \in B_{u^\perp}(\delta)^n$ which are affinely independent with $\theta_\y \in \Theta$, necessarily $0 = \scalar{c_\y , \theta_\y} = \sum_{i=1}^n \theta_\y^i c_u(y_i)$. Since $c_\y = (c_u(y_1),\ldots,c_u(y_n))$ is continuous in $\y \in B_{u^\perp}(\delta)^n$, and so is $\theta_\y \in \S^{n-1}$ on the relatively open subset of affinely indepdendent vectors (as in the proof of Lemma \ref{lem:linear}), it follows that the statement in the previous sentence holds for all such $\y$'s, not just almost everywhere. Applying Lemma \ref{lem:linear} to the function $c_u(y)$, it follows that $c_u$ is a linear function on $\interior (B_{u^\perp}(\delta))\setminus \{0\}$; by continuity of $c_u$, this extends to the entire $B_{u^\perp}(\delta)$. In other words, all of the mid-points of fibers over $y \in B_{u^\perp}(\delta)$ lie on a common hyperplane through the origin, concluding the proof. 
\end{proof}

\subsection{Characterization of ellipsoids}

To finish the proof of Theorem \ref{thm:intro-main}, we need the following simple adaptation of Soltan's Theorem \ref{thm:intro-Soltan} from \cite{Soltan-EllipsoidViaMidpoints}. This is a local extension of the classical Bertrand--Brunn characterization of ellipsoids (see \cite[Section 8]{Soltan-EllipsoidsSurvey} for a historical discussion).
\begin{thm}[after Soltan \cite{Soltan-EllipsoidViaMidpoints}] \label{thm:Soltan}
Let $K$ denote a compact set in $\R^n$ which is equi-graphical over $B_n(\delta)$ for some $\delta > 0$. Assume that for a dense subset of $u$'s in $\S^{n-1}$, the mid-points of all segments of $K$ parallel to $u$ passing through $B_n(\delta)$  lie on a common hyperplane. Then $K$ is an ellipsoid. If these common hyperplanes all pass through the origin, then $K$ is a centered ellipsoid. 
\end{thm}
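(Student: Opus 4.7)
The plan is to reduce Theorem \ref{thm:Soltan} to Soltan's original Theorem \ref{thm:intro-Soltan} by (i) upgrading the density hypothesis to all directions using continuity of the graph functions, and (ii) adapting Soltan's local reconstruction of the ellipsoid to the equi-graphical setting, so that convexity of $K$ is not needed a priori.

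For (i), I would set $c_u(y) := \tfrac{1}{2}(F(y,u) - F(y,-u))$ for $(y, u) \in B_n(\delta) \times \S^{n-1}$, so that the hypothesis is that $c_u$ is affine (respectively linear, in the centered case) on $B_{u^\perp}(\delta)$ for a dense set of $u \in \S^{n-1}$. Since $F$ is uniformly continuous, for any sequence $u_k \to u$, composing with rotations $O_k \to \Id$ sending $u$ to $u_k$ yields uniform convergence of the rotated $c_{u_k}$ to $c_u$ on $B_{u^\perp}(\delta)$. Uniform limits of affine (respectively linear) functions on a ball remain affine (respectively linear), so the set of good directions is closed and by density equals all of $\S^{n-1}$.

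For (ii), equi-graphicality implies every line through $B_n(\delta)$ meets $K$ in a single interval, so $K$ is star-shaped with respect to $B_n(\delta)$ and in particular contains it. Writing $c_u(y) = \scalar{\alpha_u, y} + \beta_u$ and $h_u(y) := \tfrac{1}{2}(F(y,u) + F(y,-u))$, the chord $K \cap L_u^y$ is $y + [c_u(y) - h_u(y), c_u(y) + h_u(y)] u$. The classical Bertrand--Brunn / Soltan argument shows that the mid-chord condition across all directions forces the family $(\alpha_u, \beta_u)_{u \in \S^{n-1}}$ to arise from a single affine form on $\R^n$, and that $h_u(y)^2$ is a polynomial of degree at most $2$ in $y$ whose coefficients depend on $u$ in a compatible way. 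These data assemble into a single quadratic form $Q$ on $\R^n$ such that $x = y + s u \in K$ iff $(s - c_u(y))^2 \le h_u(y)^2$ iff $Q(x) \le 1$, for every $y \in B_{u^\perp}(\delta)$ and $u \in \S^{n-1}$. Compactness of $K$ together with $B_n(\delta) \subset K$ forces $Q$ to be positive definite, so $E := \{Q \le 1\}$ is an ellipsoid agreeing with $K$ on every line through $B_n(\delta)$. Since every $x \in \R^n \setminus \{0\}$ lies on the line $\R \cdot (x/|x|)$ through the origin, and every such line passes through $B_n(\delta)$, equi-graphicality applied to both $K$ and $E$ along this line forces $x \in K$ iff $x \in E$, so $K = E$. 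In the centered case, $c_u(0) = 0$ for every $u$ forces each $\beta_u = 0$, so the mid-point of every chord of $E$ through the origin is the origin, giving $E = -E$.

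The hard part is step (ii): Soltan's original proof is stated for convex bodies, and carefully re-verifying that the algebraic reconstruction of the quadric relies only on the graph structure of $K$ (equi-graphicality) and the affinity of every $c_u$, rather than on convexity-specific tools such as supporting hyperplanes or convex-duality arguments, is the technical heart of the proof. The uniform continuity of $F$ and the affinity of $c_u$ for all $u$ should provide the regularity needed to replicate Soltan's computations in this more general setting, but the bookkeeping must be done with some care.
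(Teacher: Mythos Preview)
Your proposal is correct and follows essentially the same approach as the paper: first upgrade the density hypothesis to all $u \in \S^{n-1}$ via the uniform continuity of $F$ (the paper phrases this as ``affine functions with bounded coefficients are closed under pointwise convergence'', while you make the domain-matching explicit via rotations), and then invoke Soltan's argument, noting that his proof uses only $u$-graphicality over $B_{u^\perp}(\delta)$ rather than convexity. The paper is terser on step (ii), simply asserting that an inspection of \cite[Section 7]{Soltan-EllipsoidViaMidpoints} confirms convexity is not used beyond graphicality, whereas you sketch the algebraic reconstruction of the quadric; both amount to the same reduction.
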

In particular, by Proposition \ref{prop:graphical}, this applies to any Lipschitz star body $K$. 
\begin{proof}
Soltan's proof of Theorem \ref{thm:intro-Soltan} (say, with $p=0$) in \cite[Section 7]{Soltan-EllipsoidViaMidpoints} does not invoke convexity beyond knowing that $K$ is $u$-graphical over $B_{u^{\perp}}(\delta)$ for all $u \in \S^{n-1}$. Consequently, to invoke Theorem \ref{thm:intro-Soltan}, it remains to show that the mid-point property holds not only for a dense subset of $u$'s in $\S^{n-1}$ but actually for every $u \in \S^{n-1}$. But since $K$ is assumed equi-graphical, the mid-points are given by $\frac{f_u(y) + g_u(y)}{2} = \frac{F(y,u) - F(y,-u)}{2}$ for some (uniformly) continuous function $F : B_n(\delta) \times \S^{n-1} \rightarrow \R$, and so this is immediate by continuity and the fact that affine functions (with bounded coefficients) are closed under pointwise convergence. We deduce that $K$ must be an ellipsoid, and if all of the mid-point hyperplanes pass through the origin, it must be centered. 
\end{proof}

\section{Tying everything together} \label{sec:proof}

We can now finally present the proof of Theorem \ref{thm:intro-main} and Corollary \ref{cor:intro-main}. For completeness, we also present a proof of the trivial directions. To this end, recall that for any star body $K$ in $\R^n$ and  a non-singular linear map $T \in \GL_n$ (see \cite[Theorem 8.1.6]{GardnerGeometricTomography2ndEd}),
\begin{equation} \label{eq:ITK}
I(T K) = |\det T| (T^{-1})^*(I K) .
\end{equation}
In particular,
\[
I(c K) = c^{n-1} I K \;\;\; \forall c > 0 ,
\]
and we see that $I^2$ is $\GL_n$-covariant in the following sense:
\[
I^2 (T K) = |\det T|^{n-2} \, T(I^2 K) .
\]
Clearly $I B_n = \omega_{n-1} B_n$, and $I^2 B_n = \omega_{n-1}^{n-1} I B_n = \omega_{n-1}^n B_n$.

\begin{proof}[Proof of Theorem \ref{thm:intro-main}] 
Let $K$ be a centered ellipsoid in $\R^n$ ($n \geq 2$), and write $K = T(B_n)$ for some $T \in \GL_n$. Then,
\[
I^2 K = |\det T|^{n-2} \, T(I^2 B_n) = \omega_{n-1}^n |\det T|^{n-2} K ,
\]
and we see that $I^2 K = c K$ for an appropriate $c > 0$.

Conversely, let $K$ be a star body in $\R^n$, $n \geq 3$, so that $I^2 K = c K$, $c > 0$. Recall that $\rho_{I K} = \omega_{n-1} \Rad(\rho_K^{n-1})$, where $\Rad$ denotes the spherical Radon (or Funk) transform, and hence
\begin{equation} \label{eq:RadRad}
c \rho_K = \rho_{I^2 K} = \omega_{n-1}^n \Rad(\Rad(\rho_K^{n-1})^{n-1}) .
\end{equation}
By Theorem \ref{thm:regularity} in the Appendix, since $n \geq 3$ it follows that $\rho_K$ is $C^\infty$ smooth, and in particular Lipschitz continuous, so $K$ is a Lipschitz star body. By Theorem \ref{thm:Lip-multi-graphical} there exists a Lebesgue measurable $\U \subseteq \S^{n-1}$ of full measure such that for all $u \in \U$, $K$ is $u$-multi-graphical (recall Definition \ref{def:mgs}). By the results of Subsection \ref{subsec:StK}, the continuous Steiner symmetrization $\{S_u^t K\}_{t \in [0,1]}$ is a well-defined family of compact sets satisfying $|S_u^t K| = |K|$ for all $t \in [0,1]$. Moreover,  $S_u^t K$ are (uniformly) Lipschitz star bodies with $S_u^0 K = K$ by Proposition \ref{prop:StRemainsLip} and Corollary \ref{cor:Su01}, and constitute an admissible radial perturbation of $K$ (recall Definition \ref{def:admissible}) by Proposition \ref{prop:admissible}. Since $I^2 K = c K$, it follows by Proposition \ref{prop:stationary} that $K$ is a stationary point for the functional $\F_c(K) = |I K| - (n-1) c |K|$, and since $\dt{|S_u^t K|}= 0$, we deduce in Corollary \ref{cor:derivative-exists} that $\dt{|I(S_u^t K)|}$ exists and is equal to $0$. Theorem \ref{thm:intro-dIdt} tells us that (for any Lipschitz star body $K$) $|I(S_u^t K)| = \I_0(S_u^t K) = \I_u(S_u^t K)$ is a non-decreasing function in $t \in [0,1]$, and provides some geometric information on $K$ whenever $\dt{|I(S_u^t K)|} = 0$. Crucially utilizing that $n \geq 3$, this is further refined in Theorem \ref{thm:mid-points}, stating that there exists $\delta > 0$ (independent of $u \in \U$) so that for all $y \in B_{u^{\perp}}(\delta)$, the one-dimensional fibers $K \cap L_u^y$ are intervals whose mid-points lie on a common hyperplane through the origin. As this holds for all $u \in \U$, applying Proposition \ref{prop:graphical} and Theorem \ref{thm:Soltan}, we conclude that $K$ must be a centered ellipsoid. 
\end{proof}

\begin{rem}
The same argument applies if $K$ is only assumed to be a star-shaped bounded Borel set in $\R^n$ ($n \geq 3$), and $I^2 K = c K$ is only assumed to hold up to an $\H^n$-null-set. These assumptions mean that $\rho_K$ is a non-negative function in $L^\infty(\S^{n-1})$, and that (\ref{eq:RadRad}) holds on $\S^{n-1}$ up to an $\H^{n-1}$-null-set (by integration in polar coordinates and Fubini), i.e.~as functions in $L^\infty(\S^{n-1})$. In that case, Theorem \ref{thm:regularity} in the Appendix shows that up to modifying $\rho_K$ on an $\H^{n-1}$-null-set (which amounts to modifying $K$ on an $\H^n$-null-set), $\rho_K$ is $C^\infty$ smooth, and hence (\ref{eq:RadRad}) and $I^2 K = c K$ hold pointwise. Moreover, either $\rho_K$ is identically zero (if $|K| = 0$) or else it is strictly positive, so the resulting modified $K$ is a Lipschitz star body, and the proof proceeds as usual. 
It is also worth noting that $IK$ can be defined for a bounded Borel set $K$,
but the equation $I^2 K = c K$ implies that $K$ is star-shaped.
It follows that the same characterization holds for bounded Borel sets as well.
\end{rem}

\begin{proof}[Proof of Corollary \ref{cor:intro-main}] 
If $K = B_n(R)$ then clearly $I K = R^{n-1} I B_n = R^{n-1} \omega_{n-1} B_n = R^{n-2} \omega_{n-1} K$. 
Conversely, if $I K = c K$ for some $c > 0$ then $I^2 K = c^{n-1} I K = c^n K$, and so $K$ must be a centered ellipsoid by Theorem \ref{thm:intro-main}. Writing $K = T (B_n)$ for some $T \in \GL_n$, we see by (\ref{eq:ITK}) that
\[
c T(B_n) = c K = I K = |\det T| (T^{-1})^*(I B_n) = |\det T| \omega_{n-1}  (T^{-1})^*(B_n) ,
\]
and hence $T^* T B_n = \frac{1}{c} |\det T| \omega_{n-1} B_n$. This implies that up to scaling, $T$ is orthogonal, and hence $K$ is a centered Euclidean ball. 
\end{proof}

\begin{proof}[Proof of Corollary \ref{cor:intro-SuK-equality}]
Let $K$ be a Lipschitz star body in $\R^n$, $n \geq 3$, and assume that $|I K| = |I(S_u K)|$ for all $u \in Q$ with $Q \subseteq \S^{n-1}$ of full-measure. By Theorem \ref{thm:Lip-multi-graphical} there exists a $\U \subseteq \S^{n-1}$ of full measure so that for all $u \in \U$, $K$ is $u$-multi-graphical, and so by Lemma \ref{lem:rectangles-increasing}, $[0,1] \ni t \mapsto |I(S_u^t K)|$ is non-decreasing. Consequently, for all $u \in \U \cap Q$, since $S_u^0 K = K$ and $S_u^1 K = S_u K$ by Corollary \ref{cor:Su01},\[
|I K| = |I(S_u^0 K)| \leq |I(S_u^1 K)| = |I(S_u K)| = |I K|,
\]
 we conclude that $[0,1] \ni t \mapsto |I(S_u^t K)|$ must be constant, and in particular $\dt{|I(S_u^t K)|} = 0$. Since $\U \cap Q$ is of full-measure in $\S^{n-1}$ and hence dense, we conclude as in the proof of Theorem \ref{thm:intro-main} that $K$ must be a centered ellipsoid by Theorems \ref{thm:mid-points} and \ref{thm:Soltan}. 
 
 Conversely, if $K$ is a centered ellipsoid, it is well known (e.g. \cite[Lemma 2]{BLM-SteinerSymmetrizations}) that $S_u K$ remains a centered ellipsoid (of the same volume) for all $u \in \S^{n-1}$, and so $|I K| = |I(S_u K)|$ by (\ref{eq:ITK}). 
\end{proof}

\section{Concluding remarks} \label{sec:conclude}

\subsection{Additional accessible results} 

The method we have employed in this work is rather general, and may be applied to characterize additional geometric equations. Let $\F(K)$ be a functional on the class of (Lipschitz) star bodies or convex bodies $K$ such that $[0,1] \ni t \mapsto \F(S_u^t K)$ is monotone under continuous Steiner symmetrization, and such that 
$\dt{\F(S^t_u K)} = 0$ for all (or a.e.) $u \in \S^{n-1}$ iff $K$ is an ellipsoid or a Euclidean ball (perhaps centered). Then any stationary point $K$ of $\F$ under admissible (i.e.~a.e.~equi-differentiable) perturbations of the radial function $\rho_K$ (for star bodies) or the support function $h_K = \sup_{x \in K} \scalar{\cdot,x}$ (for convex bodies) must be an ellipsoid or Euclidean ball, respectively. The stationary points for $\F$ are characterized by an Euler-Lagrange geometric equation, which is typically easy to compute, and so we obtain a method for generating and solving such geometric equations. 

\smallskip

Of course, to rigorously justify the above somewhat simplified sketch, one would need to handle some technicalities arising from employing continuous Steiner symmetrization $S_u^t K$. In this work, we have introduced this notion for Lipschitz star bodies $K$ and addressed the a.e.~equi-differentiability of $\rho_{S_u^t K}(\theta)$. The equi-differentiability of $h_{S_u^t K}(\theta)$ for convex bodies $K$ is actually much simpler, as this function is known to be convex in $t$ (see \cite[Lemma 2.1]{Saroglou-GeneralizedBCD} and the preceding comments). Note that convex bodies $K$ containing the origin in their interior are automatically Lipschitz star bodies, and so our results regarding $\rho_{S_u^t K}$ apply (see also \cite[Proposition 4.3]{Saroglou-GeneralizedBCD}). 

\smallskip
The literature already contains numerous functionals $\F$ for which $\F(K) \leq \F(S_u K)$ with equality for all $u \in \S^{n-1}$ iff $K$ is an ellipsoid or Euclidean ball (possibly centered). Often the arguments involve the use of continuous Steiner symmetrization, and it remains to inspect the proof and confirm that 
it is actually enough to have $\dt{\F(S^t_u K)} = 0$ for all $u \in \S^{n-1}$ to characterize ellipsoids or balls. 
Below is a partial list of geometric equations which may be solved using this approach --- we leave the details to the reader.  
From here on, $K$ denotes a convex body in $\R^n$. 

\begin{enumerate}
\item Fixed points for the centroid body of the polar projection body $\Pi^* K$: 
\begin{equation} \label{eq:Z1Pi}
\Gamma_1(\Pi^*K) = c K .
\end{equation}
The polar projection body $\Pi^* K$ is the polar body to the projection body $\Pi K$. It is the convex body whose gauge function is given by
\[
 \norm{\theta}_{\Pi^* K} = h_{\Pi K}(\theta) = |P_{\theta^{\perp}} K|_{n-1}.
 \]
 The centroid body $\Gamma_1(L)$ of $L$ is defined (up to our non-standard normalization) via $h_{\Gamma_1(L)}(\theta) = \int_L \abs{\scalar{\theta,x}} dx$. Of course, (\ref{eq:Z1Pi}) implies that the corresponding mixed surface area measures (see \cite{Lutwak-MixedProjectionInqs}) satisfy
\begin{equation} \label{eq:Z1Pi2}
S_{\Gamma_1(\Pi^* K),K,\ldots,K} = c \; S_{K,K,\ldots,K} . 
\end{equation}
It is easy to check that (\ref{eq:Z1Pi2}) is the Euler-Lagrange equation under perturbations of $h_K$ for the functional
\[
\F(K) =  |\Pi^* K| + c' |K| . 
\]
It is known that $\F(K) \leq \F(S_u K)$ \cite{LYZ-Lp-PettyProjection,LYZ-OrliczProjectionBodies}, and that equality occurs for all $u \in \S^{n-1}$ iff $K$ is an ellipsoid \cite[Theorem 1.4]{EMilmanYehudayoff-AffineQuermassintegrals}. In fact, it is shown in \cite[Theorem 3.10]{EMilmanYehudayoff-AffineQuermassintegrals} that $[0,1] \ni t \mapsto \F(S_u^t K)$ is non-decreasing. By inspecting and adapting the proof of \cite[Theorem 4.6]{EMilmanYehudayoff-AffineQuermassintegrals}, it is easy to check for a given $u \in \S^{n-1}$ that $\dt{\F(S_u^t K)} = 0$ iff $\F(S_u^t K)$ is constant for all $t \in [0,1]$, iff (by monotonicity) $\F(K) = \F(S_u K)$, and therefore $\dt{\F(S_u^t K)} = 0$ for all $u \in \S^{n-1}$ iff $K$ is an ellipsoid. 
 Consequently, it follows that (\ref{eq:Z1Pi2}) holds iff $K$ is an ellipsoid, and thus (\ref{eq:Z1Pi}) holds iff $K$ is an origin-symmetric ellipsoid (as the centroid body $\Gamma_1(\Pi^* K)$ is origin symmetric). This resolves a conjecture of Lutwak--Yang--Zhang from \cite[Section 7]{LYZ-Lp-PettyProjection} in the case that $p=1$; it is likely that the method can be extended to handle general $p \geq 1$, but we do not pursue this here. 
\item Fixed points for the iterated polar $L^p$-centroid body ($p \in [1,\infty)$):
\begin{equation} \label{eq:Zp*}
\Gamma_p^*(\Gamma_p^* K) = c K . 
\end{equation}
Here the polar $L^p$-centroid body $\Gamma_p^* K$ is the polar body to the $L^p$-centroid body $\Gamma_p K$, namely the convex body whose gauge function is given (up to our non-standard normalization) by
\[
 \norm{\theta}^p_{\Gamma_p^*(K)} = h^p_{\Gamma_p(K)}(\theta) = \int_K \abs{\scalar{\theta,x}}^p \, dx.
\]
 It is easy to check that (\ref{eq:Zp*}) is the Euler-Lagrange equation under perturbations of $\rho_K$ for the functional
\[
\F_p(K) = |\Gamma_p^* K| + \frac{n+p}{p c^{p}} |K| . 
\]
Since $\Gamma_p^*(L)$ is origin symmetric, it is enough to restrict to origin-symmetric $K = -K$ when considering solutions of (\ref{eq:Zp*}). 
It is known that $\F_p(K) \leq \F_p(S_u K)$ \cite[Lemma 3.2]{LutwakZhang-IntroduceLqCentroidBodies} with equality for all $u \in \S^{n-1}$ 
 iff $K$ is a centered ellipsoid (\cite[Proof of Theorem B]{LutwakZhang-IntroduceLqCentroidBodies} or \cite{CampiGronchi-VolumeProductInqs}). Moreover, defining $g_u(t-1) := 1 / |\Gamma_p^* S_u^t K|$, it was shown by Campi and Gronchi \cite[Theorem 2]{CampiGronchi-VolumeProductInqs} that $g_u : [-1,1] \rightarrow \R_+$ is a convex and even function (as $K = -K$). Consequently, for a given $u \in \S^{n-1}$, $\dt{\F(S_u^t K)} = 0$ iff $\dta{g_u(t)}{-1} = 0$ iff $g_u$ is constant on $[-1,1]$ iff $|\Gamma_p^* K| = |\Gamma_p^*(S_u K)|$, and so we conclude that $\dt{\F(S_u^t K)} = 0$ for all $u \in \S^{n-1}$ iff $K$ is a centered ellipsoid. 
Consequently, we confirm that (\ref{eq:Zp*}) holds iff $K$ is a centered ellipsoid (note that this is false for $p=\infty$, as $\Gamma_\infty^* K$ coincides with the polar body $K^*$ for all convex $K=-K$ and $(K^*)^* = K$). As in the proof of Corollary \ref{cor:intro-main}, it follows that
\[
\Gamma_p^* K = c K 
\]
iff $K$ is a centered Euclidean ball (and this also trivially holds for $p=\infty$). See \cite{Reuter-LocalFixedPoints} for some local fixed point results for various additional problems involving centroid bodies. 
\item Fixed points for the polar $L^p$-projection body of the $L^p$-centroid body ($p \in [1,\infty)$):
\begin{equation} \label{eq:polarLp}
\Pi_p^* (\Gamma_p K) = c K . 
\end{equation}
The $L^p$-centroid body $\Gamma_p K$ has already been defined above, and the polar $L^p$-projection body $\Pi_p^*(K)$ is the polar body to the $L^p$-projection body $\Pi_p(K)$, given (up to our non-standard normalization) by 
\[
\norm{\theta}^p_{\Pi_p^*(K)} = h^p_{\Pi_p(K)}(\theta) = \int_{\S^{n-1}} \abs{\scalar{\theta,\xi}}^p \, dS_p K(\xi).
\]
Here $S_p K$ denotes the $L^p$ surface area measure of the convex body $K$, defined as $S_p K = h_K^{1-p} S_K$, where $S_K = (\nu_{\partial K})_*(\H^{n-1}|_{\partial K})$ denotes the surface area measure of $K$ on $\S^{n-1}$ (and $\nu_{\partial K}$ is the unit outer normal to $\partial K$). These objects were introduced and studied by Lutwak in \cite{Lutwak-Firey-Sums,Lutwak-Firey-Sums-II}.
It is easy to show that (\ref{eq:polarLp}) is the Euler-Lagrange equation under perturbations of $\rho_K$ for the functional
\[
\F_p(K) = |\Gamma_p K| - c'_{p} |K|. 
\]
Since $\Pi_p^*(L)$ is origin symmetric, so is any solution $K$ to (\ref{eq:polarLp}). 
Defining $g_u(t-1) := |\Gamma_p S_u^t K|$, it was shown  by Campi and Gronchi \cite[Theorem 2.2]{CampiGronchi-LpBusemannPettyCentroid}  that $[-1,1] \ni t \mapsto g_u(t)$ is a convex function, which is trivially even whenever $K = -K$. Furthermore, \cite[Theorem 2.2]{CampiGronchi-LpBusemannPettyCentroid} shows that $|\Gamma_p K| = |\Gamma_p S_u K|$ for all $u \in \S^{n-1}$ iff $K$ is a centered ellipsoid. Consequently, $\dt{\F_{p}(S_u^t K)} = 0$ iff $\dta{g_u(t)}{-1} = 0$ iff $g_u$ is constant on $[-1,1]$ iff $|\Gamma_p K| = |\Gamma_p S_u K|$, and so we conclude that $\dt{\F_{p}(S_u^t K)} = 0$ for all $u \in \S^{n-1}$ iff $K$ is a centered ellipsoid. We thus confirm that (\ref{eq:polarLp}) holds iff $K$ is a centered ellipsoid. 
\item For completeness, we also mention the $L^p$-Minkowski equation ($p \geq -n$) for origin-symmetric convex bodies $K=-K$ and constant data:
\begin{equation} \label{eq:LpSK}
h_{K}^{1-p} S_K = c \, \H^{n-1}|_{\S^{n-1}} . 
\end{equation}
Recall that the left-hand side is precisely the $L^p$ surface area measure $S_p K$. 
It is known (see \cite{BCD-PowerOfGaussCurvatureFlow,IvakiEMilman-GeneralizedBCD,Lutwak-Firey-Sums,LYZ-LpMinkowskiProblem,Saroglou-GeneralizedBCD}) that (\ref{eq:LpSK}) holds iff $K$ is a centered ellipsoid (when $p=-n$) or a Euclidean ball (when $p > -n$, necessarily centered if $p \neq 1$ and necessarily with $c=c_n$ if $p=n$). 
It is easy to check that (\ref{eq:LpSK}) is the Euler-Lagrange equation under perturbations of $h_K$ for the functional
\[
\F_p(K) = \frac{1}{p} \int_{\S^{n-1}} h_K^{p}(\theta) \, d\theta + \frac{1}{c} |K| 
\]
(interpreted as $\F_0(K) = \int_{\S^{n-1}} \log h_K(\theta) \, d\theta + \frac{1}{c} |K|$ when $p=0$). Furthermore, when $K = -K$ is origin symmetric, it is known when $p \geq -n$ that $[0,1] \ni t \mapsto \F_p(S_u^t K)$ is non-decreasing: for $p=-n$, this was shown by Campi--Gronchi \cite[Theorem 1]{CampiGronchi-VolumeProductInqs}, and for $p > -n$ this follows from Saroglou's work \cite[Proposition 4.5]{Saroglou-GeneralizedBCD}. Moreover, it is known that $\F_{-n}(K) = \F_{-n}(S_u K)$ for all $u \in \S^{n-1}$ iff $K$ is a centered ellipsoid \cite{MeyerPajor-Santalo,SaintRaymond-Santalo}. Defining $g_u(t-1) := 1 / |(S_u^t K)^*|$, \cite[Theorem 1]{CampiGronchi-VolumeProductInqs} actually shows that $g_u : [-1,1] \rightarrow \R_+$ is a convex and even function, and so $\dt{\F_{-n}(S_u^t K)} = 0$ iff $\dta{g_u(t)}{-1} = 0$ iff $g_u$ is constant on $[-1,1]$ iff $|K^*| = |(S_u K)^*|$, and so we conclude that $\dt{\F_{-n}(S_u^t K)} = 0$ for all $u \in \S^{n-1}$ iff $K$ is a centered ellipsoid. 
When $p > -n$, \cite[Proposition 4.5 and Lemma 5.2]{Saroglou-GeneralizedBCD} imply that $\dt{\F_p(S_u^t K)} = 0$ for all $u \in \S^{n-1}$ iff $K$ is a centered Euclidean ball. These observations immediately recover the known results for origin-symmetric convex solutions to (\ref{eq:LpSK}) when $p \geq -n$. This variational proof is not new, and has been carried out (with all technical details) by Saroglou \cite[Proposition 5.1]{Saroglou-GeneralizedBCD} for $p > -n$; in fact, the hardest part of Saroglou's work is to treat the general case of (possibly not origin symmetric) convex bodies containing the origin in their interior. 
\item Of course, one may also combine several different functionals by adding or subtracting them, so that the overall monotonicity of $[0,1] \mapsto \F(S_u^t K)$ is preserved, yielding additional possibly interesting geometric equations. 
\end{enumerate}

\subsection{Inaccessible results} 

Before concluding, we mention a well-known dual problem to (\ref{eq:intro-I2K}), which remains inaccessible to our method. It was conjectured by Petty \cite{Petty-IsoperimetricProblems} that when $n \geq 3$, the quantity $\F(K) = |\Pi K| |K|^{1-n}$ is minimized over all convex bodies $K$ in $\R^n$ if and only if $K$ is an ellipsoid. Petty's projection conjecture is widely considered one of the major open problems in convex geometry; one reason this conjecture is apparently difficult is that $|\Pi K|$ may actually increase under Steiner symmetrization, as observed by Saroglou \cite{Saroglou-PettyNotMonotoneUnderSteiner}. It was observed by Schneider \cite[pp. 570-571]{Schneider-Book-2ndEd} 
that a necessary condition for $K$ to be a minimizer of $\F(K)$ is that
\[
\Pi^2 K = \frac{|\Pi K|}{|K|} K . 
\]
It is therefore very interesting to classify those convex bodies $K$ in $\R^n$ ($n \geq 3$) such that
\begin{equation} \label{eq:Schneider}
\Pi^2 K = c K ,
\end{equation}
which is clearly a dual problem to (\ref{eq:intro-I2K}). Contrary to (\ref{eq:intro-I2K}), for which centered ellipsoids are the only solutions, it is known that (\ref{eq:Schneider}) admits additional ones;
for example, even $K=B_\infty^n$ satisfies (\ref{eq:Schneider}).
The polytopes $K$ satisfying (\ref{eq:Schneider}) were completely classified by Weil \cite{Weil-Pi2KForPolytopes}. For additional partial results in these directions we refer to \cite{Ivaki-SecondMixedProjection,Ivaki-LocalUniquenessForPetty,SaroglouZvavitch-IterationsOfPi}. 

\medskip

Lastly, it is worthwhile mentioning Problem 5 of Busemann and Petty \cite{Busemann-Petty}, whose equivalent formulation (see \cite[Open Problem 12.6]{Lutwak-Selected}) asks whether the only (origin-symmetric) convex bodies satisfying $(I K)^* = c K$ for some $c>0$ are centered ellipsoids; this is known to be false in dimension $n=2$ but remains open for $n \geq 3$. We do not see how to extend our results in this direction. See \cite{ANRY-BusemannProblems5and8} for a solution when $n\geq 3$ and $K$ is close to the Euclidean ball in the Banach-Mazur distance.

\appendix

\section{Regularity of spherical Radon transform} \label{sec:appendix}

In this appendix, we establish the following {\it a priori} regularity for the solution to the equation $I^2 K = c K$ in the class of star-shaped bounded Borel sets in $\R^n$, $n \geq 3$. It will be clear from the proof that the same regularity holds equally for an equation of the form $I^\ell K = c K$ for any integer $\ell \geq 1$. Recall that $\Rad : L^2(\S^{n-1}) \rightarrow L^2(\S^{n-1})$ denotes the spherical Radon (or Funk) transform. 

\begin{thm} \label{thm:regularity}
Let $n \geq 3$, and let $f \in L^\infty(\S^{n-1})$ satisfy
\begin{equation} \label{eq:R-equation}
\Rad(\Rad(f^{n-1})^{n-1}) = c f ,
\end{equation}
for some $c \neq 0$. Then (possibly modifying $f$ on a null-set) $f \in C^\infty(\S^{n-1})$. In particular, if $f$ is {\it a priori} assumed continuous, then $f \in C^\infty(\S^{n-1})$.
Lastly, if $f$ is non-negative then either it is identically zero or else it is strictly positive. 
\end{thm}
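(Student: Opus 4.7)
The plan is to combine the \emph{a priori} $L^\infty$-bound with the smoothing properties of $\Rad$, executing a bootstrap in fractional Sobolev spaces on $\S^{n-1}$. Since $\Rad h$ is always an even function, (\ref{eq:R-equation}) immediately forces $cf$, and hence $f$ (as $c \neq 0$), to be even modulo a null-set. Expanding in spherical harmonics $L^2(\S^{n-1}) = \bigoplus_k H_k$, $\Rad$ acts as a Fourier multiplier with eigenvalue $\lambda_k$ on $H_k$; the classical Funk--Hecke formulas yield $\lambda_k = 0$ for $k$ odd and $|\lambda_k| \leq C(1+k)^{-(n-2)/2}$ for $k$ even. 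Defining $\|h\|_{H^s}^2 := \sum_k (1+k)^{2s} \|\mathrm{proj}_{H_k} h\|_{L^2}^2$, this gives the continuous mapping $\Rad : H^s(\S^{n-1}) \to H^{s+(n-2)/2}(\S^{n-1})$.

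Setting $g := \Rad(f^{n-1})$, so $cf = \Rad(g^{n-1})$, both $f$ and $g$ lie in $L^\infty$. Starting from $f \in L^\infty \subseteq H^0$, we have $f^{n-1} \in L^\infty \subseteq H^0$ and hence $g \in H^{(n-2)/2}$. Since $n \geq 3$, the exponent $(n-2)/2$ is strictly positive, so the Moser--type estimate $\|u^{n-1}\|_{H^s} \leq C(\|u\|_\infty)\|u\|_{H^s}$ (valid for $s > 0$, applied to the smooth nonlinearity $u \mapsto u^{n-1}$ which vanishes at the origin) yields $g^{n-1} \in H^{(n-2)/2}$, and therefore $cf = \Rad(g^{n-1}) \in H^{n-2}$. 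Iterating, each pair of Radon applications produces a strict regularity gain of $n-2 \geq 1$, so $f$ belongs to $H^s(\S^{n-1})$ for every $s \geq 0$, and by Sobolev embedding $f \in C^\infty(\S^{n-1})$ (after modification on the null-set identified at the start; if $f$ is {\it a priori} continuous, the representative coincides with $f$).

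For strict positivity, assume $f \geq 0$ with $f \not\equiv 0$ but $f(\theta_0) = 0$ at some $\theta_0 \in \S^{n-1}$. Then $g \geq 0$ and $cf(\theta_0) = \Rad(g^{n-1})(\theta_0) = 0$ forces $g \equiv 0$ on the great subsphere $\S^{n-1} \cap \theta_0^{\perp}$. Applying the same reasoning to each such $\theta$, we get $f \equiv 0$ on $\S^{n-1} \cap \theta^{\perp}$ for every $\theta \in \S^{n-1} \cap \theta_0^{\perp}$. When $n \geq 3$, the subspace $\{\theta_0,\xi\}^{\perp}$ is at least $(n-2)$-dimensional (hence nonzero) for every $\xi \in \S^{n-1}$, so every such $\xi$ lies in $\S^{n-1} \cap \theta^{\perp}$ for some $\theta \in \S^{n-1} \cap \theta_0^{\perp}$. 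Hence $f \equiv 0$ on $\S^{n-1}$, contradicting $f \not\equiv 0$.

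The main obstacle in the bootstrap is to ensure that the nonlinearity $u \mapsto u^{n-1}$ does not consume all the regularity gained from $\Rad$; this is precisely where the Moser estimate, and the uniform $L^\infty$-bound on both $f$ and $g$ preserved throughout the iteration, are essential. The assumption $n \geq 3$ enters crucially on two counts: it forces the Radon smoothing exponent $(n-2)/2$ to be strictly positive (so each application of $\Rad$ genuinely regularizes), and it ensures that the family of great subspheres $\{\S^{n-1} \cap \theta^{\perp} : \theta \in \S^{n-1} \cap \theta_0^{\perp}\}$ covers $\S^{n-1}$ in the positivity step.
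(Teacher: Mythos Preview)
Your proof is correct and follows essentially the same approach as the paper. The bootstrap via $\Rad : H^s \to H^{s+(n-2)/2}$ combined with the preservation of $H^s \cap L^\infty$ under $u \mapsto u^{n-1}$ (which the paper phrases using the Kato--Ponce inequality and the algebra property of $H^s \cap L^\infty$ rather than Moser-type composition estimates, but these are equivalent for polynomial nonlinearities), together with the propagation-of-zeros argument for strict positivity, are identical to the paper's proof.
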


Naturally, the proof relies on harmonic analysis, but also draws heavily from the theory of Sobolev spaces. 

\subsection{Harmonic analysis}

Let $n \geq 3$, and abbreviate $L^\infty = L^\infty(\S^{n-1})$ and $L^2 = L^2(\S^{n-1})$, noting that $L^\infty \subset L^2$. Given a real parameter $s \geq 0$, let $H^s = H^s(\S^{n-1})$ denote the (Bessel potential) fractional Sobolev space, consisting of all $f \in L^2$ such that $|D|^s f \in L^2$, or equivalently, all distributions $f$ 
such that $\di{D}^s f \in L^2$, where $|D| = (-\Delta)^{1/2}$, $\di{D} = (\Id - \Delta)^{1/2}$ and $\Delta$ is the spherical Laplacian. Set
\[
 \norm{f}_{H^s} := \norm{\di{D}^s f}_{L^2} \simeq \norm{f}_{L^2} + \norm{|D|^s f}_{L^2} . 
\]
Here and below, $a \simeq b$ indicates that $c_1 a \leq b \leq c_2 a$ for some constants $c_1,c_2 \in (0,\infty)$ that may depend on $n$ and $s$.

It is well known (e.g.~\cite{Groemer}) that if $Q_m$ is a spherical harmonic of degree $m \geq 0$ on $\S^{n-1}$, then
\[
-\Delta Q_m = m (m + n - 2) Q_m .
\]
Consequently, if
\[
f \sim \sum_{m=0}^\infty Q_m 
\]
denotes the (unique) decomposition of $f \in L^2$ into spherical harmonics $Q_m$ of degree $m$, then
\[
f \in H^s \;\; \Rightarrow \;\; \di{D}^s f \sim \sum_{m=0}^\infty  (1 + m (m + n - 2))^{\frac{s}{2}} Q_m .
\]
Therefore, by Parseval's identity, setting $\di{m} := \sqrt{1+|m|^2}$,
\begin{equation} \label{eq:Hs-norm}
f \in H^s \;\; \Leftrightarrow \;\; \norm{\di{D}^s f}_{L^2}^2 \simeq \sum_{m=0}^\infty \di{m}^{2s} \norm{Q_m}_2^2 < \infty ,
\end{equation}
and this can be used as a harmonic analytic definition of the space $H^s$.

\smallskip

The following lemma is well known.
\begin{lemma} \label{lem:R-regularity}
If $f \in H^{s}$ then $\Rad(f) \in H^{s + \frac{n}{2} - 1}$. 
\end{lemma}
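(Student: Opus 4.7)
The plan is to reduce the claim to a statement about the eigenvalues of $\Rad$ on spherical harmonics, using the harmonic-analytic characterization of $H^s$ already recorded in (\ref{eq:Hs-norm}). The spherical Radon transform is known to commute with the action of $\mathrm{SO}(n)$ and therefore, by Schur's lemma applied to each irreducible $\mathrm{SO}(n)$-module of spherical harmonics of a given degree, it must act as a scalar $\lambda_m$ on each such module. Concretely, the Funk--Hecke formula gives $\Rad(Q_m) = \lambda_m Q_m$ for every spherical harmonic $Q_m$ of degree $m \geq 0$, with $\lambda_m = 0$ when $m$ is odd (since $\Rad$ annihilates odd functions), and an explicit ratio of Gamma functions when $m$ is even.

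The key quantitative input I would need is the asymptotic behaviour
\[
|\lambda_m| \simeq \langle m \rangle^{-(n-2)/2} \text{ for even } m,
\]
which follows from the standard formula for $\lambda_m$ via Stirling's approximation. Granted this, the proof is essentially bookkeeping: decompose $f \sim \sum_{m \geq 0} Q_m$ in spherical harmonics, so that $\Rad(f) \sim \sum_m \lambda_m Q_m$, and use (\ref{eq:Hs-norm}) to compute
\[
\norm{\Rad f}_{H^{s+\frac{n}{2}-1}}^2 \simeq \sum_{m \geq 0} \langle m \rangle^{2s + n - 2} |\lambda_m|^2 \norm{Q_m}_2^2 \lesssim \sum_{m \geq 0} \langle m \rangle^{2s} \norm{Q_m}_2^2 \simeq \norm{f}_{H^s}^2,
\]
where the middle inequality uses that $\langle m \rangle^{n-2} |\lambda_m|^2$ is uniformly bounded (trivially for odd $m$, and by the above asymptotic for even $m$). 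This proves $\Rad(f) \in H^{s+\frac{n}{2}-1}$ whenever $f \in H^s$, with a continuous dependence on $f$.

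The only real step of substance is the eigenvalue asymptotic, and this is purely a classical computation with Gamma functions; I do not anticipate any obstacle beyond looking up the right formula (see, e.g., Groemer's book on geometric applications of the Funk--Hecke theorem). No issue of convergence of the series arises because the series for $f$ and $\Rad(f)$ are both interpreted in $L^2$, and the bound above shows that the partial sums of $\Rad(f)$ in $H^{s+\frac{n}{2}-1}$ form a Cauchy sequence whenever those of $f$ do in $H^s$.
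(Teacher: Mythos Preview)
Your proposal is correct and follows essentially the same route as the paper: decompose $f$ into spherical harmonics, use that $\Rad$ acts diagonally with eigenvalues $\nu_{n,m}$ satisfying $|\nu_{n,m}| \leq C_n m^{-(n/2-1)}$ (the paper cites Groemer, Lemmas 3.4.7--3.4.8, for exactly this), and then invoke the characterization (\ref{eq:Hs-norm}). The only cosmetic difference is that you state a two-sided asymptotic $|\lambda_m| \simeq \langle m\rangle^{-(n-2)/2}$, whereas only the upper bound is needed for the stated inclusion.
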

\begin{proof}
This is explicitly proved in \cite[Lemma 4.3]{Strichartz}. 
Indeed, by \cite[Lemma 3.4.7]{Groemer} we have 
\[
\Rad(Q_m) = \nu_{n,m} Q_m 
\]
for an explicit constant $\nu_{n,m}$,  
which is easily seen (e.g. \cite[Proof of Lemma 3.4.8]{Groemer}) to satisfy
\[
\abs{\nu_{n,m}} \leq C_n m^{-\frac{n}{2} + 1} .
\]
The conclusion immediately follows from (\ref{eq:Hs-norm}). 
\end{proof}

\subsection{Algebraic structure of Sobolev spaces}

We will crucially need to use the following proposition, which already is more specialized and less known to non-experts (see \cite[Appendix]{KatoPonce-Commutators} for a proof for Euclidean space, \cite[Theorem 25]{CoulhonEtAl-SobolevAlgebras} for a proof for compact Riemannian manifolds, and \cite{BBR-AlgebraSobolevSpaces} for further extensions).
\begin{proposition} \label{prop:algebra}
For all $s \geq 0$, $H^s \cap L^\infty$ is an algebra: if $f, g \in H^s \cap L^\infty$ then $f g \in H^s \cap L^\infty$. 
\end{proposition}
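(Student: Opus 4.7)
The strategy is to reduce the algebra property to the Kato--Ponce fractional Leibniz inequality
\begin{equation} \label{eq:KP-plan}
\||D|^s(fg)\|_{L^2} \lesssim \|f\|_{L^\infty}\||D|^s g\|_{L^2} + \|g\|_{L^\infty}\||D|^s f\|_{L^2}.
\end{equation}
Once \eqref{eq:KP-plan} is established, combining it with the trivial bounds $\|fg\|_{L^2} \leq \|f\|_{L^\infty}\|g\|_{L^2}$ and $\|fg\|_{L^\infty} \leq \|f\|_{L^\infty}\|g\|_{L^\infty}$ yields $fg \in H^s \cap L^\infty$ with an explicit quantitative norm bound, which is exactly the content of the proposition.

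For integer $s=k$, I would establish \eqref{eq:KP-plan} directly on $\S^{n-1}$: by the classical Leibniz rule, $\nabla^k(fg)$ is a linear combination of tensor products $\nabla^j f \otimes \nabla^{k-j} g$; each such term is estimated in $L^2$ via H\"older with exponents $(2k/j,\, 2k/(k-j))$, followed by the Gagliardo--Nirenberg inequality on the sphere, $\|\nabla^j f\|_{L^{2k/j}} \lesssim \|f\|_{L^\infty}^{1-j/k}\|\nabla^k f\|_{L^2}^{j/k}$, which recombines to the desired bound. For general (possibly non-integer) $s \geq 0$, the natural route is a Littlewood--Paley decomposition adapted to $\S^{n-1}$: define dyadic spherical-harmonic blocks $P_j f := \sum_{2^j \leq m < 2^{j+1}} Q_m f$ (with $Q_m$ the projection onto harmonics of degree $m$), and apply Bony's paraproduct identity $fg = \sum_j S_{j-3}f \cdot P_j g + \sum_j P_j f \cdot S_{j-3} g + \sum_{|j-k|\leq 2} P_j f \cdot P_k g$, where $S_j := \sum_{k<j} P_k$. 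Each paraproduct is controlled using frequency localization together with $\|S_j f\|_{L^\infty} \leq \|f\|_{L^\infty}$; the diagonal remainder is handled via Bernstein-type inequalities for the blocks $P_j$.

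The principal obstacle is that the Kato--Ponce machinery is classically formulated on $\R^n$, so the required paraproduct ingredients---Bernstein-type inequalities for spherical harmonic blocks and the Littlewood--Paley equivalence $\|f\|_{H^s}^2 \simeq \sum_j 2^{2sj}\|P_j f\|_{L^2}^2$---must be transplanted to the compact manifold $\S^{n-1}$. Fortunately, precisely this transplantation has already been carried out in the more general setting of compact Riemannian manifolds by Coulhon--Russ--Tardivel-Nachef \cite{CoulhonEtAl-SobolevAlgebras} (their Theorem~25, cited in the statement) and by Bahouri--Bernicot--Russ \cite{BBR-AlgebraSobolevSpaces}. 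Consequently, the proof of Proposition \ref{prop:algebra} reduces to invoking these results directly. Alternatively, one can bypass non-integer exponents altogether by establishing \eqref{eq:KP-plan} for integer $s$ via the elementary argument above and then recovering the fractional case through complex interpolation of the bilinear map $(f,g) \mapsto fg$ between the integer endpoints, using that $H^s = [H^k, H^{k+1}]_\theta$ for $s = (1-\theta)k + \theta(k+1)$.
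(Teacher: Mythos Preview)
Your proposal is correct and follows essentially the same route as the paper: both reduce the algebra property to the Kato--Ponce fractional Leibniz inequality, handle integer $s$ via the classical Leibniz rule combined with Gagliardo--Nirenberg, and for fractional $s$ defer to the same references (\cite{CoulhonEtAl-SobolevAlgebras}, \cite{BBR-AlgebraSobolevSpaces}) for the transplantation to compact manifolds. Two minor remarks: the authors of \cite{BBR-AlgebraSobolevSpaces} are Badr--Bernicot--Russ, not Bahouri--Bernicot--Russ; and your closing alternative via bilinear complex interpolation is not quite routine, since the target estimate is a \emph{sum} $\|f\|_{L^\infty}\||D|^s g\|_{L^2} + \|g\|_{L^\infty}\||D|^s f\|_{L^2}$ rather than a single product bound, so standard bilinear interpolation does not apply directly without first splitting into paraproducts (at which point you are back to the Littlewood--Paley argument anyway).
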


For completeness and to better appreciate this non-obvious fact, we provide some context. First, note that Proposition \ref{prop:algebra} is completely false if we do not restrict to $L^\infty$, even for $H^0 = L^2$.  Second, for integer $k$ and $p \in [1,\infty]$, let $W^{k,p}$ denote the classical Sobolev space of functions whose first $k$ weak derivatives are in $L^p$. When $s$ is an integer, it is well known that $H^s$ coincides with the Sobolev space $W^{s,2}$. Using this and the Leibniz formula $\nabla( f g) = (\nabla f) g + (\nabla g) f$, it is very simple to show that $H^1 \cap L^\infty$ is an algebra. In order to extend this to higher \emph{integer} values of $s$, it is already necessary to use the classical Gagliardo--Nirenberg interpolation inequalities (see \cite[Propositions 31,32]{CoulhonEtAl-SobolevAlgebras} and the references therein for a proof in the Riemannian setting):
\begin{equation} \label{eq:GN}
\norm{f}_{W^{k,\frac{2s}{k}}} \leq C(k,s,n) \norm{f}_{W^{s,2}}^{\frac{k}{s}} \norm{f}_{L^\infty}^{1 - \frac{k}{s}} \;\; \forall k=1,\ldots,s-1 . 
\end{equation}
For example, since $\Delta (f g) = \Delta(f) g + 2 \scalar{\nabla f, \nabla g} + f \Delta(g)$, in order to show that this is in $L^2$ by invoking Cauchy-Schwarz on the $\scalar{\nabla f , \nabla g}$ term, one needs to establish that if $f \in H^2 \cap L^\infty$ then $\abs{\nabla f} \in L^4$, i.e.~that $f \in W^{1,4}$, which is precisely guaranteed by (\ref{eq:GN}). This is already enough for establishing Proposition \ref{prop:algebra} and hence Theorem \ref{thm:regularity} when $n \geq 4$, since in that case $n/2 - 1 \geq 1$ and so Lemma \ref{lem:R-regularity} guarantees that the Radon transform adds at least one derivative of regularity, allowing us to only work with integer values of $s$. While (\ref{eq:GN}) remains valid in our setting also for fractional values of $s,k$ (perhaps with some exceptional limiting cases, depending on how one interprets $W^{s,p}$ for fractional $s$ --- see \cite{BrezisMironescu-TheFullStory,CoulhonEtAl-SobolevAlgebras}),
it is no longer clear how to invoke the Leibniz formula for fractional derivatives. Consequently, a different approach is required to handle fractional values of $s$, such as the half-integer values which will appear in the proof of Theorem \ref{thm:regularity} in the case $n=3$ (for which $n/2  - 1 = 1/2$). 
In \cite[Appendix]{KatoPonce-Commutators}, Kato and Ponce established Proposition \ref{prop:algebra} in the Euclidean setting by essentially proving the following ``fractional Leibniz" inequality (for more general $H^{s,p}$ spaces) on $\R^d$.
\begin{thm}[Kato--Ponce Inequality] \label{thm:Kato-Ponce}
For all $s \geq 0$ and smooth functions $f, g \in C^\infty(\S^{n-1})$,
\[
\norm{f g}_{H^s} \leq C(s,n) \brac{ \norm{f}_{H^s} \norm{g}_{L^\infty} + \norm{g}_{H^s} \norm{f}_{L^\infty} } .
\]
\end{thm}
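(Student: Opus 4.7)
The plan is to prove this via a paraproduct (Bony) decomposition adapted to the sphere through the spectral Littlewood--Paley theory of the Laplace--Beltrami operator, which is the now-standard route for such ``fractional Leibniz'' inequalities on compact manifolds. Throughout, I assume $s > 0$ (the case $s=0$ is just H\"older).

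First I would set up dyadic spectral projectors on $\S^{n-1}$. Choose a smooth bump $\phi$ supported in $[1/2, 2]$ with $\sum_{j\in \Z} \phi(2^{-j} t) = 1$ for $t > 0$, let $\psi(t) = 1 - \sum_{j\ge 0} \phi(2^{-j} t)$, and define operators $P_0 = \psi(|D|)$, $P_j = \phi(2^{-j} |D|)$ for $j \ge 1$, together with $S_k := \sum_{j \le k} P_j$. By the functional calculus for $-\Delta$ on $\S^{n-1}$, these are bounded on every $L^p(\S^{n-1})$ (this is a discrete consequence of the Stein--Tomas / spectral multiplier theorems on compact symmetric spaces; alternatively Sogge's spectral cluster estimates and the boundedness of the Hardy--Littlewood maximal operator suffice). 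The key intrinsic properties one needs are the Littlewood--Paley characterization $\|f\|_{H^s}^2 \simeq \sum_j 2^{2js}\|P_j f\|_{L^2}^2$ coming from (\ref{eq:Hs-norm}), together with the Bernstein-type inequality $\|P_j f\|_{L^\infty} \lesssim \|f\|_{L^\infty}$ uniformly in $j$.

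Next I would decompose, via Bony's paraproduct, for a small fixed integer $N$,
\[
 fg \;=\; \pi_f(g) + \pi_g(f) + R(f,g), \qquad \pi_f(g) := \sum_{j} S_{j-N} f \cdot P_j g, \qquad R(f,g) := \sum_{|j-k| \le N} P_j f \cdot P_k g .
\]
I would then estimate the $H^s$-norm of each piece. For $\pi_f(g)$: the frequency-support of $S_{j-N} f \cdot P_j g$ lies in a dyadic shell $\{|D| \sim 2^j\}$, so
\[
 \|\pi_f(g)\|_{H^s}^2 \;\simeq\; \sum_j 2^{2js} \|S_{j-N} f \cdot P_j g\|_{L^2}^2 \;\leq\; \sum_j 2^{2js} \|S_{j-N} f\|_{L^\infty}^2 \|P_j g\|_{L^2}^2 \;\lesssim\; \|f\|_{L^\infty}^2 \|g\|_{H^s}^2 ,
\]
using Bernstein on $S_{j-N}f$ and then the Littlewood--Paley characterization of $\|g\|_{H^s}$. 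The term $\pi_g(f)$ is handled symmetrically, giving $\|f\|_{H^s}\|g\|_{L^\infty}$. The remainder $R(f,g)$ is the one that requires a touch more care: its dyadic pieces are not frequency-localized in a single shell, so one uses that $P_j f\cdot P_k g$ has frequency support in $\{|D|\lesssim 2^{\max(j,k)}\}$; applying the almost-orthogonality of a further dyadic projection, writing $\|R(f,g)\|_{H^s}^2 \simeq \sum_\ell 2^{2\ell s} \|P_\ell R(f,g)\|_{L^2}^2$, and using $\|P_\ell(P_j f \cdot P_k g)\|_{L^2}\leq \|P_j f\|_{L^2}\|P_k g\|_{L^\infty}$ (or the symmetric choice) together with the constraint $\ell \le \max(j,k)+C$, one sums the geometric series (here $s > 0$ is essential) to conclude $\|R(f,g)\|_{H^s}\lesssim \|f\|_{H^s}\|g\|_{L^\infty}+\|g\|_{H^s}\|f\|_{L^\infty}$.

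The main technical obstacle, and the only place spherical geometry enters non-trivially, is justifying the spectral-multiplier boundedness of $P_j$ and $S_k$ uniformly in $j,k$ on $L^p(\S^{n-1})$ for $p \in \{2,\infty\}$ and the corresponding Bernstein inequality $\|P_j f\|_{L^\infty}\lesssim \|f\|_{L^\infty}$; these are classical facts following from Sogge's spectral cluster theorem on compact Riemannian manifolds, or from the heat-kernel based functional calculus of Stein. Given these ingredients the rest of the argument is the verbatim Euclidean Kato--Ponce argument from \cite{KatoPonce-Commutators}, and no macros or definitions beyond those in the paper are needed.
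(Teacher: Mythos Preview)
The paper does not actually prove this theorem: it is quoted as a known result, with pointers to \cite[Appendix]{KatoPonce-Commutators} for the Euclidean version, \cite[Theorem 7.6.1]{Grafakaos-ModernFourierAnalysis3rdEd} for the bilinear-multiplier proof, and \cite[Theorem 25]{CoulhonEtAl-SobolevAlgebras} for the extension to compact Riemannian manifolds. Your paraproduct outline is precisely the strategy underlying those references, so in spirit you are on the right track.

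That said, there is one step in your sketch that is imported from $\R^n$ without justification and is the genuine crux on a manifold: the assertion that $S_{j-N}f \cdot P_j g$ has spectral support in the shell $\{|D|\sim 2^j\}$. In Euclidean space this is automatic because the Fourier transform turns products into convolutions. On $\S^{n-1}$ it happens to be true as well, but for a different reason: the product of spherical harmonics of degrees $m$ and $k$ decomposes (Clebsch--Gordan) into harmonics of degrees $|m-k|, |m-k|+2,\ldots, m+k$, so the spectral support of a product really is contained in the sumset of the individual supports. You should state this explicitly; without it the Littlewood--Paley square-function estimate for $\pi_f(g)$ does not follow. On a general compact manifold no such exact localization holds, and the references you would be mimicking (e.g.\ \cite{CoulhonEtAl-SobolevAlgebras}) instead work with heat-semigroup based Littlewood--Paley pieces and prove only \emph{approximate} localization with rapidly decaying tails --- a substantially more technical argument than what you wrote. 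Since you are on the sphere, you can avoid that machinery, but you must invoke the harmonic-product rule to close the gap. The remaining ingredients you list (uniform $L^\infty$ bounds for the smooth spectral multipliers $P_j$, $S_k$; the square-function characterization of $H^s$) are indeed available on $\S^{n-1}$ and are correctly identified as the analytic input.
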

As already explained, for integer values of $s$ this follows easily from the Gagliardo--Nirenberg inequalities, but the general case requires the theory of bilinear multipliers. See \cite[Theorem 7.6.1]{Grafakaos-ModernFourierAnalysis3rdEd} and the references therein for generalizations, and \cite[Theorem 25]{CoulhonEtAl-SobolevAlgebras} for an extension to the Riemannian setting, which in particular applies to compact Riemannian manifolds such as $\S^{n-1}$. 
As smooth functions are dense in $H^s$, Theorem \ref{thm:Kato-Ponce} immediately implies Proposition \ref{prop:algebra}.

\medskip

Combining all of the above, we immediately obtain the following proposition. 
\begin{proposition} \label{prop:Rfk}
If $f \in H^{s} \cap L^\infty$ then $\Rad(f^k) \in H^{s + \frac{n}{2} - 1} \cap L^\infty$ for all integers $k \geq 1$. 
\end{proposition}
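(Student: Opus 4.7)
The plan is to directly combine the two main ingredients already at hand: the algebra property (Proposition \ref{prop:algebra}) for the power $f^k$, and the regularization Lemma \ref{lem:R-regularity} for the action of $\Rad$. The only small additional observation needed is $L^\infty$-boundedness of $\Rad$, which is immediate from the fact that $\Rad(g)(u)$ is an average of $g$ over $\S^{n-1} \cap u^\perp$ and hence $\norm{\Rad(g)}_{L^\infty} \leq \norm{g}_{L^\infty}$ (this was already noted in the Introduction via Jensen's inequality).

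First I would argue by induction on $k \geq 1$ that $f^k \in H^s \cap L^\infty$. The base case $k=1$ is the hypothesis. For the inductive step, assuming $f^{k-1} \in H^s \cap L^\infty$, Proposition \ref{prop:algebra} applied to $f^{k-1}$ and $f$ (both members of the algebra $H^s \cap L^\infty$) yields $f^k = f^{k-1} \cdot f \in H^s \cap L^\infty$. Explicitly, $\norm{f^k}_{L^\infty} \leq \norm{f}_{L^\infty}^k$ is trivial, and the $H^s$-bound follows from the Kato--Ponce estimate of Theorem \ref{thm:Kato-Ponce}.

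Next, having $f^k \in H^s$, Lemma \ref{lem:R-regularity} (which asserts that $\Rad$ gains $n/2 - 1$ derivatives of Sobolev regularity) immediately gives $\Rad(f^k) \in H^{s + n/2 - 1}$. Simultaneously, since $f^k \in L^\infty$ and $\Rad$ is a contraction from $L^\infty$ to $L^\infty$, we conclude $\Rad(f^k) \in L^\infty$ with $\norm{\Rad(f^k)}_{L^\infty} \leq \norm{f}_{L^\infty}^k$. Combining these two conclusions gives $\Rad(f^k) \in H^{s + n/2 - 1} \cap L^\infty$, as desired.

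There is really no obstacle here, since the two deep inputs (the algebra property for fractional Sobolev spaces intersected with $L^\infty$, and the smoothing property of $\Rad$) have already been recorded. The proof is essentially a one-line combination; the main work was in assembling those two statements in the preceding discussion. The only thing worth flagging is that one should not attempt to first apply $\Rad$ and then raise to a power, since $\Rad$ only preserves the $L^\infty$ norm but does not obviously interact with pointwise products --- hence the order ``multiply first, then transform'' is essential.
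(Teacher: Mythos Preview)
Your proof is correct and follows exactly the approach the paper intends: the paper itself offers no explicit proof beyond the phrase ``Combining all of the above, we immediately obtain the following proposition,'' and your argument spells out precisely this combination --- the algebra property (Proposition~\ref{prop:algebra}) to obtain $f^k \in H^s \cap L^\infty$, followed by Lemma~\ref{lem:R-regularity} for the Sobolev gain and the trivial $L^\infty$-boundedness of $\Rad$.
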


\subsection{Concluding the proof}

\begin{proof}[Proof of Theorem \ref{thm:regularity}]
Applying Proposition \ref{prop:Rfk} twice, it follows for all $s\geq 0$ that if $f \in H^s \cap L^\infty$ satisfies (\ref{eq:R-equation}), then in fact $f \in H^{s + n -2} \cap L^\infty$. Applying this repeatedly starting from $s=0$ (as $L^\infty \subset L^2$), we deduce that $f \in H^{k (n-2)} \cap L^\infty$ for all integer $k \geq 1$. It follows by a standard application of the Sobolev--Morrey embedding theorem \cite[Theorem 6.3]{HebeyRobert-SobolevSpacesOnManifolds} that, up to modifying $f$ on a null-set,  $f$ is $C^\infty$-smooth, as asserted. 

If $f \in C^\infty(\S^{n-1},\R_+)$ and $f(\theta_0) = 0$, then denoting $g = \Rad(f^{n-1}) \in C^\infty(\S^{n-1},\R_+)$, we are given that $\Rad(g^{n-1})(\theta_0) = c f(\theta_0) = 0$, which clearly implies that $g$ vanishes on $\S^{n-1} \cap \theta_0^{\perp}$. But this in turn implies that $f$ vanishes on $\S^{n-1} \cap u^{\perp}$ for all $u \in \S^{n-1} \cap \theta_0^{\perp}$, meaning that $f$ vanishes on the entire $\S^{n-1}$. Consequently, if $f$ is not identically zero, then $f$ must be strictly positive. 
\end{proof}

\bibliographystyle{plain}

\begin{thebibliography}{10}

\bibitem{APPS-LpCentroidBodies}
R.~Adamczak, G.~Paouris, P.~Pivovarov, and P.~Simanjuntak.
\newblock From intersection bodies to dual centroid bodies: a stochastic
  approach to isoperimetry.
\newblock {\em J. Eur. Math. Soc.}, DOI 10.4171/JEMS/1570, 2024.

\bibitem{ANRY-BusemannProblems5and8}
M.~A. Alfonseca, F.~Nazarov, D.~Ryabogin, and V.~Yaskin.
\newblock A solution to the fifth and the eighth {B}usemann-{P}etty problems in
  a small neighborhood of the {E}uclidean ball.
\newblock {\em Adv. Math.}, 390:Paper No. 107920, 28, 2021.

\bibitem{BBR-AlgebraSobolevSpaces}
N.~Badr, F.~Bernicot, and E.~Russ.
\newblock Algebra properties for {S}obolev spaces---applications to semilinear
  {PDE}s on manifolds.
\newblock {\em J. Anal. Math.}, 118(2):509--544, 2012.

\bibitem{Baernstein-Book}
A.~Baernstein, II.
\newblock {\em Symmetrization in analysis}, volume~36 of {\em New Mathematical
  Monographs}.
\newblock Cambridge University Press, Cambridge, 2019.
\newblock With David Drasin and Richard S. Laugesen, With a foreword by Walter
  Hayman.

\bibitem{BGGK-PolyaSzego}
G.~Bianchi, R.~J. Gardner, P.~Gronchi, and M.~Kiderlen.
\newblock The {P}\'{o}lya-{S}zeg\"{o} inequality for smoothing rearrangements.
\newblock {\em J. Funct. Anal.}, 287(2):Paper No. 110422, 56, 2024.

\bibitem{Bogachev-WeakConvergenceBook}
V.~I. Bogachev.
\newblock {\em Weak convergence of measures}, volume 234 of {\em Mathematical
  Surveys and Monographs}.
\newblock American Mathematical Society, Providence, RI, 2018.

\bibitem{BLM-SteinerSymmetrizations}
J.~Bourgain, J.~Lindenstrauss, and V.~Milman.
\newblock Estimates related to {S}teiner symmetrizations.
\newblock In {\em Geometric aspects of functional analysis (1987--88)}, volume
  1376 of {\em Lecture Notes in Math.}, pages 264--273. Springer, Berlin, 1989.

\bibitem{BrascampLiebLuttinger}
H.~J. Brascamp, E.~H. Lieb, and J.~M. Luttinger.
\newblock A general rearrangement inequality for multiple integrals.
\newblock {\em J. Func. Anal.}, 17:227--237, 1974.

\bibitem{BCD-PowerOfGaussCurvatureFlow}
S.~Brendle, K.~Choi, and P.~Daskalopoulos.
\newblock Asymptotic behavior of flows by powers of the {G}aussian curvature.
\newblock {\em Acta Math.}, 219(1):1--16, 2017.

\bibitem{BrezisMironescu-TheFullStory}
H.~Brezis and P.~Mironescu.
\newblock Gagliardo-{N}irenberg inequalities and non-inequalities: the full
  story.
\newblock {\em Ann. Inst. H. Poincar\'{e} C Anal. Non Lin\'{e}aire},
  35(5):1355--1376, 2018.

\bibitem{Brock-ContSteiner}
F.~Brock.
\newblock Continuous {S}teiner-symmetrization.
\newblock {\em Math. Nachr.}, 172:25--48, 1995.

\bibitem{Brock-ContSteiner2}
F.~Brock.
\newblock Continuous rearrangement and symmetry of solutions of elliptic
  problems.
\newblock {\em Proc. Indian Acad. Sci. Math. Sci.}, 110(2):157--204, 2000.

\bibitem{BuragoZalgallerBook}
Yu.~D. Burago and V.~A. Zalgaller.
\newblock {\em Geometric inequalities}, volume 285 of {\em Grundlehren der
  Mathematischen Wissenschaften [Fundamental Principles of Mathematical
  Sciences]}.
\newblock Springer-Verlag, Berlin, 1988.

\bibitem{Busemann-IntersectionBodyIsConvex}
H.~Busemann.
\newblock A theorem on convex bodies of the {B}runn-{M}inkowski type.
\newblock {\em Proc. Nat. Acad. Sci. U.S.A.}, 35:27--31, 1949.

\bibitem{Busemann-IntersectionBodyInq}
H.~Busemann.
\newblock Volume in terms of concurrent cross-sections.
\newblock {\em Pacific J. Math.}, 3:1--12, 1953.

\bibitem{Busemann-Petty}
H.~Busemann and C.~M. Petty.
\newblock Problems on convex bodies.
\newblock {\em Math. Scand.}, 4:88--94, 1956.

\bibitem{CampiGronchi-LpBusemannPettyCentroid}
S.~Campi and P.~Gronchi.
\newblock The {$L^p$}-{B}usemann-{P}etty centroid inequality.
\newblock {\em Adv. Math.}, 167(1):128--141, 2002.

\bibitem{CampiGronchi-VolumeProductInqs}
S.~Campi and P.~Gronchi.
\newblock On volume product inequalities for convex sets.
\newblock {\em Proc. Amer. Math. Soc.}, 134(8):2393--2402, 2006.

\bibitem{Christ-KPlaneTransform}
M.~Christ.
\newblock Estimates for the {$k$}-plane transform.
\newblock {\em Indiana Univ. Math. J.}, 33(6):891--910, 1984.

\bibitem{CFPP-EasyBusemannAndCampiGronchi}
D.~Cordero-Erausquin, M.~Fradelizi, G.~Paouris, and P.~Pivovarov.
\newblock Volume of the polar of random sets and shadow systems.
\newblock {\em Math. Ann.}, 362(3-4):1305--1325, 2015.

\bibitem{CoulhonEtAl-SobolevAlgebras}
T.~Coulhon, E.~Russ, and V.~Tardivel-Nachef.
\newblock Sobolev algebras on {L}ie groups and {R}iemannian manifolds.
\newblock {\em Amer. J. Math.}, 123(2):283--342, 2001.

\bibitem{EvansGariepy-BookRevised}
L.~C. Evans and R.~F. Gariepy.
\newblock {\em Measure theory and fine properties of functions}.
\newblock Textbooks in Mathematics. CRC Press, Boca Raton, FL, revised edition,
  2015.

\bibitem{FNRZ-IntersectionBodyOperator}
A.~Fish, F.~Nazarov, D.~Ryabogin, and A.~Zvavitch.
\newblock The unit ball is an attractor of the intersection body operator.
\newblock {\em Adv. Math.}, 226(3):2629--2642, 2011.

\bibitem{Gardner-BP-3dim}
R.~J. Gardner.
\newblock A positive answer to the {B}usemann-{P}etty problem in three
  dimensions.
\newblock {\em Ann. of Math. (2)}, 140(2):435--447, 1994.

\bibitem{GardnerSurveyInBAMS}
R.~J. Gardner.
\newblock The {B}runn-{M}inkowski inequality.
\newblock {\em Bull. Amer. Math. Soc. (N.S.)}, 39(3):355--405, 2002.

\bibitem{GardnerGeometricTomography2ndEd}
R.~J. Gardner.
\newblock {\em Geometric tomography}, volume~58 of {\em Encyclopedia of
  Mathematics and its Applications}.
\newblock Cambridge University Press, Cambridge, second edition, 2006.

\bibitem{Gardner-DualAffineQuermassintegrals}
R.~J. Gardner.
\newblock The dual {B}runn-{M}inkowski theory for bounded {B}orel sets: dual
  affine quermassintegrals and inequalities.
\newblock {\em Adv. Math.}, 216(1):358--386, 2007.

\bibitem{GKS}
R.~J. Gardner, A.~Koldobsky, and T.~Schlumprecht.
\newblock An analytic solution to the {B}usemann-{P}etty problem on sections of
  convex bodies.
\newblock {\em Ann. of Math. (2)}, 149(2):691--703, 1999.

\bibitem{Grafakaos-ModernFourierAnalysis3rdEd}
L.~Grafakos.
\newblock {\em Modern {F}ourier analysis}, volume 250 of {\em Graduate Texts in
  Mathematics}.
\newblock Springer, New York, third edition, 2014.

\bibitem{Grinberg-Zhang}
E.~L. Grinberg and G.~Zhang.
\newblock Convolutions, transforms, and convex bodies.
\newblock {\em Proc. London Math. Soc.}, 78(3):77--115, 1999.

\bibitem{Groemer}
H.~Groemer.
\newblock {\em Geometric Applications of Fourier Series and Spherical
  Harmonics}, volume~61 of {\em Encyclopedia of Mathematics and its
  Applications}.
\newblock Cambridge University Press, New-York, 1996.

\bibitem{Gruber-ConvexAndDiscreteGeometry}
P.~M. Gruber.
\newblock {\em Convex and discrete geometry}, volume 336 of {\em Grundlehren
  der Mathematischen Wissenschaften [Fundamental Principles of Mathematical
  Sciences]}.
\newblock Springer, Berlin, 2007.

\bibitem{Halkin-ImplicitFunctionTheorems}
H.~Halkin.
\newblock Implicit functions and optimization problems without continuous
  differentiability of the data.
\newblock {\em SIAM J. Control}, 12:229--236, 1974.

\bibitem{HebeyRobert-SobolevSpacesOnManifolds}
E.~Hebey and F.~Robert.
\newblock Sobolev spaces on manifolds.
\newblock In {\em Handbook of global analysis}, pages 375--415, 1213. Elsevier
  Sci. B. V., Amsterdam, 2008.

\bibitem{Ivaki-SecondMixedProjection}
M.~N. Ivaki.
\newblock The second mixed projection problem and the projection centroid
  conjectures.
\newblock {\em J. Funct. Anal.}, 272(12):5144--5161, 2017.

\bibitem{Ivaki-LocalUniquenessForPetty}
M.~N. Ivaki.
\newblock A local uniqueness theorem for minimizers of {P}etty's conjectured
  projection inequality.
\newblock {\em Mathematika}, 64(1):1--19, 2018.

\bibitem{IvakiEMilman-GeneralizedBCD}
M.~N. Ivaki and E.~Milman.
\newblock Uniqueness of solutions to a class of isotropic curvature problems.
\newblock {\em Adv. Math.}, 435(part A):Paper No. 109350, 11, 2023.

\bibitem{KatoPonce-Commutators}
T.~Kato and G.~Ponce.
\newblock Commutator estimates and the {E}uler and {N}avier-{S}tokes equations.
\newblock {\em Comm. Pure Appl. Math.}, 41(7):891--907, 1988.

\bibitem{KlenkeBook3rdEd}
A.~Klenke.
\newblock {\em Probability theory---a comprehensive course}.
\newblock Universitext. Springer, Cham, 2020.
\newblock Third edition.

\bibitem{Koldobsky-Book}
A.~Koldobsky.
\newblock {\em Fourier Analysis in Convex Geometry}, volume 116 of {\em
  Mathematical Surveys and Monographs}.
\newblock American Mathematical Society, 2005.

\bibitem{KrantzParks-GeometryBook}
S.~G. Krantz and H.~R. Parks.
\newblock {\em The geometry of domains in space}.
\newblock Birkh\"{a}user Advanced Texts: Basel Textbooks. Birkh\"{a}user
  Boston, 1999.

\bibitem{LinWu-LipschitzStarBodiesReview}
Y.~Lin and Y.~Wu.
\newblock Lipschitz star bodies.
\newblock {\em Acta Math. Sci. Ser. B (Engl. Ed.)}, 43(2):597--607, 2023.

\bibitem{LinXi-LipschitzStarBodySymmetrization}
Y.~Lin and D.~Xi.
\newblock Orlicz affine isoperimetric inequalities for star bodies.
\newblock {\em Adv. in Appl. Math.}, 134:Paper No. 102308, 32, 2022.

\bibitem{Lutwak-MixedProjectionInqs}
E.~Lutwak.
\newblock Mixed projection inequalities.
\newblock {\em Trans. Amer. Math. Soc.}, 287(1):91--105, 1985.

\bibitem{Lutwak-intersection-bodies}
E.~Lutwak.
\newblock Intersection bodies and dual mixed volumes.
\newblock {\em Advances in Mathematics}, 71:232--261, 1988.

\bibitem{Lutwak-Firey-Sums}
E.~Lutwak.
\newblock The {B}runn-{M}inkowski-{F}irey theory. {I}. {M}ixed volumes and the
  {M}inkowski problem.
\newblock {\em J. Differential Geom.}, 38(1):131--150, 1993.

\bibitem{Lutwak-Selected}
E.~Lutwak.
\newblock Selected affine isoperimetric inequalities.
\newblock In {\em Handbook of convex geometry, {V}ol. {A}, {B}}, pages
  151--176. North-Holland, Amsterdam, 1993.

\bibitem{Lutwak-Firey-Sums-II}
E.~Lutwak.
\newblock The {B}runn-{M}inkowski-{F}irey theory. {II}. {A}ffine and geominimal
  surface areas.
\newblock {\em Adv. Math.}, 118(2):244--294, 1996.

\bibitem{LYZ-Lp-PettyProjection}
E.~Lutwak, D.~Yang, and G.~Zhang.
\newblock {$L_p$} affine isoperimetric inequalities.
\newblock {\em J. Differential Geom.}, 56(1):111--132, 2000.

\bibitem{LYZ-LpMinkowskiProblem}
E.~Lutwak, D.~Yang, and G.~Zhang.
\newblock On the {$L_p$}-{M}inkowski problem.
\newblock {\em Trans. Amer. Math. Soc.}, 356(11):4359--4370, 2004.

\bibitem{LYZ-OrliczProjectionBodies}
E.~Lutwak, D.~Yang, and G.~Zhang.
\newblock Orlicz projection bodies.
\newblock {\em Adv. Math.}, 223(1):220--242, 2010.

\bibitem{LutwakZhang-IntroduceLqCentroidBodies}
E.~Lutwak and G.~Zhang.
\newblock Blaschke-{S}antal\'o inequalities.
\newblock {\em J. Differential Geom.}, 47(1):1--16, 1997.

\bibitem{McNabb}
A.~McNabb.
\newblock Partial {S}teiner symmetrization and some conduction problems.
\newblock {\em J. Math. Anal. Appl.}, 17:221--227, 1967.

\bibitem{MeyerPajor-Santalo}
M.~Meyer and A.~Pajor.
\newblock On the {B}laschke-{S}antal\'{o} inequality.
\newblock {\em Arch. Math. (Basel)}, 55(1):82--93, 1990.

\bibitem{MeyerReisner-SantaloViaShadowSystems}
M.~Meyer and S.~Reisner.
\newblock Shadow systems and volumes of polar convex bodies.
\newblock {\em Mathematika}, 53(1):129--148 (2007), 2006.

\bibitem{EMilmanYehudayoff-AffineQuermassintegrals}
E.~Milman and A.~Yehudayoff.
\newblock Sharp isoperimetric inequalities for affine quermassintegrals.
\newblock {\em J. Amer. Math. Soc.}, 36(4):1061--1101, 2023.

\bibitem{PaourisPivovarov-Survey}
G.~Paouris and P.~Pivovarov.
\newblock Randomized isoperimetric inequalities.
\newblock In {\em Convexity and concentration}, volume 161 of {\em IMA Vol.
  Math. Appl.}, pages 391--425. Springer, New York, 2017.

\bibitem{Petty-CentroidSurfaces}
C.~M. Petty.
\newblock Centroid surfaces.
\newblock {\em Pacific J. Math.}, 11:1535--1547, 1961.

\bibitem{Petty-IsoperimetricProblems}
C.~M. Petty.
\newblock Isoperimetric problems.
\newblock In {\em Proceedings of the {C}onference on {C}onvexity and
  {C}ombinatorial {G}eometry ({U}niv. {O}klahoma, {N}orman, {O}kla., 1971)},
  pages 26--41, 1971.

\bibitem{PolyaSzego-Book}
G.~P\'{o}lya and G.~Szeg\"{o}.
\newblock {\em Isoperimetric {I}nequalities in {M}athematical {P}hysics}.
\newblock Annals of Mathematics Studies, No. 27. Princeton University Press,
  Princeton, NJ, 1951.

\bibitem{Reuter-LocalFixedPoints}
C.~Reuter.
\newblock Local fixed point results for centroid body operators.
\newblock Manuscript, arXiv:2312.10574, 2023.

\bibitem{Rogers-BLL}
C.~A. Rogers.
\newblock A single integral inequality.
\newblock {\em J. London Math. Soc.}, 32:102--108, 1957.

\bibitem{RogersShephard-ShadowSystems}
C.~A. Rogers and G.~C. Shephard.
\newblock Some extremal problems for convex bodies.
\newblock {\em Mathematika}, 5:93--102, 1958.

\bibitem{SaintRaymond-Santalo}
J.~Saint-Raymond.
\newblock Sur le volume des corps convexes sym\'{e}triques.
\newblock In {\em Initiation {S}eminar on {A}nalysis: {G}. {C}hoquet-{M}.
  {R}ogalski-{J}. {S}aint-{R}aymond, 20th {Y}ear: 1980/1981}, volume~46 of {\em
  Publ. Math. Univ. Pierre et Marie Curie}. Univ. Paris VI, Paris, 1981.
\newblock Exp. No. 11, 25 pages.

\bibitem{Saroglou-PettyNotMonotoneUnderSteiner}
C.~Saroglou.
\newblock Volumes of projection bodies of some classes of convex bodies.
\newblock {\em Mathematika}, 57(2):329--353, 2011.

\bibitem{Saroglou-GeneralizedBCD}
C.~Saroglou.
\newblock On a non-homogeneous version of a problem of {F}irey.
\newblock {\em Math. Ann.}, 382(3-4):1059--1090, 2022.

\bibitem{SaroglouZvavitch-IterationsOfPi}
C.~Saroglou and A.~Zvavitch.
\newblock Iterations of the projection body operator and a remark on {P}etty's
  conjectured projection inequality.
\newblock {\em J. Funct. Anal.}, 272(2):613--630, 2017.

\bibitem{Schneider-Book-2ndEd}
R.~Schneider.
\newblock {\em Convex bodies: the {B}runn-{M}inkowski theory}, volume 151 of
  {\em Encyclopedia of Mathematics and its Applications}.
\newblock Cambridge University Press, Cambridge, second expanded edition, 2014.

\bibitem{SchneiderWeil-Book}
R.~Schneider and W.~Weil.
\newblock {\em Stochastic and integral geometry}.
\newblock Probability and its Applications (New York). Springer-Verlag, Berlin,
  2008.

\bibitem{Shephard-ShadowSystems}
G.~C. Shephard.
\newblock Shadow systems of convex sets.
\newblock {\em Israel J. Math.}, 2:229--236, 1964.

\bibitem{Soltan-EllipsoidViaMidpoints}
V.~Soltan.
\newblock Convex solids with hyperplanar midsurfaces for restricted families of
  chords.
\newblock {\em Bul. Acad. \c{S}tiin\c{t}e Repub. Mold. Mat.}, (2):23--40, 2011.

\bibitem{Soltan-EllipsoidsSurvey}
V.~Soltan.
\newblock Characteristic properties of ellipsoids and convex quadrics.
\newblock {\em Aequationes Math.}, 93(2):371--413, 2019.

\bibitem{Solynin-Cont1}
A.~Yu. Solynin.
\newblock Continuous symmetrization of sets.
\newblock {\em Zap. Nauchn. Sem. Leningrad. Otdel. Mat. Inst. Steklov. (LOMI)},
  185(Anal. Teor. Chisel i Teor. Funktsi\u{\i}. 10):125--139, 186, 1990.

\bibitem{Solynin-Cont2}
A.~Yu. Solynin.
\newblock Continuous symmetrization via polarization.
\newblock {\em Algebra i Analiz}, 24(1):157--222, 2012.

\bibitem{Strichartz}
R.~S. Strichartz.
\newblock {$L\sp p$} estimates for {R}adon transforms in {E}uclidean and
  non-{E}uclidean spaces.
\newblock {\em Duke Math. J.}, 48(4):699--727, 1981.

\bibitem{Toranzos-LipschitzConstant}
F.~A. Toranzos.
\newblock Radial functions of convex and star-shaped bodies.
\newblock {\em Amer. Math. Monthly}, 74:278--280, 1967.

\bibitem{Weil-Pi2KForPolytopes}
W.~Weil.
\newblock \"{U}ber die {P}rojektionenk\"{o}rper konvexer {P}olytope.
\newblock {\em Arch. Math. (Basel)}, 22:664--672, 1971.

\bibitem{Zhang-Correction-4dim}
G.~Zhang.
\newblock A positive solution to the {B}usemann-{P}etty problem in {$\bold
  R^4$}.
\newblock {\em Ann. of Math. (2)}, 149(2):535--543, 1999.

\bibitem{Zhu-OrliczCentroidInqForStarBodies}
G.~Zhu.
\newblock The {O}rlicz centroid inequality for star bodies.
\newblock {\em Adv. in Appl. Math.}, 48(2):432--445, 2012.

\end{thebibliography}

\def\cprime{$'$} \def\textasciitilde{$\sim$}

\end{document}